\documentclass[reqno, a4paper]{amsart}

\title{An Affine invariant Minkowski problem}

\author{Antoine Ablondi}
\address{Antoine Ablondi: IMAG, Universit\'e de Montpellier, CNRS, Montpellier, France.} 
\email{antoine.ablondi@umontpellier.fr}
\date{}

\usepackage[margin=2.5cm]{geometry}
\usepackage{setspace}

\usepackage{enumitem}

\usepackage{amsmath,amssymb,amscd,amsthm,mathtools}
\usepackage[T1]{fontenc}
\usepackage[english]{babel}

\numberwithin{equation}{section}

\usepackage{enumitem}

\usepackage{graphicx}
\graphicspath{ {. /Figures/} }
\usepackage[justification=centering]{caption}
\usepackage[labelsep=period]{caption}
\numberwithin{figure}{section}

\usepackage{xcolor}
\definecolor{newred}{RGB}{200, 30, 45}
\definecolor{newyellow}{RGB}{255, 222, 23}
\definecolor{newblue}{RGB}{25, 48, 126}
\definecolor{newgrey}{RGB}{115, 110, 95}
\definecolor{newbrown}{RGB}{160, 62, 45}

\usepackage{hyperref}
\hypersetup{colorlinks, linkcolor={newred}, citecolor={newbrown}, 
 pdftitle={An Affine invariant Minkowski problem},
	pdfauthor={Antoine Ablondi}}
\usepackage{pdfpages}

\newtheorem{maintheorem}{Theorem}

\newtheorem{theorem}{Theorem}[section]
\newtheorem{corollary}[theorem]{Corollary}
\newtheorem{lemma}[theorem]{Lemma}
\newtheorem{proposition}[theorem]{Proposition}

\theoremstyle{definition}
\newtheorem{definition}[theorem]{Definition}
\newtheorem{example}[theorem]{Example}

\theoremstyle{remark}
\newtheorem{remark}[theorem]{Remark}

\newtheorem*{remark*}{Remark}

\newcommand{\R}{\mathbb{R}} 
\newcommand{\N}{\mathbb{N}} 
\newcommand{\V}{\mathbb{V}} 
\newcommand{\A}{\mathbb{A}} 
\renewcommand{\P}{\mathbb{P}} 

\newcommand{\E}{\mathbb{E}} 
\newcommand{\Sph}{\mathbb{S}} 
\newcommand{\M}{\mathbb{M}} 
\renewcommand{\H}{\mathbb{H}} 

\newcommand{\SL}{\mathrm{SL}} 
\newcommand{\SO}{\mathrm{SO}} 
\newcommand{\SA}{\mathrm{SA}} 

\newcommand{\Id}{\mathrm{Id}} 

\newcommand{\dif}{\mathrm{d}} 
\DeclareMathOperator{\grad}{grad} 
\DeclareMathOperator{\Hess}{Hess} 
\DeclareMathOperator{\Jac}{Jac} 

\DeclareMathOperator{\graph}{graph} 
\DeclareMathOperator{\epi}{epigraph} 

\newcommand{\C}{\mathcal{C}} 
\renewcommand{\S}{\Sigma_\C} 
\newcommand{\G}{\mathcal{G}} 

\newcommand{\DL}{\mathcal{D}} 

\newcommand{\linvol}{\mathit{vol}} 
\newcommand{\vol}{\mathrm{vol}} 
\newcommand{\dvol}{\dif \mathrm{vol}} 
\newcommand{\covol}{\mathrm{covol}} 

\newcommand{\Area}{\mathrm{A}} 
\newcommand{\MA}{\mathrm{MA}} 

\newcommand{\st}{\; \middle\vert \;} 

\newcommand{\namelessfunction}[4]{ \left\lbrace \begin{matrix}
{#1} &\longrightarrow& {#2}\\
{#3} & \longmapsto & {#4}
\end{matrix} \right. }
\newcommand{\function}[5]{{#1}: \namelessfunction{#2}{#3}{#4}{#5}}

\newcommand{\vp}[1]{{\left( {#1} \right)}} 
\newcommand{\bp}[1]{{\bigl( {#1} \bigr)}} 
\newcommand{\Bp}[1]{{\Bigl({#1}\Bigr)}} 

\begin{document}

\begin{abstract}

In Euclidean space, the generalised Minkowski problem asks, for a given finite Radon measure $\mu$ on the unit sphere $\Sph^d$, to find a compact convex set $K$ with area measure $\mu$. For convex sets in the Minkowski space invariant under an affine deformation of a uniform lattice of $\SO_0(d,1)$, the analogous Minkowski problem was considered and solved by Barbot--Béguin--Zeghib (partially) and Bonsante--Fillastre.
By a theorem of Mess--Barbot--Bonsante, that also solves the Minkowski problem in flat Lorentzian spacetimes with compact hyperbolic Cauchy surface. 

We consider convex domains of  the oriented real affine space $\R^{d+1}$ which are invariant under a subgroup of affine transformations obtained by adding translation parts to a discrete subgroup of $\SL (\R^{d+1})$ dividing a convex cone. 
We prove that those convex domains satisfy a local Steiner Formula, allowing to introduce natural area measures and define an affine invariant Minkowski problem. We then solve that Minkowski problem through a variational method using the convexity of a covolume functional.
We also give  an interpretation of those results in some ``affine spacetimes'', which were introduced by the author in a preceding work.

\end{abstract}
\maketitle

{\hypersetup{linkcolor=black}
\hypersetup{bookmarksdepth=2} \setcounter{tocdepth}{1} \tableofcontents}

\begin{spacing}{1.225}

\section*{Introduction}

\subsection*{The Minkowski problem in Euclidean space}

Let $d \geq 1$, let $\E^{d+1}$ denote the $(d+1)$-dimensional \emph{Euclidean affine space} and let $\Sph^d$ be the \emph{unit sphere} in the Euclidean vector space $(\V^{d+1}, \Vert\cdot\Vert_{euc})$ of translations of $\E^{d+1}$. Given a bounded convex domain $K$ of $\E^{d+1}$ and  a Borel subset $b$ of $\Sph^d$, the \emph{parallel $\varepsilon$-neighbourhood} $\mathcal{N}_\varepsilon(K)(b)$ consists of all the points $P \in \E^{d+1}$ at distance less then $\varepsilon$ from $\partial K$ having orthogonal projection $P'$ onto $\partial K$ such that $\overrightarrow{P'P}$ is collinear to a vector $\vec{n} \in b$. Notice that we have 
\begin{equation*}
\mathcal{N}_\varepsilon(K)(\Sph^{d}) = K +\varepsilon \Sph^{d} \, .
\end{equation*}
Fenchel and Jessen have proved \cite{Fenchel_Jessen_38} that the volume of such neighbourhoods satisfies a \emph{local Steiner Formula}: it is uniquely decomposed as
\begin{equation*}
\vol_{euc}\bp{\mathcal{N}_\varepsilon(K)(b)} = \frac{1}{d+1}\sum_{i=0}^{d}\varepsilon^{d+1-i} \binom{d+1}{i} S_i(K) (b)\, , 
\end{equation*}
where each $S_i(K)$ is a finite Radon measure on $\Sph^d$, called the \emph{area measure of order $i$} of $K$. The measure $S_0(K)$ is always $\vol_{\Sph^d}$ the volume on the Riemannian unit sphere $\Sph^d \subset (\V^{d+1}, \Vert\cdot\Vert_{euc})$.

We shall say that the $d$-th area measure $S_d(K)$ is \emph{‘‘the'' area measure} of $K$ and denote it by $\Area(K)$. Notice that it can equivalently be seen as some sort of derivative of the volume of parallel neighbourhoods of $K$:
\begin{equation*}
    \Area(K) = S_d(K) = \lim_{\varepsilon \to 0} \frac{\vol_{euc}\bp{\mathcal{N}_\varepsilon(K)}}{\varepsilon} \, .
\end{equation*}
The area measure generalises the notion of \emph{Gaussian curvature}: if $K$ is a bounded convex domain of $\E^{d+1}$ with a locally uniformly convex $C^2$ boundary, its area measure $\Area(K)$ is the measure on $\Sph^d$ of the form 
\begin{equation*}
 \Area(K) = \frac{1}{\phi} \, \vol_{\Sph^d} \, ,
\end{equation*}
where $\phi(\vec{n})$ is the Gaussian curvature of $\partial K$ at the point $P(\vec{n}) \in \partial K$ with exterior normal vector $\vec{n} \in \Sph^d$. 

The \emph{generalised Euclidean Minkowski problem} is the question of the existence of a bounded convex domain $K$ with prescribed area measure $\Area(K)$, generalising the \emph{classical Euclidean Minkowski problem} \cite{Minkowski_03} asking for the existence of a $C^2$ bounded of $\E^{d+1}$ having specified \emph{Gaussian curvature}.

It has been solved by Alexandrov \cite{Alexandrov_38}, who has shown that for any finite Borel measure $\mu$ on $\Sph^d$ with support not included in any hemisphere and satisfying an ‘‘equilibrium'' condition 
\begin{equation}\label{equilibirum}\tag{Eq}
    \int_{\Sph^d}\vec{n} \, \dif\mu(\vec{n})=0 \, ,
\end{equation}
there exists a unique (up to translation) bounded convex domains $K$ inside the Euclidean affine space with area measure $\Area(K)=\mu$. 

Alexandrov's proof of the existence of a solution relies on polyhedral approximation and the fact that Minkowski had already solved the problem for area measures coming from bounded polyhedra via a variational method \cite{Minkowski_97}. A completely variational proof of that fact has later been found by Carlier \cite{Carlier_03}. That proof is based on the study of \emph{volume functional} on bounded convex domains of $\E^{d+1}$ through the use of their \emph{support functions} on the unit sphere $\Sph^{d}$.

The uniqueness, up to translation, of the solution of the generalised Minkowski problem is then an independent convex geometry fact, which is a consequence of the Brunn--Minkowski inequality \cite[Theorem~8.1.1]{Schneider_93}.

\subsection*{The invariant Minkowski Problem in Minkowski space}

Let $d \geq 1$ and let $\M^{d,1}$ denote the $(d + 1)$-dimensional \emph{time-oriented Minkowski affine space}, that is the oriented real affine space endowed with a complete flat Lorentzian metric and a time orientation. Let $\H^d$ denote the one-sheeted hyperboloid consisting of all unit future timelike vectors of the Minkowski vector space $\R^{d,1} =(\V^{d+1}, \Vert\cdot\Vert_{d,1})$ of translations of the Minkowski affine space $\M^{d,1}$. The metric $\Vert\cdot\Vert_{d,1}$ induces a Riemannian metric on $\H^{d}$, giving the classical hyperboloid model of the $d$-dimensional \emph{hyperbolic space}.

An \emph{$F$-convex domains} of $\M^{d,1}$\cite{Fillastre_Veronelli_16,Bonsante_Fillastre_17} is a convex domain $K$ equal to its own future (denoted by $J^+(K)$) and such that every spacelike linear hyperplane of $\R^{d,1}$ directs a supporting affine hyperplane of $K$. Notice that the condition $K = J^+(K)$ imposes that any supporting hyperplane $H$ of $K$ is either \emph{spacelike}, i.e. the restriction of $\Vert\cdot\Vert_{d,1}$ onto $H$ has signature $(d,0,0)$, or \emph{null}, i.e. the restriction of $\Vert\cdot\Vert_{d,1}$ onto $H$ has signature $(d-1,0,1)$.

Bonsante has shown \cite[Proposition~4.3]{Bonsante_05} that if  $K$ is an $F$-convex domain of $\M^{d,1}$, there is an orthogonal (for the ambient Lorentzian metric) projection from $K$ to its boundary $\partial K$, closely related to the Lorentzian cosmological time on $(K,\Vert\cdot\Vert_{d,1})$ \cite{Andersson_Galloway_Howard_98}. Hence, like in the Euclidean case, one can define parallel neighbourhoods. Consider $K$ an $F$-convex domain of $\M^{d,1}$ and $b$ a Borel subset of $\mathbb{H}^d$. The \emph{parallel $\varepsilon$-neighbourhood} $\mathcal{N}_\varepsilon(K)(b)$ consists of all the points $P \in K$ at Lorentzian distance at most $\varepsilon$ from $\partial K$ having orthogonal projection $P'$ onto $\partial K$ such that $\overrightarrow{P'P}$ is collinear to a vector $\vec{n} \in b$. Those parallel neighbourhoods now satisfy
\begin{equation*}
    \mathcal{N}_\varepsilon(K)(\H^d) = K \setminus(K+ \varepsilon \H^d) \,.
\end{equation*}
The volume of such neighbourhoods also satisfies a \emph{local Steiner Formula} \cite{Fillastre_Veronelli_16}: it uniquely decomposes as
\begin{equation*}
\vol_{d,1}\bp{\mathcal{N}_\varepsilon(K)(b)} = \frac{1}{d+1}\sum_{i=0}^{d}\varepsilon^{d+1-i} \binom{d+1}{i} S_i(K)(b) \, , 
\end{equation*}
where each $S_i(K)$ is a Radon measure on the hyperbolic space $\H^d$ and $S_0(K)$ is always $\vol_{\H^d}$, the hyperbolic volume on the hyperboloid. Once again the area measure $\Area(K) = S_d(K)$ generalises Gaussian curvature: if $K$ is an $F$-convex domain of $\M^{d,1}$ with a locally uniformly convex $C^2$ boundary, its area measure $A(K)$ is a Radon measure on $\H^d$ satisfying
\begin{equation*}
 \Area(K) = \frac{1}{\phi} \, \vol_{\H^d} \, ,
\end{equation*}
where $\phi(\vec{n})$ is the \emph{extrinsic Gaussian curvature} of the hypersurface $\partial K$ inside  $\M^{d,1}$ at the point $P(\vec{n}) \in \partial K$ with exterior normal vector $\vec{n} \in \H^d$. 

In that setting, Bonsante and Fillastre \cite{Bonsante_Fillastre_17} have solved and \emph{invariant generalised Minkowski problem} (concerning prior results, see the paragraph preceding the statement of Theorem~\ref{theo intro C-curv AGC}). They have considered $F$-convex domains of $\M^{d,1}$ invariant under a subgroup $\Gamma_\tau < \SO_0(d,1) \ltimes \R^{d,1}$ of affine transformations obtained by adding translations parts to a uniform lattice $\Gamma < \SO_0(d,1)$. The study of such groups was motivated by the work of Mess \cite{Mess_07,Mess_notes}, Bonsante \cite{Bonsante_05}, and Barbot \cite{Barbot_05}, proving that they describe all the 
$(d+1)$-dimensional flat Lorentzian spacetimes with compact hyperbolic Cauchy surface.

 Then, the area measure $A(K)$ of a $\Gamma_\tau$-invariant $F$-convex domain is a $\Gamma$-invariant Radon measure on $\H^d$, thus descending to a finite measure on the closed hyperbolic manifold $\H^d/\Gamma$. Bonsante and Fillastre have proved that for any finite measure $\mu$ on $\H^d/\Gamma$, there exists a unique $\Gamma_\tau$-invariant $F$-convex domain in $\M^{d,1}$ with area measure descending to $\mu$. Note that unlike the Euclidean case, the measure $\mu$ does not have to satisfy any ‘‘equilibrium'' condition \eqref{equilibirum}.

Their proof is variational and similar to Carlier's proof of the Euclidean case:  it relies on the use of the \emph{covolume functional} on invariant $F$-convex domains and of their \emph{hyperbolic support functions} on $\H^d$. 

\subsection*{Affine Steiner Formula}

Let $d \geq 2$, let $\A^{d+1}$ denote the $(d+1)$-dimensional \emph{oriented real affine space} endowed with a parallel volume form $\dvol$. Let $\V^{d+1}$ denote the vector space of translations of $\A^{d+1}$, which is naturally endowed with the linear volume form $\linvol$ inducing $\dvol$ on $\A^{d+1}$. Let $\SL(\V^{d+1})$ denote the space of volume and orientation preserving linear transformations of $(\V^{d+1},\linvol)$.

An open convex cone $\C \subset \V^{d+1}$ is said to be \emph{proper} if it is non-empty and its closure does not contain any entire line. 
Given a proper open convex cone $\C \subset \V^{d+1}$, we shall consider the so-called \emph{$\C$-convex domains} of the affine space $\A^{d+1}$ (see Figure~\ref{fig C-convex}). A $\C$-convex domain $K \subset \A^{d+1}$ is a convex domain having \emph{asymptotic cone} (or \emph{recession cone}) equal to $\overline{\C}$, the closure of $\C$, i.e. it is a convex domain $K \subset \A^{d+1}$ such that
\begin{equation*}
 \left\lbrace \vec{v} \in \V^{d+1} \st \forall P \in K, \ P+ \R_+ \vec{v} \subseteq K \right\rbrace = \overline{\C} \, .
\end{equation*}
Note that the set of linear hyperplanes of $\V^{d+1}$ that do not intersect $\C \subset \V^{d+1}$ is given by $\overline{\P(\C^*)}$, the closure of the projectivisation of the proper open convex cone $\C^*$ dual to $\C$:
\begin{equation*}
\C^* \coloneqq \mathrm{int} \left\lbrace \varphi \in (\V^{d+1})^* \st \varphi(\vec{v}) \leq 0, \ \forall \, \vec{v} \in \C\right\rbrace \, ,
\end{equation*}
where $\mathrm{int}$ denotes the interior of a set. Thus, any supporting hyperplane of a $\C$-convex domain must be directed by an element in $\overline{\P(\C^*)}$. An affine hyperplane directed by an element of $\P(\C^*)$ will be called \emph{$\C$-spacelike}.

A central idea in the study of convex subsets of $\A^{d+1}$ is convex duality: one should see convex subsets as intersections of half-spaces rather than sets of points. 
In the Euclidean and Minkowski cases described above, the ``spheres'' \(\Sph^{d}\) and \(\H^d\) allow to define a normal direction from a supporting hyperplane of a convex set. In our affine setting, 
this ‘‘sphere'' will be $\S$, the \emph{Cheng--Yau unit affine sphere} associated with $\C \subset (\V^{d+1}, \linvol)$ \cite{Cheng_Yau_77,Loftin_10}. It is a uniquely defined smooth convex hypersurface asymptotic to $\C$, endowed with a canonical volume form $\dvol_{\S}$, see subsection~\ref{subsec Affine differential geometry}. 

\begin{remark}
Note that when $\C$ is the cone of future timelike vectors in the Minkowski vector space $\R^{d,1}$,   the $\C$-convex domains are exactly the $F$-convex domains from \cite{Fillastre_Veronelli_16,Bonsante_Fillastre_17}, and the affine sphere $\S$ is $\H^d$. Then, we shall recover all the tools from convex geometry in the Minkowski space.
\end{remark}

Thanks to that normal direction,  we have proved \cite[Section 6]{Ablondi_25} that, similarly to the Minkowski setting, every $\C$-convex domain $K$ admits a well-defined projection from itself to its boundary $\partial K$,  that gives a foliation of $K$ by the hypersurfaces $(\partial(K+t\S))_{t>0}$. That allows to define the parallel $\varepsilon$-neighbourhoods for $\C$-convex domains similarly to the Minkowski and Euclidean cases (see subsection~\ref{subsec parallel neighbourhoods}). Using duality, that is the correspondence between a $\C$-spacelike hyperplane and its normal vector given by $\S$, they are defined in the following way. Consider $K$ an $\C$-convex domain of $\A^{d+1}$ and $b$ a Borel subset of $\P(\C^*)$, $\mathcal{N}_\varepsilon(K)(b)$ consists of all the points $P \in K\setminus(K+\varepsilon\S)$ such that at $P'$, the normal projection of $P$ onto $\partial K$, $K$ has a supporting hyperplane directed by an element in $b$. 

Then, the volume $V_\varepsilon(K)$ of the parallel $\varepsilon$-neighbourhoods will be a Radon measure on $\P(\C^*)$ (Proposition~\ref{prop Veps is Radon}). Similarly, using duality, the support function of a $\C$-convex domain, which indicates the position of its supporting hyperplanes, will be a function on the hypersurface $\S^*$ dual to $\S$, which happens to be the Cheng--Yau affine sphere asymptotic to $\C^*$ \cite{Gigena_81}, and which is radially identified with $\P(\C^*)$.

\begin{remark}
That might look different from the Euclidean and Minkowski cases where no duality seems to be involved. Actually, it is not. Using its bilinear form, the Euclidean vector space $(\V^{d+1}, \Vert \cdot \Vert_{euc})$ (respectively the Minkowski vector space $\R^{d,1}$) is usually implicitly identified with its dual space, then the unit sphere (resp. the unit hyperboloid) is an self-dual hypersurface:  $\Sph^{d} = (\Sph^{d})^*$ (resp. $\H^{d} = (\H^{d})^*$). Hence, support functions and area measures are often seen as supported by $\Sph^{d}$ (resp. $\H^{d}$) rather than $(\Sph^{d})^*$ (resp. $(\H^{d})^*$).
\end{remark} 

We then prove that those measures satisfy a local Steiner Formula. 

\begin{maintheorem}[Local Steiner Formula]
\label{theo intro Steiner}
Let $\C \subset \V^{d+1}$ be a proper open convex cone and $K \subset \A^{d+1}$ be a $\C$-convex domain. Then, there exist a unique family of Radon measures $S_0(K)$, $\dots$, $S_d(K)$ on $\P(\C^*)$, such that for all $\varepsilon>0$ and $b$ a Borel set of $\P(\C^*)$,
\begin{equation*}
V_\varepsilon(K)(b) = \vol\bp{\mathcal{N}_\varepsilon(K)(b)} = \frac{1}{d+1}\sum_{i=0}^{d}\varepsilon^{d+1-i} \binom{d+1}{i} S_i(K)(b) \, . 
\end{equation*}
Moreover, one always has $S_0(K) = \vol_{\S^*}$, where $\dvol_{\S^*}$ is the canonical volume form on the affine sphere $\S^*$ radially identified with $\P(\C^*)$.
\end{maintheorem}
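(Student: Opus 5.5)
We follow the classical route of Fenchel--Jessen and Fillastre--Veronelli: prove the formula first for a dense class of ``nice'' $\C$-convex domains, where $\mathcal{N}_\varepsilon(K)(b)$ can be parametrised explicitly, and then pass to the limit. Uniqueness is immediate once existence is known. Indeed, for fixed $b$ the right-hand side is a polynomial in $\varepsilon$ with no constant term, so its coefficients are determined by the function $\varepsilon \mapsto V_\varepsilon(K)(b)$. Measures agreeing on all Borel sets coincide.

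\textbf{Step 1: the parametrisation.} Identify $\P(\C^*)$ with $\S^*$ and, by duality, with $\S$ via the normal map $N\colon \S^*\to\S$; this is the normal direction of the introduction. For a $\C$-convex domain $K$ with support function $h$ on $\S^*$, the normal projection of \cite[Section 6]{Ablondi_25} gives a description of $K\setminus(K+\varepsilon\S)$. Each of its points is written $P'+t\,N(\xi)$ with $t\in(0,\varepsilon]$, where $P'\in\partial K$ has a supporting hyperplane directed by $\xi$. In the smooth, locally uniformly convex case, $P'=P(\xi)$ is given by the gradient of the $1$-homogeneous extension of $h$. Then
\[
\Phi(\xi,t)=P(\xi)+t\,N(\xi)
\]
is a diffeomorphism from $b\times(0,\varepsilon]$ onto $\mathcal{N}_\varepsilon(K)(b)$.

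\textbf{Step 2: computing the Jacobian.} The differential of $\Phi$ is $\dif P+t\,\dif N$ on $T\S^*$, together with $N$ in the $t$ direction. Because $N(\xi)$ is transverse to $\ker \xi$ and $\xi(\dif P)=0=\xi(\dif N)$, we get
\[
\Phi^*\dvol = \det\bigl(\dif P + t\,\dif N\bigr)\,\dif t\wedge \dvol_{\S^*},
\]
up to normalisation by $\xi(N)$, which is constant on the affine sphere. Here $\dif N$ is identified with the identity using the affine-sphere structure: the affine normal of $\S$ is the position vector, and $\S^*$ is the conormal image. This normalisation is exactly what makes $S_0=\vol_{\S^*}$. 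The determinant is then a polynomial $\sum_i \binom{d}{i}t^{d-i}\sigma_i(\xi)$, where $\sigma_i$ are the mixed discriminants of $\dif P$ (the radii of curvature relative to $\S$) against the identity. Integrating in $t$ from $0$ to $\varepsilon$ gives
\[
\frac{1}{d+1}\sum_{i}\varepsilon^{d+1-i}\binom{d+1}{i}\int_b\sigma_i\,\dvol_{\S^*}.
\]
So we set $S_i(K)=\sigma_i\,\vol_{\S^*}$, and $S_0(K)=\vol_{\S^*}$ follows.

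\textbf{Step 3: approximation.} For a general $K$, approximate its support function by smooth, strictly convex ones, for instance on compact subsets of $\S^*$, using $K_n=K+\frac1n\S$ composed with smoothing. We use that $V_\varepsilon(K_n)\to V_\varepsilon(K)$ vaguely as Radon measures on $\P(\C^*)$, which should follow from the argument of Proposition~\ref{prop Veps is Radon} together with Hausdorff convergence on compacts. Fix $d+1$ distinct values $\varepsilon_0,\dots,\varepsilon_d$ and invert the Vandermonde system. This writes each $S_i(K_n)$ as a fixed linear combination of the $V_{\varepsilon_j}(K_n)$, so the $S_i(K_n)$ converge vaguely to signed Radon measures $S_i(K)$ satisfying the formula for all $\varepsilon$. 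Positivity passes to the limit.

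\textbf{Expected main obstacle.} The hardest step is the weak continuity of $K\mapsto V_\varepsilon(K)(b)$ in Step 3. Non-compactness near $\partial\P(\C^*)$ means we only have local control, and boundary points of $K$ with several normals make the projection multivalued. We would handle this as in Fenchel--Jessen and Schneider: the set of points with non-unique projection is null, and the projection is continuous in $K$, so $\mathbf{1}_{\mathcal{N}_\varepsilon(K_n)(b)}\to\mathbf{1}_{\mathcal{N}_\varepsilon(K)(b)}$ almost everywhere when $b$ is compact with $V_\varepsilon(K)(\partial b)=0$. Dominated convergence then applies on the bounded region above $b$.
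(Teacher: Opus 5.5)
Your proposal is correct and follows essentially the paper's route: the explicit Jacobian of $(y,t)\mapsto \G_K^{-1}(y)+tN_{\S}(y)$ in the $C^2_+$ case, $C^2_+$ approximation, vague continuity of $V_\varepsilon$ under epi-convergence, and Vandermonde inversion, with $S_0(K)=\vol_{\S^*}$ passing to the limit because it holds for every approximant (a step you leave implicit). Two details differ only in presentation: the paper gets your normalisation $\det(\dif N)=1$ in coordinates from $\det\Hess\omega_\C=(-\omega_\C)^{-d-2}$, and it obtains vague convergence of $V_\varepsilon(K_n)$ from $\limsup$/$\liminf$ set inclusions on compact/open sets plus the Evans--Gariepy criterion rather than your a.e.\ convergence of indicators on continuity sets plus dominated convergence, though both rest on the same continuity of the normal projection under epi-convergence.
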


\begin{remark}
In general, the affine sphere has no reason to be self-dual, that is, there might not be an diffeomorphism $\S \to \S^*$ sending the \emph{affine invariants} of $\S$ onto those of $\S^*$. Nevertheless, there is always a canonical smooth diffeomorphic identification $\S \simeq \S^*$ given by the \emph{Blaschke conormal} (see \cite{Loftin_10} for more details). This diffeomorphism actually sends $\dvol_{\S}$ onto $\dvol_{\S^*}$, so from now we shall abusively denote by $\vol_{\S}$ the canonical volume on $\S \simeq \S^* \simeq\P(\C^*)$ given by the affine sphere $\S$.
\end{remark}

We thus define the \emph{area measure} of a $\C$-convex domain $K$ as the Radon measure $\Area(K) \coloneqq S_d(K)$ given by Theorem~\ref{theo intro Steiner}.

In the case where $K$ is a $\C$-convex domain with a locally uniformly convex $C^2$ boundary, the situation is similar to the Euclidean and Minkowski cases: we prove that its area measure $A(K)$ is of the form 
\begin{equation*}
 \Area(K) = \frac{1}{\phi} \, \vol_{\S} \, ,
\end{equation*}
where $\dvol_{\S}$ is the canonical volume on $\P(\C^*)$ given by the affine sphere $\S$ (see the remark above), and $\phi$ is (the push-forward by the Gauss map of) the \emph{Gaussian curvature of $\partial K$ relative to the affine sphere $\S$}, or more precisely \emph{relative to the pair $(\S,\S^*)$}, which we shall call \emph{$\C$-curvature}. That curvature is \emph{relative} in the sense of \emph{affine differential geometry} \cite{LSZH_15}, and is obtained in the following way. First, endow $\partial K$ with the following transverse vector field $N$, which we shall call the \emph{$\C$-normal vector field on $\partial K$}: for every point $P \in \partial K \subset \A^{d+1}$, let $N(P)$ be the vector $\vec{n} \in \S \subset \V^d$ such that the tangent hyperplane of $\S$ at $\vec{n}$ has the same direction as the tangent hyperplane of $\partial K $ at $P$ (see Figure~\ref{fig normal}). Then, as for the usual extrinsic Gaussian curvature in a Riemannian setting, $\C$-curvature is obtained by considering the infinitesimal variation of the $\C$-normal field on the hypersurface $\partial K$. 

\begin{remark}
\label{rem intro curvature}
That construction of $\C$-curvature actually generalises Euclidean and Minkowski geometry: 
\begin{itemize}
 \item In the Euclidean affine space $\E^{d+1}$, the Gauss structural equations of an immersed hypersurface come from its affine differential geometry relative to the unit sphere $\Sph^d$ (which is an affine sphere); the normal vector to any affine hyperplane $H$ is the vector of $\vec{n} \in \Sph^d$ having tangent hyperplane with the same direction as $H$. 
 \item In the Minkowski affine space $\mathbb{M}^{d,1}$, the Gauss structural equations of an immersed spacelike hypersurface come from its affine differential geometry relative to the hyperboloid $\H^d$ (which is an affine sphere); the normal vector to any spacelike affine hyperplane $H$ is the vector of $\vec{n} \in \H^d$ having tangent hyperplane with the same direction as $H$. 
\end{itemize}  
\end{remark}

\subsection*{An affine invariant Minkowski problem}

We now consider the case when the cone $\C$ is \emph{divisible} by a discrete torsion-free subgroup $\Gamma < \SL(\V^{d+1})$, that is the projective action of $\Gamma$ on $\P(\C)$ is free, properly discontinuous and cocompact. Let $\Gamma_\tau < \SA(\A^{d+1}) $ be an \emph{affine deformation} of $\Gamma$, that is a discrete subgroup of orientation and volume preserving affine transformations obtained by adding translation parts to elements of $\Gamma$. Then, one checks that the area measures of $\Gamma_\tau$-invariant $\C$-convex domains descend to Radon measures on the projective manifold $\P(\C^*)/\Gamma$, given by the quotient of the proper convex projective domain $\P(\C^*)$ by the dual projective action of $\Gamma$, which is also free, properly discontinuous and cocompact \cite[Lemma~2.8]{Benoist_04}. 

In that affine invariant setting, we solve the following Minkowski problem. 

\begin{maintheorem}
\label{theo intro Minkowski solution}
Let $\Gamma_\tau < \SA(\A^{d+1})$ be an affine deformation of a subgroup $\Gamma < \SL(\V^{d+1})$ dividing a proper open convex cone $\C \subset \V^{d+1}$. For every finite Radon measure $\mu$ on the closed convex projective manifold $\P(\C^*)/\Gamma$, there exists a unique $\Gamma_\tau$-invariant $\C$-convex domain $K \subset \A^{d+1}$ with area measure $\Area(K)=\mu$. 
\end{maintheorem}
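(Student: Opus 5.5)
We follow the variational strategy of Carlier and of Bonsante--Fillastre, with support functions on the affine sphere $\S^*$ in place of hyperbolic support functions.

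\emph{Support functions and the covolume functional.} A $\C$-convex domain $K$ is encoded by its support function $h_K$, a function on $\C^*$ which is $1$-homogeneous and convex; we restrict it to $\S^*\simeq\P(\C^*)$. For $K$ $\Gamma_\tau$-invariant, $h_K$ satisfies $h_K(\gamma^*\varphi)=h_K(\varphi)+\varphi(\tau_\gamma)$, a twisted equivariance. Fix one invariant domain $K_0$; such a domain exists by the standard averaging/Koszul--Vinberg argument. Then $h_K-h_{K_0}$ is $\Gamma$-invariant and descends to the compact manifold $M=\P(\C^*)/\Gamma$. Define the covolume $\covol(K)$ as the volume of $(K_\tau^{\mathrm{ref}}\setminus K)/\Gamma_\tau$, or, for the linear case ($\tau=0$), of $(\C\setminus K)/\Gamma$. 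In the affine case we use instead the difference of covolumes with respect to the fixed $K_0$, which is translation-normalized.

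\emph{First variation.} Using Theorem~\ref{theo intro Steiner} and Minkowski linearity of support functions, we show
\[
\frac{\dif}{\dif t}\Big|_{t=0}\covol\bigl(K+tL\bigr) = -\int_{M} h_L\,\dif \Area(K).
\]
Here $L$ ranges over $\Gamma$-invariant $\C$-convex domains, and we then extend by density to differences of support functions. We first check this on polyhedral or $C^2$ domains, using the formula $\Area(K)=\phi^{-1}\vol_{\S}$ and the Steiner decomposition. We then pass to the limit using weak continuity of $\Area$ under convergence of support functions, proved via the Radon property (Proposition~\ref{prop Veps is Radon}). We also need that $\covol^{1/(d+1)}$ is convex on the convex set of invariant support functions. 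This is a Brunn--Minkowski-type inequality for coconvex sets; we prove it as in Bonsante--Fillastre via the mixed-covolume Minkowski inequality, or directly by approximation through invariant polyhedra.

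\emph{Existence.} Given $\mu$, minimize
\[
\Phi(h)=\covol(h)^{\,?}+\int_M h\,\dif\mu
\]
or equivalently minimize $\int_M h\,\dif\mu$ subject to $\covol(h)=1$, then rescale. The main obstacle is coercivity and compactness. Invariant support functions with bounded $\int h\,\dif\mu$ must stay in a compact set. We handle this by showing that $\covol(h)$ controls $\max_M (h-h_{K_0})$ from below, using cocompactness and the fact that $\C$-convex domains are asymptotic to $\overline{\C}$. Lipschitz bounds on the compact fundamental domain, together with Arzel\`a--Ascoli, then give a convergent minimizing sequence. Lower semicontinuity of $\covol$ follows. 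At an interior minimizer, the first variation formula applied with perturbations $h+t f$ for arbitrary continuous $f$ yields $\Area(K)=\lambda\mu$. This uses Alexandrov's lemma: $\covol$ is differentiable along the Wulff-shape perturbations obtained from $h+tf$. Homogeneity, where scaling by $s$ multiplies $\Area$ by $s^d$, then removes $\lambda$. Unlike the Euclidean case, no equilibrium condition arises, because the translation parts are fixed by $\tau$, so there is no translation degree of freedom to balance.

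\emph{Uniqueness.} If $\Area(K_1)=\Area(K_2)=\mu$, combine the first variation with the Minkowski inequality for mixed covolumes. This inequality is the strict convexity of $\covol^{1/(d+1)}$, whose equality case requires $h_1-h_2$ to be constant, hence $0$ by equivariance. The result is $\covol(K_1)=\covol(K_2)$ with equality in the inequality, hence $K_1=K_2$. The difficulty here is the equality case of Brunn--Minkowski for non-smooth coconvex sets. We first try to prove it by reduction to the smooth case via regularization, or by the Knothe map argument restricted to a fundamental domain.
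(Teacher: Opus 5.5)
Your architecture matches the paper's: a covolume-based functional, a first variation along the Wulff-shape perturbations $K_{s+tf}$ identified with the area measure, and compactness via Lipschitz bounds and Arzel\`a--Ascoli. But two of your steps rely on dilation homogeneity, and that is not available here. If $K$ is $\Gamma_\tau$-invariant, then $\lambda K$ (even after a translation) is invariant under $\Gamma_{\lambda\tau}$. The cocycle $\lambda\tau$ is not cohomologous to $\tau$ unless $[\tau]=0$, so the space of $\tau$-convex domains is closed only under convex combinations. This has two consequences.

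\emph{Existence.} ``Minimise $\int_M h\,\dif\mu$ subject to $\covol=1$'' is not equivalent to your unconstrained problem. The multiplier in $\Area(K)=\lambda\mu$ cannot be removed by rescaling.

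\emph{Uniqueness.} $\covol^{1/(d+1)}$ is in general \emph{not} convex on $\tau$-convex domains. The family $t\mapsto D_\tau+t\S$ is a segment of $\tau$-convex domains starting at $\covol=0$. Since $\Area(D_\tau)=0$, the Steiner formula gives $\covol(D_\tau+t\S)=\frac{1}{d+1}\sum_{i=0}^{d-1}\binom{d+1}{i}t^{d+1-i}S_i(D_\tau)(M)$. As soon as some $S_i(D_\tau)$ with $1\le i\le d-1$ is nonzero, $\covol^{1/(d+1)}$ grows like $t^{\alpha}$ with $\alpha<1$ near $0$, so it lies above its chords. This happens, e.g., for non-trivial $\tau$ in the Minkowski case $d=2$, where $S_1(D_\tau)$ encodes the measured lamination. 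Your uniqueness argument via a Minkowski inequality for mixed covolumes therefore has no foundation when $\tau\neq 0$. Even in the Fuchsian case its equality case is homothety, not ``$h_1-h_2$ constant''. What you wrote is essentially the $\tau=0$ argument.

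\emph{Repairing existence.} Take exponent $1$ in your $\Phi$ and never rescale. Normalise by $h_{D_\tau}$; the existence of $D_\tau$ is the Choi/Nie--Seppi theorem, not an averaging argument. Then minimise
\[
L_\mu(K)=\covol(K)+\int_M (h_K-h_{D_\tau})\,\dif\mu,
\]
where in $(\mathbf{P})$ one has $h_K-h_{D_\tau}=-(s-s_\tau)/\omega_\C$. This is exactly the paper's functional. Coercivity works as follows. The function $(s-s_\tau)/\omega_\C$ lies between $\mathcal{T}_\tau^{\min}(K)$ and $\mathcal{T}_\tau^{\max}(K)$, whose ratio is bounded once $\mathcal{T}_\tau^{\min}(K)\geq 1$. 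So the linear term is $O(\mathcal{T}_\tau^{\min}(K))$. Meanwhile $\covol(K)\geq\covol(D_\tau+\mathcal{T}_\tau^{\min}(K)\,\S)$, which is a polynomial of degree $d+1$ in $\mathcal{T}_\tau^{\min}(K)$. At a minimiser, the Gateaux derivative of $\covol$ gives $\Area(K)=\mu$ directly, with no multiplier. No interiority of the minimiser is needed: $K_{s+tf}$ is defined at every $\tau$-convex $K$ (including $D_\tau$ when $\mu=0$), and $\Area(K_g)$ does not charge $\{g^{**}\neq g\}$. Only the (strict) convexity of $\covol$ itself is used.

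\emph{Repairing uniqueness.} The paper observes that every $\tau$-equivariant continuous function has the same boundary values $g_\tau$ on $\partial\Omega^*$. So two solutions both solve $\MA(s)=(-\omega_\C)^{-1}\mu$ with the same Dirichlet data, and they coincide by the comparison principle. If you prefer to stay variational: the right derivative at $0$ of the convex function $t\mapsto\covol((1-t)K_0+tK_1)$ is $\int_M\frac{s_1-s_0}{\omega_\C}\,\dif\Area(K_0)$. Hence any $K_0$ with $\Area(K_0)=\mu$ minimises $L_\mu$, and strict convexity of $\covol$ forces uniqueness.
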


Even though  the $\Gamma_\tau$-invariant $\C$-convex domains are unbounded, the divisibility condition, which is a cocompactness property, and the volume preserving nature of the transformations we study, allow to introduce their \emph{covolume}. Inspired by Carlier's variational approach and the work of Bonsante and Fillastre, we show that, in some sense area measures are derivatives of the covolume functional.

\begin{maintheorem}
\label{theo intro derivative covol}
The Gateau derivative of the covolume functional at a $\Gamma_\tau$-invariant $\C$-convex domain $K$ is the area measure $\Area(K)$. 
\end{maintheorem}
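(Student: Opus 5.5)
The plan is to work entirely with support functions on the dual affine sphere $\S^*$, which is radially identified with $\P(\C^*)$. For a $\Gamma_\tau$-invariant $\C$-convex domain $K$, set $h_K(\varphi)=\sup_{P\in K}\varphi(P - O)$ for $\varphi\in\S^*$, with $O$ a fixed origin. The function $h_K$ is finite and convex (after $1$-homogeneous extension to $\C^*$). It is $\tau$-equivariant: $h_K(\varphi\circ\gamma^{-1})=h_K(\varphi)+\varphi(\gamma^{-1}\tau_\gamma)$, up to a sign convention. Consequently the difference $h_K-h_{K'}$ of two such support functions is $\Gamma$-invariant, hence descends to a continuous function on the compact quotient $\S^*/\Gamma\simeq \P(\C^*)/\Gamma$. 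This lets me parametrise the space of $\Gamma_\tau$-invariant $\C$-convex domains as an open convex subset $h_{K_0}+\mathcal{F}$, where $\mathcal{F}$ is a cone of $\Gamma$-invariant continuous functions. The covolume is defined as the volume of $(\Omega\setminus K)/\Gamma_\tau$, where $\Omega$ is a fixed invariant reference region (for instance $K_0+\C$-translate), or equivalently relative to a fixed invariant domain. Since $\Gamma_\tau$ preserves volume, this is well defined up to an additive constant. The statement to prove is then
\begin{equation*}
\frac{\dif}{\dif t}\Big|_{t=0^+}\covol\bp{K_{h_K+tf}} = \int_{\P(\C^*)/\Gamma} f \,\dif \Area(K)
\end{equation*}
for admissible $\Gamma$-invariant directions $f$. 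The sign is chosen so that enlarging $K$ decreases the covolume. I would also check that the Gateau derivative exists in both directions for continuous $f$.

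Step one is the case $f = h_{\S}$ restricted, i.e. $K_t=K+t\S$, the Minkowski sum with the affine sphere. This sum is again $\Gamma_\tau$-invariant because $\S$ is $\Gamma$-invariant and $h_{\S}$ is $\Gamma$-invariant. Here $\covol(K)-\covol(K+t\S)$ equals the volume of $\mathcal N_t(K+t\S)$ modulo $\Gamma_\tau$, hmm, more precisely of $(K\setminus(K+t\S))/\Gamma_\tau$. By Theorem~\ref{theo intro Steiner} applied to a fundamental domain $b$ of the $\Gamma$-action on $\P(\C^*)$, this volume is a polynomial in $t$ with linear coefficient $S_d(K)(b)$. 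This gives the formula for that direction.

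Step two handles general directions by a mixed-volume type argument. Approximate $K$ and the perturbation by invariant domains with $C^2$ locally uniformly convex boundaries; this is possible via a regularisation of support functions on the compact quotient, e.g. by adding $\varepsilon h_\S$ and smoothing equivariantly. For such smooth domains the boundary is the graph of the inverse Gauss map $\varphi\mapsto \nabla h$. The covolume can then be written as an integral over a fundamental domain in $\S^*$ of $-\frac{1}{d+1}h\,\det(\text{relative Hessian of }h)\dif\vol_{\S}$, the relative Hessian being taken with respect to $\S^*$. This is the analogue of the Euclidean formula $V=\frac1{d+1}\int h\,\dif S_d$, justified by the divergence theorem on $(\Omega\setminus K)/\Gamma_\tau$. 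Equivariance makes the boundary terms on the fundamental domain cancel, since volume is preserved and the translation parts are accounted for by $h_K-h_{K'}$ being invariant. Differentiating in $h$ and integrating by parts, using the symmetry of the mixed discriminant, yields $\int f\,\phi^{-1}\dif\vol_\S=\int f\,\dif\Area(K)$, using the identification $\Area(K)=\phi^{-1}\vol_\S$ stated after Theorem~\ref{theo intro Steiner}.

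Step three passes to general $K$ by continuity. Area measures are weakly continuous under uniform convergence of support functions on the quotient, which I would derive from Steiner's formula, since the volumes of the neighbourhoods $\mathcal N_\varepsilon$ are continuous and the coefficients are recovered from finitely many values of $\varepsilon$. The covolume is convex along $h_K+tf$ (Brunn–Minkowski-type concavity of the relevant quantity). Uniform convergence of convex functions with convergent derivatives of the approximants then forces the one-sided derivatives of the limit to equal $\int f\,\dif\Area(K)$.

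I expect the main obstacle to be step two. The difficulty is making the divergence-theorem or integration-by-parts computation rigorous on the quotient despite the unbounded, equivariant-but-not-invariant support functions. The boundary contributions must cancel exactly, and this relies on the volume-preserving property and on the Blaschke conormal identification.
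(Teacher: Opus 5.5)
There is a genuine gap, and it lies exactly where the paper adds its new ingredient. You only differentiate along ``admissible'' directions, meaning those $f$ for which $h_K+tf$ is still a support function. You also describe the invariant domains as an open convex set $h_{K_0}+\mathcal{F}$, but that set is not open. The $\tau$-equivariant convex functions form a closed convex subset of the affine space of continuous $\tau$-equivariant functions, and $s+tf$ can fail to be convex for every small $t>0$ (e.g.\ $s=s_\tau$ and $f\geq 0$, $f\not\equiv 0$).

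The precise statement (Theorem~\ref{theo area measure as gateau derivative}, Remark~\ref{rem Gateaux derivative}) is a two-sided limit $t\to0$ for \emph{every} continuous $\Gamma$-equivariant $f$. This is what the Riesz-representation step in the proof of Theorem~\ref{theo Minkowski solution} needs, at a minimiser $K_\mu$ that may be far from smooth. Your remark that you ``would also check'' both directions for continuous $f$ has no mechanism behind it, since $K_{h_K+tf}$ is not even defined then. Three things are missing:
\begin{itemize}
\item define $K_{s+tf}$ via the convex hull $(s+tf)^{**}$ (Proposition~\ref{prop Kg and g**});
\item show that $t\mapsto K_{s+tf}$ is Hausdorff continuous (Lemma~\ref{lemma Hausdorff cv of Ks+tf});
\item crucially, show that $\Area(K_g)$ does not charge $\{g^{**}\neq g\}$ (Lemma~\ref{prop MA and cvx hull}, Corollary~\ref{cor area measure convex hull}): subgradients of $g^{**}$ at such points are non-differentiability points of $g^*$, which form a Lebesgue-null set.
\end{itemize}
Feed this into the comparison inequality of Lemma~\ref{lem ineq covol} with $K\subseteq K_{s+tf}$ (case $f\geq0$). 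It lets you replace $(s+tf)^{**}-s$ by $tf$ when integrating against $\Area(K_{s+tf})$, which gives
\begin{equation*}
\int_{\Omega^*/\Gamma}\frac{f}{\omega_\C}\,\dif\Area(K/\Gamma_\tau)\leq\frac{\covol(K_{s+tf})-\covol(K)}{t}\leq\int_{\Omega^*/\Gamma}\frac{f}{\omega_\C}\,\dif\Area(K_{s+tf}/\Gamma_\tau)\, .
\end{equation*}
One then concludes with Proposition~\ref{prop weak cv and Hausdorff}, the symmetric case $f\leq0$, and the decomposition $f=f^++f^-$.

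Even restricted to admissible directions, steps two and three fail as written.

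In step two, the formula $\covol=-\frac{1}{d+1}\int h\,\dif\Area$ over a fundamental domain only makes sense for $\tau=0$. For $\tau\neq0$, $h$ is merely $\tau$-equivariant. Replacing a piece $A$ of the fundamental domain by $\gamma\star A$ changes $\int h\,\dif\Area(K)$ by $\int_A\tau(\gamma)\cdot(\gamma\star Y)\,\dif\Area(K)(Y)$, which depends on $K$ in general. So the formula is not the covolume even up to an additive constant. Equivalently, the position vector field does not descend to $D_\tau/\Gamma_\tau$, so no divergence theorem is available and the side terms do not cancel. You do not need a global formula anyway: the foliation by the $\partial K_t$ and the Jacobian computation of Lemma~\ref{lem Jacobian computation} give the first variation \eqref{eq expression f} directly, with the invariant integrand $\frac{s_1-s_0}{\omega_\C}$.

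In step three, the principle you invoke is false. If convex $\psi_n\to\psi$ uniformly with $\psi_n'(0)\to L$, you only get $\psi'(0^-)\leq L\leq\psi'(0^+)$; consider $\sqrt{t^2+1/n^2}\to|t|$. What passes to the limit is an inequality, namely the two-sided bound of Lemma~\ref{lem ineq covol}:
\begin{itemize}
\item for $C^2_+$ domains it comes from $\Psi'(0)\leq\Psi(1)-\Psi(0)\leq\Psi'(1)$, using the convexity of Theorem~\ref{theo convexity covolume} (itself a nontrivial input, proved with Dirichlet--Lee domains);
\item it extends to all $\tau$-convex domains via Appendix~\ref{app C2+-approximation};
\item it is then combined with the weak continuity of $\Area$ along the path.
\end{itemize}

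Finally, your signs are inconsistent. With your convention (increasing $h$ enlarges $K$), the derivative is $-\int f\,\dif\Area(K)$. Also, it is $\covol(K+t\S)-\covol(K)$, not the reverse, that equals the volume of $(K\setminus(K+t\S))/\Gamma_\tau$.
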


For a more rigorous and precise statement of Theorem~\ref{theo intro derivative covol}, we refer to Theorem~\ref{theo area measure as gateau derivative} and Remark~\ref{rem Gateaux derivative}. 

Using Theorem~\ref{theo intro derivative covol}, we proceed and get a variational proof that the affine invariant Minkowski problem always admits a solution.

It is worth noticing that there is a Monge--Amp\`ere equation underlying that affine Minkowski problem. Indeed, by arbitrarily choosing an affine chart to identify $\P(\C^*)$ with a bounded open convex domain $\Omega^* \subset \R^d$, support functions of $\C$-convex domains can be seen as convex functions on $\Omega^*$. We show (Proposition~\ref{prop A MA C-convex}) that the area measure of a $\C$-convex domain $K$ satisfies 
\begin{equation*}
    \Area(K) = (-\omega_\C) \MA(s_K) \, ,
\end{equation*}
where $\omega_\C : \Omega^* \to \R$ is the support function of the affine sphere $\S$ and $\MA(s_K)$ is the Monge--Amp\`ere measure of $s_K : \Omega^* \to \R$, the support function of $K$. Hence, prescribing the area measure $\mu$ is equivalent to finding a solution of a Monge--Amp\`ere equation:
\begin{equation*}
    \MA(s) = \frac{1}{(-\omega_\C)} \mu\, .
\end{equation*}
Then, the uniqueness of the solution in Theorem~\ref{theo intro Minkowski solution} is a consequence from standard arguments of Monge--Amp\`ere equation theory.

Theorem~\ref{theo intro Minkowski solution} extends a long series of results:
\begin{itemize}
 \item Oliker and Simon have proved it in the case where $\Gamma_\tau = \Gamma < \SO(d,1)$ is a uniform lattice, $\S = \H^d$, and $\mu$ of the form $\phi \, \vol_{\H^d}$, with $\phi$ a smooth positive function on $\H^d/\Gamma$ \cite{Oliker_Simon_83},
 \item Barbot, B\'eguin and Zeghib have proved it for any affine deformation $\Gamma_\tau$ of a Fuchsian subgroup $\Gamma < \SO(2,1)$, $\S = \H^2$, and $\mu$ of the form $\phi \, \vol_{\H^2}$, with $\phi$ a positive smooth function on $\H^2/\Gamma$ \cite{BBZ_11},
 \item Bonsante and Fillastre have proved it for any affine deformation $\Gamma_\tau$ of a uniform lattice $\Gamma < \SO(d,1)$, $\S = \H^d$, and $\mu$ any finite Radon measure on $\H^d/\Gamma$ \cite{Bonsante_Fillastre_17},
 \item Labourie \cite[Theorem~8.2.1]{Labourie_07} and later Nie and Seppi \cite{Nie_Seppi_23} have proved it, with different approaches, for any affine deformation $\Gamma_\tau$ of a group $\Gamma < \SL(\V^3)$ dividing a convex cone $\C \subset \V^3$, and $\mu = k \, \vol_{\S^*}$, where $k>0$ is constant \cite{Nie_Seppi_23}. 
\end{itemize}
In fact, Labourie and Nie and Seppi were not considering $\C$-curvature, but the distinct \emph{affine Gaussian curvature}. Nevertheless, if the boundary of a $\C$-convex domain has constant $\C$-curvature, we prove that it also has a constant (with a different value) affine Gaussian curvature.

\begin{maintheorem}
\label{theo intro C-curv AGC}
Let $K$ be convex domain in $\A^{d+1}$ with $C^2$ boundary. If $\partial K$ has constant $\C$-curvature $\kappa>0$, then it has constant  affine Gaussian curvature $\kappa^{2(d+1)/(d+2)}$. 
\end{maintheorem}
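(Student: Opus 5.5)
Since both curvatures are $\SA(\A^{d+1})$-invariant, I would do the comparison in coordinates adapted to a point. Fix $P\in\partial K$ and choose an affine chart in which $\partial K$ is locally the graph of a convex $C^2$ function $u$ over a domain of $\R^d$. The support-function picture from Proposition~\ref{prop A MA C-convex} gives the $\C$-curvature directly. The Gauss map sends $P$ to the direction of its supporting hyperplane in $\P(\C^*)\simeq\Omega^*$, and in the chart this is $x\mapsto \nabla u(x)$. By Proposition~\ref{prop A MA C-convex} and the $C^2$ description $\Area(K)=\phi^{-1}\vol_{\S}$, we have
\[
\phi\bigl(\nabla u(x)\bigr) = \frac{\det \Hess u(x)}{-\omega_\C\bigl(\nabla u(x)\bigr)}\,\cdot\,\frac{\dvol_{\S}}{\dif y}\Big|_{y=\nabla u(x)},
\]
where $\dvol_\S/\dif y$ is the density of the canonical volume in the affine chart. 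This expression is only schematic, and the first step is to make it exact by tracking the Legendre duality $s=u^*$, so that $\MA(s)=(\det\Hess u)^{-1}$ pulled back.

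The second step is to compute the density of $\vol_{\S^*}$ in the chart $\Omega^*$. Here I would use the Cheng--Yau characterisation: the affine sphere $\S^*$ is the radial graph of $(-\omega_\C)^{-1}$-type data over $\Omega^*$. Its support function $\omega_\C$ solves $\det\Hess \omega_\C = (-\omega_\C)^{-(d+2)}$ (up to normalising constants), and the canonical (Blaschke) volume has density $(-\omega_\C)^{-(d+1)}$ times $\sqrt{\det\Hess\omega_\C}$-type factors. Combining this with the first step should give $\phi = c\,\det\Hess u\cdot(-\omega_\C)^{a}$ at $y=\nabla u$, with an explicit exponent $a$ and $c=1$ for the paper's normalisation.

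The third step is the affine Gaussian curvature of the graph of $u$. From standard affine differential geometry \cite{LSZH_15}, the Blaschke metric is $(\det\Hess u)^{-1/(d+2)}\Hess u$ and the affine normal is $\xi=(\det\Hess u)^{1/(d+2)}(0,1)+(\text{tangential})$. The affine Gauss–Kronecker curvature is the determinant of the affine shape operator, which for graphs is an explicit, more complicated quantity. To avoid computing it in general, I would instead show directly that constant $\phi=\kappa$ makes $\partial K$ an affine sphere-like object. If $\phi\equiv\kappa$, the $\C$-normal field $N$ satisfies $\dif N = -\kappa^{1/d}$-type relations. More precisely, I would show that the relative normal $N$ and the Blaschke normal $\xi$ are proportional, $\xi=\lambda N$ with $\lambda$ a power of $\kappa$. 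This uses uniqueness of the equiaffine normal: $N$ is equiaffine (since $\S$ is an affine sphere and $\dif N$ is tangent), and the volume condition $\det(\text{induced metric relative to }N)=\det(\text{volume form})$ pins down the scaling as $\lambda=\kappa^{1/(d+2)}$ up to the sign convention of $\S$. The affine shape operator is then $\lambda\,\dif N$, whose determinant is $\lambda^{d}\cdot\kappa'$ for the appropriate relative curvature $\kappa'$. The relation between the metrics gives $\lambda^{d+2}=\kappa^{\pm 1}$-type identities, and combining them yields $\kappa^{2(d+1)/(d+2)}$.

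The main obstacle is this last bookkeeping. I need to get right the normalisation of the relative metric attached to $N$ versus the Blaschke metric (the factor $\kappa^{1/(d+2)}$), and the fact that the shape operator of $N$ has determinant involving both $\kappa$ and the affine-sphere constant of $\S$. I would verify the final exponent on the model case $\C$ the future cone and $\partial K=r\H^d$. There $\C$-curvature is $r^{-d}$ and the affine Gaussian curvature of a hyperboloid of radius $r$ is computable directly, and this serves to calibrate constants and signs.
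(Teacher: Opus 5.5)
Your route is correct, but it is genuinely different from the paper's. The paper never looks at the Blaschke normal of $\partial K$. Working in $(\mathbf{P})$, it first notes that constant $\C$-curvature $\kappa$ means $\det\Hess s=\kappa^{-1}(-\omega_\C)^{-d-2}$, since $\phi_K=(-\omega_\C)^{-d-2}\det(\Hess s)^{-1}$. It then quotes Nie--Seppi's characterisation: constant affine Gaussian curvature $k$, with Blaschke affine spheres asymptotic to $\C$, is equivalent to $\det\Hess s=k^{-(d+2)/(2(d+1))}(-\omega_\C)^{-d-2}$. Matching constants gives the result. This outsources all the affine differential geometry and yields an ``if and only if''. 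Your argument instead identifies the Blaschke normal as a constant multiple of the $\C$-normal via its uniqueness. It is self-contained, it shows exactly why the affine sphere (and not some other hypersurface asymptotic to $\C$) is the right reference, and it proves the direction stated.

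Your hedged bookkeeping (the ``$\kappa^{\pm1}$'' and ``$\sqrt{\det\Hess\omega_\C}$-type'' factors) is settled by one computation from the paper's formulas for the $\C$-affine invariants:
\[
\nu_K=-\omega_\C\det(\Hess s)\,\dif\mathcal{L},\qquad \dvol_{h_K}=(-\omega_\C)^{-d/2}\det(\Hess s)^{1/2}\,\dif\mathcal{L},\qquad\text{so}\qquad \nu_K=\phi_K^{-1/2}\,\dvol_{h_K}.
\]
Intrinsically, $h_{\S}=h_K(\cdot,S_K\,\cdot)$, $\nu_{\S}=\det(S_K)\,\nu_K$, and $\nu_{\S}=\dvol_{h_{\S}}$ because $\S$ is an affine sphere. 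This volume ratio is where the affine sphere enters, not equiaffinity: $\tau_K=0$ only uses parallel tangent planes.

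For constant $\lambda>0$, the field $\lambda N$ is equiaffine with $h_{\lambda N}=\lambda^{-1}h_K$ and $\nu_{\lambda N}=\lambda\,\nu_K$. The Blaschke volume condition therefore reads $\lambda^{(d+2)/2}=\kappa^{1/2}$, i.e.\ $\lambda^{d+2}=\kappa$ (not $\kappa^{-1}$). The affine shape operator $\lambda S_K$ then has determinant $\kappa^{d/(d+2)}\cdot\kappa=\kappa^{2(d+1)/(d+2)}$, so no calibration on $r\H^d$ is needed.

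Two small corrections. First, constant $\det S_K$ does not make $S_K$ scalar, so the ``$\dif N=-\kappa^{1/d}$'' and ``affine-sphere-like'' remarks are false, though you do not need them. Second, the Blaschke normal involves third derivatives, so you should first note that $s$ is smooth. This holds because $s$ is a $C^2$ strictly convex solution of a Monge--Amp\`ere equation with smooth positive right-hand side.
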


Note that affine Gaussian curvature is only defined for $C^2$ hypersurfaces, while the notion of $\C$-curvature can naturally be generalised to the boundary of any $\C$-convex domain via area measures.

\subsection*{Relations to affine spacetimes}

In \cite{Ablondi_25}, we have introduced a notion of \emph{affine spacetime} structure, with which the quotients of the invariant domains considered above are naturally endowed. If $\Gamma_\tau < \SA(\A^{d+1}) $ is an affine deformation of a discrete subgroup $\Gamma < \SL(\V^{d+1})$ dividing a proper open convex cone $\C \subset \V^{d+1}$, by the work of Nie and Seppi \cite{Nie_Seppi_23}, and Choi \cite{Choi_25}, there exist a maximal $\Gamma_\tau$-invariant $\C$-convex domain $D_\tau \subset \A^{d+1}$. Then, we have shown that $(D_\tau/\Gamma_\tau,\C)$  is a \emph{Maximal Globally Hyperbolic affine spacetimes admitting a $C^2$ locally uniformly Convex and Compact Cauchy surface}, denoted as \emph{MGHCC affine spacetimes} and that, conversely, any MGHCC affine spacetime can be built in that way \cite[Theorems I and II]{Ablondi_25}. 

When the cone $\C$ is quadratic, that is a theorem on \emph{flat Lorentzian spacetimes} due to Mess \cite{Mess_07,Mess_notes} for $d=2$, and Bonsante \cite{Bonsante_05} and Barbot \cite{Barbot_05} for $d \geq 2$. 

Hence, Theorem~\ref{theo intro Minkowski solution} can produce \emph{Cauchy surfaces} inside MGHCC affine spacetimes with prescribed $\C$-curvature. But one has to be careful, as the boundary $S= \partial K$ of a $\Gamma_\tau$-invariant $\C$-convex domain $K$ might not completely be included inside the open convex set $D_\tau$: one could have $\partial K \cap \partial D_\tau \neq \emptyset$. Still, like in the flat Lorentzian case \cite[Theorem~1.4]{Bonsante_Fillastre_17}, if the \emph{total mass} of the area measure $\Area(K)$ is big enough, that cannot happen. 

\begin{maintheorem}
\label{theo intro Minkowski pb in MGHCC measure}
Let $M = (D_\tau/\Gamma_\tau,\C)$ be an MGHCC affine spacetime. Then, there exists a constant $m(M) \geq 0$ such that if $\mu$ is a Radon measure on $\P(\C^*)/\Gamma$ with total mass $\mu(\P(\C^*)/\Gamma) > m(M)$, there is a unique convex Cauchy surface $S$ in $M$ with area measure equal to $\mu$. 
\end{maintheorem}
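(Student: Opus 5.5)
The plan is to reduce Theorem~\ref{theo intro Minkowski pb in MGHCC measure} to Theorem~\ref{theo intro Minkowski solution}. The domain $K$ it produces has area measure $\mu$, and the real work is to show that $\partial K$ sits strictly inside $D_\tau$ once the mass of $\mu$ is large. This mirrors \cite[Theorem~1.4]{Bonsante_Fillastre_17}.

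Fix $\mu$ and let $K$ be the unique $\Gamma_\tau$-invariant $\C$-convex domain with $\Area(K)=\mu$ given by Theorem~\ref{theo intro Minkowski solution}. By maximality of $D_\tau$ we have $K\subseteq \overline{D_\tau}$. In terms of support functions on $\S^*$, this reads $s_K\geq s_{D_\tau}$, up to the sign convention. Once $\partial K\subset D_\tau$ is known, $\partial K/\Gamma_\tau$ is a closed convex hypersurface of $M$ that is achronal with respect to $\C$ (its supporting hyperplanes are $\C$-spacelike or degenerate). By the results of \cite{Ablondi_25}, which identify Cauchy surfaces of $M$ with boundaries of invariant $\C$-convex domains contained in $D_\tau$, it is then a convex Cauchy surface. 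Uniqueness of the Cauchy surface follows from the uniqueness in Theorem~\ref{theo intro Minkowski solution}, since any convex Cauchy surface of $M$ lifts to the boundary of such a domain.

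It remains to show $\partial K\cap\partial D_\tau=\emptyset$ when $\mu(\P(\C^*)/\Gamma)$ is large. I argue by contradiction using the covolume. Put $f=s_K-s_{D_\tau}\geq 0$. This is a $\Gamma$-invariant function, so it descends to the compact quotient, and by $1$-homogeneity it can be read on $\S^*$. The condition $\partial K\cap\partial D_\tau\neq\emptyset$ should force $f$ to vanish somewhere, i.e. $\inf f=0$. For this I use that $\partial D_\tau$ is ruled by null-type pieces and that a contact point gives a common supporting hyperplane; making that step precise is where care is needed.

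Next I use the variational characterisation behind Theorem~\ref{theo intro derivative covol}: $K$ minimises the functional $\covol(\cdot)-\int s\,\dif\mu$ (signs adjusted) among invariant $\C$-convex domains. I compare $K$ with the competitor $K_t=D_\tau+t\S$, whose support function is $s_{D_\tau}+t\,\omega$, for suitable $t>0$. The covolume of $K_t$ is a polynomial in $t$ with coefficients bounded in terms of $M$ alone, by the Steiner formula Theorem~\ref{theo intro Steiner} integrated over a fundamental domain. On the other side, the linear term of $K$ against $\mu$ is at least of order $\mu(\text{total})\cdot c\,t$, since $f$ vanishes at a point while $K_t$ sits uniformly at distance $t$.

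The main obstacle is to turn the vanishing of $f$ at one point into a quantitative gain proportional to the total mass. I would first try a Lipschitz bound on support functions of invariant $\C$-convex domains, uniform on a compact fundamental domain of $\P(\C^*)/\Gamma$ with Lipschitz constant controlled by $M$. Such a bound gives $f$ small on a neighbourhood of the zero, though small on a neighbourhood alone may not suffice to control $\int f\,\dif\mu$. Failing that, I would adapt Bonsante--Fillastre's argument directly. In their argument, the cosmological time of $\partial K$ is bounded above in terms of the mass, via $\covol(K)\geq$ a lower bound obtained from the Minkowski inequality $\int s_K\,\dif\Area(K)$ versus the covolume. When the mass exceeds $m(M)$, this forces $K$ to lie inside $\{T> 0\}$, where $T$ is the cosmological time of $D_\tau$ from \cite{Ablondi_25}. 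The constant $m(M)$ is then the threshold obtained from comparing these two estimates.
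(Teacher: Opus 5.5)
There is a genuine gap. The implication that carries all the content is: a $\tau$-convex domain whose boundary meets $\partial D_\tau$ has total area at most some constant $m(\tau)$. You never establish this, and neither of your two routes delivers it as stated.

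In the covolume comparison, set $g=(s_K-s_\tau)/\omega_\C\geq 0$ (the $\Gamma$-invariant normalisation of your $f$) and $m=\mu(\Omega^*/\Gamma)$. Minimality of $L_\mu$ at $K$ against $D_\tau+t\S$ reads $\covol(K)-\int g\,\dif\mu\leq P(t)-t\,m$, with $P(t)=\covol(D_\tau+t\S)$. The competitor's linear term is exactly $t\,m$. So what you need is an \emph{upper} bound $\int g\,\dif\mu\leq(\sup g)\,m$, with $\sup g=\mathcal{T}_\tau^{\max}(K)\leq C(\tau)$ \emph{uniformly over all touching domains}. The vanishing of $g$ at one point is useless as local information; as you note yourself, $\mu$ may sit where $g$ is large. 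It only helps through such a global bound, which you do not have.

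Your fallback does not fix this. The argument the paper takes from Bonsante--Fillastre involves no Minkowski-type inequality. Moreover, an upper bound on the cosmological time of $\partial K$ in terms of the mass can never force $\mathcal{T}_\tau^{\min}(K)>0$. What is needed is: touching $\Rightarrow$ bounded mass.

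The missing idea is compactness. In Proposition~\ref{prop m(tau)} the paper argues by contradiction. Suppose $\partial K_n$ meets $\partial D_\tau$ and $\mathrm{Area}(K_n/\Gamma_\tau)>n$. Then the $K_n$ all meet a fixed level set $S^t_\tau$, since each $K_n$ is convex, satisfies $K_n+\C=K_n$, and has $\mathcal{T}_\tau^{\min}(K_n)=0$. Lemma~\ref{lem exctraction of tau-convex domains} then gives a $d_H$-convergent subsequence. Proposition~\ref{prop weak cv and Hausdorff}, tested on the constant function $1$, makes the total areas converge, which is a contradiction.

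The paper then checks $\partial K=\partial_{sp}K$, which your achronality remark skips. At a point of $\partial K$ with no $\C$-spacelike supporting hyperplane, a supporting hyperplane of $K$ also supports $D_\tau$, because $s_K$ and $s_\tau$ both extend by $g_\tau$ to $\partial\Omega^*$. So that point lies on $\partial D_\tau$.

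Your variational route can be repaired with the same kind of input: precompactness gives the uniform bound $C(\tau)$ on $\mathcal{T}_\tau^{\max}$ of touching domains. There is also an elementary argument:
\begin{itemize}
\item Take a zero $y_0$ of $g$, which exists because $\min_{\Omega^*}g=\mathcal{T}_\tau^{\min}(K)=0$, and move it by invariance into a compact fundamental domain $F$.
\item Since $s_K\leq s_\tau$ with equality at $y_0$, we get $\partial s_K(y_0)\subseteq\partial s_\tau(y_0)\subseteq\partial s_\tau(F)$, a compact set.
\item This bounds $s_K$ from below on $F$, hence bounds $g$ on $F$, hence bounds $g$ everywhere by invariance.
\end{itemize}
Taking $t=2C$ and using $\covol(K)\geq 0$ and $0\leq g\leq C$ gives $-Cm\leq L_\mu(K)\leq P(2C)-2Cm$, i.e.\ $m\leq P(2C)/C$. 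This version relies on the solution being the minimiser of $L_\mu$. The compactness argument instead applies to every $\tau$-convex domain of large total area.
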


When that total mass condition is satisfied, one can also use regularity results on the \emph{Monge--Amp\`ere equation} underlying the Minkowski problem to get information on the regularity of the solution.

\begin{maintheorem}
\label{theo intro Minkowski pb in MGHCC reg}
Let $M = (D_\tau/\Gamma_\tau,\C)$ be an MGHCC affine spacetime. Then, there exists a constant $\varepsilon(M) \in (0,+\infty]$ such that if $\phi$ is a positive $C^{k+1}$ function on $\P(\C^*)/\Gamma$ with $k\geq 2$, and such that $\phi<\varepsilon(M)$, then there is a unique $C^{k+2}$ Cauchy surface $S$ in $M$ with $\C$-curvature equal to $\phi$. 

If $d=2$ or $D_\tau$ is simple (see Definition~\ref{def simple}), the constant $\varepsilon(M)$ can be taken to be $+\infty$. 
\end{maintheorem}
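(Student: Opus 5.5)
Here is the plan for Theorem~\ref{theo intro Minkowski pb in MGHCC reg}. The idea is to combine three ingredients: the existence and uniqueness of Theorem~\ref{theo intro Minkowski solution}, the total-mass criterion of Theorem~\ref{theo intro Minkowski pb in MGHCC measure}, and interior regularity for the Monge--Amp\`ere equation
\begin{equation*}
\MA(s) = \frac{1}{(-\omega_\C)\,\phi}\,\vol_{\S}
\end{equation*}
on the affine chart $\Omega^*$, which is the form the problem takes via Proposition~\ref{prop A MA C-convex}.

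\textbf{Step 1: choice of the threshold.} Take $\mu = \phi^{-1}\vol_{\S}$, viewed on the compact quotient $\P(\C^*)/\Gamma$. If $\phi<\varepsilon$ everywhere, then
\begin{equation*}
\mu\bigl(\P(\C^*)/\Gamma\bigr) > \varepsilon^{-1}\vol_{\S}\bigl(\P(\C^*)/\Gamma\bigr).
\end{equation*}
So I would set $\varepsilon(M) = \vol_{\S}(\P(\C^*)/\Gamma)/m(M)$, with the convention $\varepsilon(M)=+\infty$ when $m(M)=0$. Theorem~\ref{theo intro Minkowski pb in MGHCC measure} then gives a unique convex Cauchy surface $S=\partial K/\Gamma_\tau$ with area measure $\mu$.

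\textbf{Step 2: uniqueness.} A $C^{k+2}$ Cauchy surface with $\C$-curvature $\phi$ has area measure $\phi^{-1}\vol_\S$, by the $C^2$ description of area measures. Uniqueness therefore follows from the uniqueness part of Theorem~\ref{theo intro Minkowski solution}.

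\textbf{Step 3: regularity.} Since $\partial K$ lies in $D_\tau$, I expect strict convexity of $s_K$ on $\Omega^*$. The reason is that a non-strictly convex support function corresponds to a face of $\partial K$ with normals in $\partial$, i.e.\ to points where $\partial K$ touches the null boundary of $D_\tau$. Once strict convexity holds, the right-hand side is $C^{k-1,\alpha}$ and bounded between two positive constants on compact sets. Caffarelli's interior theory then gives $s_K\in C^{1,\alpha}$ and strict convexity, and Schauder bootstrapping gives $s_K\in C^{k+1,\alpha}$ locally. Invariance under $\Gamma$ and compactness of the quotient upgrade this to a uniform statement. Finally, $\partial K$ is the Legendre dual of a strictly convex $C^{k+1}$ graph with nondegenerate Hessian. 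Because $\phi$ is $C^{k+1}$, one further derivative is available, so I would aim to get $\partial K$ of class $C^{k+2}$ by differentiating the equation once more, or by doing regularity directly for the primal hypersurface's equation.

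\textbf{Main obstacle: strict convexity.} I expect Step 3's strict convexity to be the hardest point. Caffarelli's theorem says that a non-strictly convex solution has a section whose extremal points lie on $\partial\Omega^*$. I would show that such a line segment in the graph of $s_K$, propagated to the boundary, forces a supporting hyperplane of $K$ to be null. That would contradict $\partial K\subset D_\tau$ and the cocompact $\Gamma$-invariance.

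\textbf{The cases $d=2$ or $D_\tau$ simple.} Here one expects $m(M)=0$, so $\varepsilon=+\infty$.
\begin{itemize}
\item For $d=2$, Alexandrov's two-dimensional result shows that solutions with positive bounded measure are automatically strictly convex, which rules out contact with $\partial D_\tau$.
\item When $D_\tau$ is simple (Definition~\ref{def simple}), I would argue that any $\Gamma_\tau$-invariant $K$ with a positive density area measure avoids $\partial D_\tau$. The plan is to compare with $\partial D_\tau$, whose singular set carries zero area measure.
\end{itemize}
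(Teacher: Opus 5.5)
\textbf{What matches the paper.} Your architecture for the general case is the paper's. The threshold $\vol_{\S}(\P(\C^*)/\Gamma)/m(M)$ is exactly the constant $c(\tau)$ used there, and uniqueness comes from the uniqueness part of Theorem~\ref{theo Minkowski solution}. Strict convexity comes from Caffarelli's dichotomy (Theorem~\ref{theo Caffarelli}) applied on compact subdomains, together with $\partial K\subset D_\tau$. The paper packages this as $s<s_\tau=\mathrm{Env}(g_\tau)$, i.e.\ condition~(1) of Theorem~\ref{theo regularity higher d}.

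When you say the propagated segment ``forces a supporting hyperplane of $K$ to be null'', add that this null hyperplane also supports $D_\tau$, because $s$ and $s_\tau$ have the same boundary values $g_\tau$. Equivalently, $s=a=s_\tau$ on the contact set, contradicting $s<s_\tau$. Your heuristic in Step~3 is also slightly off: non-strict convexity of $s$ corresponds to a point of $\partial K$ with a segment of supporting directions, not to a face. Your $d=2$ argument via Alexandrov--Heinz matches the paper.

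\textbf{The genuine gap: the case where $D_\tau$ is simple.} ``Compare with $\partial D_\tau$, whose singular set carries zero area measure'' does not lead anywhere. The comparison principle only gives $s\le s_\tau$, which you already know, and $\Area(D_\tau)=0$ says nothing about where $s$ may touch $s_\tau$. Indeed, the Pogorelov-type examples of \cite{Bonsante_Fillastre_17} show that touching does occur for positive densities when $d\ge 3$. So simplicity must enter through the structure of the flat pieces of $s_\tau$.

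The missing ingredient is the multidimensional Alexandrov--Heinz theorem, which restricts the dimension of sets on which a convex $s$ with $\MA(s)\ge m\mathcal{L}$ can be affine. By Caffarelli's dichotomy, a non-trivial contact set $\Sigma$ of $s$ has its extremal points on $\partial\Omega^*$. Hence $s=s_\tau$ is affine on $\Sigma$, so $\Sigma$ is a flat piece of $s_\tau$. Simplicity is precisely the hypothesis on flat pieces of $s_\tau$ that rules this out: this is condition~(2) of Theorem~\ref{theo regularity higher d}, following \cite[Corollary~2.37]{Bonsante_Fillastre_17}. This gives strict convexity, hence regularity and the Cauchy property, for every positive $\phi$.

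Relatedly, ``one expects $m(M)=0$'' is neither expected nor the mechanism, since $m(M)$ must control arbitrary, possibly singular, Radon measures. The value $\varepsilon(M)=+\infty$ comes from bypassing Step~1 altogether: for positive densities, strict convexity is obtained intrinsically. That is in fact what your $d=2$ bullet does.

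\textbf{Derivative count in Step~3.} Your count loses one derivative. The density $\phi^{-1}(-\omega_\C)^{-d-2}$ is $C^{k+1}$, hence locally $C^{k,\alpha}$. Theorem~\ref{Strictly convex solution is C2} then gives $s\in C^{k+2,\alpha}$ directly. That result rests on Caffarelli's interior $C^{2,\alpha}$ estimate for strictly convex solutions followed by Schauder; Schauder cannot be started from $C^{1,\alpha}$. Since $\Hess s>0$, the Legendre transform $s^*$ is also $C^{k+2}$. So $\partial K$ is a $C^{k+2}$ hypersurface without ``differentiating once more''.
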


Note that the total mass condition cannot be removed from the statement of Theorem~\ref{theo intro Minkowski pb in MGHCC measure}. Indeed, for all $d \geq 3$, Bonsante and Fillastre \cite[subsection 3.7]{Bonsante_Fillastre_17} have provided examples, based on the work of Pogorelov \cite{Pogorelov_78}, of an affine deformation $\Gamma_\tau$ of a uniform lattice $\Gamma$ of $\SO_0(d,1)$, and a positive measure $\mu$ on $\H^d/\Gamma$ (that is such that $\mu \geq m \, \vol_{\H^d}$ with $m>0$) such that the future-convex domain with area measure $\mu$ given by Theorem~\ref{theo intro Minkowski solution} meets the boundary $\partial D_\tau$. 

\subsection*{\texorpdfstring{Dimension $2+1$ and higher Teichm\"uller theory}{Dimension 2+1 and higher Teichm\"uller theory}}

Finally, let us write a few words about the special case when $d=2$, and the relation between the work of the present article and (higher) Teichm\"uller theory.

First, let us start by considering the case when the cone $\C$ is quadratic, i.e. convex geometry inside the Minkowski space $\M^{2,1}$. That setting is closely related to \emph{classical Teichm\"uller theory}. Indeed, if $S_g$ is a closed oriented and connected surface of genus $g \geq 2$, its \emph{Teichm\"uller space} $ \mathcal{T}(S_g)$ can be seen as a connected component of the character variety
\begin{equation*}
\chi \bp{ \pi_1(S_g) , \SO_0(2,1)} \coloneqq \text{Hom}\bp{\pi_1(S_g),\SO_0(2,1)} \big/ \SO_0(2,1) \, , 
\end{equation*}
where the quotient is by conjugation. More precisely, $ \mathcal{T}(S_g)$ is diffeomorphic to $\chi^{fd}( \pi_1(S_g) , \SO_0(2,1))$ one of the two connected components of the character variety consisting of conjugacy classes of faithful and discrete representations \cite{Goldman_thesis}, i.e. representations $\rho$ such that $\rho(\pi_1(S_g))$ is a uniform lattice of $\SO_0(2,1)$.

If $[\rho] \in \mathcal{T}(S_g)$, through the special identification between the Lie algebra $\mathfrak{so}(2,1)$ and the Minkowski space $\R^{2,1}$, an affine deformations of $\rho(\pi_1(S_g))$ can actually be seen as vector in the tangent space $\mathrm{T}_{[\rho]} \mathcal{T}(S_g)$. By the work of Mess~\cite{Mess_07,Mess_notes}, an affine deformation $\Gamma_\tau$ of $\Gamma=\rho(\pi_1(S_g))$ can also be identified with a \emph{measured geodesic lamination} on the hyperbolic surface $\H^{2}/\rho(\pi_1(S_g))$, which is another model for tangent vectors of $\mathrm{T}_{[\rho]} \mathcal{T}(S_g)$. 

The work of Barbot and Fillastre \cite{Barbot_Fillastre_20} has highlighted how that fact can be recovered through convex geometry in the Minkowski space. The first area measure $S_1(D_\tau)$ of the maximal $\Gamma_\tau$-invariant $F$-convex domain $D_\tau \subset \M^{2,1}$ (given by the Steiner Formula, Theorem~\ref{theo intro Steiner}) induces a finite measure on $\H^d/\Gamma$ supported by a geodesic lamination, from which one can recover the measured lamination.

Another tool from Teichm\"uller theory which can be recovered through convex geometry in the affine Minkowski space $\M^{2,1}$ is \emph{grafting}. Indeed, let $\Gamma_\tau$ be an affine deformation of a Fuchsian subgroup. Then, the surface $\partial(D_\tau+\H^2) \subset \M^{2,1}$ is a $C^{1,1}$ surface endowed with a $C^{0,1}$ Riemannian metric induced by the ambient Lorentzian metric of $\M^{2,1}$. By the work of Benedetti and Bonsante \cite{Benedetti_Bonsante_09}, it actually is the Riemannian surface obtained by the grafting of the hyperbolic surface $\H^{2}/\Gamma$ along the measured geodesic lamination associated with the affine deformation $\Gamma_\tau$ described above.

Now, let us consider the more general case of a divisible cone $\C$. That setting is closely related to \emph{higher Teichm\"uller theory}. Indeed, if $S_g$ is a closed oriented and connected surface of genus $g \geq 2$,  a theorem of Choi and Goldman \cite{Choi_Goldman_93} states that the \emph{Hitchin component} \cite{Hitchin_92} of the representation space of $\pi_1(S_g)$ into $\SL(\V^3)$ is also the \emph{moduli space of convex projective structures on $S_g$}, and it can be described as 
\begin{equation*}
\mathrm{Hit}(S_g) = \mathrm{Hom}_{\mathrm{div}} \bp{\pi_1(S_g), \SL(\V^3)} \Big/\SL(\V^3) \, ,
\end{equation*}
where the quotient is by conjugation and $\mathrm{Hom}_{\mathrm{div}} (\pi_1(S_g), \SL(\V^3))$ is the set of faithful and discrete group representations $\rho \in \mathrm{Hom}(\pi_1(S_g) , \SL(\V^3))$ such that there is a proper open convex cone $\C_\rho \subset \V^3$ divisible by $\rho(\pi_1(S_g))$. 

In that setting, the area measure $S_1(D_\tau)$ and the surface $\partial(D_\tau+\S) \subset \A^{2+1}$ are still-well defined. Hence, we can hope for a natural generalisation of measured geodesic lamination and a generalised grafting process for convex projective surfaces. 

Note that we have based our study of $\C$-convex domains on affine spheres, which were used independently by Loftin \cite{Loftin_01} and Labourie \cite{Labourie_07} in order to get a holomorphic parametrisation of the $\SL(\V^3)$-Hitchin component. Thus, we could hope that a generalised grafting process obtained in that way would have a nice behaviour in the Labourie--Loftin parametrisation of $\mathrm{Hit}(S_g)$.

It would also be interesting to see if we can relate our work with the one of Bobb and Farre \cite{Bobb_Farre_24} concerning the correspondence between affine deformation of dividing groups and affine laminations \cite{Hatcher_Oertel_92}.

\subsection*{Organisation of the article} 

In Section~\ref{sec (P)}, we describe the setting of this article and introduce a parametrisation $(\mathbf{P})$ of the affine space which will be useful for computations. In Section~\ref{sec C-convex domains}, we introduce and study $\C$-convex domains and the Cheng--Yau affine sphere $\S$ asymptotic to $\C$. Section~\ref{sec area meas}
is dedicated to the proof, via an approximation argument, of the local Steiner Formula (first part of Theorem~\ref{theo intro Steiner}), allowing to define area measures for $\C$-convex domains. Then, in Section~\ref{sec relation area measure curvature and MA}, we use affine differential geometry relative to the Cheng--Yau affine sphere $\S$ in order to introduce a notion of $\C$-curvature for $\C$-convex domains with enough regularity. We then proceed to exhibit the relation between area measures and $\C$-curvature, and prove the second part of Theorem~\ref{theo intro Steiner} and a stronger version  of Theorem~\ref{theo intro C-curv AGC}. We also highlight the relation between the area measure of a $\C$-convex domain and the Monge--Amp\`ere measure of its support function. Section~\ref{sec invariant convex domains} focuses on the case of $\C$-convex domains invariant under an affine deformation $\Gamma_\tau$ of a discrete subgroup $\Gamma$ dividing $\C$. In Section~\ref{sec covolume}, we introduce the covolume functional on such invariant domains, we prove its strict convexity and that, in some sense, its derivative is the area measure (Theorem~\ref{theo intro derivative covol}). Finally, in Section~\ref{sec inv Mink pb}, we solve the Minkowski problem for such invariant domains (Theorem~\ref{theo intro Minkowski solution}), and explain how classical facts from Monge--Amp\`ere equation theory provide some regularity results implying Theorems~\ref{theo intro Minkowski pb in MGHCC measure} and \ref{theo intro Minkowski pb in MGHCC reg}. 

In addition, Appendix~\ref{app DL domain} presents Dirichlet--Lee fundamental domains for the actions of affine deformation of dividing subgroups. Those convex fundamental domains appear in the proof of the strict convexity of the covolume. Appendix~\ref{app C2+-approximation} provides, for the convenience of the reader, the proof of an approximation lemma for $\C$-convex domain due to Choi, which is used in the proof of Theorem~\ref{theo intro derivative covol}. 

\subsection*{Acknowledgments} I am deeply indebted and grateful to my advisors Fran\c{c}ois Fillastre and Andrea Seppi for introducing me to this subject as well as for their regular help and guidance. I would like to thank Andreas Bernig for various useful explanations and remarks concerning area measures. I also thank Suhyoung Choi for introducing me to his work, which contains the approximation lemma presented in Appendix~\ref{app C2+-approximation}. 

\section{Setting and parametrisation of the affine space}
\label{sec (P)}

\subsection{Setting of the article}

Let $d \geq 2$ be an integer. Let $\V^{d+1}$ be the $(d+1)$-dimensional real vector space endowed with an orientation and a linear volume form $\linvol$. Let $\A^{d+1}$ be the $(d+1)$-dimensional real affine space with $\V^{d+1}$ as its vector space of translations. It is endowed with an orientation, a flat affine connection $D$ and a parallel differential volume form $\dvol$, induced by $\linvol$ on $\V^{d+1}$. We shall denote by $\vol$ the measure on $\A^{d+1}$ obtained by integrating the form $\dvol$.

The dual vector space $(\V^{d+1})^*$ is the space of real linear forms on $\V^{d+1}$. It is naturally endowed with a linear volume form $\linvol^*$ defined in the following way: the volume of a basis $\varphi_1, \dots,\varphi_{d+1}$ of $(\V^{d+1})^*$ is 
\begin{equation*}
 \linvol^*(\varphi_1, \dots,\varphi_{d+1}) = \frac{1}{\linvol(\vec{e}_{d+1},\dots,\vec{e}_1)} \, ,
\end{equation*}
where $\vec{e}_1,\dots,\vec{e}_{d+1}$ is the basis of $\V^{d+1}$ pre-dual to $\varphi_1, \dots,\varphi_{d+1}$.

\begin{definition}[Proper convex cone]
A \emph{cone} in the vector space $\V^{d+1}$ is a subset $\C \subseteq \V^{d+1}$ such that for all $\vec{v} \in \C$ and $\lambda>0$, one has $\lambda\vec{v} \in \C$.

A cone $\C \subset \V^{d+1}$ is \emph{convex} if for all $\vec{v},\vec{w} \in \C$ and $t \in [0,1]$ , one has $ (1-t)\vec{v}+t\vec{w} \in \C$. A convex cone $\C \subset \V^{d+1}$ is said to be \emph{proper} if it is non-empty and its closure does not contain any entire line.   
\end{definition} 

Let $\C \subset \V^{d+1}$ be a proper open convex cone and $\C^* \subset (\V^{d+1})^*$ be the open \emph{polar cone} to $\C$. That is, the proper open convex cone in $(\V^{d+1})^*$ defined as 
\begin{equation}
\label{eq C*}
\C^* \coloneqq \mathrm{int} \left\lbrace \varphi \in (\V^{d+1})^* \st \varphi(\vec{v}) \leq 0, \ \forall \, \vec{v} \in \C\right\rbrace,
\end{equation}
where $\mathrm{int}$ denotes the interior of a set. Using the identity~\eqref{eq C*} and identifying each elements $[\varphi] \in \P((\V^{d+1})^*)$ with the linear hyperplane $\ker \varphi \subset \V^{d+1}$, the set $\overline{\P(\C^*)} $ can be seen as the set of \emph{supporting hyperplanes} of the convex cone $\C$, that is of linear hyperplanes $H \subset \V^{d+1}$ such that $H\cap \C = \emptyset $ and $H\cap \partial\C \neq \emptyset$. Among those hyperplanes, $\P(\C^*)$ is the set of linear hyperplane $H \subset \V^{d+1}$ such that $H\cap \overline{\C} = \{0\} $. As a set of linear hyperplanes in $\V^{d+1}$, $\P(\C^*)$ can also be seen as a set of directions of affine hyperplanes in $\A^{d+1}$. Let us then introduce the following definition.

\begin{definition}[$\C$-spacelike hyperplane]
\label{def C-spacelike hyperplane}
An affine hyperplane of $\A^{d+1}$ is said to be \emph{$\C$-spacelike} if it is directed by an element in $\P(\C^*)$.
\end{definition} 

\begin{remark}
The adjective ‘‘spacelike'' comes from the special case where the cone $\C$ is quadratic. In that case, the vector space $\V^{d+1}$ can be endowed with a quadratic form of signature $(d,1)$ with isotropic cone $\partial \C \cup (- \partial \C)$. Then, $\A^{d+1}$ is endowed with a flat Lorentzian metric and $\C$-spacelike hyperplanes are spacelike in the Lorentzian sense.
\end{remark}

Let $\SL(\V^{d+1})$ denote the group of volume preserving linear automorphisms of $(\V^{d+1}, \linvol)$ and $\SA(\A^{d+1})$ denote the group of volume preserving affine automorphisms of $(\A^{d+1}, \dvol)$. We shall denote by $\star$ the \emph{dual action} of $\SL(\V^{d+1})$ on $(\V^{d+1})^*$; for all $\varphi \in (\V^{d+1})^*$ and $\gamma \in \SL(\V^{d+1})$,
\begin{equation*}
\gamma \star \varphi \coloneqq \varphi \circ \gamma^{-1} \, ,
\end{equation*}
and by $*$ the \emph{projective dual action} of $\SL(\V^{d+1})$ on $\P(\V^{d+1})^*$; for all $\varphi \in \P(\V^{d+1})^*$ and $\gamma \in \SL(\V^{d+1})$,
\begin{equation*}
\gamma * [\varphi] \coloneqq [\gamma \star \varphi]= [\varphi \circ \gamma^{-1}] \, .
\end{equation*}
Note that for all $\vec{v} \in \V^{d+1}$, $\varphi \in (\V^{d+1})^*$ and $\gamma \in \SL(\V^{d+1})$,
\begin{equation}
\label{eq action and dual action}
(\gamma \star \varphi)(\gamma \cdot \vec{v})= \varphi(\vec{v}) \, .
\end{equation}

\subsection{A parametrisation of the affine space}

Most of the results presented in this article are purely affine and we shall state them as such, without using any parametrisation of the affine space. Nevertheless, their proofs sometime require to use a parametrisation of the affine space $\A^{d+1}$ as $\R^{d+1}$ in order to do computations. 

Let us arbitrarily choose a base point $O \in \A^{d+1}$, and a direct unit-volume basis $(\vec{e}_1, \dots, \vec{e}_{d+1})$ of $(\V^{d+1},\linvol)$ such that $\vec{e}_{d+1} \in \C$. That gives us a parametrisation of $\A^{d+1}$, and thus $\V^{d+1}$, as $\R^{d+1}$ so that 
\begin{enumerate}[label=($\mathbf{P}$\arabic*)]
\item the cone $\C \subset \V^{d+1}\simeq \R^{d+1}$ gets parametrised as
 \begin{equation}
 \label{eq expression Omega}
 \C = \left\lbrace t \, (x,1) \st x \in \Omega, \ t>0 \right\rbrace ,
 \end{equation}
where $\Omega$ is a bounded open convex domain of $\R^d$ containing $0$,
\item the linear volume form $\linvol$ on $\V^{d+1} \simeq \R^{d+1}$ is the determinant $\det$, which integrates to the Lebesgue measure $\mathcal{L}$ on $\R^{d+1}$.
\end{enumerate}
We shall denote this parametrisation by $(\mathbf{P})$ and use the following notation convention:
\begin{itemize}
\item an element of $\R^d$ will often be denoted by $x$ and an element of $\R^{d+1}$ by $X$,
\item using the usual scalar products $\cdot$ on $\R^d$ and $\R^{d+1}$ for duality identifications, dual elements will often be denoted by $y \in \R^d = (\R^d)^*$ and $Y \in \R^{d+1} = (\R^{d+1})^*$. 
\end{itemize} 
Then, in the parametrisation $(\mathbf{P})$, the polar cone $\C^*$ is expressed as 
\begin{equation*}
\C^*  = \mathrm{int} \left\lbrace Y \in (\R^{d+1})^* \st X\cdot Y \leq 0, \ \forall X \in \C\right\rbrace = \left\lbrace Y \in (\R^{d+1})^* \st X\cdot Y<0, \ \forall X \in \overline{\C} \setminus \{0\} \right\rbrace 
\end{equation*}
so that, using \eqref{eq expression Omega}
\begin{equation}
 \label{eq expression Omega*}
\C^* = \left\lbrace t \,(y,-1) \st y \in \Omega^*, \ t>0 \right\rbrace ,
\end{equation}
where $\Omega^*$ is the bounded open convex domain of $\R^d$ containing $0$ defined by
\begin{equation*}
\Omega^* = \left\lbrace y \in \R^d \st x \cdot y < 1 , \ \forall x \in \overline{\Omega} \right\rbrace. 
\end{equation*}
Using \eqref{eq expression Omega} and \eqref{eq expression Omega*}, we shall radially identify the bounded open convex domains $\Omega$ and $\Omega^*$ with respectively $\P(\C)$ and $\P(\C^*)$. The following table summarises the identifications in the parametrisation $(\mathbf{P})$.

\medskip

\begin{center}
\begin{tabular}{|p{5.5cm} | p{5.5cm}|}
\hline
\multicolumn{2}{|c|}{Identifications}\\
\hline \hline
Unparametrised setting & Parametrisation $(\mathbf{P})$ \\
\hline
$(\V^{d+1}, \linvol)$ & $(\R^{d+1},\det)$ \\
$(\A^{d+1},\dvol)$ & $(\R^{d+1},\dif \mathcal{L})$ \\
$\bp{(\V^{d+1})^*,\linvol^*}$ & $(\R^{d+1},\det)$\\
$\P(\C)$ & $\Omega$ \\
$\P(\C^*)$ & $\Omega^*$ \\
$\SL(\V^{d+1})$ & $\SL(\R^{d+1})$ \\
$\SA(\A^{d+1})$ & $\SL(\R^{d+1})\ltimes \R^{d+1}$\\
\hline
\end{tabular}
\end{center} 

\medskip

In $(\mathbf{P})$, the dual actions of $\SL(\R^{d+1})$ are described as follows. For all $Y \in \R^{d+1}$ and $\gamma \in \SL(\R^{d})$,
\begin{equation*}
 \gamma \star Y = \gamma^{-\top} Y,
\end{equation*}
where $\gamma^{-\top} \in \SL(\R^{d})$ is the inverse of the transpose of $\gamma$. For all $y \in \Omega^*$ and $\gamma \in \SL(\R^{d})$ preserving $\C$ (and thus $\C^*)$,
\begin{equation*}
\gamma * y \coloneqq R\bp{\gamma \star (y,-1)}  = R\bp{\gamma^{-\top}(y,-1)}\, 
\end{equation*}
where $R : \C^*\to\Omega^*$ is the radial projection of the cone $\C^*$ onto $\Omega^*$. 

\section{\texorpdfstring{$\C$-convex domain}{C-convex domains}}
\label{sec C-convex domains}

\subsection{\texorpdfstring{$\C$-convex domains}{C-convex domains}}
\label{subsec C-convex domain}

A convex domain of the affine space $\A^{d+1}$ is a non-empty open set $K \subseteq \A^{d+1}$ such that for all $P,Q \in K$, the segment $[PQ]$ is included in $K$; i.e. for all $t \in [0,1]$,
\begin{equation*}
 P+ t \overrightarrow{PQ} \in K \, .
\end{equation*}
A \emph{supporting hyperplane} of a convex domain $K \subseteq \A^{d+1}$ is an affine hyperplane $H \subseteq \A^{d+1}$ such that $H \cap K = \emptyset$ and $H \cap \partial K \neq \emptyset$. For such a hyperplane, we shall write that \emph{‘‘$p \in \partial K$ has $H$ as a supporting hyperplane''} if $p \in \partial K \cap H$.

\begin{definition}[$\C$-convex domain]
\label{def C-convex}
A \emph{$\C$-convex} domain is a convex domain $K \subset \A^{d+1}$ such that:
\begin{itemize}
 \item $K+\C = K$,
 \item the set $\mathcal{H}(K) \subset\P((\V^{d+1})^*)$ of directions of supporting hyperplanes of $K$ satisfies 
 \begin{equation*}
 \P(\C^*) \subseteq \mathcal{H}(K) \subseteq\overline{\P(\C^*)} \, .
 \end{equation*}
\end{itemize} 

The \emph{$\C$-spacelike boundary} of $\C$-convex domain $K$, denoted by $\partial_{sp}K$, is the set of points $p \in \partial K$ having a supporting hyperplane with direction in $\P(\C^*)$. 

A $\C$-convex domain $K$ is said to be \emph{$\C$-spacelike} if $\partial K = \partial_{sp}K$.
\end{definition}

\begin{remark}
\label{rem asymptotic cone}
Using a classical fact from convex analysis \cite[Theorem~14.2 and Corollary~14.2.1]{Rockafellar_97}, $\C$-convex domains can be equivalently be characterised in the following way. A $\C$-convex domain is a convex domain $K \subset \A^{d+1}$ having \emph{asymptotic cone} (or \emph{recession cone}) equal to $\overline{\C}$, the closure of $\C$, i.e. such that
\begin{equation*}
 \left\lbrace \vec{v} \in \V^{d+1} \st \forall P \in K, \ P+ \R_+ \vec{v} \subseteq K \right\rbrace = \overline{\C} \, .
\end{equation*}
\end{remark}

\begin{example}
\label{example cone}
A trivial example of a $\C$-convex domain is any translation $P+\C$ of the cone $\C$, with $P \in \A^{d+1}$.
\end{example}

\begin{figure}[ht]
 \centering
 \includegraphics{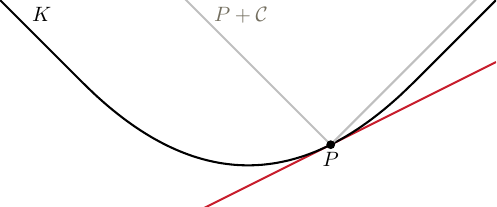}
 \caption{A $\C$-convex domain $K$. }
 \label{fig C-convex}
\end{figure}

\begin{definition}[Gauss map]
\label{def Gauss map}
The ($\C$-spacelike) \emph{Gauss map} of a $\C$-convex domain $K \subset \R^{d+1}$ is the set-valued map
\begin{equation*}
 \G_K : \partial_{sp}K \longrightarrow \mathcal{P}\bp{\P(\C^*)}
\end{equation*}
mapping any point of $\partial_{sp}K$ to the set of directions of its supporting hyperplanes, seen as an element of the power set $ \mathcal{P}(\P(\C^*))$.
\end{definition}

In order to translate the definition of a $\C$-convex domain in the parametrisation $(\mathbf{P})$ of $\A^{d+1}$ described in Section~\ref{sec (P)}, let us recall the definition of the subdifferential of a convex function.

\begin{definition}[Subgradient and subdifferential]
\label{def subgrad subdif}
Let $U \subseteq \R^d$ be a convex domain, let $s: U \to \R$ be a convex function and let $x_0 \in U$. An element $y \in \R^d$ is a \emph{subgradient} of $s$ at $x_0$ if for all $x\in \R^d$,
\begin{equation*}
s(x_0) + (x-x_0) \cdot y \leq s(x). 
\end{equation*}
The set of all subgradients of $s$ at a point $x_0$ is a non-empty convex set of $\R^d$ called the \emph{subdifferential} of $s$ at $x_0$ and is denoted by $\partial s(x_0)$. If $A$ is a subset of $U$, we shall denote 
\begin{equation*}
\partial s(A) \coloneqq \bigcup_{x \in A}\partial s(x) \, . 
\end{equation*}
\end{definition}

Then, it is straightforward to notice that in the parametrisation $(\mathbf{P})$, $\C$-convex domains can be characterised in the following way.

\begin{proposition}[Epigraph characterisation of $\C$-convex domains]
\label{prop char C-convex}
In the parametrisation $(\mathbf{P})$, a $\C$-convex domain $K \subset \R^{d+1}$ is an open set which can be expressed as the epigraph
\begin{equation*}
 K= \epi(f) \coloneqq \left\lbrace(x,\lambda) \in \R^{d+1} \st \lambda>f(x)\right\rbrace
\end{equation*}
of a convex function $f: \R^d\to\R$ such that $\Omega^* \subseteq \partial f (\R^d) \subseteq \overline{\Omega^*}$.

The $\C$-spacelike boundary of a $\C$-convex domain $K = \epi(f)$ then is the set of points $X=(x,f(x)) \in \partial K$ such that $\partial f(x) \cap \Omega^* \neq \emptyset$. 
\end{proposition}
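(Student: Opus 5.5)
The plan is to translate the two defining conditions of a $\C$-convex domain into the coordinates of $(\mathbf{P})$, in three steps: first show that $K$ is the epigraph of a finite convex function $f$ defined on all of $\R^d$, then identify supporting hyperplanes of $K$ with subgradients of $f$, and finally read off the Gauss map.

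\emph{Step 1: $K$ is an epigraph.} Since $\vec{e}_{d+1} = (0,1) \in \C$ and $K+\C=K$, each vertical line $\{x\}\times\R$ meets $K$ in either the empty set or an open upward half-line $(f(x),+\infty)$. So $K=\epi(f)$ for a function $f:\R^d\to\R\cup\{\pm\infty\}$, and $f$ is convex because $K$ is convex.

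\emph{Step 2: $f$ is finite and real-valued.}
\begin{itemize}
\item For $f>-\infty$: by definition there is a $\C$-spacelike supporting hyperplane. In $(\mathbf{P})$ it is the graph of an affine function $x\mapsto x\cdot y+c$ with $y\in\Omega^*$. Its direction $(y,-1)$ is non-vertical, and $K$ lies above it, so $f(x)\ge x\cdot y+c$.
\item For $f<+\infty$ on all of $\R^d$ (i.e. the projection of $K$ onto $\R^d$ is everything): suppose some $x_0$ lies outside the closure of this projection. Separating $x_0$ from the projection gives a supporting hyperplane of $K$ that is vertical, with direction $[(y,0)]$. A vertical direction is not in $\overline{\P(\C^*)}$, since every element of that closure is $[(y,-1)]$ with $y\in\overline{\Omega^*}$, which is closed and bounded. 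This contradicts $\mathcal{H}(K)\subseteq\overline{\P(\C^*)}$.
\item The projection is convex and open, hence equal to its dense-closure interior, so it is all of $\R^d$.
\end{itemize}
This step is the main obstacle: one must make sure the separating hyperplane really touches $\partial K$, i.e. that a supporting hyperplane exists rather than only an asymptotic one. I would handle this by taking $x_0$ on the boundary of the projection and using that a convex function whose domain is not everything has vertical supporting hyperplanes to its epigraph's closure at boundary points of the domain.

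\emph{Step 3: supporting hyperplanes versus subgradients.} Once $f$ is finite convex on $\R^d$, non-vertical supporting hyperplanes of $\epi(f)$ at $(x,f(x))$ correspond exactly to subgradients $y\in\partial f(x)$: the hyperplane is the graph of $x'\mapsto f(x)+(x'-x)\cdot y$, and its direction in $\Omega^*$-coordinates is $y$. There are no vertical supporting hyperplanes because $f$ is finite everywhere. Hence $\mathcal{H}(K)$ identifies with $\partial f(\R^d)$, and the condition $\P(\C^*)\subseteq\mathcal{H}(K)\subseteq\overline{\P(\C^*)}$ becomes $\Omega^*\subseteq\partial f(\R^d)\subseteq\overline{\Omega^*}$.

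\emph{Step 4: the $\C$-spacelike boundary.} By definition, a boundary point $(x,f(x))$ belongs to $\partial_{sp}K$ exactly when some supporting hyperplane there has direction in $\P(\C^*)$. By Step 3 this means $\partial f(x)\cap\Omega^*\neq\emptyset$, which is the second claim of the proposition.
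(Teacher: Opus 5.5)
Your Steps 1, 3 and 4 are fine. The paper gives no proof, calling the statement straightforward. However, the finiteness argument in Step~2 has a genuine gap, and the fix you propose relies on a false statement. It is not true that a convex function whose domain is not all of $\R^d$ has vertical supporting hyperplanes touching the closure of its epigraph at boundary points of its domain. Take $f(x)=-\log x_1$ on $\{x_1>0\}$. The closure of $\epi(f)$ never meets $\{x_1=0\}$, so the separating hyperplane $\{x_1=0\}$ is disjoint from $K$ but does not meet $\partial K$. In fact this $K$ has no vertical supporting hyperplane at all. So knowing only that $\overline{\P(\C^*)}$ contains no vertical direction cannot force $f<+\infty$, and your separation argument does not close as written.

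The missing idea is to use the hypothesis $K+\C=K$ fully, not only through the vector $(0,1)$. Since $\Omega$ is an open neighbourhood of $0$, $\C$ contains $(u,1)$ for every $u$ in some ball $B$ centred at $0$. Hence for $(x_0,\lambda_0)\in K$, all points $(x_0+tu,\lambda_0+t)$ with $t>0$, $u\in B$ lie in $K$. So $\pi(K)\supseteq x_0+\bigcup_{t>0}tB=\R^d$, and $f$ is finite everywhere with no separation argument needed.

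If you prefer to stay with hyperplanes, what does the work is the \emph{boundedness} of $\overline{\Omega^*}$:
\begin{itemize}
\item subgradients of $f$ at points of the open set $\pi(K)$ give non-vertical supporting hyperplanes of $K$, so they lie in $\overline{\Omega^*}$;
\item hence $f$ is Lipschitz on $\pi(K)$ and extends continuously to its closure;
\item then the vertical hyperplane supporting $\pi(K)$ at a boundary point $x_0$ does meet $\overline{K}$, at $(x_0,\lambda)$ for $\lambda$ large, which gives the contradiction.
\end{itemize}

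Finally, the paper presents the proposition as a characterisation, but you only treat one direction. For the converse you also need $\epi(f)+\C\subseteq\epi(f)$ when $\partial f(\R^d)\subseteq\overline{\Omega^*}$. This follows from the subgradient inequality at $x+tu$: for $u\in\Omega$, $t>0$ and $y'\in\partial f(x+tu)$ one has $f(x+tu)\le f(x)+t\,u\cdot y'<f(x)+t$. The last inequality holds because $u\cdot y'<1$ whenever $u\in\Omega$ and $y'\in\overline{\Omega^*}$, as $\Omega$ is open.
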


In $(\mathbf{P})$, $\C$-convex domains can be studied through their \emph{support functions}, defined in the following way. The \emph{total support function} of a $\C$-convex domain $K \subset \R^{d+1}$ is the function
\begin{equation*}
\function{\tilde{s}_K}{\C^*}{\R}{Y}{\sup \left\lbrace X\cdot Y \st X \in K\right\rbrace} \, .
\end{equation*}
One checks that it is well-defined (i.e. the supremum is always finite) by definition of a $\C$-convex domain. It is also clearly sublinear and $1$-homogeneous, hence convex. 

The \emph{$\Omega^*$-support function} $s_K:\Omega^* \to \R$ of a $\C$-convex domain $K$ is defined by $s_K (y)=\tilde{s}_K(y,-1)$. By $1$-homogeneity of the total support function, we have that for all $Y = (y,-\mu)$ in $\C^*$,
\begin{equation*}
\tilde{s}_K(Y)=\tilde{s}_K(y,-\mu)=\mu \, s_K\vp{\frac{y}{\mu}} \, ,
\end{equation*}
so that the datum of an $\Omega^*$-support function is equivalent to the one of a total support function. From now on, when there is no possible confusion, we shall simply write that $s_K$ is the \emph{support function of $K$}. 

These support functions are related to the $\C$-convex domains of $\R^{d+1}$ through the \emph{Legendre--Fenchel transform}, which is a classical tool in \emph{convex analysis} \cite{Rockafellar_97} and \emph{convex geometry} \cite{Schneider_93}. 

\begin{definition}[Legendre--Fenchel transform]
\label{def Legendre--Fenchel transform}
Let $U \subseteq \R^d$ be a convex domain. The \emph{Legendre--Fenchel transform} or \emph{Legendre--Fenchel conjugate} of a lower semi-continuous function $s: U \to \R \cup \{+\infty\}$ is the lower semi-continuous convex function $s^*:(\R^d)^* \to \R \cup \{+\infty\}$ defined by 
\begin{equation*}
s^*(y) = \sup_{x \in U}\bp{x \cdot y - s(x)} \, ,
\end{equation*}
using the usual scalar product $\cdot$ on $\R^d$ to identify $(\R^d)^*$ with $\R^d$. 
\end{definition}

\begin{proposition}[{\cite[Proposition~4.9]{Ablondi_25}}]
\label{prop K is epigraph}
In the parametrisation $(\mathbf{P})$, let $K$ be a $\C$-convex domain of $\R^{d+1}$ with support function $s$. Then, it is the epigraph of $f=s^*$, the Legendre--Fenchel transform of $s$. 

Conversely, any convex function $s:\Omega^* \to \R$ gives a $\C$-convex domain, defined as the epigraph of its Legendre--Fenchel transform $f=s^*$, which is finite over the whole $\R^d$. 
\end{proposition}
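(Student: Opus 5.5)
We prove both directions of Proposition~\ref{prop K is epigraph} by Legendre--Fenchel duality in the parametrisation $(\mathbf{P})$, combined with the epigraph characterisation of Proposition~\ref{prop char C-convex}.

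\emph{Direct part.} By Proposition~\ref{prop char C-convex}, $K=\epi(f)$ for a convex function $f:\R^d\to\R$ with $\Omega^*\subseteq\partial f(\R^d)\subseteq\overline{\Omega^*}$. For $y\in\Omega^*$ we compute
\begin{equation*}
s_K(y)=\sup_{(x,\lambda)\in K}\bigl(x\cdot y-\lambda\bigr)=\sup_{x\in\R^d}\bigl(x\cdot y-f(x)\bigr)=f^*(y).
\end{equation*}
The second equality holds because, for fixed $x$, the supremum over $\lambda>f(x)$ is attained in the limit $\lambda\downarrow f(x)$. Hence $s_K=f^*|_{\Omega^*}$.

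The key step is to show that $f^*=+\infty$ outside $\overline{\Omega^*}$. Once this is known, $f=f^{**}$ (Fenchel--Moreau, since $f$ is finite and convex, hence continuous) can be computed using only the values of $f^*$ on $\overline{\Omega^*}$. Moreover, $f^*$ restricted to $\overline{\Omega^*}$ is the lower semicontinuous closure of its restriction to $\Omega^*$, because a closed convex function coincides with the radial limits of its values on the relative interior of its domain. Therefore
\begin{equation*}
f(x)=\sup_{y\in\Omega^*}\bigl(x\cdot y-s_K(y)\bigr)=s_K^*(x),
\end{equation*}
with $s_K^*$ computed over $U=\Omega^*$ as in Definition~\ref{def Legendre--Fenchel transform}. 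To prove that $\mathrm{dom}\,f^*\subseteq\overline{\Omega^*}$, we use that the range of $\partial f$ is dense in $\mathrm{dom}\,f^*$; more precisely, $\mathrm{ri}(\mathrm{dom}\,f^*)\subseteq\partial f(\R^d)\subseteq\mathrm{dom}\,f^*$ \cite{Rockafellar_97}. Hence $\mathrm{dom}\,f^*\subseteq\overline{\partial f(\R^d)}\subseteq\overline{\Omega^*}$. Alternatively, the same conclusion follows from the recession cone: $K+\C=K$ forces $x\cdot y-f(x)$ to be unbounded along suitable rays whenever $(y,-1)\notin\overline{\C^*}$.

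\emph{Converse part.} Let $s:\Omega^*\to\R$ be convex and set $f=s^*$, computed over $U=\Omega^*$. First, $f$ is finite everywhere. Since $\Omega^*$ is bounded and $s$ is convex, $s$ is bounded below by an affine function, hence bounded below on $\Omega^*$; consequently $f(x)\le|x|\sup_{\Omega^*}|y|-\inf s<\infty$. Also $f>-\infty$ because $\Omega^*\neq\emptyset$. So $f$ is a finite convex function on $\R^d$, and $\epi(f)$ is an open convex domain.

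It remains to verify the subgradient condition of Proposition~\ref{prop char C-convex}. Let $\bar s$ denote the closure of $s$ extended by $+\infty$ outside $\overline{\Omega^*}$; it is closed convex and $f=\bar s^{\,*}$. For any $y\in\Omega^*$, the function $\bar s$ is finite and continuous near $y$, so $\partial\bar s(y)\neq\emptyset$. Taking $x\in\partial\bar s(y)$, the duality $x\in\partial\bar s(y)\iff y\in\partial f(x)$ gives $y\in\partial f(\R^d)$. Conversely, $\partial f(x)\subseteq\mathrm{dom}\,f^*=\mathrm{dom}\,\bar s\subseteq\overline{\Omega^*}$. Thus $\Omega^*\subseteq\partial f(\R^d)\subseteq\overline{\Omega^*}$, and $\epi(f)$ is $\C$-convex. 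Finally, its support function equals $f^*$ on $\Omega^*$, which is $\bar s=s$ there, which closes the loop.

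\emph{Main obstacle.} The delicate point is the boundary behaviour at $\partial\Omega^*$. We must justify that the Legendre--Fenchel transform taken only over the open set $\Omega^*$ loses no information, and that $\mathrm{dom}\,f^*$ lies inside $\overline{\Omega^*}$. For both we rely on the standard convex-analysis facts from \cite{Rockafellar_97}: the closure of a convex function is determined by its values on the relative interior of its domain, and the range of the subdifferential is nested between the relative interior of the conjugate's domain and that domain.
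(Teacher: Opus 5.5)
Your proof is correct. The paper does not prove this statement itself; it quotes it from \cite{Ablondi_25}. The closest argument in the paper is the proof of Proposition~\ref{prop Kg and g**}, which works from the half-space side: it writes the domain as $\{X : X\cdot(y,-1)<g(y)\ \forall y\}$ and unwinds definitions to obtain $\epi(g^*)$ with support function $g^{**}$. A direct part in that style would describe $K$ as the intersection of its supporting open half-spaces. It would then have to show that the half-spaces with direction in $\partial\P(\C^*)$ are redundant.

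You instead start from the epigraph side (Proposition~\ref{prop char C-convex}), where $s_K=f^*|_{\Omega^*}$ is immediate, and recover $f=f^{**}$ by Fenchel--Moreau. The boundary issue then reduces to two facts, and you handle both correctly:
\begin{enumerate}
\item $\mathrm{dom}\,f^*\subseteq\overline{\Omega^*}$. Your recession-cone alternative is the most elementary justification. If $x_0\in\Omega$ satisfies $x_0\cdot y>1$, then $f(p+tx_0)<f(p)+1+t$ for all $t>0$, which forces $f^*(y)=+\infty$.
\item Values of $f^*$ on $\partial\Omega^*$ do not enlarge the supremum defining $f^{**}$. You only need the convexity inequality $f^*((1-\lambda)y_0+\lambda y)\le(1-\lambda)f^*(y_0)+\lambda f^*(y)$ with $y_0\in\Omega^*$, not the full radial-limit theorem. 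Do state explicitly that $\Omega^*\subseteq\partial f(\R^d)\subseteq\mathrm{dom}\,f^*$, so that $\Omega^*$ is the interior of $\mathrm{dom}\,f^*$.
\end{enumerate}

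What your route buys is a converse that is a short duality argument. It also shows that $\epi(s^*)$ has support function exactly $s$, so $K\mapsto s_K$ is a bijection onto convex functions on $\Omega^*$. The statement does not spell this out, but the paper uses it implicitly, for instance in Remark~\ref{rem Gateaux derivative}.
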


Here is a classical convex geometry fact \cite[Theorem~1.7.5]{Schneider_93} concerning sum of support functions. 

\begin{proposition}[{\cite[Proposition 4.10]{Ablondi_25}}]
\label{prop support function and sum}
In the parametrisation $(\mathbf{P})$, let $K$ and $K'$ be two $\C$-convex domains of $\R^{d+1}$, with respective support function $s$ and $s'$. Then, their \emph{Minkowski sum}
\begin{equation*}
 K+K'\coloneqq \left\lbrace X+X' \st (X,X')\in K\times K' \right\rbrace
\end{equation*}
is a $\C$-convex domain with support function $s+s'$.
\end{proposition}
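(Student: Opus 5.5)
The plan is to compute the total support function of $K+K'$ directly from its definition, then restrict to $\Omega^*$, and separately check that $K+K'$ satisfies the two conditions of Definition~\ref{def C-convex}.

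\emph{Support function.} For $Y\in\C^*$,
\begin{equation*}
\tilde{s}_{K+K'}(Y)=\sup\{(X+X')\cdot Y \st X\in K,\ X'\in K'\}.
\end{equation*}
Because the two suprema are taken over independent variables, this equals
\begin{equation*}
\sup_{X\in K} X\cdot Y+\sup_{X'\in K'}X'\cdot Y=\tilde{s}_K(Y)+\tilde{s}_{K'}(Y),
\end{equation*}
and both terms are finite since $K$ and $K'$ are $\C$-convex. Evaluating at $Y=(y,-1)$ gives $s_{K+K'}=s+s'$ on $\Omega^*$.

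\emph{$\C$-convexity.} The set $K+K'$ is open, since it is a union of translates $X+K'$ of the open set $K'$. It is convex, being a Minkowski sum of convex sets. It is also invariant under adding $\C$: $(K+K')+\C=(K+\C)+K'=K+K'$.

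Rather than verifying the hyperplane-direction condition by hand, I would use Proposition~\ref{prop K is epigraph}. The function $s+s'$ is convex and finite on $\Omega^*$, so that proposition says the epigraph $L$ of $(s+s')^*$ is a $\C$-convex domain, and its support function is $(s+s')^{**}=s+s'$ after taking lower semicontinuous closures, which do not change anything on the open set $\Omega^*$. It then remains to show $K+K'=L$.

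\emph{Identifying $K+K'$ with $L$.} The inclusion $K+K'\subseteq L$ holds because every point $Z$ of $K+K'$ satisfies $Z\cdot Y\le s(y)+s'(y)$ for all $Y=(y,-1)$, with strict inequality since $K+K'$ is open. Equality is where the real work is. By Proposition~\ref{prop K is epigraph}, both $K+K'$ and $L$ are open convex sets of the form $\{\lambda>g(x)\}$, so they coincide once their closures coincide. The closure of a convex set is the intersection of its closed supporting half-spaces, so it suffices to compare supporting half-spaces. The support functions of $K+K'$ and $L$ agree on all of $\C^*$. On $\partial\C^*$, each support function equals the limit of its values from inside $\C^*$, by lower semicontinuity of the convex closure, or it is $+\infty$. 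Hence the two closures are cut out by the same half-spaces and are equal. From this it follows that $K+K'=L$ is $\C$-convex with support function $s+s'$.

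I expect the boundary directions in $\partial\C^*$ to be the main obstacle, that is, making sure no extra supporting hyperplanes appear there. I would handle it with the standard closure argument \cite[Theorem~1.7.5]{Schneider_93}.
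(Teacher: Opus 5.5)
Your proof is correct, and its first step, splitting the supremum to get $\tilde s_{K+K'}=\tilde s_K+\tilde s_{K'}$, is exactly the classical additivity of support functions (\cite[Theorem~1.7.5]{Schneider_93}). The paper gives no argument beyond citing this fact together with the earlier paper. You go further and actually certify that $K+K'$ is $\C$-convex. You do this by identifying $K+K'$ with the $\C$-convex domain $L$, the epigraph of $(s+s')^*$ given by the converse half of Proposition~\ref{prop K is epigraph}, and then comparing closed support functions.

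Two points need tidying.
\begin{itemize}
\item You cannot invoke Proposition~\ref{prop K is epigraph} to say $K+K'$ is an epigraph before knowing it is $\C$-convex. That sentence is idle, though: two nonempty open convex sets with the same closure coincide. For the same reason the preliminary inclusion $K+K'\subseteq L$ is not needed.
\item You should state explicitly that both support functions are $+\infty$ off $\overline{\C^*}$. This holds because $\C$ lies in both recession cones; for $K+K'$ you proved $(K+K')+\C=K+K'$. So both are closed proper convex functions whose domains have interior $\C^*$. Agreement on all of $\partial\C^*$ then follows from agreement on $\C^*$ by \cite[Theorem~7.5]{Rockafellar_97}.
\end{itemize}

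A more economical route is to check Definition~\ref{def C-convex} directly.
\begin{itemize}
\item Take $[Y]\in\P(\C^*)$. The supporting hyperplane of $K$ directed by $[Y]$ must be $\{Z\cdot Y=\tilde s_K(Y)\}$, since $Z\cdot Y\to-\infty$ along $\C$. So $\tilde s_K(Y)$ is attained at some $P\in\partial K$, and likewise $\tilde s_{K'}(Y)$ at some $P'\in\partial K'$.
\item Then $P+P'\in\overline{K+K'}$ attains $\tilde s_{K+K'}(Y)$ and lies outside the open set $K+K'$. Hence $[Y]$ directs a supporting hyperplane of $K+K'$.
\item Conversely, if $K+K'\subseteq\{Z\cdot Y<c\}$, invariance under adding $\C$ forces $X\cdot Y\le 0$ for all $X\in\C$, i.e.\ $[Y]\in\overline{\P(\C^*)}$.
\end{itemize}
This avoids any analysis of support functions on $\partial\C^*$. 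Your route costs that boundary argument, but in exchange it yields the description of $K+K'$ as the epigraph of $(s+s')^*$.
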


\begin{remark}
\label{rem convex combination}
Note that the Minkowski sum between two $\C$-convex domains is only defined in a parametrisation of $\A^{d+1}$, here $(\mathbf{P})$. Otherwise one cannot sum points of an affine space. Nevertheless, convex combinations of two convex domains $K,K' \subset \A^{d+1}$ are always well-defined: for all $t\in [0,1]$,
\begin{equation*}
 (1-t)K+tK'\coloneqq \left\lbrace P+t \, \overrightarrow{PP'} \st (P,P')\in K\times K' \right\rbrace \subset \A^{d+1} \, 
\end{equation*}
is a $\C$-convex domain.
\end{remark}

In the parametrisation $(\mathbf{P})$, the Gauss map $\G_K$ of a $\C$-convex domain $K$ with support function $s$ can thus be expressed as the following set-valued map
\begin{equation*}
\label{eq gauss map def}
\function{\G_{K}}{\partial_{sp} K}{\mathcal{B}(\Omega^*)}{X}{\partial s^*\bp{\pi(X)}} \, ,
\end{equation*}
where $\mathcal{B}(\Omega^*)$ is the Borel algebra of $\Omega^*$, and $\pi$ is the projection on the first $d$ coordinates
\begin{equation*}
 \function{\pi}{\R^{d+1}}{\R^{d}}{X=(x,\lambda)}{x} \, . 
\end{equation*}

\subsection{\texorpdfstring{$C^2_+$-convex $\C$-convex domains}{C2+ C-convex domains}}

Let us now describe a subfamily of $\C$-convex domains with more regularity.

\begin{definition}[$C^2_+$ $\C$-convex domain]
\label{def C2+}
A $C^2_+$ $\C$-convex domain $K \subset \A^{d+1}$ is a $\C$-convex domain with $C^2$ locally uniformly convex boundary, i.e. such that $\partial K$ is locally the graph of a convex function $f$ with non-degenerate Hessian.
\end{definition}

Again, one notices that in the parametrisation $(\mathbf{P})$, those can be characterised in the following way.

\begin{proposition}[Epigraph characterisation of $C^2_+$ $\C$-convex]
\label{prop C2+}
In the parametrisation $(\mathbf{P})$, a $C^2_+$ $\C$-convex domain $K \subset \R^{d+1}$ is an open set which can be expressed as the epigraph of a $C^2$ convex function $f: \R^d\to\R$ such that 
\begin{itemize}
 \item \label{point1 def C2+} $\Omega^* \subseteq \grad f (\R^d) \subseteq \overline{\Omega^*}$,
 \item \label{point2 def C2+} for all $x \in \R^d$, one has $\Hess f(x) >0$. 
\end{itemize}

The $\C$-spacelike boundary of a $C^2_+$ $\C$-convex domain $K = \epi(f)$ then is the set of points $X=(x,f(x)) \in \partial K$ such that $\grad f(x) \in \Omega^*$.
\end{proposition}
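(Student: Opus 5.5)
The plan is to deduce this directly from Proposition~\ref{prop char C-convex}, adding the regularity information. Let $K$ be a $C^2_+$ $\C$-convex domain. By Proposition~\ref{prop char C-convex}, $K=\epi(f)$ for a convex function $f:\R^d\to\R$ with $\Omega^*\subseteq \partial f(\R^d)\subseteq\overline{\Omega^*}$. It remains to show that $f$ is $C^2$ with positive definite Hessian.

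Since $\partial K$ is locally the graph of a $C^2$ function with non-degenerate Hessian, the first step is to check that these local graphs can be taken over the horizontal hyperplane $\R^d\times\{0\}$. For this I would argue as follows. At every point $X=(x,f(x))$ the boundary is $C^1$, so $K$ has a unique supporting hyperplane there. Its direction lies in $\overline{\P(\C^*)}$, which means it is of the form $(y,-1)$ with $y=\grad f(x)$ and $y\in\overline{\Omega^*}$ bounded. Such a hyperplane never contains the vertical direction $\vec e_{d+1}$. Hence the tangent hyperplane is transverse to $\vec e_{d+1}$, and the implicit function theorem gives that $\partial K$ is locally the graph over $\R^d$ of a $C^2$ function. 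Because $\partial K=\graph f$, that function is $f$, so $f$ is $C^2$ and $\partial f(x)=\{\grad f(x)\}$. The condition on $\partial f(\R^d)$ then becomes $\Omega^*\subseteq\grad f(\R^d)\subseteq\overline{\Omega^*}$.

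The main obstacle is the Hessian condition, namely showing that local uniform convexity transfers from an arbitrary local graph description to $f$. Non-degeneracy of the second fundamental form is an affine-invariant notion: the Hessian of a local defining graph is non-degenerate exactly when the second fundamental form relative to any transversal field is non-degenerate. Applying this with the transversal field $\vec e_{d+1}$, the second fundamental form is $\Hess f$. Convexity gives $\Hess f\ge 0$, and non-degeneracy then upgrades this to $\Hess f>0$.

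The converse direction is immediate: the epigraph of such an $f$ is $\C$-convex by Proposition~\ref{prop char C-convex}, since $\partial f=\{\grad f\}$, and its boundary $\graph f$ is $C^2$ and locally uniformly convex. Finally, the description of $\partial_{sp}K$ follows from the corresponding statement in Proposition~\ref{prop char C-convex}, using $\partial f(x)=\{\grad f(x)\}$.
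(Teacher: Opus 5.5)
Your proposal is correct and takes the same route the paper intends. The paper gives no separate proof and presents the statement as an immediate consequence of Proposition~\ref{prop char C-convex} and Definition~\ref{def C2+}. Your argument fills in exactly the details left implicit: the tangent hyperplane is transverse to $\vec{e}_{d+1}$ because its direction lies in $\overline{\P(\C^*)}$, and non-degeneracy of the Hessian does not depend on the local graph chart, so it transfers to $\Hess f$.
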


\begin{remark}
Note that the definition of a $C^2_+$ $\C$-convex domain given here is slightly stronger than the one given in \cite{Ablondi_25}, where only the $\C$-spacelike boundary is required to be $C^2$ and locally uniformly convex. This is only for the sake of presentation and convenience of the reader, all the following would still hold with the less restrictive definition. 
\end{remark}

\begin{proposition}[{\cite[Propositions 4.11]{Ablondi_25}}]
\label{prop K is epigraph C2+}
In the parametrisation $(\mathbf{P})$, let $K$ be a $C^2_+$ $\C$-convex domain of $\R^{d+1}$. Then, its support function $s$ is $C^2$ with everywhere positive definite Hessian.
\end{proposition}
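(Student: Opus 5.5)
We need to prove that if $K$ is a $C^2_+$ $\C$-convex domain, then its support function $s$ is $C^2$ on $\Omega^*$ with everywhere positive definite Hessian. We work in the parametrisation $(\mathbf{P})$, where by Proposition~\ref{prop C2+} we have $K=\epi(f)$. Here $f:\R^d\to\R$ is $C^2$ with $\Hess f>0$ everywhere and $\Omega^*\subseteq \grad f(\R^d)\subseteq\overline{\Omega^*}$. By Proposition~\ref{prop K is epigraph}, $f=s^*$, and since $s$ is convex and lower semi-continuous on $\Omega^*$ (extended by $+\infty$ outside, or by its lower semicontinuous hull), the Fenchel--Moreau theorem gives $s=f^*$ on $\Omega^*$.

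\emph{Step 1: $\grad f$ restricted to $U\coloneqq(\grad f)^{-1}(\Omega^*)$ is a $C^1$ diffeomorphism onto $\Omega^*$.}
The set $U$ is open because $\grad f$ is continuous. The map $\grad f$ is $C^1$ with invertible differential $\Hess f(x)$, so by the inverse function theorem it is a local $C^1$ diffeomorphism. It is injective, by strict convexity of $f$ (which follows from $\Hess f>0$): if $\grad f(x_0)=\grad f(x_1)=y$ with $x_0\neq x_1$, then $f$ would be affine on the segment $[x_0,x_1]$, because both $x_0$ and $x_1$ minimise $f(x)-x\cdot y$ and the set of minimisers is convex. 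This contradicts $\Hess f>0$. Surjectivity onto $\Omega^*$ is exactly the hypothesis $\Omega^*\subseteq\grad f(\R^d)$. Hence $\grad f:U\to\Omega^*$ is a bijective local diffeomorphism, i.e. a $C^1$ diffeomorphism. Denote its inverse by $g:\Omega^*\to U$.

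\emph{Step 2: formula for $s$.}
For $y\in\Omega^*$, the supremum defining $f^*(y)=\sup_x\bp{x\cdot y-f(x)}$ is attained at $x=g(y)$, since a critical point of a concave function is a global maximiser. Thus
\begin{equation*}
s(y)=g(y)\cdot y-f\bp{g(y)} \, ,
\end{equation*}
which shows that $s$ is $C^1$. Differentiating and using $\grad f(g(y))=y$ gives $\grad s(y)=g(y)$, the classical Legendre duality. Since $g$ is $C^1$, $s$ is $C^2$, and
\begin{equation*}
\Hess s(y)=\dif g(y)=\bp{\Hess f(g(y))}^{-1} \, ,
\end{equation*}
which is positive definite as the inverse of a positive definite symmetric matrix.

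The only delicate point is Step 1, namely injectivity and the use of the hypothesis to get surjectivity onto $\Omega^*$. One must also make sure that the attainment of the supremum and the identification $s=f^*$ on the open set $\Omega^*$ hold even though $\grad f(\R^d)$ may touch $\partial\Omega^*$. This is harmless, because we only evaluate at points $y\in\Omega^*$, where a maximiser $g(y)$ exists. The rest is routine.
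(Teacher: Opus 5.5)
Your proof is correct. The paper does not reprove this statement; it imports it from \cite[Proposition~4.11]{Ablondi_25}. Your argument is the standard Legendre-duality one: $\grad f$ restricts to a $C^1$ diffeomorphism from $(\grad f)^{-1}(\Omega^*)$ onto $\Omega^*$ whose inverse is $\grad s$, so $\Hess s=(\Hess f\circ\grad s)^{-1}$ is positive definite. This identity $\grad s=(\grad f)^{-1}$ is exactly what underlies the paper's formula \eqref{eq expression of the inverse of the Gauss map} for $\G_K^{-1}$.

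One simplification is available. The identity $s=f^*$ on $\Omega^*$ follows directly from the definition of the support function, since $s(y)=\tilde{s}_K(y,-1)=\sup_{\lambda>f(x)}(x\cdot y-\lambda)=\sup_{x}\bp{x\cdot y-f(x)}$. The detour through Fenchel--Moreau, and your caveat about lower semicontinuity of the $+\infty$ extension, is therefore unnecessary.
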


The Gauss map of a $C^2_+$ $\C$-convex domain $K$ is then a genuine function mapping any point of $\partial_{sp}K$ to the direction of its tangent hyperplane. It is actually a $C^1$ diffeomorphism. Indeed, in the parametrisation $(\mathbf{P})$, if $K$ has support function $s$, its Gauss map is the $C^1$ map
\begin{equation*}
\label{eq expression of the Gauss map}
\function{\G_{K}}{\partial_{sp} K}{\Omega^*}{X}{\grad s^*\bp{\pi(X)}} \, ,
\end{equation*}
where $\pi$ is the projection on the first $d$ coordinates, with inverse the $C^1$ map
\begin{equation}
\label{eq expression of the inverse of the Gauss map}
\function{\G_{K}^{-1}}{\Omega^*}{\partial_{sp} K}{y}{\bp{\grad s(y) , \grad s(y) \cdot y -s (y)}} 
\end{equation}
Note that $\G_{K}^{-1}$ gives a parametrisation of $\partial_{sp}K$ by $\Omega^*$. 

\subsection{Cheng--Yau affine sphere}
\label{subsec Cheng-Yau affine sphere}

In the Euclidean affine space $\E^{d+1}$, the unit sphere $\Sph^{d}$ of $(\V^{d+1},\Vert\cdot\Vert_{euc})$ plays a central role in the study of compact domains convex domains, as it allows to define the normal vector to a hyperplane \cite{Schneider_93}. In the Minkowski affine space $\M^{d,1}$, it is the one-sheeted hyperboloid $\H^{d}$ of $(\V^{d+1},\Vert\cdot\Vert_{d,1})$ that is used for defining normal vectors \cite{Fillastre_Veronelli_16}. In this subsection, we shall thus present the hypersurface of $(\V^{d+1},\linvol)$ which shall fill that role for $\C$-convex domain of $\A^{d+1}$: the \emph{Cheng--Yau} affine sphere $\S$ associated with $\C$.

For definitions and explanations concerning Cheng--Yau affine spheres in the setting of this article, we refer the reader to \cite{Loftin_10} and \cite{Ablondi_25}. Here, we shall only explicit some of the properties satisfied by $\S$:
\begin{itemize}
 \item $\S$ is a smooth locally uniformly convex hypersurface in $(\V^{d+1},\linvol)$ uniquely associated with $\C$.
 \item $\S$ is asymptotic to $\partial \C$.
 \item $\S$ is canonically endowed with a volume form $\dvol_{\S}$.
 \item $\S$ is preserved by $\mathrm{Aut}_{\SL}(\C)$, the subgroup of elements of $\SL(\V^{d+1})$ preserving the cone $\C$. 
 \item $\S$ ‘‘behaves well'' with duality. The hypersurface $\S^* \subset (\V^{d+1})^*$ \emph{polar} to $\S$ is defined as 
 \begin{equation*}
 \S^* = \left\lbrace \varphi \in (\V^{d+1})^* \st \exists \, \vec{v} \in \S \ \text{such that} \ \varphi(\vec{v}) = -1 \ \text{and} \ \ker(\varphi)=\mathrm{T}_{\vec{v}}\S\right\rbrace . 
 \end{equation*}
 By the work of Gigena \cite[Proposition~1]{Gigena_81}, it actually happens to be $\Sigma_{(\C^*)}$, the Cheng--Yau affine sphere associated with $\C^*$ in $((\V^{d+1})^*,\linvol^*)$. We shall call $\S^* = \Sigma_{(\C^*)}$ the \emph{polar affine sphere} of $\C$. Moreover, there is a natural smooth diffeomorphism between $\S$ and $\S^*$, called the \emph{Blaschke conormal field} and this map sends the volume form $\dvol_{\S}$ onto $\dvol_{\S^*}$.
\end{itemize}

\begin{remark}
\label{rem K+S}
Note that for every point $P \in \A^{d+1}$ and $t\geq 0$, $P + t\, \S$ is the boundary of the $\C$-spacelike $C^2_+$ $\C$-convex domain $P+ (t\, \S + \C) \subset \A^{d+1}$. As $K$ is a $\C$-convex domain, $K = K +\C$ and thus for all $t \geq 0$,
\begin{equation*}
 K+t\, \S = K+ \C + t\, \S = K+ (t\, \S + \C)
\end{equation*}
is a $\C$-convex domain (those sums are well-defined because $\C$ and $\S$ are subsets of the vector space $\V^{d+1})$.   
\end{remark}

In the parametrisation $(\mathbf{P})$, $\S \subset \R^{d+1}$ is the graph of $\omega_\C^*$, the Legendre--Fenchel transform of the smooth convex negative function $\omega_\C$ on $\Omega^*$ satisfying the Monge--Amp\`ere equation 
\begin{equation}
\label{eq MA aff sphere}
\begin{cases}
\det \Hess \omega= (-\omega)^{-d-2} & \text{in} \ \Omega^* \, ,\\
\omega_{\vert \partial\Omega^*} =0 \, .
\end {cases}
\end{equation}
The polar affine sphere $\S^* \subset \R^{d+1}$ is then the radial graph of $-\omega_\C$:
 \begin{equation*}
 \S^* = \left\lbrace -\frac{1}{\omega_\C(y)} (y,-1) \st y \in \Omega^*\right\rbrace \subset \, (\R^{d+1})^*. 
 \end{equation*}

In the Euclidean (respectively Minkowski) case, one can completely understand orthogonality for the ambient metric through the unit sphere $\Sph^d$ (resp. the hyperboloid $\H^{d}$): at $\vec{n} \in \Sph^d$ (resp. $ \vec{n} \in \H^{d}$) the orthogonal to $\vec{n}$ is the linear hyperplane directing $\mathrm{T}_{\vec{n}}\Sph^d$ (resp. $\mathrm{T}_{\vec{n}}\H^d$). We shall extend this notion in the following way.

\begin{definition}[$\C$-normal field]
\label{def C-normal vf}
The \emph{$\C$-normal} field on $\P(\C^*)$ is the field $N_{\S}$ associating to each $\C$-spacelike hyperplane direction $[\varphi] \in \P(\C^*)$ the unique vector $\vec{n} \in \S \subset\V^{d+1}$ such that $\mathrm{T}_{\vec{n}}\S$ is directed by $[\varphi]$. 
\end{definition} 

In the parametrisation $(\mathbf{P})$, using the support function $\omega_\C$ of the affine sphere, the $\C$-normal field can explicitly be expressed as the smooth map
\begin{equation}
\label{eq expression of the C-normal field}
\function{N_{\S}}{\Omega^*}{\S \subset\R^{d+1}}{y}{\bp{\grad \omega_\C(y) , \grad \omega_\C(y) \cdot y -\omega_\C (y)}} \,. 
\end{equation}

\section{Steiner Formula and area measures}

\label{sec area meas}

\subsection{Parallel neighbourhood volume measures}
\label{subsec parallel neighbourhoods}

In this subsection, we shall introduce the \emph{parallel $\varepsilon$-neighbourhood volume measures} of $\C$-convex domains which are \emph{Radon measures} on $\P(\C^*)$. So here is a reminder concerning measure theory in $\R^d$. On an open subset $U \subseteq \R^d$, a Radon measure is just a Borel measure which is finite on compact sets. Such a measure $\mu$ is: 
\begin{itemize}
 \item \emph{inner regular}, that is for any Borel set $b\subseteq U$,
 \begin{equation*}
 \mu(b) = \sup \left\lbrace \mu(A) \st A \subseteq b \ \text{and $A$ is compact} \right\rbrace ,
 \end{equation*}
 \item \emph{outer regular}, that is for any Borel set $b\subseteq U$,
 \begin{equation*}
 \mu(b) = \inf \left\lbrace \mu(O) \st b \subseteq O \ \text{and $O$ is open} \right\rbrace . 
 \end{equation*}
\end{itemize}

Given a $\C$-convex domain $K \subset \A^{d+1}$ and $\varepsilon>0$, we can introduce the \emph{parallel $\varepsilon$-neighbourhood volume measure} of $K$ as the measure $V_\varepsilon(K)$ on $\P(\C^*)$ defined in the following way. For any Borel subset $b \subseteq \P(\C^*)$, $V_\varepsilon(K)(b)$ is defined as the volume of the \emph{parallel $\varepsilon$-neighbourhood} $\mathcal{N}_\varepsilon(K)(b) \subset \A^{d+1}$, which is the set of points of the form $P + t \, \vec{n} $ where $P \in \partial_{sp} K \subset \A^{d+1}$ and $\vec{n}\in \S \subset\V^{d+1}$ both have a common supporting hyperplane direction in $b \subset \P(\C^*)$, and $t \in (0,\varepsilon)$.

Using the Gauss map $\G_K$ (Definition~\ref{def Gauss map}) and the $\C$-normal field $N_{\S} $on $\P(\C^*)$ (Definition~\ref{def C-normal vf}), it can equivalently be described in the following way. For any Borel subset $b \subseteq \P(\C^*)$,
\begin{equation*}
V_\varepsilon(K)(b) \coloneqq \vol \bp{\mathcal{N}_\varepsilon(K)(b)}  
\end{equation*}
is the volume of the \emph{parallel $\varepsilon$-neighbourhood} 
\begin{equation*}
\mathcal{N}_\varepsilon(K)(b) \coloneqq \left\lbrace P + t \, N_{\S} [\varphi]\st \ [\varphi]\in b, P \in \G_K^{-1}[\varphi], \ t \in (0,\varepsilon) \right\rbrace \subset K \, ,
\end{equation*}
where $P \in \G_K^{-1}[\varphi]$ means that $P \in \partial_{sp}K$ is such that $[\varphi] \in \G_K(P)$.
In the case where $K$ is a $C^2_+$ $\C$-convex domain, and thus $\G_K$ is a genuine bijective map, that becomes 
\begin{equation}
\label{eq eps neigh C2+}
\mathcal{N}_\varepsilon(K)(b) = \left\lbrace \G_{K}^{-1}[\varphi]+t \, N_{\S}[\varphi]\st [\varphi] \in b , \ t \in (0,\varepsilon )\right\rbrace \, . 
\end{equation}

\begin{remark}
Using the causal distance on $K$ given by the affine sphere $\Sigma_\C$ \cite[Section 6]{Ablondi_25}, $\mathcal{N}_\varepsilon(K)(b)$ can be interpreted as set of points $P$ of $K$ which are at distance at most $\varepsilon$ from $\partial_{sp}K$ and having $\C$-normal projection $P'$ onto $\partial_{sp}K$ such that $\overrightarrow{P'P}$ is collinear to an element of $b$. 
\end{remark}

\begin{proposition}
\label{prop Veps is Radon}
Let $K$ be a $\C$-convex domain and $\varepsilon>0$. Then, $V_\varepsilon(K)$ is a Radon measure on $\P(\C^*)$. 
\end{proposition}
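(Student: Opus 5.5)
We must show that $V_\varepsilon(K)$ is a Borel measure (countably additive, with Borel sets $b$ giving measurable sets $\mathcal{N}_\varepsilon(K)(b)$) and that it is finite on compact subsets of $\P(\C^*)$. We work in the parametrisation $(\mathbf{P})$, so that $\P(\C^*)\simeq\Omega^*$ and $K=\epi(s^*)$ by Proposition~\ref{prop K is epigraph}.

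\textbf{Key map.} Consider the continuous map
\begin{equation*}
\Phi:\; \mathcal{N}_\varepsilon(K)(\Omega^*)\to\Omega^*, \qquad P+t\,N_{\S}(y)\mapsto y .
\end{equation*}
The plan is to show that $\Phi$ is well defined: each point of $\mathcal{N}_\varepsilon(K)(\Omega^*)$ has a unique decomposition $P+tN_{\S}(y)$ up to a Lebesgue-null set. This uses the normal projection onto $\partial K$ of \cite[Section~6]{Ablondi_25}, which gives the foliation of $K$ by $\partial(K+t\S)$. Concretely, the point $Q=P+tN_{\S}(y)$ lies on $\partial(K+t\S)$, and $K+t\S$ has support function $s+t\omega_\C$. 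The direction $y$ is then a supporting direction of $K+t\S$ at $Q$. The function $s+t\omega_\C$ is strictly convex, since $\omega_\C$ is, so its Legendre transform is $C^1$. Hence $\partial(K+t\S)$ has a unique supporting hyperplane at each point, and $y$ is determined by $Q$. Moreover $t$ is determined by $Q$ via the foliation. Consequently
\begin{equation*}
\mathcal{N}_\varepsilon(K)(b)=\Phi^{-1}(b)\cap\{\text{time}<\varepsilon\}.
\end{equation*}

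\textbf{Measurability and additivity.} Once $\Phi$ is continuous, $\mathcal{N}_\varepsilon(K)(b)$ is Borel for every Borel $b$. Disjoint sets $b_i$ give disjoint neighbourhoods, so countable additivity of $V_\varepsilon(K)$ follows from that of $\vol$. Continuity of $\Phi$ should come from uniqueness of the supporting hyperplane together with a standard closed-graph argument for subdifferentials.

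\textbf{Finiteness on compacts.} Let $A\subset\Omega^*$ be compact. Then $\partial s^{-1}(A)$, i.e.\ $\G_K^{-1}(A)$, is a compact subset of $\partial_{sp}K$. Indeed, $s$ is finite and convex on a neighbourhood of $A$, so $\partial s(A)$ is bounded, and the corresponding points $(\grad s(y),\,y\cdot\grad s(y)-s(y))$ stay bounded. Likewise $N_{\S}(A)$ is compact by smoothness of \eqref{eq expression of the C-normal field}. Hence $\mathcal{N}_\varepsilon(K)(A)$ is contained in the bounded set
\begin{equation*}
\G_K^{-1}(A)+[0,\varepsilon]\,N_{\S}(A),
\end{equation*}
which has finite volume. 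On $\Omega^*$, Radon is then equivalent to Borel plus locally finite.

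\textbf{Main obstacle.} The delicate step is the well-definedness and measurability of the decomposition. At non-smooth points of $\partial K$ several directions $y$ share the same $P$, so one must check that distinct pairs $(y,t)$ never produce the same point. I would handle this via strict convexity of $s+t\omega_\C$ as above. As a fallback, I would approximate $K$ by $C^2_+$ domains using the approximation lemma of Appendix~\ref{app C2+-approximation}, for which \eqref{eq eps neigh C2+} gives an explicit parametrisation.
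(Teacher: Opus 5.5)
Your proof is correct, and its finiteness part is the paper's argument. The paper also bounds $\mathcal{N}_\varepsilon(K)(A)$ by $\G_K^{-1}(A)+[0,\varepsilon]\,N_{\S}(A)$ and gets boundedness of $\G_K^{-1}(A)$ from compactness of $\partial s(A)$. It argues by contradiction via continuity of $s^*$, whereas you read boundedness off directly from $s^*(p)=p\cdot y-s(y)$ for $y\in A$, $p\in\partial s(y)$. For non-smooth $s$, write these points as $(p,\,p\cdot y-s(y))$ rather than with $\grad s$.

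The genuine difference is that the paper only checks local finiteness. It takes for granted that $b\mapsto\vol(\mathcal{N}_\varepsilon(K)(b))$ is a Borel measure. You prove this by exhibiting $V_\varepsilon(K)$ as the push-forward of Lebesgue measure on $K\setminus\overline{K+\varepsilon\S}$ under the normal-projection map $Q\mapsto y$. That is a worthwhile completion. The decomposition is in fact unique for every $Q\in K$, not just off a null set:
\begin{itemize}
\item $t$ is determined by the unique leaf $\partial(K+t\S)$ through $Q$; the leaves are disjoint because $(s+t'\omega_\C)^*>(s+t\omega_\C)^*$ for $t'>t$;
\item then $y=\grad(s+t\omega_\C)^*(q)$ and $P=Q-tN_{\S}(y)$.
\end{itemize}

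Two small corrections.

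\emph{Differentiability of the conjugate.} Strict convexity of $s+t\omega_\C$ on $\Omega^*$ does not by itself make its conjugate $C^1$. The maximiser of $x\cdot y-(s+t\omega_\C)(y)$ could a priori sit on $\partial\Omega^*$, where the lower semicontinuous extension need not be strictly convex. What saves you is that $\partial\omega_\C$, and hence $\partial(s+t\omega_\C)$, is empty at points of $\partial\Omega^*$, because $\grad\omega_\C^*(\R^d)\subseteq\Omega^*$ (the affine sphere is $\C$-spacelike). Then $s+t\omega_\C$ is essentially strictly convex and \cite[Theorem~26.3]{Rockafellar_97} applies. This is the content of \cite[Proposition~6.7]{Ablondi_25}, which the paper uses in Lemma~\ref{lem vague cv of Veps}.

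\emph{The fallback.} It points to the wrong lemma: Appendix~\ref{app C2+-approximation} concerns $\tau$-convex domains, and for general $\C$-convex domains the relevant result is Lemma~\ref{lem approx}. Approximation alone would not yield measurability for $K$ anyway. Fortunately the fallback is not needed.
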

\begin{proof}
Let us work in the parametrisation $(\mathbf{P})$ described in Section~\ref{sec (P)}, where $V_\varepsilon(K)$ is a measure on the bounded convex domain $\Omega^* \subset\R^d$. Let $s$ be the support function of $K$.

We shall prove that for all $A\subset \Omega^*$ compact, $\mathcal{N}_\varepsilon(K)(A) \subset \R^{d+1}$ is bounded, and thus that its Lebesgue volume $V_\varepsilon(K)(A)$ is finite. Let $A$ be a compact subset of $\Omega^*$. As $N_{\S}$ is continuous on $\Omega^*$, the image $N_{\S}(A)$ is compact. Hence, as $\mathcal{N}_\varepsilon (K)(A)\subseteq \G_K^{-1}(A)+[0,\varepsilon] \, N_{\S}(A)$, we just have to show that $\G_K^{-1}(A)$ is bounded. 

By contradiction, assume that there exists an unbounded sequence $(X_n)_n = (x_n,s^*(x_n))_n \in \G_K^{-1}(A)^\N$. That means that for all $n$, $\partial s^*(x_n) \cap A \neq \emptyset$. Then, by the Fenchel Inequality Theorem \cite[Theorem~23.5]{Rockafellar_97}, each $x_n$ is in $\partial s(A)$. 

For all $n\in \N$, we have $X_n \in \graph s^*$. Hence, as the sequence $(X_n)_n$ is unbounded and $s^*$ is continuous (it is convex and finite over the whole $\R^d$), the sequence of $x_n \in \partial s(A) \subseteq \R^{d}$ is necessarily unbounded. But, by \cite[Lemma~A.22]{Figalli_17}, the compactness of $A$ imply that $\partial s(A)$ is compact, giving a contradiction. 
\end{proof}

\subsection{\texorpdfstring{Steiner Formula and area measures of $\C$-convex domains}{Steiner Formula and area measures of C-convex domains}}

The goal of this subsection is to prove the following theorem.

\begin{theorem}[{First part of Theorem~\ref{theo intro Steiner}}, Local Steiner Formula for $\C$-convex domains]
\label{theo Steiner}
Let $K$ be a $\C$-convex domain. Then, there exists a unique family $S_0(K)$, $\dots$, $S_d(K)$ of Radon measures on $\P(\C^*)$ such that for all $\varepsilon>0$,
\begin{equation*}
V_\varepsilon(K) = \frac{1}{d+1}\sum_{i=0}^{d}\varepsilon^{d+1-i} \binom{d+1}{i} S_i(K) \, . 
\end{equation*}
\end{theorem}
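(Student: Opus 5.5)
Uniqueness is immediate. If two families satisfy the formula for all $\varepsilon>0$, then for each Borel set $b$ with finite values, two polynomials in $\varepsilon$ agree on $(0,\infty)$, so their coefficients agree. The coefficients are finite on compact sets, so by inner regularity the measures coincide.

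For existence, I will follow the paper's announced strategy: first treat $C^2_+$ domains by direct computation, then pass to general $K$ by approximation.

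\textbf{Smooth case.} Work in the parametrisation $(\mathbf{P})$ and let $K$ be a $C^2_+$ $\C$-convex domain with support function $s$. By \eqref{eq eps neigh C2+}, $\mathcal{N}_\varepsilon(K)(b)$ is the image of $b\times(0,\varepsilon)$ under
\begin{equation*}
\Phi(y,t)=\G_K^{-1}(y)+t\,N_{\S}(y).
\end{equation*}
By \eqref{eq expression of the inverse of the Gauss map} and \eqref{eq expression of the C-normal field}, $\Phi(y,t)=\bigl(\grad(s+t\omega_\C)(y),\ \grad(s+t\omega_\C)(y)\cdot y-(s+t\omega_\C)(y)\bigr)$. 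This is exactly $\G_{K+t\S}^{-1}(y)$, since $s+t\omega_\C$ is the support function of $K+t\S$ by Proposition~\ref{prop support function and sum} and Remark~\ref{rem K+S}.

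Next I check that $\Phi$ is injective, which follows from the foliation of $K$ by the surfaces $\partial(K+t\S)$ from \cite[Section 6]{Ablondi_25}. Then I compute the Jacobian. The derivatives in $y$ are $(\Hess(s+t\omega_\C)\,v,\ y\cdot\Hess(s+t\omega_\C)\,v)$, and the derivative in $t$ is $(\grad\omega_\C,\grad\omega_\C\cdot y-\omega_\C)$. Subtracting $y$ times the first $d$ rows from the last row gives
\begin{equation*}
|\Jac\Phi|=(-\omega_\C(y))\det\Hess\bigl(s+t\omega_\C\bigr)(y).
\end{equation*}
Expanding the determinant with mixed discriminants,
\begin{equation*}
\det(\Hess s+t\Hess\omega_\C)=\sum_{i=0}^d\binom{d}{i}t^{d-i}D_i(y),
\end{equation*}
where $D_i$ is the mixed discriminant with $i$ copies of $\Hess s$ and $d-i$ copies of $\Hess\omega_\C$. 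Integrating $t$ over $(0,\varepsilon)$ and using $\binom{d}{i}/(d+1-i)=\binom{d+1}{i}/(d+1)$ gives the formula with $\dif S_i(K)=(-\omega_\C)D_i\,\dif y$. For $i=0$, \eqref{eq MA aff sphere} gives $(-\omega_\C)^{-d-1}\dif y$, which I will identify with $\vol_{\S^*}$ in the later part.

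\textbf{General case.} For arbitrary $K$, I approximate $s$ locally uniformly on $\Omega^*$ by support functions $s_n$ of $C^2_+$ $\C$-convex domains $K_n$, using Choi's lemma from Appendix~\ref{app C2+-approximation}. Two facts are needed.
\begin{itemize}
\item[(a)] The measures $V_\varepsilon(K_n)$ converge weakly on compact subsets of $\Omega^*$ to $V_\varepsilon(K)$. I would prove this as for Monge--Amp\`ere measures: $\mathcal{N}_\varepsilon(K)(A)$ is the image of the subdifferential-type set-valued map $y\mapsto\partial(s+t\omega_\C)(y)$, lifted to the graph. The upper semicontinuity of subdifferentials under uniform convergence (\cite[Lemma~A.22]{Figalli_17}-type arguments) handles compact sets, and openness handles open sets. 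The Lebesgue-null set of points with several preimages is controlled as in Alexandrov's lemma.
\item[(b)] Each $V_\varepsilon(K_n)$ is a polynomial in $\varepsilon$ with measure coefficients. Evaluating at $d+1$ distinct values of $\varepsilon$ and inverting the Vandermonde system writes $S_i(K_n)$ as a fixed linear combination of the $V_{\varepsilon_j}(K_n)$.
\end{itemize}
Passing to the limit then defines signed Radon measures $S_i(K)$ as weak limits that satisfy the formula for all $\varepsilon$. Positivity follows because the approximating $S_i(K_n)$ are nonnegative, since mixed discriminants of positive definite matrices are nonnegative.

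\textbf{Main obstacle.} The hardest step is (a), the weak continuity of $V_\varepsilon$ under the approximation. The difficulty is controlling the non-injectivity set and the boundary mass, meaning showing $V_\varepsilon(K)(\partial A)=0$ for enough sets $A$ or arguing through the liminf and limsup inequalities on open and compact sets. An alternative route is to prove the injectivity of $(y,t)\mapsto\partial(s+t\omega_\C)$ off a null set directly and then apply the area formula to the Lipschitz map $\grad(s+t\omega_\C)^*$.
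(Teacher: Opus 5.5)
Your route is the paper's route. The $C^2_+$ case goes through the change of variables $\Phi(y,t)=\G_{K+t\S}^{-1}(y)$ with $|\Jac_\Phi|=(-\omega_\C)\det\Hess(s+t\omega_\C)$; you check injectivity of $\Phi$ explicitly, which the paper's Lemma~\ref{lem Jacobian computation} leaves implicit. Then come $C^2_+$ approximation, vague convergence of $V_\varepsilon$ (your step (a) is the paper's Lemma~\ref{lem vague cv of Veps}), and Lagrange/Vandermonde inversion to define $S_i(K)$ and pass to the limit, with positivity inherited from the $S_i(K_n)$.

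\textbf{Gap in the approximation step.} This step fails as written, though the repair is easy: the approximation lemma of Appendix~\ref{app C2+-approximation} applies only to $\tau$-convex domains.
\begin{itemize}
\item Its proof needs the averaged function $\chi$ to be $\Gamma_\tau$-invariant, and uses compactness of $\partial K/\Gamma_\tau$ to bound $\chi$ from below on $\partial K$.
\item Its conclusion is stated in terms of $d_H$, which is only defined between $\tau$-convex domains.
\end{itemize}
The Steiner formula, by contrast, concerns an arbitrary $\C$-convex domain for an arbitrary proper cone, with no dividing group at all. What you actually need is only that $s$ is a locally uniform limit on $\Omega^*$ of support functions of $C^2_+$ $\C$-convex domains. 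That is the paper's Lemma~\ref{lem approx}, taken from Bonsante--Fillastre. One way to get it: mollify $y\mapsto s(\lambda y)$ with $\lambda<1$ close to $1$, which is finite near $\overline{\Omega^*}$ because $0\in\Omega^*$, and then add $\delta\,\omega_\C$. By Theorem~\ref{theo cv iff cv*}, this is exactly the epi-convergence used in (a).

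\textbf{On step (a).} The difficulties you anticipate do not arise.
\begin{itemize}
\item $K$ is foliated by the pairwise disjoint hypersurfaces $\partial(K+t\S)$, $t>0$. Hence every $X=(x,\lambda)\in K$ has a unique $t>0$, and a unique $y=\grad(s+t\omega_\C)^*(x)$, with $X\in\G_K^{-1}(y)+t\,N_{\S}(y)$. This is the normal projection the paper uses. So there is no multiplicity set to control, and this is also what makes $V_\varepsilon(K)$ additive.
\item On an open set $U$, each $X\in\mathcal{N}_\varepsilon(K)(U)$ eventually lies in $\mathcal{N}_\varepsilon(K_n)(U)$. Indeed $t_n\to t$ by pointwise convergence of $(s_n+(t\pm\delta)\omega_\C)^*$, and $y_n\to y$ by convergence of gradients of convex functions.
\item On a compact set $A$, the set-theoretic $\limsup_n\mathcal{N}_\varepsilon(K_n)(A)$ lies in $\overline{\mathcal{N}_\varepsilon(K)(A)}$. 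The closure only adds subsets of $\partial K$ and $\partial(K+\varepsilon\S)$, which are Lebesgue-null.
\end{itemize}
The paper then applies exactly the criterion you mention (limsup on compact sets, liminf on open sets). No condition like $V_\varepsilon(K)(\partial A)=0$, no area formula, and no Alexandrov-type lemma is needed.
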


Theorem~\ref{theo Steiner} then motivates to introduce the following definition.

\begin{definition}[Area measures of a $\C$-convex domain]
Let $K$ be a $\C$-convex domain. For all $0 \leq i \leq d$, the Radon measure $S_i(K)$ on $\P(\C^*)$ given by Theorem~\ref{theo Steiner} is called the \emph{area measure of $K$ of order $i$}. The measure $S_d(K)$ is called \emph{‘‘the'' area measure} of $K$. We shall denote it by $\Area(K)$.
\end{definition}

\begin{proposition}
For all $0\leq i \leq d$, the function mapping a $\C$-convex domain to its $i$-th area measure $S_i$ is invariant by translation and $i$-homogeneous. In the parametrisation $(\mathbf{P})$ that is, for all $\lambda > 0$, $\tau \in \R^{d+1}$ and $K$ a $\C$-convex domain,
\begin{equation*}
S_i(\lambda K + \tau)= \lambda^{i} S_i(K) \, . 
\end{equation*}
\end{proposition}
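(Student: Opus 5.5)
The plan is to compare the parallel neighbourhoods of $\lambda K+\tau$ with those of $K$, and then to use the uniqueness part of Theorem~\ref{theo Steiner}. We work in the parametrisation $(\mathbf{P})$, fix $\lambda>0$ and $\tau\in\R^{d+1}$, and set $K'=\lambda K+\tau$. By Proposition~\ref{prop support function and sum}, applied with the translate of the trivial domain of Example~\ref{example cone}, or directly from the definition of $\tilde s$, the domain $K'$ is again $\C$-convex with support function $s_{K'}(y)=\lambda s_K(y)+\tau\cdot(y,-1)$. In particular, supporting hyperplanes of $K'$ are exactly the images under $X\mapsto \lambda X+\tau$ of supporting hyperplanes of $K$, with the same directions. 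Hence $\partial_{sp}K'=\lambda\,\partial_{sp}K+\tau$, and $\G_{K'}(\lambda P+\tau)=\G_K(P)$ for all $P\in\partial_{sp}K$. The normal field $N_{\S}$ depends only on the direction $[\varphi]$, not on $K$.

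The key set identity is, for every Borel set $b\subseteq\Omega^*$ and every $\varepsilon>0$,
\begin{equation*}
\mathcal{N}_\varepsilon(K')(b)=\lambda\,\mathcal{N}_{\varepsilon/\lambda}(K)(b)+\tau \, .
\end{equation*}
It follows from the definition: a point $P'+t\,N_{\S}[\varphi]$ with $P'=\lambda P+\tau\in\G_{K'}^{-1}[\varphi]$ and $t\in(0,\varepsilon)$ equals $\lambda\bigl(P+(t/\lambda)N_{\S}[\varphi]\bigr)+\tau$ with $t/\lambda\in(0,\varepsilon/\lambda)$. The converse inclusion is the same computation. Since the Lebesgue measure is translation invariant and scales by $\lambda^{d+1}$ under dilation, we obtain $V_\varepsilon(K')=\lambda^{d+1}V_{\varepsilon/\lambda}(K)$ as measures on $\Omega^*$.

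Next we apply Theorem~\ref{theo Steiner} to $K$ at radius $\varepsilon/\lambda$:
\begin{equation*}
V_\varepsilon(K')=\frac{1}{d+1}\sum_{i=0}^d \lambda^{d+1}\Bigl(\frac{\varepsilon}{\lambda}\Bigr)^{d+1-i}\binom{d+1}{i}S_i(K)=\frac{1}{d+1}\sum_{i=0}^d \varepsilon^{d+1-i}\binom{d+1}{i}\lambda^{i}S_i(K) \, .
\end{equation*}
The measures $\lambda^iS_i(K)$ are Radon, and this identity holds for every $\varepsilon>0$. By the uniqueness in Theorem~\ref{theo Steiner} applied to $K'$, we conclude $S_i(K')=\lambda^iS_i(K)$. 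Uniqueness itself comes from evaluating on each compact $b$: a polynomial in $\varepsilon$ vanishing identically has zero coefficients.

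The main point to be careful about is the first step, namely that the Gauss maps correspond and $\partial_{sp}$ is preserved under homothety and translation. This is elementary, because affine maps of the form $X\mapsto\lambda X+\tau$ with $\lambda>0$ preserve hyperplane directions and the supporting property. The rest is routine bookkeeping. Translation invariance is the case $\lambda=1$. Note also that, when phrased intrinsically in $\A^{d+1}$, the statement only uses homotheties and translations, so the parametrisation plays no essential role.
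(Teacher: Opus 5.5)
Your proof is correct and follows the same route as the paper. The paper checks the scaling identity $V_{\lambda\varepsilon}(\lambda K+\tau)=\lambda^{d+1}V_\varepsilon(K)$, which is equivalent to your $V_\varepsilon(K')=\lambda^{d+1}V_{\varepsilon/\lambda}(K)$, and then concludes from the Steiner formula and the uniqueness of polynomial coefficients; you additionally spell out the correspondence of Gauss maps and $\C$-spacelike boundaries under $X\mapsto\lambda X+\tau$, which the paper leaves as ``one easily checks''.
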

\begin{proof}
In the parametrisation $(\mathbf{P})$, let $K$ be $\C$-convex domain, $\lambda > 0$, and $\tau \in \R^{d+1}$. For all $\varepsilon>0$, using the $(d+1)$-homogeneity of the Lebesgue measure $\mathcal{L}$ on $\R^{d+1}$, and the definition of parallel neighbourhoods, one easily checks that
\begin{equation*}
    V_{\lambda\varepsilon}(\lambda K + \tau ) =  \lambda^{d+1} V_\varepsilon(K) \, .
\end{equation*}
Then, using the local Steiner Formula defining the area measures (Theorem~\ref{theo Steiner}) and the uniqueness of the coefficients of a polynomial function on $\R$, we get the wanted result.
\end{proof}

We shall prove Theorem~\ref{theo Steiner} by approximation. We will first prove it for $C^2_+$ $\C$-convex domains. Then, we will introduce the right notion of convergence for $\C$-convex domains and Radon measures on $\P(\C^*)$, and show that every $\C$-convex domain can be approximated by a sequence of $C^2_+$ $\C$-convex domains. Finally, we will show that it allows to extend the local Steiner Formula to the general case.

\subsubsection{\texorpdfstring{The $C^2_+$ case}{The C2+ case}} Let us start by proving the local Steiner Formula for $C^2_+$ $\C$-convex domains.

\begin{lemma}[Local Steiner Formula for $C^2_+$ $\C$-convex domains]
\label{lem Steiner formula for C2+}
Let $K$ be a $C^2_+$ $\C$-convex domain. Then, there exists a unique family $S_0(K)$, $\dots$, $S_d(K)$ of Radon measures on $\P(\C^*)$ such that for all $\varepsilon>0$,
\begin{equation*}
V_\varepsilon(K) = \frac{1}{d+1}\sum_{i=0}^{d}\varepsilon^{d+1-i} \binom{d+1}{i} S_i(K) \, .
\end{equation*}
\end{lemma}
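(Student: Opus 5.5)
Work in the parametrisation $(\mathbf{P})$ with $s$ the support function of $K$; by Proposition~\ref{prop K is epigraph C2+}, $s$ is $C^2$ with positive definite Hessian on $\Omega^*$. By \eqref{eq eps neigh C2+}, \eqref{eq expression of the inverse of the Gauss map} and \eqref{eq expression of the C-normal field}, the parallel neighbourhood $\mathcal{N}_\varepsilon(K)(b)$ is the image of $b\times(0,\varepsilon)$ under
\begin{equation*}
\Phi(y,t) = \G_K^{-1}(y) + t\,N_{\S}(y) = \bigl(\grad h_t(y),\ \grad h_t(y)\cdot y - h_t(y)\bigr), \qquad h_t \coloneqq s + t\,\omega_\C ,
\end{equation*}
which is affine in $t$. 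The plan is to compute $V_\varepsilon(K)(b)$ via the change of variables formula, so the first step is to show that $\Phi$ is injective on $\Omega^*\times(0,+\infty)$. This follows from the fact that $h_t$ is the support function of the $\C$-convex domain $K+t\S$ (Remark~\ref{rem K+S} and Proposition~\ref{prop support function and sum}). Since $\Hess h_t \geq \Hess s >0$, each $\Phi(\cdot,t)$ is injective with image $\partial_{sp}(K+t\S)$. For $t<t'$, the surfaces $\partial(K+t\S)$ and $\partial(K+t'\S)$ are disjoint, because $K+t'\S = (K+t\S)+(t'-t)\S$ lies strictly inside $K+t\S$; alternatively, $h_{t'}<h_t$ pointwise since $\omega_\C<0$. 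Injectivity can also be read off the foliation of $K$ by $\partial(K+t\S)$ from \cite[Section 6]{Ablondi_25}.

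Next, I compute the Jacobian. The map $\Phi$ is $C^1$ because $s\in C^2$ and $\omega_\C$ is smooth. Write $H_t=\Hess h_t=\Hess s + t\Hess\omega_\C$. The $y$-derivatives give the columns $(H_t e_j,\ (H_t y)_j)$, and the $t$-derivative is $N_{\S}(y)=(\grad\omega_\C,\ \grad\omega_\C\cdot y-\omega_\C)$. Subtracting $y$-combinations of the first $d$ rows from the last row, the last row becomes $(0,\dots,0,-\omega_\C(y))$. Hence
\begin{equation*}
|\Jac\Phi(y,t)| = (-\omega_\C(y))\,\det\bigl(\Hess s(y)+t\,\Hess\omega_\C(y)\bigr),
\end{equation*}
which is a polynomial of degree $d$ in $t$ with positive coefficients given by mixed discriminants. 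Expanding by multilinearity,
\begin{equation*}
\det(A+tB)=\sum_{i=0}^d \binom{d}{i} t^{d-i} D_i(y),
\end{equation*}
where $D_i$ is the mixed discriminant with $i$ copies of $A=\Hess s$ and $d-i$ copies of $B=\Hess\omega_\C$. Integrating in $t\in(0,\varepsilon)$ gives $\binom{d}{i}\varepsilon^{d+1-i}/(d+1-i) = \frac{1}{d+1}\binom{d+1}{i}\varepsilon^{d+1-i}$. I therefore set
\begin{equation*}
S_i(K) \coloneqq (-\omega_\C)\,D_i\bigl(\Hess s,\dots,\Hess s,\Hess\omega_\C,\dots\bigr)\,\mathcal{L}_{\Omega^*}.
\end{equation*}
These measures have continuous nonnegative densities on $\Omega^*$, so they are Radon. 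By Fubini, the Steiner identity then holds for every Borel set $b$. As a consistency check, $S_0 = (-\omega_\C)\det\Hess\omega_\C\,\mathcal{L} = (-\omega_\C)^{-d-1}\mathcal{L}$ by \eqref{eq MA aff sphere}, which should be $\vol_{\S^*}$.

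Uniqueness is standard: for fixed $b$ with $V_\varepsilon(K)(b)<\infty$, for instance $b$ relatively compact by Proposition~\ref{prop Veps is Radon}, the identity is a polynomial in $\varepsilon$. Its coefficients are therefore determined, and Radon measures are determined by their values on relatively compact Borel sets. The main obstacle I expect is the injectivity of $\Phi$ across different values of $t$, i.e.\ making rigorous that the surfaces $\partial_{sp}(K+t\S)$ are pairwise disjoint. The pointwise inequality $h_{t'}<h_t$ alone does not suffice, so I would rely on the strict containment of $K+t'\S$ in the interior of $K+t\S$, using that $\S+\C$ lies in the interior of $\C$ translated appropriately, together with Remark~\ref{rem K+S}.
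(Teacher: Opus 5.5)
Your proposal is correct and follows the paper's proof essentially step by step. The paper also writes $\mathcal{N}_\varepsilon(K)(U)=\Phi\bigl(U\times(0,\varepsilon)\bigr)$ with $\Phi(y,t)=\G_{K+t\S}^{-1}(y)$ and computes $\left\vert\Jac_\Phi\right\vert=-\omega_\C\det\bigl(\Hess(s+t\omega_\C)\bigr)$ by the same determinant manipulation. It then expands in $t$ with nonnegative coefficients, integrates over $(0,\varepsilon)$, and gets uniqueness from the uniqueness of polynomial coefficients; there the injectivity of $\Phi$ is left implicit.

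Your closing worry is unfounded: the pointwise inequality $h_{t'}<h_t$ on $\Omega^*$ does suffice. Suppose $\Phi(y,t)=\Phi(y',t')$ with $t<t'$. Convexity of $h_{t'}$ gives $h_t(y)=\Phi(y,t)\cdot(y,-1)=\Phi(y',t')\cdot(y,-1)=h_{t'}(y')+\grad h_{t'}(y')\cdot(y-y')\le h_{t'}(y)=h_t(y)+(t'-t)\,\omega_\C(y)<h_t(y)$, a contradiction. The strict inequality is only used at the direction $y\in\Omega^*$, where $\omega_\C(y)<0$, so you need no strict-containment statement about the full boundaries $\partial(K+t\S)$.
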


We need a couple of intermediate results in order to prove Lemma~\ref{lem Steiner formula for C2+}. 

\begin{lemma}
\label{Foliation of neighbourhood}
Let $K$ be a $C^2_+$ $\C$-convex domain. Then, for any Borel subset $b$ of $\P(\C^*)$, we have
\begin{equation*}
\mathcal{N}_\varepsilon(K)(b) = \bigcup_{t \in (0,\varepsilon)} \G_{K+t\S}^{-1}(b) \, . 
\end{equation*}
\end{lemma}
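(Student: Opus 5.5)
The plan is to compute the Gauss map of $K+t\S$ explicitly in the parametrisation $(\mathbf{P})$ and compare it with the description \eqref{eq eps neigh C2+} of $\mathcal{N}_\varepsilon(K)(b)$.

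Let $s$ be the support function of $K$. By Proposition~\ref{prop K is epigraph C2+}, $s$ is $C^2$ with positive definite Hessian on $\Omega^*$. The function $\omega_\C$ is smooth and strictly convex, since $\det \Hess \omega_\C = (-\omega_\C)^{-d-2}>0$, and it is the support function of $\S+\C$: indeed $\S$ is the graph of $\omega_\C^*$ and the domain $\S+\C$ is its epigraph, so Proposition~\ref{prop K is epigraph} applies. By Remark~\ref{rem K+S} and Proposition~\ref{prop support function and sum}, $K+t\S = K+(t\S+\C)$ is a $\C$-convex domain with support function $s_t \coloneqq s+t\,\omega_\C$, using the $1$-homogeneity of support functions under scaling. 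Since $s_t$ is $C^2$ with positive definite Hessian, $K+t\S$ should again be a $C^2_+$ $\C$-convex domain, at least on its spacelike part. I would check this via the converse direction: the Legendre transform of a $C^2$ strictly convex $s_t$ on $\Omega^*$ is $C^2$ with nondegenerate Hessian on $\grad s_t(\Omega^*)$. Formula \eqref{eq expression of the inverse of the Gauss map} then gives
\begin{equation*}
\G_{K+t\S}^{-1}(y) = \bp{\grad s_t(y), \grad s_t(y)\cdot y - s_t(y)} = \G_K^{-1}(y) + t\, N_{\S}(y),
\end{equation*}
where the last equality is linearity in $s_t$ combined with \eqref{eq expression of the C-normal field}.

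Taking the union over $y\in b$ and $t\in(0,\varepsilon)$ then yields exactly the right-hand side of \eqref{eq eps neigh C2+}, which proves the lemma.

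The one delicate point is justifying that formula \eqref{eq expression of the inverse of the Gauss map} applies to $K+t\S$. One must make sure that the points $\G_{K+t\S}^{-1}(y)$ for $y\in\Omega^*$ are exactly the points of $\partial_{sp}(K+t\S)$ with normal direction $y$, without invoking any regularity beyond what Proposition~\ref{prop K is epigraph C2+} provides. I would avoid needing the full $C^2_+$ property and argue directly with subdifferentials. For $y\in\Omega^*$ the support function $s_t$ is differentiable at $y$, so the supporting hyperplane of direction $y$ touches $\partial(K+t\S)$ at the unique point $X$ with $\pi(X) = \grad s_t(y)$ and $X\cdot(y,-1) = s_t(y)$. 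This uses the Fenchel equality $y\in\partial s_t^*(x)\iff x\in\partial s_t(y)$ from \cite[Theorem~23.5]{Rockafellar_97}, and it gives the displayed formula for the preimage of $y$ under the set-valued Gauss map.
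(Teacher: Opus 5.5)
Your proof is correct and follows the paper's argument. Both identify $s+t\,\omega_\C$ as the support function of $K+t\S$ via Proposition~\ref{prop support function and sum}, then use the linearity of \eqref{eq expression of the inverse of the Gauss map} in the support function together with \eqref{eq expression of the C-normal field} to get $\G_{K+t\S}^{-1}(y)=\G_K^{-1}(y)+t\,N_{\S}(y)$, and compare with \eqref{eq eps neigh C2+}. Your direct Fenchel-equality check that this inverse Gauss-map formula holds for $K+t\S$ on $\Omega^*$ is a welcome precision that the paper leaves implicit (it simply asserts that $K+t\S$ is $C^2_+$).
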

\begin{proof}
In the parametrisation $(\mathbf{P})$, using the expression of $\mathcal{N}_\varepsilon(K)(b)$ given by \eqref{eq eps neigh C2+}, the expression of the Gauss map of a $C^2_+$ $\C$-convex domain given by \eqref{eq expression of the inverse of the Gauss map}, the expression of the $\C$-normal field given by \eqref{eq expression of the C-normal field}, and Proposition~\ref{prop support function and sum}, we get that for $b$ a Borel subset of $\Omega^*$,
\begin{equation*}
\mathcal{N}_\varepsilon(K)(b) = \left\lbrace \G_{K}^{-1}(y)+t \, N_{\S}(y)\st y\in b , \ t \in (0,\varepsilon )\right\rbrace= \left\lbrace \G_{K+t\S}^{-1}(y)\st y\in b , \ t \in (0,\varepsilon) \right\rbrace . \qedhere
\end{equation*}
\end{proof}
\begin{lemma}
\label{lem Jacobian computation}
In the parametrisation $(\mathbf{P})$, let $K$ be a $C^2_+$ $\C$-convex domain with support function $s$. Then, the map
\begin{equation*}
\function{\Phi}{\Omega^* \times \R^*_+}{K}{(y,t)}{\G_{K+t\S}^{-1}(y)} \, ,
\end{equation*}
induces a diffeomorphism between $U\times (0,\varepsilon)$ and $\mathcal{N}_\varepsilon(K)(U)$, for any open subset $U$ of $\Omega^*$ and $\varepsilon > 0$. Moreover, for all $y\in \Omega^*$ and $t>0$, we have
\begin{equation*}
 \left\vert \Jac_\Phi(y,t) \right\vert = -\omega_\C(y) \det\bp{\Hess (s + t \omega_\C)(y)} \, . 
\end{equation*}
\end{lemma}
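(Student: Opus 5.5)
Write $u_t = s + t\,\omega_\C$. By Proposition~\ref{prop support function and sum}, this is the support function of $K+t\S$. Then \eqref{eq expression of the inverse of the Gauss map} gives the explicit formula
\begin{equation*}
\Phi(y,t) = \bp{\grad u_t(y),\ \grad u_t(y)\cdot y - u_t(y)} .
\end{equation*}
Since $s$ is $C^2$ (Proposition~\ref{prop K is epigraph C2+}) and $\omega_\C$ is smooth, $\Phi$ is $C^1$. By Lemma~\ref{Foliation of neighbourhood}, $\Phi(U\times(0,\varepsilon)) = \mathcal{N}_\varepsilon(K)(U)$. The plan is:
\begin{enumerate}
\item compute the Jacobian determinant directly;
\item show it never vanishes;
\item prove injectivity;
\item conclude by the inverse function theorem.
\end{enumerate}

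For the Jacobian, differentiate the two components. With respect to $y$, the first component gives $\Hess u_t(y)$. The last component gives $\Hess u_t(y)\, y + \grad u_t - \grad u_t = \Hess u_t(y)\, y$. With respect to $t$, the derivative is $\bp{\grad\omega_\C(y),\ \grad\omega_\C(y)\cdot y - \omega_\C(y)}$, which is exactly $N_\S(y)$ by \eqref{eq expression of the C-normal field}. So the Jacobian matrix is
\begin{equation*}
\begin{pmatrix} \Hess u_t(y) & \grad\omega_\C(y) \\ y^\top \Hess u_t(y) & \grad\omega_\C(y)\cdot y - \omega_\C(y)\end{pmatrix}.
\end{equation*}
Subtract from the last row $y^\top$ times the first $d$ rows. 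This kills the bottom-left block and leaves the entry $-\omega_\C(y)$ in the corner. The determinant is therefore $-\omega_\C(y)\det\Hess u_t(y)$, which is the claimed formula.

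This quantity is positive. Indeed $\omega_\C<0$ on $\Omega^*$, and $\Hess u_t = \Hess s + t\Hess\omega_\C$ is positive definite, being the sum of a positive definite matrix (Proposition~\ref{prop K is epigraph C2+}) and $t>0$ times another one ($\omega_\C$ solves \eqref{eq MA aff sphere} and is convex, so its Hessian has positive determinant and is positive definite). So $\Phi$ is a local $C^1$ diffeomorphism.

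The main obstacle is global injectivity; I would split it by comparing the level $t$. Suppose $\Phi(y,t)=\Phi(y',t')$. The key fact is that $K+t\S$ strictly contains the closure of $K+t'\S$ when $t>t'$. This holds because $u_t - u_{t'} = (t-t')\,\omega_\C<0$ on $\Omega^*$, so the support functions are strictly ordered at every $\C$-spacelike direction. Consequently the boundaries $\partial(K+t\S)$ and $\partial(K+t'\S)$ cannot share a $\C$-spacelike point, which forces $t=t'$. One should check this carefully at the $\C$-spacelike boundary, using that $\Phi(y,t)$ has a supporting hyperplane of direction $y\in\Omega^*$. Once $t=t'$, injectivity reduces to injectivity of $\grad u_t$, which follows from the strict convexity of $u_t$, i.e. from $\G_{K+t\S}^{-1}$ being a bijection.

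Finally, an injective local $C^1$ diffeomorphism is a diffeomorphism onto its open image. Hence $\Phi$ restricts to a diffeomorphism $U\times(0,\varepsilon)\to\mathcal{N}_\varepsilon(K)(U)$.
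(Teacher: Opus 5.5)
Your proposal is correct and follows essentially the paper's proof: the same explicit formula for $\Phi$ through the support function $s+t\omega_\C$ of $K+t\S$, the same appeal to Lemma~\ref{Foliation of neighbourhood} for the image, and the same Jacobian determinant. Your row reduction replaces the paper's splitting of the last column by multilinearity, and your injectivity argument makes explicit a step the paper skips when it says that non-vanishing of the Jacobian suffices.

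One slip to fix. Since $u_t<u_{t'}$ on $\Omega^*$ for $t>t'$, it is $K+t'\S$ that contains $\overline{K+t\S}$, not the reverse. The clean check is that $X=\Phi(y,t)=\Phi(y',t')$ with $t>t'$ would give $X\cdot(y',-1)\leq u_t(y')<u_{t'}(y')=X\cdot(y',-1)$. This uses $X\in\overline{K+t\S}$ together with the supporting direction $y'$ at the lower level $t'$, not $y$.
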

\begin{proof}
For all $t\in \R_+$, let us denote $K_t = K+t\S$. It is a $\C$-convex domain domain (see Remark~\ref{rem K+S}) and by Proposition~\ref{prop support function and sum}, it is actually $C^2_+$ with support function $s_t \coloneqq s + t \omega_\C$. Notice that, using \eqref{eq expression of the inverse of the Gauss map}, the diffeomorphism $\G_{K_t}^{-1}$ can be expressed as
\begin{equation*}
\function{\G_{K_t}^{-1}}{\Omega^*}{\partial K_t \subset \R^{d+1}}{y}{\left(\begin{matrix} \grad s(y)+ t\grad \omega_\C(y) \\ \grad s(y)\cdot y + t \grad \omega_\C (y) \cdot y -(s+t\omega_\C)(y)
\end{matrix}\right)} \, . 
\end{equation*}
By Lemma~\ref{Foliation of neighbourhood}, for any open subset $U$ of $\Omega^*$ and $\varepsilon > 0$, we have $\Phi (U \times (0,\varepsilon))= \mathcal{N}_\varepsilon(K)(U)$. In order to conclude, we only have to compute the determinant of the Jacobian matrix of $\Phi$ at any point, and notice that it never vanishes. 

Let $y\in \Omega^*$ and $t>0$. On one side, 
\begin{equation*}
\partial_t \Phi(y,t) = \partial_t \G_{K_t}^{-1}(y) = \left(\begin{matrix} \grad \omega_\C(y) \\ \grad \omega_\C (y) \cdot y -\omega_\C(y)
\end{matrix}\right) \, . 
\end{equation*}
On the other side, for all $i \in \{1,\dots ,d\}$,
\begin{equation*}
\partial_{y_i} \Phi(y,t) = \left(\begin{matrix} \partial_{y_i}\grad s_t(y) \\ \partial_{y_i} \grad s_t (y) \cdot y + \grad s_t (y) \cdot e_i - \partial_{y_i} s_t(y)
\end{matrix}\right) = \left(\begin{matrix} \partial_{y_i} \grad s_t(y) \\ \partial_{y_i} \grad s_t (y) \cdot y
\end{matrix}\right)\, . 
\end{equation*}
Thus, 
\begin{equation*}
\det \Jac_\Phi(y,t) = \det\left(\begin{array}{c|c}
\Hess s_t(y) & \grad \omega_\C(y)\\ 
\hline \partial_{y_1} \grad s_t (y) \cdot y \ \vline \ \cdots \ \vline \ \partial_{y_d} \grad s_t (y) \cdot y & \grad \omega_\C (y) \cdot y -\omega_\C(y) 
\end{array}\right) \, . 
\end{equation*}
By $d$-linearity of the determinant,
\begin{multline*}
\det \Jac_\Phi(y,t) = \det\left(\begin{array}{c|c|c|c}
\partial_{y_1} \grad s_t (y) & \cdots & \partial_{y_d} \grad s_t (y) & \grad \omega_\C(y)\\
\hline \partial_{y_1} \grad s_t (y) \cdot y & \cdots & \partial_{y_d} \grad s_t (y) \cdot y & \grad \omega_\C (y) \cdot y
\end{array}\right) \\
 - \det\left(\begin{array}{c|c}
\Hess s_t(y) & 0\\
\hline \partial_{y_1} \grad s_t (y) \cdot y \ \vline \ \cdots \ \vline \ \partial_{y_d} \grad s_t (y) \cdot y & \omega_\C(y)
\end{array}\right) \, . 
\end{multline*}
Because its lines satisfy $L_{d+1} = y_1 L_1 + \dots+ y_n L_d$, 
\begin{equation*}
\det\left(\begin{array}{c|c|c|c}
\partial_{y_1} \grad s_t (y) & \cdots & \partial_{y_d} \grad s_t (y) & \grad \omega_\C(y)\\
\hline \partial_{y_1} \grad s_t (y) \cdot y & \cdots & \partial_{y_d} \grad s_t (y) \cdot y & \grad \omega_\C (y) \cdot y
\end{array}\right)=0 \, . 
\end{equation*}
Moreover, we have
\begin{equation*}
\det\left(\begin{array}{c|c}
\Hess s_t(y) & 0\\
\hline \partial_{y_1} \grad s_t (y) \cdot y \ \vline \ \cdots \ \vline \ \partial_{y_d} \grad s_t (y) \cdot y & \omega_\C(y)
\end{array}\right) = \omega_\C(y) \det\bp{\Hess s_t(y)}< 0 \, . 
\end{equation*}
So we finally get 
\begin{equation*}
\left\vert \Jac_\Phi(y,t) \right\vert = -\omega_\C(y) \det\bp{\Hess s_t(y)} = -\omega_\C(y) \det\bp{\Hess (s + t \omega_\C)(y)} > 0 \, . \qedhere
\end{equation*}
\end{proof}

\begin{proof}[Proof of Lemma~\ref{lem Steiner formula for C2+}]
Let us work in the parametrisation $(\mathbf{P})$. By Lemma~\ref{lem Jacobian computation}, for any open subset $U \subseteq \Omega^*$,
\begin{equation*}
V_\varepsilon(K)(U) = \int_{\mathcal{N}_\varepsilon(K)(U)} \dif x = \int_U \left\vert \int_0^\varepsilon \Jac_\Phi(y,t) \right\vert \dif t \dif y = \int_U \int_0^\varepsilon (-\omega_\C)(y) \det\Bp{\Hess (s + t \omega_\C)(y)} \dif t \dif y \, .
\end{equation*}
Thus, by the uniqueness part of the Carath\'eodory's Extension Theorem on $\R^d$, that the following measures are equal 
\begin{equation}
\label{eq Veps before radii}
V_\varepsilon(K) = \vp{\int_0^\varepsilon (-\omega_\C)\det\bp{\Hess (s + t \omega_\C)} \dif t} \mathcal{L} \, ,
\end{equation}
where $\mathcal{L}$ is the Lebesgue measure on $\Omega^* \subseteq \R^d$. As for all $y \in \Omega^*$, $\Hess(s)(y)$ and $\Hess (\omega_\C)(y)$ are positive definite symmetric matrices, the determinant $\det (\Hess (s+t\omega_\C)(y))$ is a degree $d$ polynomial in $t$ with only non-negative coefficients (continuous in $y$). Hence, for all $y \in \Omega^*$,
\begin{equation*}
 f(\varepsilon) \coloneqq -\int_0^\varepsilon \omega_\C(y) \det\bp{\Hess (s + t \omega_\C)(y)} \dif t
\end{equation*}
is then a degree $d+1$ polynomial in $\varepsilon$ with non-negative coefficients (continuous in $y$) and with zero as constant coefficient. Hence, identity~\eqref{eq1 proof radii} can be rewritten as 
\begin{equation*}
V_\varepsilon(K) = \frac{1}{d+1}\sum_{i=0}^d \binom{d+1}{i}\varepsilon^{d+1-i} f_i(K) \, \mathcal{L} \, ,
\end{equation*}
where the $f_i(K) : \Omega^* \to \R^*_+$ are non-negative continuous functions. Setting $S_i(K) \coloneqq f_i(K) \, \mathcal{L}$, which is Radon by continuity of $f_i$, we get the wanted result. The uniqueness of the measures $S_i$'s is a direct consequence of the uniqueness of the coefficient of a polynomial function on $\R$.
\end{proof}

\subsubsection{\texorpdfstring{Convergence of $\C$-convex domains and Radon measures}{Convergence of C-convex domains and Radon measures}}

We shall prove Theorem~\ref{theo Steiner} by approximation, so let us introduce the right notions of convergence for our setting.

On one hand, we shall consider the \emph{epi-convergence} of $\C$-convex domain. 

\begin{definition}[Epi-convergence of $\C$-convex domains]
\label{def epi-convergence}
A sequence $(K_n)_{n\in \N} $ of $\C$-convex domains of $\A^{d+1}$ is said to \emph{epi-converge} toward a $\C$-convex domain $K$ if in the parametrisation $(\mathbf{P})$ described in Section~\ref{sec (P)} their respective support functions converges uniformly to the support function of $K$ on every compact subset of $\Omega^*$.
\end{definition}

With a few computations, one can prove that that epi-convergence is truly an affine convergence: the epi-convergent nature of a sequence of $\C$-convex domains of $\A^{d+1}$ does not depend of the choice of the parametrisation $(\mathbf{P})$ of $\A^{d+1}$.

\begin{remark}
By the work of Beer, Rockafellar and Wets \cite{Beer_Rock_Wets_92}, the epi-convergence of $\C$-convex domains is actually equivalent to their convergence for the \emph{Kuratowski--Painlev\'e topology} on subsets of $\A^{d+1}$.
\end{remark}

Hence, in the parametrisation $(\mathbf{P})$, epi-convergence is the \emph{local uniform convergence} of support functions. That is the right notion of convergence to consider because of the following convex analysis theorem. 

\begin{theorem}[{\cite[Theorems 7.17 and 11.34]{Rockafellar_Wets_98}}]
\label{theo cv iff cv*}
Let $(s_n)_{n\in \N}$ be a family of support functions on $\Omega^*$. Then, the sequence $(s_n)_{n\in \N}$ converges locally uniformly towards a support function $s$ on $\Omega^*$ if and only if the sequence of their conjugates $(s_n^*)_{n\in \N}$ converges locally uniformly on $\R^d$ to $s^*$. 
\end{theorem}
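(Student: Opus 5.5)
The plan is to go through \emph{epi-convergence} of extended-valued convex functions on $\R^d$, where Wijsman's theorem on conjugates applies, and to use the fact that for convex functions epi-convergence and local uniform convergence coincide on the interior of the domain of the limit. First I extend each support function. Set $\bar s_n=\mathrm{cl}(s_n)$ on $\overline{\Omega^*}$, meaning the lower semi-continuous hull, with values in $\R\cup\{+\infty\}$ on $\partial\Omega^*$, and set $\bar s_n=+\infty$ outside $\overline{\Omega^*}$. Define $\bar s$ from $s$ in the same way. These are proper, lower semi-continuous and convex, and $\bar s_n^*=s_n^*$. By Proposition~\ref{prop K is epigraph}, $s_n^*$ and $s^*$ are finite convex functions on all of $\R^d$.

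\emph{Step 1: on the primal side, local uniform convergence on $\Omega^*$ is equivalent to $\bar s_n\to\bar s$ in the epigraphical sense on $\R^d$.} For $x\notin\overline{\Omega^*}$ both the $\liminf$ and the recovery conditions are trivial, since all functions equal $+\infty$ there. At interior points, local uniform convergence gives both conditions directly. The real work is at a boundary point $x\in\partial\Omega^*$. Fix $y_0\in\Omega^*$ and $t\in(0,1)$, and let $x_n\to x$. Convexity along the segment $[y_0,x_n]$ gives
\[
 s_n(x_n)\ \ge\ \frac{s_n\bp{(1-t)y_0+tx_n}-(1-t)s_n(y_0)}{t}.
\]
The points $(1-t)y_0+tx_n$ stay in a compact subset of $\Omega^*$, so uniform convergence there yields $\liminf_n s_n(x_n)\ge \bp{s((1-t)y_0+tx)-(1-t)s(y_0)}/t$. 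Letting $t\to1$ and using the radial formula for the lower semi-continuous hull gives $\liminf_n s_n(x_n)\ge\bar s(x)$. For the recovery sequence, take $x_n=(1-t_n)y_0+t_nx$ with $t_n\to1$ slowly enough, chosen by a diagonal argument so that $s_n(x_n)\to\bar s(x)$. The converse implication, from epi-convergence to local uniform convergence on $\mathrm{int}(\mathrm{dom}\,\bar s)=\Omega^*$, is the standard convex-analysis fact \cite[Theorem~7.17]{Rockafellar_Wets_98}.

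\emph{Step 2: Wijsman's theorem} \cite[Theorem~11.34]{Rockafellar_Wets_98} states that for proper lower semi-continuous convex functions, $\bar s_n$ epi-converges to $\bar s$ if and only if $\bar s_n^*$ epi-converges to $\bar s^*$. Properness of the limits holds here because $s$ is finite on $\Omega^*$ and $s^*$ is finite on $\R^d$. \emph{Step 3:} since $s_n^*$ and $s^*$ are finite convex functions on all of $\R^d$, Step~1's interior equivalence (the $\R^d$ version of \cite[Theorem~7.17]{Rockafellar_Wets_98}) identifies epi-convergence of the conjugates with local uniform convergence on $\R^d$. Chaining Steps 1 to 3 proves both directions.

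I expect the main obstacle to be Step 1 at the boundary of $\Omega^*$. There one must check that local uniform convergence on the open set $\Omega^*$ alone controls the epi-limit of the lower semi-continuous extensions, including points where the boundary values of $\bar s_n$ or $\bar s$ are $+\infty$. In the reverse direction, one must also make sure the limit obtained from Wijsman's theorem is again the extension of a finite convex function on $\Omega^*$, which follows from Proposition~\ref{prop K is epigraph} applied to $s^*$.
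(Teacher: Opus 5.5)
Your proposal is correct and takes the same route as the paper, which gives no proof beyond citing Rockafellar--Wets Theorem~7.17 (epi-convergence versus uniform convergence on compact sets avoiding the boundary of the limit's domain) and Theorem~11.34 (Wijsman's theorem); you chain exactly these two results through the lower semi-continuous extensions $\bar s_n$. Your hand-made boundary analysis in Step~1 is valid but not needed: Theorem~7.17 also characterises epi-convergence by pointwise convergence on a dense set, and $\bar s_n\to\bar s$ pointwise on the dense set $\R^d\setminus\partial\Omega^*$, since all the extensions equal $+\infty$ off $\overline{\Omega^*}$.
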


Then, a key fact to prove Theorem~\ref{theo Steiner} will be the following $C^2_+$ approximation lemma. 

\begin{lemma}
\label{lem approx}
Let $K$ be a $\C$-convex domain. Then, there exists a sequence of $C^2_+$ $\C$-convex domain epi-converging to $K$.
\end{lemma}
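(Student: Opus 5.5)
The plan is to work in the parametrisation $(\mathbf{P})$ and build the approximants at the level of support functions. By Proposition~\ref{prop K is epigraph} and Proposition~\ref{prop K is epigraph C2+}, it is enough to produce convex functions $s_n:\Omega^*\to\R$ which converge locally uniformly to $s=s_K$ and whose conjugates $s_n^*$ are $C^2$ on $\R^d$ with $\Hess s_n^*>0$ and $\Omega^*\subseteq \grad s_n^*(\R^d)\subseteq\overline{\Omega^*}$. Rather than smoothing $s$ directly, I would smooth it in a way that reflects the geometry of $\partial K$. Smoothing $s$ is delicate near $\partial\Omega^*$, where $s$ may blow up or fail to extend, and a mollification would not be defined up to the boundary.

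Step 1 is to exhaust $\Omega^*$ by compact convex sets $\overline{\Omega_n^*}$ with smooth uniformly convex boundaries, $\Omega_n^*\uparrow\Omega^*$. On a neighbourhood of $\overline{\Omega_n^*}$, I mollify $s$ to get a smooth convex $\sigma_n$ with $\|\sigma_n-s\|_{L^\infty(\Omega_n^*)}<1/n$. I then add $\frac1n\vert y\vert^2$ to make it uniformly convex.

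Step 2 is to repair the boundary behaviour using the affine sphere. I set
\begin{equation*}
s_n=\max\nolimits_{\mathrm{smooth}}\bigl(\sigma_n+\tfrac1n|y|^2,\ c_n+\omega_\C\cdot\lambda_n\bigr),
\end{equation*}
or more simply $s_n=\tilde\sigma_n+\frac1n\omega_\C$, where $\tilde\sigma_n$ is a convex extension of $\sigma_n$ to all of $\Omega^*$ that stays locally close to $s$. The key point is that adding $t\,\omega_\C$ corresponds to $K\mapsto K+t\S$ (Proposition~\ref{prop support function and sum} and Remark~\ref{rem K+S}). Since $\omega_\C$ satisfies \eqref{eq MA aff sphere} and has infinite gradient at $\partial\Omega^*$, its conjugate $\omega_\C^*$ is smooth and uniformly convex with gradient image exactly $\Omega^*$. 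Adding $\frac1n\omega_\C$ therefore forces the correct asymptotic behaviour and positive definite Hessian. Concretely, I would take $\tilde\sigma_n$ to be a regularised support function of the truncated domain $K_n\supseteq K$ obtained by keeping only the supporting half-spaces with directions in $\overline{\Omega_n^*}$. Then $s_{K_n}$ is finite on all of $\Omega^*$ and agrees with $s$ on $\Omega_n^*$.

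Step 3 is to check the convergence. On any compact set $A\subset\Omega^*$, for $n$ large we have $A\subset\Omega_n^*$, so
\begin{equation*}
|s_n-s|\le \tfrac1n+\tfrac1n\sup_A|y|^2+\tfrac1n\sup_A|\omega_\C|\to0 .
\end{equation*}
Theorem~\ref{theo cv iff cv*} then gives epi-convergence.

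The main obstacle is verifying that $s_n^*$ is genuinely $C^2$ with positive definite Hessian and the right gradient image, i.e. that $s_n$ is $C^2$, strictly convex, and has gradient blowing up at $\partial\Omega^*$. The last property is what makes $s_n^*$ finite and differentiable everywhere. The regularisation of the truncated domain $K_n$ must keep the sum with $\frac1n\omega_\C$ smooth up to $\partial\Omega^*$. I would try to handle this by mollifying $K_n$'s boundary function $s_{K_n}^*$ in $\R^d$, which is Lipschitz with gradient in $\overline{\Omega_n^*}$, so mollification keeps the gradient in $\overline{\Omega_n^*}$. I would then add $\frac1n\omega_\C$ on the support side. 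The resulting support function is the infimal-convolution dual of a smooth function and a $C^2_+$ one, and it is $C^2_+$.
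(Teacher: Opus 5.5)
Your reduction to support functions is the right one; it is all the paper itself does before invoking \cite[Lemma~2.16]{Bonsante_Fillastre_17}. However, the construction you commit to in Step~2 does not produce $C^2_+$ $\C$-convex domains, for two independent reasons.

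First, you truncate on the wrong side. The set $K_n=\bigcap_{y\in\overline{\Omega_n^*}}\{X \,:\, X\cdot(y,-1)<s(y)\}\supseteq K$ is the epigraph of $f_n(x)=\sup_{y\in\overline{\Omega_n^*}}\bigl(x\cdot y-s(y)\bigr)$, and all subgradients of $f_n$ lie in $\overline{\Omega_n^*}$. So $s_{K_n}=f_n^*$ agrees with $s$ on $\overline{\Omega_n^*}$ but equals $+\infty$ on $\Omega^*\setminus\overline{\Omega_n^*}$. Indeed, if $v$ separates $y_1\notin\overline{\Omega_n^*}$ from $\overline{\Omega_n^*}$, then $tv\cdot y_1-f_n(tv)\to+\infty$ as $t\to+\infty$. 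Equivalently, the asymptotic cone of $K_n$ strictly contains $\overline{\C}$, so $K_n$ is not $\C$-convex. The property $\grad(f_n*\rho)\in\overline{\Omega_n^*}$, which you present as an advantage, is exactly what keeps the support function infinite after mollification. Adding $\frac1n\omega_\C$ cannot repair a function that is already $+\infty$ there.

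Second, even where everything is finite, your final claim is false: smoothness of a boundary function $F$ does not make $F^*$ differentiable. Suppose $\partial K$ contains a flat piece whose normal direction lies in $\Omega_n^*$. Then $f_n$, and hence $F=f_n*\rho$, is affine on an open set. So $F^*$ and $F^*+\frac1n\omega_\C$ have a corner, and the conjugate of the latter (the infimal convolution of $F$ with $(\frac1n\omega_\C)^*$) is affine on a translate of that open set. Geometrically, the Minkowski sum with $\frac1n\S$ keeps a translated flat piece, where the Hessian of the boundary function vanishes. Since $\Hess s_n^*=(\Hess s_n)^{-1}\circ\grad s_n^*$, what you need is a support function $s_n$ that is itself $C^2$ on all of $\Omega^*$, with $\Hess s_n>0$ and $\grad s_n(\Omega^*)=\R^d$. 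The smoothing must therefore happen on the support side and up to $\partial\Omega^*$, and this is precisely the step your proposal never supplies.

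There are two easy ways to supply it.
\begin{itemize}
\item Truncate from the inside. Let $L_n$ be the closed convex hull of $\G_K^{-1}(\overline{\Omega_n^*})$, which is bounded as in the proof of Proposition~\ref{prop Veps is Radon}. The domain $L_n+\C\subseteq K$ has support function $y\mapsto\max_{X\in L_n}X\cdot(y,-1)$. This function is finite, convex and Lipschitz on all of $\R^d$ and equals $s$ on $\overline{\Omega_n^*}$, so it can be mollified on $\R^d$.
\item More simply, use $0\in\Omega^*$. For $\lambda\in(0,1)$ the function $y\mapsto s(\lambda y)$ is convex on the neighbourhood $\lambda^{-1}\Omega^*$ of $\overline{\Omega^*}$, so it can be mollified there.
\end{itemize}
Either way you get smooth convex functions $\sigma_n$, Lipschitz up to $\partial\Omega^*$, with $\sigma_n\to s$ locally uniformly.

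Now set $s_n\coloneqq\sigma_n+\frac1n\omega_\C$. Then $\Hess s_n\geq\frac1n\Hess\omega_\C>0$. Moreover $\grad s_n(\Omega^*)=\R^d$: for each $x\in\R^d$, the minimum of $y\mapsto s_n(y)-x\cdot y$ over $\overline{\Omega^*}$ cannot be attained on $\partial\Omega^*$. There $\omega_\C$ has inward directional derivatives equal to $-\infty$, while $\sigma_n$ is Lipschitz. Hence $\grad s_n:\Omega^*\to\R^d$ is a diffeomorphism, $s_n^*$ is $C^2$ with positive definite Hessian, and $\grad s_n^*(\R^d)=\Omega^*$. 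Your Step~3 then concludes.

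Your discarded regularised-maximum idea can also be completed. Take the $\Omega_n^*$ to be sublevel sets of $\omega_\C$, and choose $\lambda_n$ large enough that $c_n+\lambda_n\omega_\C$ dominates near $\partial\Omega_n^*$ but is dominated on a smaller sublevel set. As written, however, none of this is carried out.
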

\begin{proof}
Let us work in $(\mathbf{P})$ and let $s$ be a support function of a $\C$-convex. We want to prove that there exists a sequence of $C^2_+$ support functions converging locally uniformly to $s$ on $\Omega^*$. This is {\cite[Lemma~2.16]{Bonsante_Fillastre_17}}.
\end{proof}

\begin{remark}
In fact, the proof of {\cite[Lemma~2.16]{Bonsante_Fillastre_17}} by Bonsante and Fillastre is written in the case where the cone $\C$ is quadratic, but it directly translates to our more general setting. Throughout the rest of the article, every time we refer to \cite{Bonsante_Fillastre_17} for a fact, it means that their proof was not relying on any hyperbolic geometry (i.e. on the fact that the cones $\C$ and $\C^*$ are quadratic, so that $\S = \S^*=\H^d$) and thus that it can immediately be adapted to our more general case. 
\end{remark}

On the other hand, we shall consider the \emph{vague convergence}\footnote{Vague convergence of Radon measures on $\P(\C^*)$ is a form of weak*-convergence using the duality between Radon measures on $\P(\C^*)$, and $C^0_c(\P(\C^*))$, the space of compactly supported continuous function on $\P(\C^*)$. It is sometime called weak convergence, but that can cause confusion as, often, weak convergence designates the (stronger) weak*-convergence obtained through the duality between Radon measures on $\P(\C^*)$ and $C^0_b(\P(\C^*))$, the space of bounded continuous function on $\P(\C^*)$. } 
of Radon measures on $\P(\C^*)$.

\begin{definition}[Vague convergence of Radon measures]
A sequence $(\mu_n)_{n\in \N}$ of Radon measures on $\P(\C^*)$ converges vaguely towards another Radon measure $\mu$ on $\P(\C^*)$, if for all $f \in C^0_c(\P(\C^*))$, a compactly supported continuous function on $\P(\C^*)$, 
\begin{equation*}
\int_{\P(\C^*)} f \,\dif \mu_n \xrightarrow[n\to +\infty]{} \int_{\P(\C^*)} f \,\dif \mu \, . 
\end{equation*}
\end{definition}

The epi-convergence of $\C$-convex domains and the vague convergence of their volume measures are related through the following fact. 

\begin{lemma}
\label{lem vague cv of Veps}
Let $(K_n)_{n\in\N}$ be a sequence of $\C$-convex domains epi-converging to a $\C$-convex domain $K$. Then, for all $\varepsilon>0$, the sequence of measures $(V_\varepsilon(K_n))_{n\in\N}$ converges vaguely to $V_\varepsilon(K)$. 
\end{lemma}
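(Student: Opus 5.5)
We work in the parametrisation $(\mathbf{P})$. Let $s_n$ and $s$ be the support functions of $K_n$ and $K$, so $s_n \to s$ locally uniformly on $\Omega^*$; by Theorem~\ref{theo cv iff cv*}, also $s_n^*\to s^*$ locally uniformly on $\R^d$. The plan is to use the standard criterion for vague convergence of Radon measures on the open set $\Omega^*$. It suffices to show two things:
\begin{enumerate}[label=(\roman*)]
\item for every compact $A\subset\Omega^*$, $\limsup_n V_\varepsilon(K_n)(A)\le V_\varepsilon(K)(A)$;
\item for every open $U\Subset\Omega^*$, $\liminf_n V_\varepsilon(K_n)(U)\ge V_\varepsilon(K)(U)$.
\end{enumerate}
We also need the masses $V_\varepsilon(K_n)(A)$ to be uniformly bounded on compacts. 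These are the Portmanteau-type inequalities, and together they give $\int f\,\dif V_\varepsilon(K_n)\to\int f\,\dif V_\varepsilon(K)$ for $f\in C^0_c(\Omega^*)$.

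\textbf{Step 1 (uniform boundedness).} Fix a compact $A\subset\Omega^*$ and a compact neighbourhood $A'$ of $A$. Local uniform convergence of convex functions gives a uniform Lipschitz bound on $A'$, so by \cite[Lemma~A.22]{Figalli_17} the sets $\partial s_n(A)$ lie in a fixed ball for $n$ large. The argument of Proposition~\ref{prop Veps is Radon} then shows that the points of $\G_{K_n}^{-1}(A)$ are $(x,s_n^*(x))$ with $x$ in a fixed compact set, where $s_n^*$ is uniformly bounded. Hence all $\mathcal{N}_\varepsilon(K_n)(A)$ lie in a fixed compact set $B\subset\R^{d+1}$.

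\textbf{Step 2 (upper semicontinuity on compacts).} Show that if $Z_n\in\mathcal{N}_\varepsilon(K_n)(A)$ with $Z_n\to Z$, then $Z\in\overline{\mathcal{N}_\varepsilon(K)(A)}$. Write $Z_n=(x_n,s_n^*(x_n))+t_nN_{\S}(y_n)$ with $y_n\in A\cap\partial s_n^*(x_n)$ and $t_n\in(0,\varepsilon)$. Extract convergent subsequences $x_n\to x$, $y_n\to y\in A$, $t_n\to t\in[0,\varepsilon]$. Convergence of subgradients under locally uniform convergence of convex functions gives $y\in\partial s^*(x)$, so $Z=(x,s^*(x))+tN_{\S}(y)$ lies in the closure of the neighbourhood. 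Then $\limsup_n \mathbf 1_{\mathcal{N}_\varepsilon(K_n)(A)}\le\mathbf 1_{\overline{\mathcal{N}_\varepsilon(K)(A)}}$ pointwise, and reverse Fatou on $B$ gives
\[
\limsup_n V_\varepsilon(K_n)(A)\le \vol\bigl(\overline{\mathcal{N}_\varepsilon(K)(A)}\bigr).
\]
It remains to check that the extra boundary pieces are null: the layers $t=0$ and $t=\varepsilon$ are contained in $\partial_{sp}K$ and $\partial_{sp}(K+\varepsilon\S)$, which are Lipschitz graphs and hence of measure zero. The set $\overline{\mathcal{N}_\varepsilon(K)(A)}$ differs from $\mathcal{N}_\varepsilon(K)(A)$ only by these layers, since $A$ is closed.

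\textbf{Step 3 (lower semicontinuity on opens).} Here we use the foliation picture, which we expect to hold in general as in Lemma~\ref{Foliation of neighbourhood} and via the normal projection of \cite[Section 6]{Ablondi_25}. A point $Z$ of $K$ with $0<t<\varepsilon$ lies on $\partial(K+t\S)$ and has a well-defined $\C$-normal projection, with direction set in $U$. We need to show that for almost every $Z\in\mathcal{N}_\varepsilon(K)(U)$, we have $Z\in\mathcal{N}_\varepsilon(K_n)(U)$ for $n$ large. This follows from stability of the decomposition $Z=P+tN_{\S}(y)$: the time function $t_n(Z)$, defined by $Z\in\partial(K_n+t_n\S)$, converges to $t(Z)$, since $K_n+t\S$ epi-converges to $K+t\S$. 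The normal directions converge at points where the Gauss map of $K+t\S$ is single-valued, which is almost every point, since $(K+t\S)$ is $C^{1,1}$-like in the interior and the set of non-differentiability points is null. Openness of $U$ then finishes the argument by Fatou.

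We expect this step to be the main obstacle: it requires the almost-everywhere uniqueness of the normal direction together with continuity of the time function under epi-convergence. If that proves hard, the fallback is to establish the vague convergence first for the measures $V_\varepsilon(K_n)$ tested against $\mathcal{N}$ of open sets via Step 2 applied to complements inside a slightly larger compact, using the mass identity $V_\varepsilon(K_n)(A')\to V_\varepsilon(K)(A')$ for compact $A'$ with null boundary layers.
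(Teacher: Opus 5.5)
This is essentially the paper's proof. It uses the same compact/open criterion \cite[Theorem~1.40]{Evans_Gariepy_15}, the same upper-limit inclusion into $\overline{\mathcal{N}_\varepsilon(K)(A)}$ up to the two null layers in $\partial K$ and $\partial(K+\varepsilon\S)$, and the same stability of the normal decomposition $X=P_n+t_nN_{\S}(y_n)$ on open $U$; your Step~1 even makes explicit the uniform boundedness that the reverse-Fatou step needs, which the paper leaves tacit.

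The hedging to ``almost every'' point in Step~3, and the fallback, are unnecessary. For $t>0$ the Gauss map of $K+t\S$ is single-valued at every point; the paper cites \cite[Proposition~6.7]{Ablondi_25} for this and writes $y_n=\grad(s_n+t_n\omega_\C)^*(x)$. Hence $\mathcal{N}_\varepsilon(K)(U)\subseteq\liminf_n\mathcal{N}_\varepsilon(K_n)(U)$ holds at every point, once $t_n\to t$ is obtained from the sandwich $(s_n+(t-\delta)\omega_\C)^*(x)<\lambda<(s_n+(t+\delta)\omega_\C)^*(x)$ for large $n$.
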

\begin{proof}
Let us work in the parametrisation $(\mathbf{P})$ described in Section~\ref{sec (P)}. The hypothesis becomes: the respective support functions $(s_n)_{n\in\N}$ of the domains $(K_n)_{n\in\N}$ converge locally uniformly on $\Omega^*$ to $s$, the support function of $K$. We want to show that the sequence of Radon measures $(V_\varepsilon(K_n))_{n\in\N}$ on $\Omega^*$ converges vaguely to $V_\varepsilon(K)$. 

Let $\varepsilon>0$. By a classical measure theory fact \cite[Theorem~1.40]{Evans_Gariepy_15}, we just have to prove that:
\begin{enumerate}[label=(\arabic*)]
 \item \label{pt limsup} for any compact set $A\subseteq \Omega^*$, we have $\limsup_{n} V_\varepsilon(K_n)(A) \leq V_\varepsilon(K)(A)$,
 \item \label{pt liminf} for any open set $U\subseteq \Omega^*$, we have $V_\varepsilon(K)(U) \leq \liminf_{n} V_\varepsilon(K_n)(U)$. 
\end{enumerate}

For point~\ref{pt limsup}, let $A\subseteq \Omega^*$ be a compact set. We claim that 
\begin{equation}
\label{claim1}
 \limsup_{n \to +\infty} \mathcal{N}_\varepsilon(K_n)(A) \subseteq \overline{\mathcal{N}_\varepsilon(K)(A)} \, . 
\end{equation}
Thus, we have,
\begin{equation*}
\mathcal{L}\vp{\limsup_{n\to+\infty} \mathcal{N}_\varepsilon(K_n)(A)} \leq \mathcal{L}\bp{\overline{\mathcal{N}_\varepsilon(K)(A)}} = \mathcal{L}\bp{\mathcal{N}_\varepsilon(K)(A)} + \mathcal{L}\bp{\G^{-1}_K(A)}+ \mathcal{L}\bp{\G^{-1}_{K + \varepsilon \S}(A)} \, . 
\end{equation*}
Note that $\G^{-1}_K(A) \subseteq \partial K$ and $\G^{-1}_{K + \varepsilon \S}(A) \subseteq \partial (K+ \varepsilon \S)$. Thus, as the boundary of a convex set has zero Lebesgue measure \cite[Theorem~1]{Lang_86}, we have 
\begin{equation*}
\mathcal{L}\bp{\overline{\mathcal{N}_\varepsilon(K)(A)}} = \mathcal{L}\bp{\mathcal{N}_\varepsilon(K)(A)} = V_\varepsilon(K)(A) \, .
\end{equation*}
That concludes the proof of point~\ref{pt limsup}, as then
\begin{equation*}
 \limsup_{n\to+\infty} V_\varepsilon(K_n)(A) \leq \mathcal{L}\vp{\limsup_{n\to+\infty} \mathcal{N}_\varepsilon(K_n)(A)} \leq \mathcal{L}\bp{\overline{\mathcal{N}_\varepsilon(K)(A)}} = V_\varepsilon(K)(A) \, . 
\end{equation*}

Now, let us prove our claim that the inclusion~\eqref{claim1} is true. Let $X \in \limsup_n \mathcal{N}_\varepsilon(K_n)(A)$. Then, there exist an increasing sequence $(n_k)_{k \in \N}$ of integers such that for all $k\in \N$,
\begin{equation}
\label{eq dec cosm nk}
 X = P_{n_k} + t_{n_k} \, N_{\S}(y_{n_k}) \, ,
\end{equation}
where $t_{n_k}\in (0,\varepsilon)$, $y_{n_k} \in A$, and $P_{n_k} = \bp{p_{n_k},s_{n_k}^*(p_{n_k})}$ with $y_{n_k} \in \partial s^*_{n_k}(p_{n_k})$. By the Fenchel Inequality Theorem \cite[Theorem~23.5]{Rockafellar_97}, each $p_{n_k}$ is in $\partial s_{n_k}(y_{n_k})$. As $A$ and $[0,\varepsilon]$ are compact, up to extracting another subsequence, we can assume that we have $y \in A$ and $t \in [0,\varepsilon]$ such that $y_{n_k} \xrightarrow[]{} y$ and $ t_{n_k} \to t$. By \cite[Theorem~24.5]{Rockafellar_97}, for all $\delta>0$, there is a rank $k_\delta \in \N$ such that for all $k\geq k_\delta$, 
\begin{equation*}
 p_{n_k} \in \partial s(y) + B_\delta \, ,
\end{equation*}
where $B_\delta$ is the Euclidean ball of radius $\delta$. Moreover, as $\partial s(y)$ is compact \cite[Theorem~A.22]{Figalli_17}, up to extracting again we can assume that there is $p \in \partial s(y)$ such that $p_{n_k} \to p$. Thus, setting $P = (p,s^*(p))$, we have, taking the limit of equation~\eqref{eq dec cosm nk}, that 
\begin{equation*}
 X = P + t \, N_{\S}(y) \, ,
\end{equation*} 
where $t\in [0,\varepsilon]$, $y \in A$, and $P = (p,s^*(p))$ with $p \in \partial s(y)$. That is $X \in \overline{\mathcal{N}_\varepsilon(K)(A)}$, proving our claim. 

For point~\ref{pt liminf}, let $U\subseteq\Omega^*$ be an open set. 
We claim that
\begin{equation}
\label{claim2}
 \mathcal{N}_\varepsilon(K)(U) \subseteq \liminf_{n\to +\infty} \mathcal{N}_\varepsilon(K_n)(U) \, . 
\end{equation}
Then, that implies that 
\begin{equation*}
 V_\varepsilon(K)(U) = \mathcal{L}\bp{\mathcal{N}_\varepsilon(K)(U)} \leq \mathcal{L}\vp{\liminf_{n\to+\infty} \mathcal{N}_\varepsilon(K_n)(U)} \leq \liminf_{n\to+\infty} V_\varepsilon(K_n)(U) \, ,
\end{equation*}
concluding this point. 

Now, let us prove our claim that inclusion~\eqref{claim2} is true. Let $X \in \mathcal{N}_\varepsilon(K)(U)$, we have to show that there exists $n_{X} \in \N$ such that for all $n \geq n_{X}$ we have $X \in \mathcal{N}_\varepsilon(K_n)(U)$. 

Decompose $X$ as $X = (x,\lambda)\in \R^d\times \R$ and set $t \in (0,\varepsilon)$, $y \in U$, and $P\in \G^{-1}_K(y)$, such that 
\begin{equation*}
 X = P + t \, N_{\S}(y) \, . 
\end{equation*}
Then, $t \in (0,\varepsilon)$ is such that $X \in \graph (s+t\omega_\C)^*$, so $\lambda = (s+t\omega_\C)^*(x) > s^*(x)$. 

By Theorem~\ref{theo cv iff cv*}, $s_n^*$ converges pointwise to $s^*$, so for $n$ big enough, $\lambda > s_n^*(x)$ and $X \in K_n = \epi(s_n^*)$, and we can consider the normal projection \cite[section 6]{Ablondi_25} of $X$ onto $\partial_{sp}K_n$: there are $t_n \in \R$, $y_n \in \Omega^*$, and $P_n\in \G^{-1}_{K_n}(y_n)$, such that
\begin{equation*}
 X = P_n + t_n \, N_{\S}(y_n) \, . 
\end{equation*}

Now let us show that $t_n \to t$. Let $\delta \in (0,t)$. As, by Theorem~\ref{theo cv iff cv*},
\begin{equation*}
(s_n +(t\pm \delta) \, \omega_\C)^*(x) \xrightarrow[n \to +\infty]{} (s +(t\pm \delta) \omega_\C)^*(x) \, , 
\end{equation*}
and 
\begin{equation*}
(s +(t- \delta) \,\omega_\C)^*(x)< \lambda =(s +t\omega_\C)^*(x) <(s +(t + \delta) \,\omega_\C)^*(x) \, ,
\end{equation*}
for $n$ big enough, we have 
\begin{equation*}
(s_n +(t- \delta) \, \omega_\C)^*(x)< \lambda <(s_n +(t + \delta) \omega_\C)^*(x) \, ,
\end{equation*}
implying that $t_n$, which is such that $\lambda = (s_n +t_n\omega_\C)^*(x)$, is in the interval $(t-\delta,t+\delta)$. 

Now knowing that $t_n \to t$, as, by \cite[Proposition~6.7]{Ablondi_25}, $y_n$ satisfies 
\begin{equation*}
 y_n = \grad(s_n +t_n \omega_\C)^*(x) \, ,
\end{equation*}
by \cite[Theorem~24.5]{Rockafellar_97}, we have $y_n \to y$. As $t \in (0,\varepsilon)$ and $y$ is in the open set $U$, there exists $n_X \in \N$ such that for all $n\leq n_X$ we have $t_n\in (0,\varepsilon)$ and $y_n\in U$, i.e. $X \in \mathcal{N}_\varepsilon(K_n)(U)$, proving the claim. 
\end{proof}

\subsubsection{Proof of the Steiner formula}

We now have all the tools needed to prove the local Steiner Formula in the most general setting. 

\begin{proof}[Proof of Theorem~\ref{theo Steiner}]
Let $K$ be a $\C$-convex domain. By Lemma~\ref{lem Steiner formula for C2+}, we can use a Lagrange polynomial interpolation with $d+1$ fixed distinct positive real numbers $\varepsilon_0,\dots, \varepsilon_{d}$, in order to get real numbers $(a_{ik})_{0\leq i,k\leq d}$ (independent of any choice of a $\C$-convex domain), such that for any $C^2_+$ $\C$-convex domain $K'$ we have 
\begin{equation}
\label{eq polarisation Si}
 S_i(K') = \sum_{k=0}^{d} a_{ik} V_{\varepsilon_k}(K') \, . 
\end{equation}
Thus, we set $S_i(K)$ to be the (a priori signed) Radon measure on $\P(\C^*)$ defined by 
\begin{equation}
\label{eq def Si}
 S_i(K) \coloneqq \sum_{k=0}^{d} a_{ik} V_{\varepsilon_k}(K) \, . 
\end{equation}
By Lemma~\ref{lem approx}, we can set $(K_n)_{n\in\N}$ to be a sequence of $C^2_+$ $\C$-convex epi-converging towards $K$. By Lemma~\ref{lem vague cv of Veps}, for any $\varepsilon$, the measures for $V_{\varepsilon}(K_n)$ converges vaguely to $V_{\varepsilon}(K)$. Thus, by \eqref{eq polarisation Si} and \eqref{eq def Si}, the $S_i(K_n)$'s converges vaguely to the $S_i(K)$'s, which are non-negative Radon measures because the $V_{\varepsilon_k}(K)$ are Radon measures by Proposition~\ref{prop Veps is Radon}. At the vague limit of the Steiner Formula for the $\C^2_+$ $\C$-convex domains $(K_n)_n$ we get: 
\begin{equation*}
V_\varepsilon(K) = \frac{1}{d+1}\sum_{i=0}^{d}\varepsilon^{d+1-i} \binom{d+1}{i} S_i(K) \, . \qedhere
\end{equation*}
\end{proof}

\begin{corollary}
\label{cor vague limit Veps/eps}
Let $K$ be a $\C$-convex domain. Then, its area measure $\Area(K) \coloneqq S_d(K)$ is the following vague limit of Radon measures:
\begin{equation*}
 \frac{V_\varepsilon(K)}{\varepsilon} \xrightarrow[\varepsilon \to 0]{v} \Area(K) \, . 
\end{equation*}
\end{corollary}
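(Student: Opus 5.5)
The plan is to read off the limit directly from the local Steiner Formula of Theorem~\ref{theo Steiner}. Dividing that identity by $\varepsilon>0$ gives, as an equality of Radon measures on $\P(\C^*)$,
\begin{equation*}
\frac{V_\varepsilon(K)}{\varepsilon} = \frac{1}{d+1}\sum_{i=0}^{d}\varepsilon^{d-i}\binom{d+1}{i} S_i(K) = S_d(K) + \frac{1}{d+1}\sum_{i=0}^{d-1}\varepsilon^{d-i}\binom{d+1}{i} S_i(K) \, ,
\end{equation*}
since for $i=d$ the coefficient is $\frac{1}{d+1}\binom{d+1}{d}=1$ and $\varepsilon^{0}=1$. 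So $V_\varepsilon(K)/\varepsilon - \Area(K)$ is a finite linear combination of the fixed Radon measures $S_0(K),\dots,S_{d-1}(K)$, with coefficients $\varepsilon^{d-i}\binom{d+1}{i}/(d+1)$ that all tend to $0$ as $\varepsilon\to 0$ because $d-i\geq 1$.

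Next I will test against an arbitrary $f\in C^0_c(\P(\C^*))$. Each $S_i(K)$ is a (non-negative) Radon measure by Theorem~\ref{theo Steiner}, hence finite on the compact set $\mathrm{supp} f$, so $\int f\,\dif S_i(K)$ is a finite real number, bounded in absolute value by $\Vert f\Vert_\infty\, S_i(K)(\mathrm{supp} f)$. Therefore
\begin{equation*}
\left\vert \int f \, \dif\frac{V_\varepsilon(K)}{\varepsilon} - \int f\,\dif \Area(K)\right\vert \leq \frac{\Vert f\Vert_\infty}{d+1}\sum_{i=0}^{d-1}\varepsilon^{d-i}\binom{d+1}{i}\, S_i(K)(\mathrm{supp} f) \xrightarrow[\varepsilon\to 0]{} 0 \, ,
\end{equation*}
which is precisely vague convergence of $V_\varepsilon(K)/\varepsilon$ to $\Area(K)$.

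There is no real obstacle here. The only point deserving care is that the Steiner identity holds as an equality of measures, i.e. on every Borel set, so integrating it against $f$ is legitimate. Also, the lower-order measures need only be locally finite, not finite, since $f$ has compact support; this is exactly why the convergence is vague rather than, say, in total variation.
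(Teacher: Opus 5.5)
Your proposal is correct and is exactly the intended argument. The paper gives no separate proof and treats this corollary as an immediate consequence of the local Steiner Formula (Theorem~\ref{theo Steiner}), obtained as you do: divide by $\varepsilon$, note that the $i=d$ coefficient $\frac{1}{d+1}\binom{d+1}{d}$ equals $1$, and let the remaining terms vanish, since they carry positive powers of $\varepsilon$ multiplying locally finite measures tested against compactly supported continuous functions.
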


Lemma~\ref{lem vague cv of Veps} and the proof of Theorem~\ref{theo Steiner} also imply the following. 

\begin{proposition}
\label{prop vague convergence of Si}
Let $(K_n)_{n\in\N}$ be a sequence of $\C$-convex domains epi-converging towards $K$. Then, for all $0 \leq i \leq d$, the sequence of measures $(S_i(K_n))_{n\in\N}$ converges vaguely to $S_i(K)$. 
\end{proposition}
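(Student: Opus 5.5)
The plan is to reuse the polarisation identity from the proof of Theorem~\ref{theo Steiner}. Fix $d+1$ distinct positive reals $\varepsilon_0,\dots,\varepsilon_d$. The proof of Theorem~\ref{theo Steiner} produced real coefficients $(a_{ik})_{0\leq i,k\leq d}$, depending only on these $\varepsilon_k$ and not on the domain, such that
\begin{equation*}
S_i(K') = \sum_{k=0}^{d} a_{ik} V_{\varepsilon_k}(K')
\end{equation*}
for every $\C$-convex domain $K'$. This identity was first obtained for $C^2_+$ domains and then used as the definition of $S_i$ in general, which is consistent with uniqueness. We apply it both to each $K_n$ and to the limit $K$.

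Next, fix $f \in C^0_c(\P(\C^*))$. For each $k$, Lemma~\ref{lem vague cv of Veps} applied to the epi-convergent sequence $(K_n)_n$ with $\varepsilon=\varepsilon_k$ gives
\begin{equation*}
\int f \,\dif V_{\varepsilon_k}(K_n) \longrightarrow \int f \,\dif V_{\varepsilon_k}(K).
\end{equation*}
Taking the fixed finite linear combination with coefficients $a_{ik}$, and using linearity of integration with respect to signed combinations of Radon measures, yields
\begin{equation*}
\int f\,\dif S_i(K_n) \longrightarrow \int f\,\dif S_i(K).
\end{equation*}
Since $f$ was arbitrary, $S_i(K_n)$ converges vaguely to $S_i(K)$ for every $0\leq i\leq d$.

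There is no real obstacle here. The only points to check are that the coefficients $a_{ik}$ are genuinely independent of the domain, which holds because they come from inverting the fixed Vandermonde-type matrix with entries $\frac{1}{d+1}\binom{d+1}{i}\varepsilon_k^{d+1-i}$, and that each $V_{\varepsilon_k}(K_n)$ is Radon (Proposition~\ref{prop Veps is Radon}), so that integrating compactly supported continuous functions is finite and linear.
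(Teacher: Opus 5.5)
Your proposal is correct and is essentially the paper's argument. The paper also deduces this from the interpolation identity $S_i = \sum_{k} a_{ik} V_{\varepsilon_k}$, with domain-independent coefficients, which it uses to define $S_i(K)$ in the proof of Theorem~\ref{theo Steiner}, combined with the vague convergence of each $V_{\varepsilon_k}(K_n)$ given by Lemma~\ref{lem vague cv of Veps}.
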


\section{Relation between area measures, curvature and Monge--Amp\`ere measures}
\label{sec relation area measure curvature and MA}

In this section we shall exhibit the relation between the area measures introduced in Section~\ref{sec area meas} with a well-suited notion of curvature, called \emph{$\C$-curvature}, as well as their relation with the Monge--Amp\`ere measures of support functions. While doing so, we will prove the second part of Theorem~\ref{theo intro Steiner} (Corollary~\ref{cor S0}) and a stronger version of Theorem~\ref{theo intro C-curv AGC}.

In order to introduce $\C$-curvature for $\C$-spacelike boundaries of $C^2_+$ $\C$-convex domains, we shall use the theory of \emph{affine differential geometry} \cite{Nomizu_Sassaki_95,LSZH_15} which studies immersed hypersurfaces in the affine space through the following method. 

Consider a $C^2$ immersion $f: U \to \A^{d+1}$ of an open set $U \subseteq \R^d$. A smooth map $N: U \to \V^{d+1}$ transversal to $f$ allows one to decompose the tangent space at each point $f(p) \in \Sigma \coloneqq f(U)$ as the direct sum 
\begin{equation*}
\mathrm{T}_{f(p)}\A^{d+1} = \mathrm{T}_{f(p)}\Sigma \oplus \bigl\langle N(p)\bigr\rangle \, .
\end{equation*}
Using that decomposition, one can define the \emph{affine invariants} of $(f,N)$, which are differential forms on $U$, by studying the coefficients, in that decomposition, of the flat affine connection $D$ on $\A^{d+1}$. 

\subsection{\texorpdfstring{$\C$-curvature via affine differential geometry}{C-curvature via affine differential geometry}}
\label{subsec Affine differential geometry}

Let $K$ be a $C^2_+$ $\C$-convex domain in $\A^{d+1}$. The inverse of its Gauss map, gives a parametrisation of $\partial_{sp}K$ by the embedding $\G_K^{-1} : \P(\C^*) \hookrightarrow\A^{d+1}$. Then, the $\C$-normal field $N_{\S} : \P(\C^*) \to \V^{d+1}$ induced by the affine sphere $\S$ (Definition~\ref{def C-normal vf}) provides a natural transverse vector field on the hypersurface $\partial_{sp}K$. 

That transverse vector field on $\partial_{sp}K$ can be described in the following way: it associates to every point $P$ the vector $\vec{n} \in \S$ such that the tangent hyperplane of $\S$ at $\vec{n}$ has the same direction as the one of $\partial K $ at $P$. See Figure~\ref{fig normal}.

\begin{figure}[ht]
 \centering
 \includegraphics{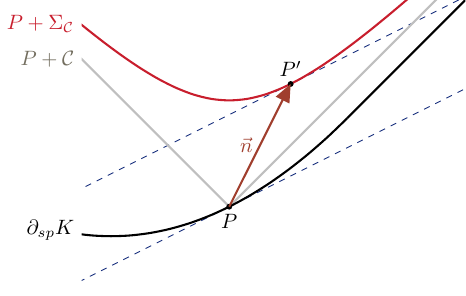}
 \caption{At point $P \in \partial_{sp} K$, the transverse $\C$-normal vector $\vec{n}$ is given by the vector $\overrightarrow{PP'}$, where $P'$ is the point of $P + \S$ with tangent hyperplane parallel to $\mathrm{T}_P\partial_{sp}K$. }
 \label{fig normal}
\end{figure}

Given a $C^2_+$ $\C$-convex domain $K$, let us then consider the affine invariants of the pair $(f^K \coloneqq \G_K^{-1} , N \coloneqq N_{\S})$. Using the terminology of \cite[Sections 1.14 and 1.15]{LSZH_15}, we are actually considering the affine differential geometry of $\G_K^{-1} : \P(\C^*) \hookrightarrow\A^{d+1}$ relative to the normalisation $(N_{\S}, N_{\S}^*)$ given by the affine sphere $\S$ (and its polar affine sphere $\S^*$). See Remark~\ref{rem intro curvature} to understand how this construction relates with the Euclidean and Minkowski cases.

\begin{definition}[$\C$-affine differential invariants]
\label{def C-invariants}
Let $K$ be a $C^2_+$ $\C$-convex domain. The \emph{$\C$-affine invariants} of $K$ are the affine invariants of the pair $(f^K \coloneqq \G_K^{-1} , N \coloneqq N_{\S})$. That is, the following differential tensors on $\P(\C^*)$:
\begin{itemize}
\item the \emph{induced $\C$-connection} $\nabla^K$ and \emph{induced $\C$-fundamental form} $h_K \in \Gamma(\mathrm{T}^*\P(\C^*) \otimes \mathrm{T}^*\P(\C^*))$, determined by
\begin{equation*}
D_{(f^K_*X)} (f^K_*Y) =\nabla^K_X Y + h_K(X,Y) N \, ,
\end{equation*}
where here and below, $X$, $Y$ and $X_1,\dots, X_n$ are any vector fields on $\P(\C^*)$,
\item the \emph{$\C$-shape operator} $S_K \in \Gamma(\mathrm{T}^*\P(\C^*) \otimes \mathrm{T}\P(\C^*))$ and \emph{$\C$-transversal connection form} $\tau_K \in \Gamma(\mathrm{T}^*\P(\C^*))$, determined by 
\begin{equation*}
D_{(f^K_*X)} N =S_K(X) + \tau_K(X) N \, ,
\end{equation*}
\item the \emph{Gauss-Kronecker $\C$-curvature} $\phi_K \in C^\infty(\P(\C^*))$, defined by
\begin{equation*}
\phi_K = \det S_K \, ,
\end{equation*}
\item the \emph{induced $\C$-volume form $\nu \in \Gamma(\bigwedge^{d+1}\mathrm{T}^*\P(\C^*))$}, defined by 
\begin{equation*}
\nu_K(X_1, \dots, X_d) = \dvol \bp{ \dif f^K(X_1), \dots, \dif f^K(X_1),N} \, . 
\end{equation*}
\item the \emph{volume form $\dvol_{h_K} \in \Gamma(\bigwedge^{d+1}\mathrm{T}^*(\Omega^*))$ induced by} $h_K$. 
\end{itemize} 
\end{definition}

\begin{remark}
Note that all those operators are not directly defined on the hypersurface $\partial_{sp}K$, but on $\P(\C^*)$ by pull-back by the Gauss map $\G_K : \P(\C^*) \to \partial_{sp}K$. 
\end{remark}

Using the parametrisation $(\mathbf{P})$ described in Section~\ref{sec (P)}, the explicit expression of the $\C$-normal field given by \eqref{eq expression of the C-normal field} and the one of the inverse of the Gauss map given by \eqref{eq expression of the inverse of the Gauss map}, computations \cite[proof of Proposition~7.1]{Nie_Seppi_22} give the following. 

\begin{proposition}
\label{prop aff invariants relative aff sphere}
In the parametrisation $(\mathbf{P})$, let $K$ be a $C^2_+$ $\C$-convex domain with support function $s$. Its $\C$-affine invariants are the following tensors on $\Omega^* \subset \R^{d}$:
\begingroup
\allowdisplaybreaks
\begin{align*}
& h_K= -\frac{1}{\omega_\C} \Hess s \, ,\\
& S_K = (\Hess s)^{-1}\Hess\omega_\C \, ,\\
& \tau_K=0 \, ,\\
& \phi_K = \det S_K = (-\omega_\C)^{-d-2} \det(\Hess s)^{-1} \, ,\\
& \nu_K= -\omega_\C \det(\Hess s) \dif \mathcal{L} \, ,\\
& \dvol_{h_K} = (-\omega_\C) ^{-d/2} \det(\Hess s) ^{1/2} \dif \mathcal{L} \, ,
\end{align*}
\endgroup
where $\omega_\C$ is the support function of the affine sphere $\S$, $\mathcal{L}$ is the Lebesgue measure on $\Omega^* \subseteq \R^d$, and, in the first identity we see the Hessian as the bilinear form, i.e. a $(2,0)$-tensor, defined by: for all $v,w \in \mathrm{T}_y \Omega^*= \R^d$,
\begin{equation*}
\Hess f(y) (v,w) = \sum_{1\leq i,j \leq d} \partial_i \partial_j f(y) \, u_i v_i \, ,
\end{equation*}
while for all the others equation we consider it to be a field of $d \times d$ matrices, i.e. a $(1,1)$-tensor, defined by: for all $y \in \Omega^*$,
\begin{equation*}
\bp{\Hess f(y) }_{1\leq i,j \leq d} = \bp{\partial_i \partial_j f(y)}_{1\leq i,j \leq d} \, . 
\end{equation*}
\end{proposition}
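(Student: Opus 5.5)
We compute all the affine invariants directly in the parametrisation $(\mathbf{P})$, using the explicit embedding $f^K=\G_K^{-1}$ from \eqref{eq expression of the inverse of the Gauss map} and the transverse field $N=N_{\S}$ from \eqref{eq expression of the C-normal field}. As in the proof of Lemma~\ref{lem Jacobian computation}, the Legendre structure makes the derivatives collapse:
\begin{equation*}
\partial_{y_i} f^K(y)=\bp{\partial_i\grad s(y),\ \partial_i\grad s(y)\cdot y},
\qquad
\partial_{y_i} N(y)=\bp{\partial_i\grad \omega_\C(y),\ \partial_i\grad \omega_\C(y)\cdot y}.
\end{equation*}
Thus both families of tangent vectors lie in the image of the linear map $L_y: v\mapsto (v, v\cdot y)$, i.e. in the linear hyperplane $\ker(y,-1)$, which is the common direction of the tangent hyperplanes of $\partial_{sp}K$ and $\S$. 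For each $i$, the vector $\partial_i N=L_y(\Hess\omega_\C\, e_i)$ is tangent, so $\tau_K=0$. Writing it in the tangent frame $\partial_j f^K=L_y(\Hess s\, e_j)$ gives $S_K=(\Hess s)^{-1}\Hess\omega_\C$, and hence $\phi_K=\det\Hess\omega_\C/\det\Hess s=(-\omega_\C)^{-d-2}\det(\Hess s)^{-1}$ by the Monge--Amp\`ere equation \eqref{eq MA aff sphere}.

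For $h_K$ and $\nabla^K$, we differentiate once more:
\begin{equation*}
\partial_j\partial_i f^K=\bp{\partial_j\partial_i\grad s,\ \partial_j\partial_i\grad s\cdot y+\partial_i\partial_j s}
= L_y(\partial_j\partial_i\grad s)+\partial_i\partial_j s\,(0,1).
\end{equation*}
We then decompose the vertical vector $(0,1)$ along $\ker(y,-1)\oplus\langle N\rangle$. Pairing with the covector $(y,-1)$ kills the tangent part, and $(y,-1)(N)=\omega_\C(y)$ while $(y,-1)(0,1)=-1$. Therefore $(0,1)=-\frac1{\omega_\C}N+(\text{tangent})$, which gives $h_K=-\frac1{\omega_\C}\Hess s$. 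The connection $\nabla^K$ is whatever remains, and it need not be written explicitly.

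For the volume forms, we take $\nu_K(\partial_1,\dots,\partial_d)=\det\bp{\partial_1f^K,\dots,\partial_df^K,N}$. This is exactly the determinant already computed in Lemma~\ref{lem Jacobian computation} with $t=0$: its absolute value is $-\omega_\C\det\Hess s$, and fixing the sign convention by orientation gives $\nu_K=-\omega_\C\det(\Hess s)\,\dif\mathcal{L}$. Finally, $\dvol_{h_K}=\sqrt{\det h_K}\,\dif\mathcal{L}=(-\omega_\C)^{-d/2}\det(\Hess s)^{1/2}\dif\mathcal{L}$, which follows directly from the formula for $h_K$.

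The only delicate point is the bookkeeping in the normal-component extraction. One must check that the pairing $(y,-1)(N(y))=\grad\omega_\C\cdot y-(\grad\omega_\C\cdot y-\omega_\C)=\omega_\C$ has the right sign, and one must fix the orientation convention so that the signs of $h_K$ and $\nu_K$ come out positive. Everything else is the same linear-algebra collapse $L_{d+1}=\sum y_iL_i$ used earlier.
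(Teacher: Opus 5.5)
Your proposal is correct and follows the paper's approach: it carries out the direct computation in $(\mathbf{P})$ from the explicit formulas~\eqref{eq expression of the inverse of the Gauss map} and~\eqref{eq expression of the C-normal field} that the paper leaves to Nie--Seppi, reading off $N_{\S}$-components by pairing with $(y,-1)$ and using $(y,-1)\cdot N_{\S}(y)=\omega_\C(y)<0$. Two small points: Lemma~\ref{lem Jacobian computation} already gives the signed value $\det(\partial_1 f^K,\dots,\partial_d f^K,N_{\S})=-\omega_\C\det\Hess s>0$, so no orientation choice is needed for $\nu_K$; and differentiating $f^K$ twice formally uses third derivatives of $s$, a regularity caveat that the paper's own definition of $h_K$ for $C^2_+$ domains also leaves implicit.
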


As expected, when $\partial K$ is any translation of the affine sphere $\S$, one notices that $S_K = S_{\S} = \Id$ and $\phi_K=\phi_{\S} = 1$. Moreover, because of the Monge--Amp\`ere equation~\eqref{eq MA aff sphere} satisfied by $\omega_\C$, in that case the two volume forms $\dif \Area(\S) =\nu_{\S}$ and $\dvol_{h_{\S}}$ coincide and are equal to $(-\omega_\C) ^{-d-1} \dif \mathcal{L}$, where $\mathcal{L}$ is the Lebesgue measure on $\Omega^* \subseteq \R^d$. Up to the identification $\Omega^* \simeq \S$ (given by the diffeomorphism $N_{\S}$), this volume form is the canonical volume from $\dvol_{\S}$ on $\S$ evoked in subsection~\ref{subsec Cheng-Yau affine sphere}.

\begin{remark}
In fact, when $\partial K$ is any translation of the affine sphere $\S$, the transverse vector field chosen is precisely the \emph{Blaschke affine normal} field on $\partial K$ given by the theory of affine differential geometry \cite{Nomizu_Sassaki_95,Loftin_10,LSZH_15}. 
\end{remark}

\begin{definition}[$\C$-curvature of a $C^2_+$ $\C$-convex domain]
Let $K$ be a $C^2_+$ $\C$-convex domain. We shall refer to the eigenvalues $k_1,\cdots, k_d$ of the $\C$-shape operator $S_K$ given by Proposition~\ref{prop aff invariants relative aff sphere}, as the \emph{principal $\C$-curvatures} of the hypersurface $\partial_{sp} K$, and to their inverses $r_1\coloneqq 1/k_1,\, \dots \, ,r_d\coloneqq 1/k_d$ as the \emph{principal radii of $\C$-curvature} of $\partial_{sp} K$. 
\end{definition} 

The $\C$-curvature and the principal $\C$-curvatures of a $C^2_+$ $\C$-convex domain thus satisfy
\begin{equation*}
 \phi_K = k_1 k_2 \cdots k_d = \frac{1}{r_1 r_2 \cdots r_d} \, . 
\end{equation*}

\begin{proposition}
The $\C$-curvature is invariant by translation and is $(-d)$-homogeneous. In the parametrisation $(\mathbf{P})$, that is, for all $\lambda > 0$, $\tau \in \R^{d+1}$ and $K \subset \R^{d+1}$ a $C^2_+$ $\C$-convex domain,
\begin{equation*}
\phi_{\lambda K + \tau}= \lambda^{-d} \phi_K. 
\end{equation*}
\end{proposition}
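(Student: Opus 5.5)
The plan is to argue in the parametrisation $(\mathbf{P})$ and reduce everything to the explicit formula of Proposition~\ref{prop aff invariants relative aff sphere}, namely $\phi_K = (-\omega_\C)^{-d-2}\det(\Hess s)^{-1}$, where $s$ is the support function of $K$. It therefore suffices to know how the support function transforms under $K \mapsto \lambda K + \tau$. We also need the Gauss map to be unchanged, so that $\phi_{\lambda K+\tau}$ and $\phi_K$ are compared at the same point of $\Omega^*$. This is automatic: $\C$-curvature is defined as a function on $\P(\C^*)$ via pull-back by $\G_K^{-1}$, and translating or dilating $K$ does not change the directions of its supporting hyperplanes.

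First I compute the total support function. For $Y \in \C^*$,
\[
\tilde s_{\lambda K+\tau}(Y) = \sup_{X\in K} (\lambda X + \tau)\cdot Y = \lambda \tilde s_K(Y) + \tau\cdot Y .
\]
Writing $\tau = (\tau', \tau_{d+1})$ and restricting to $Y=(y,-1)$ gives
\[
s_{\lambda K+\tau}(y) = \lambda s(y) + \tau'\cdot y - \tau_{d+1}.
\]
I also check that $\lambda K+\tau$ is again a $C^2_+$ $\C$-convex domain. This holds because it is the epigraph of $x\mapsto \lambda f((x-\tau')/\lambda)+\tau_{d+1}$, whose gradient image is that of $\grad f$ and whose Hessian is $\lambda^{-1}\Hess f$ evaluated at a shifted point, so both conditions of Proposition~\ref{prop C2+} are preserved.

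Next, the added term $\tau'\cdot y - \tau_{d+1}$ is affine in $y$, so it does not contribute to the Hessian, and $\Hess s_{\lambda K+\tau} = \lambda \Hess s$. Hence $\det \Hess s_{\lambda K+\tau} = \lambda^d \det\Hess s$. Since $\omega_\C$ depends only on $\C$, substituting into the formula gives
\[
\phi_{\lambda K+\tau} = (-\omega_\C)^{-d-2}\lambda^{-d}\det(\Hess s)^{-1} = \lambda^{-d}\phi_K .
\]
Equivalently, the $\C$-shape operator transforms as $S_{\lambda K+\tau} = \lambda^{-1} S_K$, which gives the result through the determinant.

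There is no real obstacle here. The only point needing care is making sure both curvatures are viewed as functions on $\Omega^*$ and not on the boundaries themselves. Pulled back to the boundaries, the statement would read $\phi_{\lambda K+\tau}(\lambda P+\tau) = \lambda^{-d}\phi_K(P)$, which is consistent because the Gauss map satisfies $\G_{\lambda K+\tau}(\lambda P+\tau)=\G_K(P)$.
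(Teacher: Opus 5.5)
Your proposal is correct and follows the same route as the paper's proof. Both write the support function of $\lambda K+\tau$ as $\lambda s$ plus the affine function $y\mapsto \tau\cdot(y,-1)$, observe that $\Hess$ scales by $\lambda$, and substitute into $\phi_K=(-\omega_\C)^{-d-2}\det(\Hess s)^{-1}$ from Proposition~\ref{prop aff invariants relative aff sphere}. Your extra checks, that $\lambda K+\tau$ remains $C^2_+$ and that both curvatures are compared at the same point of $\Omega^*$, are left implicit in the paper.
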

\begin{proof}
Let $\tau \in \R^{d+1}$, $\lambda > 0$ and $K$ be a $C^2_+$ $\C$-convex domain with support function $s$. Introducing $a_\tau : \R^d \to \R$ the affine function define by $a_\tau(y) = \tau \cdot (y,-1)$, we have that the support function of $K' \coloneqq \lambda K + \tau$ is $s' = \lambda s+a_\tau$. Thus,
\begin{align*}
\phi_{\lambda K +\tau} & = (-\omega_\C)^{-d-2} \det(\Hess s')^{-1} = (-\omega_\C)^{-d-2} \det\bp{\Hess(\lambda s+a_\tau)}^{-1} \\
& = (-\omega_\C)^{-d-2} \det(\lambda\Hess s)^{-1} = \lambda^{-d} \phi_K \, . \qedhere
\end{align*}
\end{proof}

\begin{theorem}[Relation between area measures and $\C$-curvature]
\label{theo Steiner formula for C2+}
Let $K$ be a $C^2_+$ $\C$-convex domain. Then, in the local Steiner Formula for $K$:
\begin{equation*}
V_\varepsilon(K) = \frac{1}{d+1}\sum_{i=0}^{d}\varepsilon^{d+1-i} \binom{d+1}{i} S_i(K) \, ,
\end{equation*}
the Radon measures $S_0(K)$, $\dots$, $S_d(K)$ on $\P(\C^*)$ are satisfy
\begin{equation*}
S_i(K) = \sigma_i(K) \, \vol_{\S} \, ,
\end{equation*}
where $\sigma_i(K) : \P(\C^*) \to \R^*_+$ is the $i$-th normalised elementary symmetric function of $r_1,\dots,r_d$, the principal radii of $\C$-curvature of $\partial_{sp}K$, i.e. 
\begin{equation}
\label{eq def sym function}
\sigma_i(K) \coloneqq \binom{d}{i}^{-1}\sum_{1\leq j_1<\cdots<j_i\leq d} r_{j_1}\cdots r_{j_d} \, . 
\end{equation}
\end{theorem}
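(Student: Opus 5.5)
The plan is to work in the parametrisation $(\mathbf{P})$ and start from identity~\eqref{eq Veps before radii} obtained in the proof of Lemma~\ref{lem Steiner formula for C2+}:
\begin{equation*}
V_\varepsilon(K) = \vp{\int_0^\varepsilon (-\omega_\C)\det\bp{\Hess (s + t \omega_\C)} \dif t} \mathcal{L} \, ,
\end{equation*}
where $s$ is the support function of $K$. The whole argument then reduces to a pointwise expansion of the integrand in terms of the principal radii of $\C$-curvature, followed by identification of coefficients using the uniqueness part of Lemma~\ref{lem Steiner formula for C2+}.

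At a fixed $y \in \Omega^*$, I factor the matrix as
\begin{equation*}
\Hess(s+t\omega_\C) = \Hess \omega_\C \bp{(\Hess\omega_\C)^{-1}\Hess s + t \, \Id} = \Hess\omega_\C \bp{S_K^{-1} + t\,\Id} \, ,
\end{equation*}
using $S_K = (\Hess s)^{-1}\Hess \omega_\C$ from Proposition~\ref{prop aff invariants relative aff sphere}. Since $\Hess s$ and $\Hess\omega_\C$ are positive definite, $S_K^{-1}$ is conjugate to a symmetric positive definite matrix (it is self-adjoint for the inner product $\Hess\omega_\C$). Hence it is diagonalisable with positive eigenvalues, namely $r_1,\dots,r_d$. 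Consequently
\begin{equation*}
\det\Hess(s+t\omega_\C) = \det\Hess\omega_\C \prod_{j=1}^d (r_j+t) = (-\omega_\C)^{-d-2}\sum_{i=0}^d \binom{d}{i}\sigma_i(K)\, t^{d-i} \, ,
\end{equation*}
where I use the Monge--Amp\`ere equation~\eqref{eq MA aff sphere} for $\det\Hess\omega_\C$.

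Multiplying by $-\omega_\C$ and integrating in $t$ from $0$ to $\varepsilon$ produces
\begin{equation*}
(-\omega_\C)^{-d-1}\sum_{i=0}^d \binom{d}{i}\frac{\varepsilon^{d+1-i}}{d+1-i}\,\sigma_i(K) \, .
\end{equation*}
The elementary identity $\binom{d}{i}\frac{1}{d+1-i} = \frac{1}{d+1}\binom{d+1}{i}$ rewrites this as
\begin{equation*}
\frac{1}{d+1}\sum_{i=0}^d\binom{d+1}{i}\varepsilon^{d+1-i}\sigma_i(K)\,(-\omega_\C)^{-d-1} \, .
\end{equation*}
Since $(-\omega_\C)^{-d-1}\mathcal{L}$ is exactly $\vol_{\S}$ (as noted after Proposition~\ref{prop aff invariants relative aff sphere}), the uniqueness of the $S_i(K)$ in Lemma~\ref{lem Steiner formula for C2+}, which is uniqueness of polynomial coefficients in $\varepsilon$, yields $S_i(K) = \sigma_i(K)\vol_{\S}$.

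The only delicate point is the linear algebra step: justifying that the eigenvalues of $(\Hess\omega_\C)^{-1}\Hess s$ are real and positive and equal the $r_j$, so that the determinant splits as the product $\prod_j (r_j+t)$. Simultaneous diagonalisation of the two positive definite forms handles this. Note also that the formula~\eqref{eq def sym function} should read $r_{j_1}\cdots r_{j_i}$.
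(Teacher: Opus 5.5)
Your proposal is correct and follows essentially the same route as the paper: start from \eqref{eq Veps before radii}, factor out $\det\Hess\omega_\C = (-\omega_\C)^{-d-2}$, expand $\det(tI+S_K^{-1})$ in the principal radii, integrate in $t$, and recognise $\vol_{\S} = (-\omega_\C)^{-d-1}\mathcal{L}$. Your justification of the eigenvalue step, via self-adjointness of $S_K^{-1}$ with respect to $\Hess\omega_\C$, is slightly more careful than the paper's, which calls $S_K^{-1}$ symmetric. You are also right that the product in \eqref{eq def sym function} should read $r_{j_1}\cdots r_{j_i}$.
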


\begin{remark}
That generalises the Euclidean \cite[subsection 2.5 and Section 4]{Schneider_93} and Minkowski \cite[subsection 3.1.2]{Fillastre_Veronelli_16} cases where area measures and radii of curvature are related in a similar way. 
\end{remark}

\begin{proof}
Let us work in the parametrisation $(\mathbf{P})$ and let $\varepsilon>0$. Starting from the identity~\eqref{eq Veps before radii} in the proof of Lemma~\ref{lem Steiner formula for C2+}, we have 
\begin{equation}
\label{eq1 proof radii}
V_\varepsilon(K) = \vp{\int_0^\varepsilon (-\omega_\C)\det\bp{\Hess (s + t \omega_\C)} \dif t} \mathcal{L} \, ,
\end{equation}
where $\mathcal{L}$ is the Lebesgue measure on $\Omega^*$. Note that
\begin{equation*}
\det\bp{\Hess (s + t \omega_\C)} = \det(t \Hess \omega_\C + \Hess s) =\det(\Hess \omega_\C)\det\bp{tI + \Hess s (\Hess \omega_\C)^{-1}} \, . 
\end{equation*}
Using the Monge--Amp\`ere equation~\eqref{eq MA aff sphere} satisfied by $\omega_\C$, the support function of the affine sphere $\S$, and the expression of $S_K$ given by Proposition~\ref{prop aff invariants relative aff sphere}, that becomes
\begin{equation*}
\det\bp{\Hess (s + t \omega_\C)} = (-\omega_\C)^{-d-2} \det(tI + S_K^{-1}) \, ,
\end{equation*}
and thus, using that $\dvol_{\S} = (-\omega_\C)^{-d-1} \dif \mathcal{L}$, we have 
\begin{equation}
\label{eq2 proof radii}
(-\omega_\C)\det\bp{\Hess (s + t \omega_\C)} \dif \mathcal{L} = \det(tI + S_K^{-1}) \, \dvol_{\S} \, . 
\end{equation}
As the eigenvalues of the symmetric matrix $S_K^{-1}$ are $r_1(K),\dots,r_d(K)$ the principal radii of $\C$-curvature of $\partial_{sp}K$, we have 
\begin{equation}
\label{eq3 proof radii}
\det(tI + S_K^{-1}) = \sum_{i=0}^d \binom{d}{i}t^{d-i} \sigma_i(K) \, . 
\end{equation}
where $\sigma_i(K) : \Omega^* \to \R^*_+$ is the $i$-th elementary symmetric function of the principal radii of $\C$-curvature of $\partial_{sp} K$, defined by expression~\eqref{eq def sym function}.
Using equations~\eqref{eq1 proof radii},\eqref{eq2 proof radii} and~\eqref{eq3 proof radii}, we get that for any Borel subset $b \subseteq \Omega^*$ 
\begin{equation*}
V_\varepsilon(K) = \sum_{i=0}^d \binom{d}{i}\vp{\int_0^\varepsilon t^{d-i} \dif t} \sigma_i(K) \, \vol_{\S} = \frac{1}{d+1}\sum_{i=0}^d \binom{d+1}{i}\varepsilon^{d+1-i} \sigma_i(K) \, \vol_{\S} \, . \qedhere
\end{equation*}
\end{proof}

\begin{corollary}
\label{cor link A(K) vol}
Let $K$ be a $C^2_+$ $\C$-convex domain. Then,
\begin{align*}
& S_1(K) = \frac{1}{d}\mathrm{Tr}(S_K^{-1}) \, \vol_{\S} \, , \\
& S_d(K) = \Area(K) = \det(S_K^{-1}) \, \vol_{\S} = \frac{1}{\phi_K} \,\vol_{\S} \, .
\end{align*}
\end{corollary}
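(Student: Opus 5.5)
This is a direct specialisation of Theorem~\ref{theo Steiner formula for C2+} to the indices $i=1$ and $i=d$. The plan is to read off $\sigma_1(K)$ and $\sigma_d(K)$ from the defining expression~\eqref{eq def sym function} and rewrite them in terms of the $\C$-shape operator given by Proposition~\ref{prop aff invariants relative aff sphere}.

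\textbf{The case $i=1$.} The normalising factor is $\binom{d}{1}^{-1} = 1/d$. The sum in~\eqref{eq def sym function} is then $r_1+\dots+r_d$, so
\begin{equation*}
\sigma_1(K) = \frac{1}{d}\,(r_1+\dots+r_d).
\end{equation*}
By definition, the $r_j$ are the inverses of the eigenvalues $k_j$ of $S_K$. Hence they are the eigenvalues of $S_K^{-1} = (\Hess\omega_\C)^{-1}\Hess s$.

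This matrix is a product of two positive definite symmetric matrices, so it is diagonalisable with positive real eigenvalues. Its trace therefore equals the sum of its eigenvalues, which gives $\sigma_1(K) = \frac1d \mathrm{Tr}(S_K^{-1})$. Theorem~\ref{theo Steiner formula for C2+} then yields the first identity.

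\textbf{The case $i=d$.} The factor is $\binom{d}{d}^{-1}=1$ and the sum has a single term, so $\sigma_d(K) = r_1\cdots r_d$. This product equals $\det(S_K^{-1})$, and also $1/(k_1\cdots k_d) = 1/\det S_K = 1/\phi_K$ by the definition of $\phi_K$. Positivity is not an issue: $\phi_K = (-\omega_\C)^{-d-2}\det(\Hess s)^{-1} > 0$ by Proposition~\ref{prop aff invariants relative aff sphere}.

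As a consistency check, one can compute directly. From Proposition~\ref{prop aff invariants relative aff sphere}, $1/\phi_K = (-\omega_\C)^{d+2}\det\Hess s$. Multiplying by $\dvol_{\S} = (-\omega_\C)^{-d-1}\dif\mathcal{L}$ gives $(-\omega_\C)\det(\Hess s)\,\dif\mathcal{L}$. This is the $t^0$ coefficient of the integrand in~\eqref{eq1 proof radii}, which is exactly the contribution to the $\varepsilon^1$ term, i.e. to $S_d(K)$.

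There is no real obstacle here. The only point needing care is justifying that the eigenvalues of $S_K^{-1}$ are real and positive, so that trace and determinant are the sum and product of the $r_j$. This follows from simultaneous diagonalisation of the two positive definite forms $\Hess s$ and $\Hess \omega_\C$.
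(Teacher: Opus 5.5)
Your proposal is correct and matches the paper, which obtains this corollary directly from Theorem~\ref{theo Steiner formula for C2+} by taking $i=1$ and $i=d$, so that $\sigma_1(K)=\frac1d\mathrm{Tr}(S_K^{-1})$ and $\sigma_d(K)=\det(S_K^{-1})=1/\phi_K$. Your justification that $S_K^{-1}=(\Hess\omega_\C)^{-1}\Hess s$ is diagonalisable with positive real eigenvalues (it is similar to a symmetric positive definite matrix) is a welcome precision, since the paper loosely calls $S_K^{-1}$ symmetric.
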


\begin{remark}
Using Proposition~\ref{prop aff invariants relative aff sphere}, in $(\mathbf{P})$, we have $\nu_K = -\omega_\C \det(\Hess s) \mathcal{L}$, $ \det(\Hess s) = \phi_K^{-1} (-\omega_\C)^{-d-2}$ and $ \vol_{\S} = (-\omega_\C)^{-d-1} \mathcal{L}$ so that 
\begin{equation*}
    \nu_K = \frac{1}{\phi_K} \,\vol_{\S} \, .
\end{equation*}
Thus, the area measure of a $C^2_+$ $\C$-convex domain $K$ is equal to $\nu_K$, the induced $\C$-volume form given by Definition~\ref{def C-invariants}.
\end{remark}

\begin{corollary}[Second part of Theorem~\ref{theo intro Steiner}]
\label{cor S0}
Let $K$ be a $\C$-convex domain. Then, 
\begin{equation*}
S_0(K) = \vol_{\S} \, ,
\end{equation*}
\end{corollary}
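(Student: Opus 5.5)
The plan is an approximation argument. First settle the $C^2_+$ case, then pass to a general $\C$-convex domain using the vague continuity of area measures.

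For a $C^2_+$ $\C$-convex domain $K$, Theorem~\ref{theo Steiner formula for C2+} gives $S_0(K) = \sigma_0(K)\,\vol_{\S}$. Here $\sigma_0(K)$ is the normalised elementary symmetric function of order $0$ of the principal radii of $\C$-curvature, which is the empty product, so $\sigma_0(K)\equiv 1$. One can also read this directly off the proof of Lemma~\ref{lem Steiner formula for C2+}. The coefficient of $t^d$ in $\det\bp{\Hess(s+t\omega_\C)}$ is $\det\Hess\omega_\C = (-\omega_\C)^{-d-2}$ by the Monge--Amp\`ere equation~\eqref{eq MA aff sphere}. Hence the coefficient of $\varepsilon^{d+1}$ in $V_\varepsilon(K)$ is $\frac{1}{d+1}(-\omega_\C)^{-d-1}\mathcal{L} = \frac{1}{d+1}\vol_{\S}$, and this matches the $i=0$ term of the Steiner formula. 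Either way, $S_0(K)=\vol_{\S}$ for every $C^2_+$ $\C$-convex domain, independently of $K$.

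For a general $\C$-convex domain $K$, Lemma~\ref{lem approx} provides a sequence $(K_n)_n$ of $C^2_+$ $\C$-convex domains epi-converging to $K$. By Proposition~\ref{prop vague convergence of Si} with $i=0$, the measures $S_0(K_n)$ converge vaguely to $S_0(K)$. But every $S_0(K_n)$ equals the same Radon measure $\vol_{\S}$, so the vague limit is $\vol_{\S}$. Vague limits of Radon measures are unique, since Radon measures are determined by their integrals against $C^0_c(\P(\C^*))$ by the Riesz representation theorem. Therefore $S_0(K)=\vol_{\S}$.

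No step is a serious obstacle. The only point to check carefully is the identification $\vol_{\S} = (-\omega_\C)^{-d-1}\mathcal{L}$ on $\Omega^*$, which the paper records after Proposition~\ref{prop aff invariants relative aff sphere}, together with the fact that $\sigma_0=1$ under the normalisation~\eqref{eq def sym function}.
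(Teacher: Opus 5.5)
Your proof is correct and follows the paper's argument exactly. The paper also obtains the $C^2_+$ case from Theorem~\ref{theo Steiner formula for C2+} (using $\sigma_0\equiv 1$) and then passes to a general $\C$-convex domain via Lemma~\ref{lem approx} and the vague convergence in Proposition~\ref{prop vague convergence of Si}; your direct check of the $\varepsilon^{d+1}$ coefficient through $\det\Hess\omega_\C=(-\omega_\C)^{-d-2}$ is a valid confirmation but not needed.
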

\begin{proof}
For $C^2_+$ $\C$-convex domains, it is a straightforward consequence of Theorem~\ref{theo Steiner formula for C2+}. By Lemma~\ref{lem approx} and Proposition~\ref{prop vague convergence of Si}, it is also true in the general case.
\end{proof}

Finally, let us notice that like Euclidean and Minkowski Gaussian curvature, the $\C$-curvature comes from a relative differential geometry, and thus differs from the classical \emph{affine Gaussian curvature} given by the affine differential geometry theory \cite[Definition~3.4]{Nomizu_Sassaki_95}, which is obtained by using the \emph{affine normal} (also called \emph{Blaschke normal}) to the hypersurface $\partial_{sp} K$ itself in order to get an affine differential structure on it. Nevertheless, the following theorem states that the notions of constant curvature in both models are related, again justifying our choice to consider affine invariants and curvature relative to the affine sphere $\S$, and not relative to any other smooth convex hypersurface asymptotic to the cone $\C$. 

\begin{theorem}[Stronger version of Theorem~\ref{theo intro C-curv AGC}]
\label{prop CAGC and CCcurv coincide}
Let $K$ be convex domain in $\A^{d+1}$ with $C^2$ boundary. Let $\partial_{nd} K$ denote the non-degenerate part of $\partial K$, that is the part of $\partial K$ which can locally be expressed as graph of a convex function $f$ with $\Hess(f)>0$. 

Then, $K$ is a $C^2_+$ $\C$-convex domain such that $\partial_{sp} K$ has constant $\C$-curvature $\kappa>0$ if and only if $\partial_{nd} K$ has constant Gaussian affine curvature $\kappa^{2(d+1)/(d+2)}$ and the affine sphere given by the Blaschke vector field on $\partial_{nd} K$ is asymptotic to $\C \subset \V^{d+1}$. In that case $\partial_{sp} K = \partial_{nd} K$. 
\end{theorem}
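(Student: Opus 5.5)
The plan is to compare the relative normalisation $(f^K,N_{\S})$ with the Blaschke (equiaffine) normalisation of the same hypersurface. By the uniqueness theory of affine differential geometry \cite{Nomizu_Sassaki_95,LSZH_15}, a transversal field $\xi$ on a non-degenerate hypersurface is the Blaschke normal if and only if two conditions hold: its transversal connection form vanishes, and its induced volume form $\nu_\xi$ equals the volume form of its fundamental form. Moreover, rescaling $N\mapsto cN$ by a constant $c>0$ keeps $\tau=0$ and transforms the invariants as $h\mapsto h/c$, $\nu\mapsto c\,\nu$, $S\mapsto cS$ (since $D(cN)=cS(\cdot)$).

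For the direct implication, take $K$ a $C^2_+$ $\C$-convex domain with $\phi_K\equiv\kappa$. Proposition~\ref{prop aff invariants relative aff sphere} gives $\tau_K=0$ and
\begin{equation*}
\frac{\nu_K}{\dvol_{h_K}}=(-\omega_\C)^{1+d/2}\det(\Hess s)^{1/2}=\phi_K^{-1/2}=\kappa^{-1/2},
\end{equation*}
using $\det\Hess s=(-\omega_\C)^{-d-2}\phi_K^{-1}$. For $\xi=cN_{\S}$ we get $\nu_\xi=c\kappa^{-1/2}\dvol_{h_K}$ and $\dvol_{h_K/c}=c^{-d/2}\dvol_{h_K}$. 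These agree exactly when $c^{(d+2)/2}=\kappa^{1/2}$, that is $c=\kappa^{1/(d+2)}$.

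Hence the Blaschke normal of $\partial_{sp}K$ is $\xi=\kappa^{1/(d+2)}N_{\S}$. Its affine shape operator is $\kappa^{1/(d+2)}S_K$, so the affine Gaussian curvature is $\det(\kappa^{1/(d+2)}S_K)=\kappa^{d/(d+2)}\kappa=\kappa^{2(d+1)/(d+2)}$, which is the claimed constant. The image of $\xi$ lies in $\kappa^{1/(d+2)}\S$, an affine sphere asymptotic to $\C$. Finally, a $C^2_+$ $\C$-convex domain has $\Hess f>0$ everywhere by Proposition~\ref{prop C2+}, and the points of $\partial K\setminus\partial_{sp}K$ have gradient in $\partial\Omega^*$. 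I would then check that these non-spacelike points cannot carry non-degenerate Hessian compatibly with the above, which gives $\partial_{sp}K=\partial_{nd}K$.

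For the converse, assume $\partial_{nd}K$ has constant affine Gaussian curvature and that its Blaschke normal $\xi$ takes values in an affine sphere $\Sigma'$ asymptotic to $\C$. Since $D_X\xi$ is tangent to $\partial_{nd}K$, the tangent hyperplane of $\Sigma'$ at $\xi(p)$ is parallel to $\mathrm{T}_p\partial K$. By Cheng--Yau uniqueness, a hyperbolic affine sphere asymptotic to $\C$ is $\lambda\S$ for some $\lambda>0$, so $\xi=\lambda N_{\S}\circ\G_K$. In particular the tangent directions of $\partial_{nd}K$ lie in $\P(\C^*)$. Running the computation above backwards shows that $\nu_K/\dvol_{h_K}$ is constant, so $\phi_K$ is constant, and its value is forced by the exponent relation to be $\kappa$.

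The main obstacle is the global part of the converse: showing that $K$ is actually $\C$-convex and that $\partial_{nd}K=\partial K$. The local relation says the Gauss map $\partial_{nd}K\to\P(\C^*)$ is a local diffeomorphism. I would first try to show that its image is open and closed in $\P(\C^*)$: openness is immediate, and for closedness I would use that constant $\C$-curvature bounds $\det\Hess s$ on compacts via the Monge--Amp\`ere equation, so degeneration cannot occur at interior directions. This would give all of $\P(\C^*)$. Convexity then yields injectivity, and the boundary points outside $\partial_{nd}K$ must have supporting directions in $\partial\P(\C^*)$. Together with Remark~\ref{rem asymptotic cone}, this identifies the recession cone with $\overline{\C}$.
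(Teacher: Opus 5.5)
Your forward direction is correct, and it takes a different route from the paper. The paper matches the equation $\det\Hess s=\kappa^{-1}(-\omega_\C)^{-d-2}$ against the Monge--Amp\`ere characterisation of constant affine Gaussian curvature quoted from \cite[Corollary~7.5]{Nie_Seppi_22}. You instead read off the Blaschke normal $\kappa^{1/(d+2)}N_{\S}$ directly from $\tau_K=0$ and $\nu_K=\phi_K^{-1/2}\dvol_{h_K}$. That is self-contained and explains where the exponent $2(d+1)/(d+2)$ comes from. The equality $\partial_{sp}K=\partial_{nd}K$ needs no further check: $\Hess f>0$ on $\R^d$ makes $\grad f$ an open map, so $\grad f(\R^d)\subseteq\mathrm{int}\,\overline{\Omega^*}=\Omega^*$ and $\partial K=\partial_{sp}K=\partial_{nd}K$. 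The local half of your converse (getting $\xi=\lambda N_{\S}\circ\G_K$ and then a constant $\phi$) is also fine.

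The gap is the global half of the converse, which is exactly what the paper takes from Nie--Seppi.

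\emph{Closedness.} Your argument does not work as stated. A bound on $\det\Hess s$ near an interior direction $y$ stops points $p_n\in\partial_{nd}K$ with Gauss images $y_n\to y$ from accumulating at a degenerate point of $\partial K$. It does not stop them from escaping to infinity, i.e.\ $|\grad s(y_n)|\to\infty$ at a point of $\partial U\cap\Omega^*$, where $U$ is the Gauss image. For $d\geq 2$, an upper bound on the Monge--Amp\`ere measure gives no control on the gradient at the boundary of the domain.

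\emph{Two remaining global facts.} Suppose you grant $U=\P(\C^*)$; this is immediate if the hypothesis says the Blaschke image is all of $\lambda\S$. Two things are still unproved:
\begin{enumerate}
\item that $K$ has no supporting hyperplane directed outside $\overline{\P(\C^*)}$. You assert, but do not derive, that the remaining boundary points have directions in $\partial\P(\C^*)$.
\item that $\partial K\setminus\partial_{nd}K=\emptyset$. This is required, since a $C^2_+$ domain in the sense of Proposition~\ref{prop C2+} has $\Hess f>0$ on all of $\R^d$, while your sketch explicitly leaves room for degenerate points with lightlike directions.
\end{enumerate}

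Both follow from one statement: a convex solution of $\det\Hess s=\kappa^{-1}(-\omega_\C)^{-d-2}$ on $\Omega^*$ satisfies $\grad s(\Omega^*)=\R^d$, i.e.\ it has infinite inner derivatives at every point of $\partial\Omega^*$. Then $\partial_{nd}K$ is the entire graph of $s^*$, and $K=\epi(s^*)$. That statement is a genuine boundary estimate that uses the blow-up of $(-\omega_\C)^{-d-2}$ at $\partial\Omega^*$. It is the missing idea; without it your converse establishes only the local statement.
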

\begin{proof}
Let us work in the parametrisation $(\mathbf{P})$.

On one side, $K$ is a $C^2_+$ $\C$-convex domain such that $\partial_{sp} K$ has constant $\C$-curvature $\kappa>0$ if and only if it is the graph of the Legendre--Fenchel transform of a convex function $s$ on $\Omega^*$ satisfying the following Monge--Amp\`ere equation:
\begin{equation*}
\det(\Hess s) = \kappa^{-1}(-\omega_\C)^{-d-2} \, . 
\end{equation*}

On the other side, Nie and Seppi have shown \cite[Corollary~7.5]{Nie_Seppi_22} that $\partial_{nd} K$, the non-degenerate part of $\partial K$, has constant Gaussian affine curvature $k>0$ and the affine sphere given by the Blaschke vector field on $\partial_{nd} K$ is asymptotic to $\C$, if and only if it is the graph of the Legendre--Fenchel transform of a convex function $s$ on $\Omega^*$ satisfying the following Monge--Amp\`ere equation:
\begin{equation*}
\det(\Hess s) = k^{-\frac{(d+2)}{2(d+1)}}(-\omega_\C)^{-d-2} \, . \qedhere
\end{equation*}
\end{proof}

\subsection{Relation between area measures and Monge--Amp\`ere measures}

Let us end this section by highlighting how the area measure of a $\C$-convex domain is also very closely related to the \emph{Monge--Amp\`ere measure} of its support function, which is a classical measure associated with any convex function. 

\begin{definition}[Monge--Amp\`ere measure of a convex function]
Let $U$ be an open convex subset of $\R^d$ and $s : U \to \R$ be a convex function. The \emph{Monge--Amp\`ere measure} of $s$, denoted by $\MA(s)$, is a measure on $U$ defined in the following way. For all Borel subset $b \subseteq U$,
\begin{equation*}
\MA(s)(b) \coloneqq \mathcal{L}\bp{\partial s(b)} \, ,
\end{equation*}
where $\mathcal{L}$ is the Lebesgue measure on $U\subset \R^{d}$ and we recall that
\begin{equation*}
\partial s(b) \coloneqq \bigcup_{x \in b}\partial s(x) \, . 
\end{equation*}
\end{definition}

\begin{proposition}[{\cite[Theorem~A.31]{Figalli_17}}]
\label{prop MA=det Hess}
If $s$ is a $C^2$ convex function on an open convex subset $U \subset \R^d$, then 
\begin{equation*}
\MA(s) = \det( \Hess s ) \, \mathcal{L} \, . 
\end{equation*}
\end{proposition}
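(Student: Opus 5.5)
This is the classical change-of-variables formula for the gradient map, so I would argue it directly from the definition of $\MA(s)$. Since $s$ is $C^2$ and convex on the open set $U$, the subdifferential at each point is the singleton $\partial s(x)=\{\grad s(x)\}$. Hence for every Borel set $b\subseteq U$ we have $\partial s(b)=\grad s(b)$, and the claim becomes
\begin{equation*}
\mathcal{L}\bp{\grad s(b)} = \int_b \det\bp{\Hess s(x)} \dif x \, .
\end{equation*}
Both sides are Borel measures on $U$. Measurability of $\grad s(b)$ holds because $\grad s$ is continuous, so it maps compact sets to compact sets, and inner regularity then handles general Borel sets. It is therefore enough to prove the identity for $b$ running through compact sets, or through small cubes, and then extend by the uniqueness part of Carathéodory's theorem, as was done in the proof of Lemma~\ref{lem Steiner formula for C2+}.

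The tool is the area formula for the $C^1$ map $T=\grad s:U\to\R^d$, whose Jacobian is $\Hess s\geq 0$:
\begin{equation*}
\int_{\R^d} \# \bp{T^{-1}(z)\cap b} \dif z = \int_b \det\bp{\Hess s(x)} \dif x \, .
\end{equation*}
It then remains to show that the multiplicity $\#(T^{-1}(z)\cap b)$ equals $1$ for $\mathcal{L}$-almost every $z\in T(b)$. For this I would split $U$ into two parts. The first is the degenerate set $Z=\{\det\Hess s=0\}$. By Sard's theorem for $C^1$ maps, $T(Z)$ has Lebesgue measure zero, and its contribution to the right-hand side is zero as well. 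The second is the set where $\Hess s>0$. There $s$ is locally strictly convex, so $T$ is locally injective.

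The main obstacle is upgrading local injectivity to global injectivity off a null set. Suppose $x_1\neq x_2$ and $\grad s(x_1)=\grad s(x_2)=z$. Then $s-z\cdot x$ attains its minimum at both points. By convexity it is constant on the segment $[x_1,x_2]$, and therefore the Hessian is degenerate along that segment, so $x_1,x_2\in Z$. Consequently every $z$ with two or more preimages lies in $T(Z)$, which is a null set. This gives multiplicity $1$ almost everywhere, and hence the identity of measures $\MA(s)=\det(\Hess s)\,\mathcal{L}$.
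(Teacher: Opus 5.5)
The paper gives no proof of its own here, only the citation \cite[Theorem~A.31]{Figalli_17}, so the comparison is with the usual argument. Your proof is correct. Like the usual one, it rests on the area formula for $T=\grad s$. Where you differ is in showing that $T$ has multiplicity one almost everywhere on $T(b)$.

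The classical route goes through convex duality, by the same mechanism the paper uses in the proof of Lemma~\ref{prop MA and cvx hull}. If $z\in\partial s(x_1)\cap\partial s(x_2)$ with $x_1\neq x_2$, then $x_1,x_2\in\partial s^*(z)$, so $s^*$ is not differentiable at $z$, and Rademacher's theorem makes the set of such $z$ null. That argument uses no regularity of $s$, which is why it also gives countable additivity of $b\mapsto\mathcal{L}(\partial s(b))$ for an arbitrary convex $s$.

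Your argument instead spends the $C^2$ hypothesis. A gradient value attained twice forces $s-z\cdot x$ to be constant on $[x_1,x_2]$. Hence $\Hess s(x_i)(v,v)=0$ with $v=x_2-x_1$, and by positive semi-definiteness $\det\Hess s(x_i)=0$. So every value of multiplicity at least two is a critical value. This avoids the conjugate entirely, at the cost of being confined to the smooth case.

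You do not even need Sard as an outside input. The area formula applied to $b=Z$ gives $\int_{\R^d}\#\bp{T^{-1}(z)\cap Z}\,\dif z=\int_Z\det\Hess s=0$, so $\mathcal{L}(T(Z))=0$. Here $T(Z)$ is Borel because $Z$ is relatively closed in $U$, hence $\sigma$-compact.

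Three details deserve tightening.
\begin{itemize}
\item Continuity of $\grad s$ plus inner regularity does not by itself give measurability of $\grad s(b)$. Inner regularity writes $b$ as a $\sigma$-compact set union a Lebesgue-null set $N$, and you also need $\mathcal{L}(\grad s(N))=0$. This holds because $\grad s$ is $C^1$, hence locally Lipschitz.
\item The reduction to compact sets or cubes followed by Carath\'eodory is unnecessary, and in your order it is slightly circular. You assert that $b\mapsto\mathcal{L}(\grad s(b))$ is a measure before establishing the almost-everywhere injectivity that makes it additive. Since the area formula holds for every measurable $b$, your multiplicity argument gives the identity for all Borel $b$ at once. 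It even gives measurability of $\grad s(b)$, by completeness of $\mathcal{L}$.
\item The local-injectivity remark on $\{\Hess s>0\}$ is never used.
\end{itemize}
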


Here is the relation between the area measure of a $\C$-convex domain and the Monge--Amp\`ere measure of its support function. 

\begin{proposition}
\label{prop A MA C-convex}
In the parametrisation $(\mathbf{P})$, for every $\C$-convex domain $K$ with support function $s$, we have 
\begin{equation*}
\Area(K) = (-\omega_\C) \MA(s) \, . 
\end{equation*}
\end{proposition}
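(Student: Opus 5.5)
The plan is to prove the identity first for $C^2_+$ $\C$-convex domains, then pass to general domains by approximation, using that both sides are continuous for vague convergence under epi-convergence.

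\emph{Smooth case.} For a $C^2_+$ $\C$-convex domain $K$ with support function $s$, Corollary~\ref{cor link A(K) vol} gives $\Area(K) = \phi_K^{-1}\vol_{\S}$. Proposition~\ref{prop aff invariants relative aff sphere} gives $\phi_K^{-1} = (-\omega_\C)^{d+2}\det(\Hess s)$, and we have $\vol_{\S} = (-\omega_\C)^{-d-1}\mathcal{L}$. Hence
\begin{equation*}
\Area(K) = (-\omega_\C)\det(\Hess s)\,\mathcal{L} = (-\omega_\C)\MA(s),
\end{equation*}
where the last equality is Proposition~\ref{prop MA=det Hess}; this is also just the $t^0$-coefficient computation in the proof of Lemma~\ref{lem Steiner formula for C2+}.

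\emph{General case.} Let $K$ be arbitrary. By Lemma~\ref{lem approx}, choose $C^2_+$ domains $K_n$ epi-converging to $K$, so that $s_n \to s$ locally uniformly on $\Omega^*$. Proposition~\ref{prop vague convergence of Si} gives $\Area(K_n)\to\Area(K)$ vaguely. On the other side, I use the classical stability of Monge--Amp\`ere measures: if convex functions converge locally uniformly on an open set, their Monge--Amp\`ere measures converge vaguely (e.g.\ Figalli's book, Proposition~2.6 or the appendix results around Theorem~A.31). Since $-\omega_\C$ is continuous and positive on $\Omega^*$, for every $f\in C^0_c(\Omega^*)$ the function $f\,(-\omega_\C)$ is again in $C^0_c(\Omega^*)$, so $(-\omega_\C)\MA(s_n)\to(-\omega_\C)\MA(s)$ vaguely. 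Vague limits of Radon measures are unique, which gives the equality.

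The main obstacle is the vague continuity of $\MA$ under local uniform convergence, if one wants it self-contained rather than cited. It follows the same scheme as Lemma~\ref{lem vague cv of Veps}: an upper bound on compact sets from $\limsup_n \partial s_n(A)\subseteq \partial s(A)$, which uses \cite[Theorem~24.5]{Rockafellar_97}, and a lower bound on open sets. For the lower bound one uses that the set of $p$ lying in $\partial s(x)$ for two distinct $x$ has Lebesgue measure zero, since $s^*$ is differentiable almost everywhere. Then for almost every $p\in\partial s(U)$ we have $p=\grad s^*$ at a point where $s_n^*\to s^*$, so $\grad s_n^*(p)\to\grad s^*(p)\in U$ and eventually $p\in\partial s_n(U)$; Fatou's lemma concludes.
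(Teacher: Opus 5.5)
Your proposal is correct and is essentially the paper's proof. You treat the $C^2_+$ case with Proposition~\ref{prop aff invariants relative aff sphere}, Corollary~\ref{cor link A(K) vol} and Proposition~\ref{prop MA=det Hess}, and the general case with Lemma~\ref{lem approx}, Proposition~\ref{prop vague convergence of Si} and the vague stability of Monge--Amp\`ere measures \cite[Proposition~2.6]{Figalli_17}, exactly as the paper does. Your remark that multiplying by the continuous positive weight $-\omega_\C$ preserves vague convergence, and your sketch of the stability result, only make explicit what the paper leaves to the citation.
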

\begin{proof}
In the $C^2_+$-case, that is a direct consequence from Proposition~\ref{prop aff invariants relative aff sphere}, Corollary~\ref{cor link A(K) vol} and Proposition~\ref{prop MA=det Hess}. In the general case, consider $(s_n)_{n\in\N}$ a sequence of $C^2_+$ support function converging locally uniformly to $s$ given by Lemma~\ref{lem approx}. On one side, by Proposition~\ref{prop vague convergence of Si}, we have that $A(K_n)$ converges vaguely to $A(K)$. On the other side, it is known \cite[Proposition~2.6]{Figalli_17} that the local uniform convergence of $(s_n)_{n\in \N}$ towards $s$ implies the vague convergence of $(\MA(s_n))_{n\in\N}$ towards $\MA(s)$, giving the wanted equality at the vague limit. 
\end{proof}

\section{Invariant domains}
\label{sec invariant convex domains}

In this section we shall consider the case where the convex cone $\C$ is \emph{divisible} in the sense of Benoist \cite{Benoist_04}, and we will focus on $\C$-convex domains satisfying some kind of affine invariance related to this divisibility property.

\begin{definition}[Divisible convex cone]
A proper open convex cone $\C \subset\V^{d+1}$ is said to be divisible by a subgroup $\Gamma < \SL(\V^{d+1})$ if the projective action of $\Gamma$ on $\P(\C)$ is free, properly discontinuous and cocompact.
\end{definition}

\begin{proposition}[{\cite[Lemma~2.8]{Benoist_04}}]
A proper open convex cone $\C \subset\V^{d+1}$ is divisible by a subgroup $\Gamma < \SL(\V^{d+1})$ if and only its polar cone $\C^*$ is divisible by $\Gamma$ (acting with the dual action $\star$ described in Section~\ref{sec (P)}).
\end{proposition}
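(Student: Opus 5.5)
By symmetry of the statement it suffices to prove one direction. We show: if $\C$ is divisible by $\Gamma$, then $\C^*$ is divisible by $\Gamma$ under $\star$. The converse then follows by applying this to $\C^*$, using the canonical identification $(\V^{d+1})^{**}=\V^{d+1}$ and $(\C^*)^*=\C$ (bipolar theorem for closed convex cones, up to the sign convention in \eqref{eq C*}). The dual of the $\star$-action is then the original action.

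First, $\Gamma$ preserves $\C^*$. Indeed, by \eqref{eq action and dual action}, $(\gamma\star\varphi)(\gamma\vec v)=\varphi(\vec v)$, and $\gamma\C=\C$, so $\varphi\le 0$ on $\C$ if and only if $\gamma\star\varphi\le 0$ on $\C$.

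Next, the key tool is a $\Gamma$-equivariant homeomorphism $\P(\C)\to\P(\C^*)$. We take it from the Cheng--Yau affine spheres of Subsection~\ref{subsec Cheng-Yau affine sphere}. The sphere $\S$ is preserved by $\mathrm{Aut}_{\SL}(\C)\supseteq\Gamma$ and is radially identified with $\P(\C)$. The polar map $\vec v\mapsto\varphi$ with $\varphi(\vec v)=-1$ and $\ker\varphi=\mathrm T_{\vec v}\S$ is a diffeomorphism $\S\to\S^*$. This map is $\Gamma$-equivariant: $\gamma$ maps $\mathrm T_{\vec v}\S$ to $\mathrm T_{\gamma\vec v}\S$, and $(\gamma\star\varphi)(\gamma\vec v)=-1$. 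Composing with the radial projections gives a $\Gamma$-equivariant homeomorphism $\P(\C)\cong\S\cong\S^*\cong\P(\C^*)$.

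With such a map, the three properties transfer directly. Freeness, proper discontinuity and cocompactness of the action on $\P(\C)$ are conjugation-invariant properties, so they hold for the $*$-action on $\P(\C^*)$.

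The main obstacle is justifying the existence and equivariance of that identification. If we want to avoid the Cheng--Yau sphere, an alternative is Vinberg's characteristic function $\chi(\vec v)=\int_{\C^*}e^{\varphi(\vec v)}\dif\varphi$. This function is $\SL$-invariant, and the map $\vec v\mapsto -\dif\log\chi(\vec v)$ is an equivariant diffeomorphism $\C\to\C^*$, descending to the projectivisations. Either way, the step requiring care is checking that $\Gamma\subset\SL$ preserves the relevant volume normalisation, so that no scalar factor spoils equivariance. For the Cheng--Yau sphere this holds because $\S$ is $\mathrm{Aut}_{\SL}(\C)$-invariant; for Vinberg's function it holds because $\linvol^*$ is preserved by the $\star$-action of $\SL(\V^{d+1})$.
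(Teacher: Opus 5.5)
The paper gives no argument for this statement; it only defers to \cite[Lemma~2.8]{Benoist_04}. Your argument is correct and is the standard one. A homeomorphism $h:\P(\C)\to\P(\C^*)$ with $h(\gamma\cdot x)=\gamma*h(x)$ conjugates the two actions, so freeness, proper discontinuity and cocompactness transfer in both directions at once. Your preliminary reduction via bipolarity is therefore unnecessary, though harmless: with the convention \eqref{eq C*} one gets $(\C^*)^*=\C$ exactly, and applying $\star$ twice returns the original action.

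Note that the equivariant map you build is already in the paper. The $\C$-normal field $N_{\S}$ of Definition~\ref{def C-normal vf} is exactly the inverse of your polar map $\S\to\S^*$ followed by radial projection to $\P(\C^*)$. Its equivariance $N_{\S}(\gamma*[\varphi])=\gamma\cdot N_{\S}[\varphi]$ is recorded in Proposition~\ref{prop equivariance of the Gauss map}, whose proof uses only the $\Gamma$-invariance of $\S$, so there is no circularity. The cost of this route is that it rests on the Cheng--Yau theorem, whereas Vinberg's characteristic function reaches the same conclusion by elementary means.

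Two small corrections on that Vinberg alternative.

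\begin{itemize}
\item \emph{Sign.} With the paper's convention that elements of $\C^*$ are negative on $\C$, one has $\dif\chi_{\vec v}=\int_{\C^*}\varphi\, e^{\varphi(\vec v)}\,\dif\varphi\in\C^*$. So it is $\dif\log\chi$, not $-\dif\log\chi$, that maps $\C$ to $\C^*$. Your sign belongs to the opposite convention; it is irrelevant after projectivising.
\item \emph{Normalisation.} No volume normalisation is needed for this map. The change of variables $\varphi\mapsto g\star\varphi$ gives $\chi(g\vec v)=\lvert\det g\rvert^{-1}\chi(\vec v)$ for every linear $g$ preserving $\C$. The logarithmic derivative kills this constant, so Vinberg's map is equivariant under the full linear automorphism group of $\C$, not just its $\SL$ part. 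The normalisation issue is genuine only for the Cheng--Yau sphere itself, and even there it disappears after radial projection.
\end{itemize}
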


\begin{definition}[Affine deformation of a subgroup $\Gamma < \SL(\V^{d+1})$]
An \emph{affine deformation of a subgroup} $\Gamma < \SL(\V^{d+1})$ is a subgroup $\Gamma_\tau < \SA(\A^{d+1})$ such that the linear action of $\Gamma_\tau$ on $\V^{d+1}$, the vector space of translation in $\A^{d+1}$, is exactly the one of $\Gamma$, i.e. there is a group isomorphism
\begin{equation*}
 \Phi_\tau : \Gamma \longrightarrow \Gamma_\tau
\end{equation*}
such that for all $\gamma \in \Gamma$, $\gamma_\tau \coloneqq \Phi_\tau(\gamma) \in \SA(\A^{d+1})$ satisfies 
\begin{equation*}
 \dif \gamma_\tau= \gamma \in \SL(\V^{d+1})\, .
\end{equation*}
\end{definition}

In the parametrisation $(\mathbf{P})$ described in Section~\ref{sec (P)}, an affine deformation of a subgroup $\Gamma < \SL(\R^{d+1})$ is a subgroup
\begin{equation}
\label{eq affine deformation}
 \Gamma_\tau = \left\lbrace\bp{\gamma,\tau(\gamma)}\st \gamma\in \Gamma\right\rbrace < \SL (\R^{d+1}) \ltimes \R^{d+1}
\end{equation}
obtained by adding translation parts to elements of $\Gamma$ through a translation function $\tau: \Gamma \to \R^{d+1}$. As $\Gamma_\tau$ is a group, the translation function $\tau$ must satisfy the following \emph{cocycle relation}: for all $\alpha,\beta \in \Gamma$
\begin{equation*}
\label{eq cocycle condition}
\tau({\alpha \beta}) = \alpha \tau(\beta) + \tau(\alpha) \, . 
\end{equation*}

From now on, if we assume that $\Gamma_\tau < \SA(\A^{d+1})$ is an affine deformation of a subgroup $\Gamma< \SL(\V^{d+1})$, we shall use the following notation convention:
\begin{itemize}
 \item for all $\gamma \in \Gamma$, we will denote by $\gamma_\tau$ the unique element of $\Gamma_\tau$ having linear part $\gamma$,
 \item in the parametrisation $(\mathbf{P})$, we will denote by $\tau $ the translation function $\tau: \Gamma \to \R^{d+1}$ defining $\Gamma_\tau$ through \eqref{eq affine deformation}.
\end{itemize}
\subsection{\texorpdfstring{$\tau$-convex domains}{tau-convex domains}}

Throughout the rest of this section, we assume that $\C$ is divisible by a subgroup $\Gamma < \SL(\V^{d+1})$ and let $\Gamma_\tau < \SA(\A^{d+1})$ be an affine deformation of $\Gamma$.

\begin{definition}[$\tau$-convex domain]
\label{def tau-convex domain and tau-support functions}
A \emph{$\tau$-convex domain} of $\A^{d+1}$ is a $\C$-convex domain $K$ invariant under the affine action of $\Gamma_\tau < \SA(\A^{d+1})$.

In the parametrisation $(\mathbf{P})$, a \emph{$\tau$-support function} is the $\Omega^*$-support function of a $\tau$-convex domain. 
\end{definition}

\begin{proposition}[{\cite[Corollary~5.7]{Ablondi_25}}]
\label{prop equiv supp function}
In the parametrisation $(\mathbf{P})$, a $\C$-convex domain $K$ is $\tau$-convex if and only if its total support function $\tilde{s}$ is $\tau$-equivariant, in the sense that for all $Y \in \C^*$ and $\gamma \in \Gamma$, it satisfies 
\begin{equation*}
\label{eq action on C* support function}
\tilde{s}(Y) = \tilde{s}(\gamma^{-1} \star Y) + \tau(\gamma) \cdot Y \, ,
\end{equation*}
where $\star$ denotes the dual action of $\Gamma$ (see Section~\ref{sec (P)}), and $\tau $ is translation function $\tau: \Gamma \to \R^{d+1}$ defining $\Gamma_\tau$ in the parametrisation $(\mathbf{P})$.

Equivalently, a $\C$-convex domain $K$ is $\tau$-convex if and only if its support function $s$ is $\tau$-equivariant, in the sense that for all $y \in \Omega^*$ and $\gamma \in \Gamma$, it satisfies
\begin{equation}
\label{eq action on Omega* support function}
s(y) = \frac{\omega_\C(y)}{\omega_\C(\gamma^{-1}*y)}s(\gamma^{-1} * y)+ \tau(\gamma) \cdot (y,-1) \, ,
\end{equation}
where $\omega_\C$ is the support function of the Cheng--Yau affine sphere $\S$, and $*$ denotes the dual projective action of $\Gamma$ on $\Omega^*$ (see Section~\ref{sec (P)}) and $\tau $ is the translation function $\tau: \Gamma \to \R^{d+1}$ defining $\Gamma_\tau$ in the parametrisation $(\mathbf{P})$.
\end{proposition}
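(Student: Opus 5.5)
The plan is to work in the parametrisation $(\mathbf{P})$ and translate the invariance $\gamma_\tau(K)=K$ into a statement about the total support function $\tilde{s}=\tilde{s}_K$ on $\C^*$. Then we restrict to the affine slice $\{(y,-1)\}$ using $1$-homogeneity and the description of $\omega_\C$ through the polar affine sphere $\S^*$.

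\textbf{Total support function.} In $(\mathbf{P})$, $\gamma_\tau$ acts by $X\mapsto \gamma X+\tau(\gamma)$. For any convex domain $K$ and $Y\in\C^*$ we compute
\begin{equation*}
\tilde{s}_{\gamma_\tau(K)}(Y)=\sup_{X\in K}\bp{\gamma X+\tau(\gamma)}\cdot Y=\sup_{X\in K}X\cdot \gamma^{\top}Y+\tau(\gamma)\cdot Y=\tilde{s}_K(\gamma^{-1}\star Y)+\tau(\gamma)\cdot Y \, ,
\end{equation*}
since $\gamma^{-1}\star Y=\gamma^{\top}Y$. Note that $\gamma_\tau(K)$ is again $\C$-convex because $\gamma$ preserves $\C$ and hence $\C^*$. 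Two $\C$-convex domains with the same total support function coincide: by Proposition~\ref{prop K is epigraph}, each is the epigraph of the Legendre--Fenchel transform of its $\Omega^*$-support function. Hence $\gamma_\tau(K)=K$ for all $\gamma$ if and only if $\tilde{s}(Y)=\tilde{s}(\gamma^{-1}\star Y)+\tau(\gamma)\cdot Y$ for all $\gamma$. This is the first equivalence.

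\textbf{Restriction to $\Omega^*$.} Set $Y=(y,-1)$. We need to write $\gamma^{-1}\star(y,-1)=\lambda\,(\gamma^{-1}*y,-1)$ and identify $\lambda>0$. For this we use that $\S^*$, the radial graph of $-1/\omega_\C$, is preserved by the dual action of $\Gamma$. Indeed, $\Gamma\subset\mathrm{Aut}_{\SL}(\C)$ preserves $\S$, and the polar construction is $\star$-equivariant by \eqref{eq action and dual action}.

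So $\gamma^{-1}\star$ maps $-\tfrac{1}{\omega_\C(y)}(y,-1)$ to $-\tfrac{1}{\omega_\C(\gamma^{-1}*y)}(\gamma^{-1}*y,-1)$. By linearity this gives $\lambda=\omega_\C(y)/\omega_\C(\gamma^{-1}*y)$. Then $1$-homogeneity gives $\tilde{s}(\gamma^{-1}\star(y,-1))=\lambda\, s(\gamma^{-1}*y)$, which yields \eqref{eq action on Omega* support function}. Conversely, the $\Omega^*$ identity implies the total one by homogeneity, since both sides are $1$-homogeneous in $Y$.

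\textbf{Main subtlety.} The only non-formal point is the invariance of $\S^*$ under the dual action. This follows from the uniqueness of the Cheng--Yau sphere of $\C^*$ and the fact that $\gamma^{-\top}\in\SL$ preserves $\C^*$, combined with Gigena's identification $\S^*=\Sigma_{(\C^*)}$. Everything else is routine.
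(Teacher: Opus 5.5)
Your proof is correct. This paper does not prove the statement itself; it imports it from \cite[Corollary~5.7]{Ablondi_25}, so there is no in-paper argument to compare with. Your argument is the natural self-contained proof: compute $\tilde{s}_{\gamma_\tau(K)}(Y)=\tilde{s}_K(\gamma^{-1}\star Y)+\tau(\gamma)\cdot Y$, use that a $\C$-convex domain is determined by its support function, then identify the positive factor $\lambda$ in $\gamma^{-1}\star(y,-1)=\lambda\,(\gamma^{-1}*y,-1)$.

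A slightly more economical way to get $\lambda$, which avoids the polar sphere and Gigena's theorem, uses your first step. The set $\S+\C$ is a $\Gamma$-invariant $\C$-convex domain with support function $\omega_\C$. Your first equivalence with $\tau=0$ then gives $\tilde\omega_\C(\gamma^{-1}\star Y)=\tilde\omega_\C(Y)$, and evaluating at $Y=(y,-1)$ yields $\omega_\C(y)=\lambda\,\omega_\C(\gamma^{-1}*y)$. This is really the same fact you used, since $\S^*$ is exactly the level set $\{\tilde\omega_\C=-1\}$.
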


When $\tau =0$, using equation~\eqref{eq action on Omega* support function}, one directly gets the following characterisation of equivariance for functions on $\Omega^*$.

\begin{proposition}
\label{prop equiv and inv for function on Omega*}
A function $f : \Omega^* \to \R$ is $\Gamma$-equivariant, that is $\tau$-equivariant with $\tau =0$, if and only if the function $\bar{f} = f/\omega_\C$ on $\Omega^*$ is $\Gamma$-invariant.
\end{proposition}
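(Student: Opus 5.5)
Taking $\tau=0$ in the characterisation of equivariance, equation~\eqref{eq action on Omega* support function}, a function $f:\Omega^*\to\R$ is $\Gamma$-equivariant exactly when, for all $y\in\Omega^*$ and $\gamma\in\Gamma$,
\begin{equation*}
f(y)=\frac{\omega_\C(y)}{\omega_\C(\gamma^{-1}*y)}\,f(\gamma^{-1}*y) .
\end{equation*}
The plan is to divide this identity by $\omega_\C(y)$ and read off the invariance of $\bar f=f/\omega_\C$. This is legitimate because $\omega_\C$ is a smooth negative function on $\Omega^*$, hence nowhere zero, so $\bar f$ is well defined.

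For the forward direction, assume $f$ satisfies the displayed identity. Dividing both sides by $\omega_\C(y)\neq 0$ gives
\begin{equation*}
\bar f(y)=\frac{f(\gamma^{-1}*y)}{\omega_\C(\gamma^{-1}*y)}=\bar f(\gamma^{-1}*y)
\end{equation*}
for all $y\in\Omega^*$ and $\gamma\in\Gamma$. Since $\gamma\mapsto\gamma^{-1}$ is a bijection of $\Gamma$, this says that $\bar f$ is invariant under the dual projective action $*$ of $\Gamma$ on $\Omega^*$.

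For the converse, assume $\bar f(\gamma^{-1}*y)=\bar f(y)$ for all $y$ and $\gamma$. Multiplying this identity by $\omega_\C(y)$ recovers the displayed equivariance relation, so the two conditions are equivalent.

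The only point needing care is the well-definedness of the coefficient $\omega_\C(y)/\omega_\C(\gamma^{-1}*y)$ and of $\bar f$. Both follow from the negativity of $\omega_\C$ on $\Omega^*$, together with the fact that $\Gamma$ preserves $\Omega^*$ under $*$, which holds because $\Gamma$ preserves $\C^*$. Beyond that the argument is pure algebra, and I do not expect any real obstacle.
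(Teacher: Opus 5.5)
Your proof is correct and is the paper's argument: the paper obtains this directly from the equivariance relation \eqref{eq action on Omega* support function} with $\tau=0$, dividing by the nonvanishing function $\omega_\C$, as you do. Your remarks that $\omega_\C<0$ on $\Omega^*$ and that $\Gamma$ preserves $\Omega^*$ under $*$ supply the well-definedness details the paper leaves implicit.
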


The following is a direct consequence from the identity~\eqref{eq action and dual action}.

\begin{proposition}
\label{prop equivariance of the Gauss map}
Let $K$ be a $\tau$-convex domain of $\A^{d+1}$. Its Gauss map is an equivariant set-valued map with respect to the affine action of $\Gamma_\tau$ on $\partial_{sp} K$ and the projective dual action of $\Gamma$ on $\P(\C)^*$. That is, for all $P \in \partial_{sp} K$ and $\gamma_\tau \in \Gamma_\tau$, one has the set equality
\begin{equation*}
\G_K(\gamma_\tau \cdot P) = \gamma * \G_K(X) \coloneqq \left\lbrace \gamma * [\varphi] \st [\varphi] \in \G_K(X)\right\rbrace \, . 
\end{equation*}
Similarly, as the affine sphere is $\Gamma$-invariant, the $\C$-normal vector field is $\Gamma$-equivariant: for all $\varphi \in \P(\C^*)$ and $\gamma \in \Gamma$, 
\begin{equation*}
N_{\S}(\gamma * [\varphi]) = \gamma \cdot N_{\S}[\varphi]\, . 
\end{equation*}
\end{proposition}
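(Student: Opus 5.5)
The plan is to verify both identities directly from the definitions of the Gauss map and the $\C$-normal field, using that the linear part of $\gamma_\tau$ is $\gamma$ and the duality identity~\eqref{eq action and dual action}.

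For the Gauss map, fix $P \in \partial_{sp}K$ and $\gamma_\tau \in \Gamma_\tau$, and let $[\varphi] \in \G_K(P)$. Then $H = P + \ker\varphi$ is a supporting hyperplane of $K$ at $P$, with $K$ lying on one side, say $\varphi(\overrightarrow{PQ}) > 0$ for all $Q\in K$ (or $<0$; the sign convention does not matter). Since $\gamma_\tau$ is an affine bijection preserving $K$, the image $\gamma_\tau(H) = \gamma_\tau\cdot P + \gamma(\ker\varphi)$ is a supporting hyperplane of $\gamma_\tau(K) = K$ at $\gamma_\tau\cdot P$. Next I check its direction. By~\eqref{eq action and dual action}, $(\gamma\star\varphi)(\gamma\vec v) = \varphi(\vec v)$, hence $\gamma(\ker\varphi) = \ker(\gamma\star\varphi)$. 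So the direction of $\gamma_\tau(H)$ is $[\gamma\star\varphi] = \gamma*[\varphi]$, which lies in $\P(\C^*)$ because $\Gamma$ preserves $\C$ and therefore $\C^*$. In particular $\gamma_\tau\cdot P\in\partial_{sp}K$, and we get the inclusion $\gamma*\G_K(P)\subseteq \G_K(\gamma_\tau\cdot P)$. Applying the same argument to $\gamma_\tau^{-1} = (\gamma^{-1})_\tau$ at the point $\gamma_\tau\cdot P$ gives the reverse inclusion, hence the set equality. (The statement's $\G_K(X)$ should read $\G_K(P)$.)

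For the normal field, let $\vec n = N_{\S}[\varphi]$, so that $\mathrm{T}_{\vec n}\S = \ker\varphi$. Since $\gamma$ is linear and preserves $\S$ (the affine sphere is invariant under $\mathrm{Aut}_{\SL}(\C)\supseteq\Gamma$), we have $\gamma\vec n\in\S$ and $\mathrm{T}_{\gamma\vec n}\S = \gamma(\mathrm{T}_{\vec n}\S) = \gamma(\ker\varphi) = \ker(\gamma\star\varphi)$. By the uniqueness in Definition~\ref{def C-normal vf}, it follows that $N_{\S}(\gamma*[\varphi]) = \gamma\vec n$.

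There is no real obstacle. The only points needing care are the identity $\gamma(\ker\varphi)=\ker(\gamma\star\varphi)$ and the observation that $\gamma$ preserves $\C^*$, which ensures that spacelike directions are mapped to spacelike directions.
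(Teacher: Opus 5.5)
Your proof is correct and takes the same route as the paper, which simply calls the result a direct consequence of the identity $(\gamma\star\varphi)(\gamma\cdot\vec{v})=\varphi(\vec{v})$ and of the $\Gamma$-invariance of $\S$. You have supplied the details the paper omits: $\gamma(\ker\varphi)=\ker(\gamma\star\varphi)$, the preservation of $\C^*$, the reverse inclusion via $\gamma_\tau^{-1}$, and the typo $\G_K(X)\to\G_K(P)$.
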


From \cite{Nie_Seppi_22}, one can deduce the following fact.

\begin{proposition}[{\cite[Proposition~5.12]{Ablondi_25}}]
\label{prop existence of boundary function gtau}
There is a unique continuous function on the boundary $\partial \Omega^*$, denoted by $g_\tau$, such that every $\tau$-equivariant continuous function on $\Omega^*$ continuously extends on $\overline{\Omega^*}$ to a function coinciding with $g_\tau$ on $\partial \Omega^*$.
\end{proposition}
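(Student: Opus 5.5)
The proof splits into a comparison argument, which gives uniqueness and reduces everything to a single function, and an existence step for one $\tau$-equivariant continuous function that extends continuously to $\overline{\Omega^*}$.

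\emph{Comparison of two equivariant functions.} Let $f_1,f_2$ be two $\tau$-equivariant continuous functions on $\Omega^*$. Subtracting the two instances of identity~\eqref{eq action on Omega* support function}, the translation terms $\tau(\gamma)\cdot(y,-1)$ cancel, so $f_1-f_2$ is $\Gamma$-equivariant. By Proposition~\ref{prop equiv and inv for function on Omega*}, $h\coloneqq (f_1-f_2)/\omega_\C$ is $\Gamma$-invariant and continuous on $\Omega^*$. Since $\Gamma$ acts cocompactly on $\Omega^*\simeq\P(\C^*)$ (Benoist's lemma above), $h$ descends to a continuous function on the closed manifold $\Omega^*/\Gamma$. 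Hence $h$ is bounded, say $|h|\le M$. Then
\begin{equation*}
|f_1-f_2| \le M\,|\omega_\C| \xrightarrow[y\to\partial\Omega^*]{} 0 \, ,
\end{equation*}
because $\omega_\C$ is continuous on $\overline{\Omega^*}$ and vanishes on $\partial\Omega^*$ by~\eqref{eq MA aff sphere}.

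\emph{Consequences of the comparison.} Suppose one $\tau$-equivariant continuous function $f_0$ extends continuously to $\overline{\Omega^*}$, and set $g_\tau\coloneqq f_0|_{\partial\Omega^*}$. By the estimate, every other $\tau$-equivariant continuous $f$ differs from $f_0$ by a function tending to $0$ at $\partial\Omega^*$. So $f$ also extends continuously, with the same boundary values $g_\tau$. Uniqueness of $g_\tau$ is then immediate, since the boundary values of any one such function determine it, provided at least one $\tau$-equivariant continuous function exists.

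\emph{Existence of one good function.} This is the main obstacle. My first choice is to take a $\tau$-convex domain supplied by Nie--Seppi \cite{Nie_Seppi_22}, for instance the $\Gamma_\tau$-invariant $\C$-convex domain whose boundary has constant affine Gaussian curvature, or equivalently constant $\C$-curvature by Theorem~\ref{prop CAGC and CCcurv coincide}. Its support function is $\tau$-equivariant by Proposition~\ref{prop equiv supp function}. The step to import from their work is that the support function solves
\begin{equation*}
\det\Hess s=\kappa^{-1}(-\omega_\C)^{-d-2}
\end{equation*}
with continuous Dirichlet data on $\partial\Omega^*$, and hence is continuous up to the boundary.

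If I prefer not to rely on that regularity, a fallback is to build $f_0$ directly. First I would produce some continuous $\tau$-equivariant $f$ by averaging the translation terms against a $\Gamma$-partition of unity on $\Omega^*$, in the spirit of the convex-combination remarks. Then I would show boundary continuity by writing $f=\omega_\C\cdot(\text{bounded})+\text{affine corrections}$ near $\partial\Omega^*$. This second route is where I expect genuine difficulty: the cocycle $\tau$ has to be controlled along sequences $\gamma^{-1}*y$ escaping to the boundary. For that reason I would first try to cite the Nie--Seppi continuity result.
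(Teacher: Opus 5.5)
Your comparison step is correct and is the right reduction. The difference of two $\tau$-equivariant continuous functions is $\omega_\C$ times a continuous $\Gamma$-invariant function, which is bounded by cocompactness. So existence and uniqueness of $g_\tau$ both reduce to exhibiting \emph{one} $\tau$-equivariant continuous function that extends continuously to $\overline{\Omega^*}$. That existence step is the entire substance of the statement, and your proposal does not supply it.

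The import you suggest does not work as stated, for two reasons. First, the invariant constant-curvature domains are characterised by a Monge--Amp\`ere Dirichlet problem whose boundary datum is precisely $g_\tau$ (compare the Dirichlet problem in the uniqueness part of Theorem~\ref{theo Minkowski solution}). So the step ``it solves the equation with continuous Dirichlet data, hence is continuous up to $\partial\Omega^*$'' presupposes the proposition. Second, for a general divisible cone the paper attributes such invariant domains to earlier work (Labourie, Nie--Seppi) only when $d=2$. For $d\geq 3$ they come from Theorem~\ref{theo Minkowski solution} with $\mu=\kappa^{-1}\vol_{\S}$, whose proof uses $s_\tau=\mathrm{Env}(g_\tau)$, $D_\tau$ and the cosmological time, i.e.\ the present statement. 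Theorem~\ref{prop CAGC and CCcurv coincide} is only an equivalence and gives no existence. Your fallback stops exactly at the step that needs an idea.

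Here is one way to close the gap using only cocompactness.

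\emph{Step 1: a smooth equivariant function.} An equivariant partition of unity gives a smooth $\tau$-equivariant $u$. Set $\tilde u(Y)=\sum_{\gamma\in\Gamma}\psi(\gamma^{-1}\star Y)\,(\gamma_\tau P)\cdot Y$, where $P$ is any point and $\psi\geq 0$ is smooth, $0$-homogeneous, with projectively compact support and $\sum_{\gamma}\psi(\gamma^{-1}\star Y)=1$. One checks that $\tilde u(\gamma\star Y)=\tilde u(Y)+\tau(\gamma)\cdot(\gamma\star Y)$.

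\emph{Step 2: a convex and a concave equivariant function.} On $\C^*$ the cocycle contributes only linear terms. Hence the Hessians of $\tilde u$ and of the homogeneous extension $\tilde\omega_\C$ of $\omega_\C$ are both invariant under the dual action, and both vanish on the radial direction. The second is positive definite transversally, so the ratio of the two forms (a sup over transverse directions) is a continuous $\Gamma$-invariant function on $\P(\C^*)$, bounded by cocompactness. This gives $-c\,\Hess\omega_\C\leq\Hess u\leq c\,\Hess\omega_\C$ on $\Omega^*$. Therefore $u_+=u+c\,\omega_\C$ is convex, $u_-=u-c\,\omega_\C$ is concave, both are $\tau$-equivariant, and $u_+-u_-=2c\,\omega_\C\to 0$ at $\partial\Omega^*$.

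\emph{Step 3: such a pair extends continuously.} Both functions are bounded. At $y_0\in\partial\Omega^*$, let $a\leq b$ be the $\liminf$ and $\limsup$ of $u_+$; these are also those of $u_-$. By convexity, $a$ is the limit of $u_+$ along any segment from an interior point to $y_0$. Take:
\begin{itemize}
\item $y_n\to y_0$ with $u_+(y_n)\to b$;
\item points $z_n$ on such a segment with $\vert y_n-y_0\vert=o(\vert z_n-y_0\vert)$;
\item a fixed small $s>0$ such that $q_n\coloneqq z_n+s(z_n-y_n)\in\Omega^*$.
\end{itemize}
The segment $[y_n,q_n]$ contains $z_n$, so concavity of $u_-$ gives $u_-(q_n)\leq(1+s)u_-(z_n)-s\,u_-(y_n)\to(1+s)a-sb$. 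On the other hand $\liminf_n u_-(q_n)\geq a$, hence $b\leq a$. So $u_+$ extends continuously to $\overline{\Omega^*}$, and your comparison argument then finishes the proof.
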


Introducing $s_\tau$ the \emph{convex envelope} of the function $g_\tau$ given by Proposition~\ref{prop existence of boundary function gtau}, which can be expressed in the following way,
\begin{equation*}
s_\tau (y) = \mathrm{Env}(g_\tau)(y) \coloneqq \sup \left\lbrace a(y) \st a : \overline{\Omega^*} \to \R \ \text{is affine and} \ a\leq g_\tau \ \text{on} \ \partial \Omega^*\right\rbrace \, ,
\end{equation*}
and considering the $\C$-convex domain $D_\tau $ with support function $s_\tau$, we get the following result, proved independently by Choi \cite{Choi_25} and Nie and Seppi \cite{Nie_Seppi_23}. 

\begin{proposition}
\label{prop Dtau}
There is a \emph{maximal $\tau$-convex domain} $D_\tau \subset \A^{d+1}$ for inclusion, in the sense that every $\tau$-convex domain is included in $D_\tau$.
\end{proposition}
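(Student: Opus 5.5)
We work in the parametrisation $(\mathbf{P})$ and reduce the statement to a comparison of support functions. First, the $\C$-convex domain $D_\tau$ with support function $s_\tau$ is well defined, by Proposition~\ref{prop K is epigraph}. Indeed, $g_\tau$ is continuous on the compact set $\partial\Omega^*$, so its convex envelope $s_\tau$ is a finite convex function on $\Omega^*$. Inclusion of $\C$-convex domains is then read off support functions: $K\subseteq K'$ if and only if $\tilde s_K\le \tilde s_{K'}$ on $\C^*$. By $1$-homogeneity this is equivalent to $s_K\le s_{K'}$ on $\Omega^*$. This follows from the epigraph description $K=\epi(s_K^*)$ and the order-reversing property of the Legendre--Fenchel transform. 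So it suffices to prove two things:
\begin{enumerate}[label=(\roman*)]
\item $s_\tau$ is $\tau$-equivariant, so that $D_\tau$ is $\tau$-convex;
\item every $\tau$-support function $s$ satisfies $s\le s_\tau$ on $\Omega^*$.
\end{enumerate}

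For (ii), let $s$ be a $\tau$-support function. It is convex, hence continuous on $\Omega^*$, and $\tau$-equivariant. By Proposition~\ref{prop existence of boundary function gtau}, $s$ extends continuously to $\overline{\Omega^*}$ with boundary value $g_\tau$. The extension is still convex, and a convex continuous function on a compact convex set is the supremum of the affine functions lying below it. Each such affine function $a\le s$ satisfies $a\le g_\tau$ on $\partial\Omega^*$, so $a\le s_\tau$ by definition of the envelope. Taking the supremum over $a$ gives $s\le s_\tau$.

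For (i), the key step is to check that the defining family of the envelope is preserved by the twisted action. Take an affine function $a$ on $\overline{\Omega^*}$. It is the restriction of a linear form $\tilde a(Y)=X_a\cdot Y$ on $\C^*$, evaluated at $(y,-1)$. Write the equivariance \eqref{eq action on Omega* support function} as an action $\gamma\cdot f$ on functions. Then $\gamma\cdot a$ is again affine: it corresponds to the linear form $Y\mapsto \gamma X_a\cdot Y+\tau(\gamma)\cdot Y$, which is $1$-homogeneous in $Y$. Here the factor $\omega_\C(y)/\omega_\C(\gamma^{-1}*y)$ is precisely what the radial projection produces, since $\S^*$ is the radial graph of $-\omega_\C$.

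Next, the boundary function is fixed by this action: $g_\tau=\gamma\cdot g_\tau$ on $\partial\Omega^*$. To see this, pick any $\tau$-support function, which exists (for instance by averaging, or from \cite{Nie_Seppi_23}); its boundary extension is $g_\tau$, and that extension is invariant. Some care is needed because $\omega_\C$ vanishes on $\partial\Omega^*$. The way around this is to write everything in terms of total support functions on $\overline{\C^*}\setminus\{0\}$, where the action is simply $\tilde f\mapsto \tilde f(\gamma^{-1}\star\cdot)+\tau(\gamma)\cdot(\cdot)$ and extends continuously.

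It follows that $a\le g_\tau$ on $\partial\Omega^*$ if and only if $\gamma\cdot a\le g_\tau$ on $\partial\Omega^*$. The action is order preserving, since $\omega_\C>0$ ratio-wise. Hence $\gamma\cdot s_\tau=s_\tau$, which proves that $D_\tau$ is $\tau$-convex.

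The main obstacle is this boundary bookkeeping: making sense of the action on $\partial\Omega^*$, where the $\omega_\C$-ratio degenerates. It is handled by working homogeneously on the closed cone $\overline{\C^*}$ instead of on $\Omega^*$.
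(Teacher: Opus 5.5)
Your proof is correct. It also does more than the paper does: the paper never proves this proposition. It only defines $D_\tau$ as the $\C$-convex domain with support function $s_\tau=\mathrm{Env}(g_\tau)$ and cites Choi \cite{Choi_25} and Nie--Seppi \cite{Nie_Seppi_23} for the facts that $D_\tau$ is $\tau$-convex and maximal.

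You take the same candidate and give a self-contained verification from the paper's preliminaries:
\begin{itemize}
\item \emph{Maximality} is a comparison of affine minorants. Every $\tau$-support function extends convexly to $\overline{\Omega^*}$ with boundary values $g_\tau$ (Proposition~\ref{prop existence of boundary function gtau}), so it lies below the envelope.
\item \emph{$\tau$-convexity} holds because the twisted action, read homogeneously on $\overline{\C^*}\setminus\{0\}$, permutes the family defining the envelope. It sends affine functions to affine functions (the point $X_a$ goes to $\gamma_\tau X_a$), it preserves order, and it fixes $g_\tau$.
\end{itemize}
What your route shows is that, once the boundary-function proposition is granted, the statement reduces to elementary convex analysis.

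Two small points to tighten.
\begin{itemize}
\item \emph{Invariance of $g_\tau$ does not need a $\tau$-support function.} ``Averaging'' does not obviously produce a convex function, and invoking \cite{Nie_Seppi_23} borrows from the very source you are reproving. Any continuous $\tau$-equivariant function suffices, since Proposition~\ref{prop existence of boundary function gtau} applies to all of them. For example, take $f(y)=\sum_{\gamma\in\Gamma}\psi(\gamma^{-1}*y)\,\tau(\gamma)\cdot(y,-1)$, where $\psi$ is a compactly supported partition of unity for the action of $\Gamma$ on $\Omega^*$. This $f$ is $\tau$-equivariant by the cocycle relation.
\item \emph{Commutation with suprema should be stated.} The step $\gamma\cdot s_\tau=\sup_a \gamma\cdot a$ holds because, at each $y$, the action is an increasing affine function of the value at $\gamma^{-1}*y$. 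Make this explicit.
\end{itemize}

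As an aside, the same computation in primal form gives
$D_\tau=\{X : X\cdot(y,-1)<g_\tau(y)\ \text{for all}\ y\in\partial\Omega^*\}$.
From this description both invariance and maximality can also be checked directly; maximality needs the openness of $K$ to get strict inequalities.
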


In \cite{Ablondi_25}, we have proved the following.

\begin{proposition}[{\cite[Theorem 7.8]{Ablondi_25}}]
\label{prop action Dtau free prop disc}
The action of $\Gamma_\tau$ on the maximal $\tau$-convex domain $D_\tau$ is free and properly discontinuous, and there is a homeomorphism
\begin{equation*}
D_\tau /\Gamma_\tau \simeq \P(\C)/\Gamma \times \R \, .
\end{equation*}  
\end{proposition}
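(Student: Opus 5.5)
The plan is to build a $\Gamma_\tau$-invariant ``time function'' on $D_\tau$ out of the family of domains $K_t \coloneqq D_\tau + t\,\S$, $t>0$, and to read both properness of the action and the product structure from it.

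\textbf{Step 1: an invariant foliation.} Work in $(\mathbf{P})$. By Proposition~\ref{prop equiv and inv for function on Omega*}, $\omega_\C$ is $\Gamma$-equivariant with $\tau=0$: it is the support function of the $\Gamma$-invariant sphere $\S$. Hence $s_\tau + t\,\omega_\C$ is $\tau$-equivariant, and by Propositions~\ref{prop support function and sum} and~\ref{prop equiv supp function} each $K_t$ is a $\tau$-convex domain contained in $D_\tau$. Moreover $K_t \subset K_{t'}$ for $t>t'$, since $\omega_\C<0$. By the normal projection of \cite[Section~6]{Ablondi_25}, used as in the proof of Lemma~\ref{lem vague cv of Veps}, every $X \in D_\tau$ decomposes uniquely as $X = P + T(X)\, N_{\S}(y(X))$ with $P \in \partial_{sp} D_\tau$, $y(X)\in \G_{D_\tau}(P)$ and $T(X)>0$. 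Here $T(X)$ is characterised by $X\in\partial K_{T(X)}$, and \cite[Proposition~6.7]{Ablondi_25} gives $y(X) = \grad (s_\tau + T(X)\omega_\C)^*(x)$. Arguing as in Lemma~\ref{lem vague cv of Veps} with \cite[Theorem~24.5]{Rockafellar_97}, both $T$ and $y$ are continuous. By Proposition~\ref{prop equivariance of the Gauss map}, the map $F=(y,T) : D_\tau \to \Omega^*\times(0,+\infty)$ is equivariant with respect to $\Gamma_\tau$ on the source and $\gamma\mapsto(\gamma*\cdot\,,\mathrm{id})$ on the target.

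\textbf{Step 2: freeness and proper discontinuity.} The dual action of $\Gamma$ on $\Omega^*\simeq\P(\C^*)$ is free, properly discontinuous and cocompact \cite[Lemma~2.8]{Benoist_04}. Given a compact $L\subset D_\tau$, the set $F(L)$ is compact. If $\gamma_\tau L\cap L\neq\emptyset$, then $\gamma*F(L)\cap F(L)\neq\emptyset$, which happens for only finitely many $\gamma$. The same argument with $L$ a point gives freeness, because $\gamma_\tau X = X$ forces $\gamma*y(X)=y(X)$ and hence $\gamma=1$. So the action of $\Gamma_\tau$ on $D_\tau$ is free and properly discontinuous.

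\textbf{Step 3: the homeomorphism.} Since $T$ is invariant and each level set is $T^{-1}(t)=\partial K_t$, it suffices to produce a $\Gamma_\tau$-equivariant homeomorphism $D_\tau\simeq \partial K_1\times\R$ and a $\Gamma_\tau$-equivariant homeomorphism $\partial K_1\simeq \P(\C)$. For the first, I would flow along $\C$-normals: send $X \in \partial K_1$ with normal $y$ to $X+(e^u-1)N_{\S}(y)$. This is an equivariant bijection onto $D_\tau$ by uniqueness of the decomposition in Step~1, together with the identity $K_1 + (e^u-1)\S = K_{e^u}$ for $e^u \ge 1$. For $e^u<1$ I instead parametrise $\partial K_{e^u}$ by nearest normal projection onto $\partial K_1$. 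Equivalently, I would simply use $T$ together with the retraction of each level $\partial K_t$ onto $\partial K_1$ along the unique $\C$-normal segments. For the second, $\partial K_1$ is the graph of the convex function $(s_\tau+\omega_\C)^*$, hence homeomorphic to $\R^d$, and $\Gamma_\tau$ acts on it freely, properly discontinuously and (via $F$) cocompactly. To identify $\partial K_1/\Gamma_\tau$ with $\P(\C)/\Gamma$, I would first replace $K_1$ by a $C^2_+$ $\tau$-convex domain $K\subset D_\tau$, obtained by an equivariant version of Lemma~\ref{lem approx} (Choi's lemma of Appendix~\ref{app C2+-approximation}). Its Gauss map is then an equivariant diffeomorphism $\partial K\to\Omega^*=\P(\C^*)$, and composing with the $\Gamma$-equivariant Blaschke conormal identification $\S^*\simeq\S$, radially projected, gives $\P(\C^*)\simeq\P(\C)$ equivariantly.

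\textbf{Main obstacle.} I expect the hardest part to be the product structure in Step~3 rather than Steps~1--2. The Gauss map of $D_\tau$ itself is set-valued, and the normal map $y$ may have nontrivial compact convex fibres. This is exactly why I would pass to a smooth strictly convex invariant level $\partial K$ and use the $\C$-normal flow from it. The delicate check is that this flow is a homeomorphism onto all of $D_\tau$ in both time directions. Going to the past of $\partial K$ does not follow from a Minkowski sum, so there I would rely on the uniqueness and continuity of the normal projection onto $\partial_{sp}$ of each $K_t$ from \cite[Section~6]{Ablondi_25}.
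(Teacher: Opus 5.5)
Steps~1 and~2 are sound. Given the unique decomposition $X=P(X)+T(X)\,N_{\S}(y(X))$, with $T$ and $y$ continuous and equivariant (\cite[Propositions~6.7 and~6.9]{Ablondi_25}), freeness and proper discontinuity follow from those of the dual action on $\Omega^*$ exactly as you argue. Also, $X\mapsto\bigl(P(X)+N_{\S}(y(X)),\log T(X)\bigr)$ is an equivariant homeomorphism $D_\tau\to\partial K_1\times\R$. Its inverse is $(Z,u)\mapsto P(Z)+e^{u}N_{\S}(y(Z))$, valid for every $u\in\R$, so no separate treatment of $e^u<1$ is needed.

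The gap is the remaining identification $\partial K_1/\Gamma_\tau\simeq\P(\C)/\Gamma$. You propose to replace $K_1$ by a $C^2_+$ $\tau$-convex domain $K$, but you never give a map between $\partial K_1$ and $\partial K$. This is not automatic: both quotients are closed aspherical manifolds with fundamental group $\Gamma$, and concluding a homeomorphism from that alone would amount to the Borel conjecture, so an explicit map is needed. The mechanism in your last paragraph, running the $\C$-normal flow of $\partial K$ in both time directions, fails towards the past. Past $\C$-normal lines of a $C^2_+$ hypersurface focus and cross; for $P+\S$ they all pass through $P$ at time $1$, and then they leave $D_\tau$. So that flow is neither injective nor onto $D_\tau\setminus\overline{K}$.

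The fix is to keep using the normal structure of $D_\tau$ rather than that of $\partial K$.
\begin{enumerate}
\item Take $K$ from Lemma~\ref{lem smooth approx} with $d_H(K,K_1)\le\varepsilon<1$. Then $s_K\le s_\tau+(1-\varepsilon)\omega_\C$, so $\overline{K}\subset\overline{K_{1-\varepsilon}}\subset D_\tau$. Hence $\partial K=\partial_{sp}K$, as in the proof of Proposition~\ref{prop m(tau)}, and $\G_K:\partial K\to\Omega^*$ is an equivariant homeomorphism. You need this Cauchy property for your Gauss-map claim anyway.
\item Each line $P+\R\,N_{\S}(y)$, with $y\in\G_{D_\tau}(P)$, meets $\partial K$ exactly once. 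In $(\mathbf{P})$ write $P=(p,h)$, $N_{\S}(y)=c\,(w,1)$ with $c>0$ and $w\in\Omega$, and $K=\epi(f)$ with $\partial f(\R^d)\subseteq\overline{\Omega^*}$. Then $\lambda\mapsto h+\lambda c-f(p+\lambda c w)$ is strictly increasing, with slope at least $c\bigl(1-\max_{q\in\overline{\Omega^*}}q\cdot w\bigr)>0$.
\item The crossing parameter is positive, because $\partial K\subset D_\tau$ while $P+\lambda N_{\S}(y)\notin D_\tau$ for $\lambda\le 0$.
\item By uniqueness of the decomposition, $X\mapsto P(X)+N_{\S}(y(X))$ is then an equivariant continuous bijection $\partial K\to\partial K_1$. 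By invariance of domain it is a homeomorphism.
\item Compose with $\G_K$, with $N_{\S}:\P(\C^*)\to\S$ (equivariant by Proposition~\ref{prop equivariance of the Gauss map}), and with the radial projection $\S\to\P(\C)$. This gives the required $\Gamma$-equivariant homeomorphism $\partial K_1\simeq\P(\C)$ and completes your argument.
\end{enumerate}
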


\begin{proposition}
The maximal $\tau$-convex domain $D_\tau$ is the unique $\tau$-convex domain with area measure 
\begin{equation*}
 \Area(D_\tau) = 0 \, . 
\end{equation*}
\end{proposition}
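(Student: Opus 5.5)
The plan is to work in the parametrisation $(\mathbf{P})$ and use Proposition~\ref{prop A MA C-convex}, which gives $\Area(K) = (-\omega_\C)\MA(s_K)$. Since $-\omega_\C>0$ on $\Omega^*$, a $\C$-convex domain $K$ has zero area measure if and only if $\MA(s_K)=0$ on $\Omega^*$. The statement therefore reduces to two claims about Monge--Amp\`ere measures:
\begin{itemize}
\item existence: $\MA(s_\tau)=0$;
\item uniqueness: any $\tau$-support function $s$ with $\MA(s)=0$ equals $s_\tau$.
\end{itemize}

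For existence, I will use that $s_\tau=\mathrm{Env}(g_\tau)$ is the convex envelope of boundary data, i.e.\ the supremum of affine functions lying below $g_\tau$ on $\partial\Omega^*$. The standard fact to invoke is that such a maximal convex function has vanishing Monge--Amp\`ere measure. Concretely, I argue that for every $y\in\Omega^*$ and every $p\in\partial s_\tau(y)$, the contact set $\{s_\tau = s_\tau(y)+p\cdot(\cdot-y)\}$ is the convex hull of its points on $\partial\Omega^*$. Hence $y$ lies in a nontrivial segment along which $s_\tau$ is affine. A point whose subdifferential contains an interior point of $\partial s_\tau(\Omega^*)$ cannot lie in such a segment. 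By Aleksandrov's lemma (the set of gradients attained at more than one point is Lebesgue-null), $\mathcal L(\partial s_\tau(\Omega^*))=0$. Equivalently, I may quote the classical result that the convex envelope of continuous boundary data on a bounded convex domain is the (generalised) solution of $\MA(u)=0$ with $u=g_\tau$ on $\partial\Omega^*$; this is found e.g.\ in \cite{Figalli_17}. Either way $\Area(D_\tau)=0$.

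For uniqueness, let $K$ be $\tau$-convex with $\Area(K)=0$ and support function $s$. Then $\MA(s)=0$ on $\Omega^*$. Since $s$ is convex, $\tau$-equivariant and continuous on $\Omega^*$, Proposition~\ref{prop existence of boundary function gtau} shows that $s$ extends continuously to $\overline{\Omega^*}$ with boundary values $g_\tau$. The same holds for $s_\tau$, since $D_\tau$ is $\tau$-convex by Proposition~\ref{prop Dtau}. Thus $s$ and $s_\tau$ are two convex functions, continuous up to $\partial\Omega^*$, sharing boundary data and both with zero Monge--Amp\`ere measure. The comparison principle for Aleksandrov solutions \cite{Figalli_17} then gives $s=s_\tau$, hence $K=D_\tau$ by Proposition~\ref{prop K is epigraph}.

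A more elementary route to uniqueness is also available. First, $s\le s_\tau$: $s$ is convex with boundary values $g_\tau$, so it lies below the envelope (and this is just maximality, $K\subseteq D_\tau$). Second, if $s<s_\tau$ somewhere, then $s_\tau - s$ has a positive interior maximum. Subtracting a small strictly convex paraboloid, one obtains a subgradient-inclusion argument giving $\MA(s)>0$ on a set of positive measure, a contradiction. The main obstacle I expect is justifying these comparison steps cleanly with merely continuous boundary extensions, and checking that the envelope really attains $g_\tau$ continuously at $\partial\Omega^*$. The latter holds because $s_\tau$ is itself a $\tau$-support function, so Proposition~\ref{prop existence of boundary function gtau} applies to it directly.
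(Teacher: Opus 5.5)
Your proposal is correct and follows the paper's own proof. You reduce via Proposition~\ref{prop A MA C-convex} to $\MA(s)=0$, use Proposition~\ref{prop existence of boundary function gtau} to get the common boundary values $g_\tau$, and conclude with the classical fact that $\mathrm{Env}(g_\tau)$ is the unique convex solution of the homogeneous Dirichlet problem; the paper just cites \cite[1.5.2]{Gutierrez_01}, where you unpack this via Aleksandrov's lemma and the comparison principle.

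One minor slip: your claim that a point whose subdifferential contains an interior point of $\partial s_\tau(\Omega^*)$ cannot lie on an affine segment is false for general convex functions (take $\max(\vert x\vert-1,0)$ at $x=0$). It is also unnecessary, since once every $p\in\partial s_\tau(y)$ is a subgradient at two distinct points, Aleksandrov's lemma already gives $\mathcal{L}(\partial s_\tau(\Omega^*))=0$.
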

\begin{proof}
Let us use the parametrisation $(\mathbf{P})$. Proposition~\ref{prop A MA C-convex} implies that a $\tau$-convex domain has zero area measure if and only if its $\tau$-support function $s$ satisfies $\MA(s) =0 $. By Proposition~\ref{prop existence of boundary function gtau} and  \cite[1.5.2]{Gutierrez_01}, that is the case if and only if $s = \mathrm{Env}(g_\tau)=s_\tau$. 
\end{proof}

\begin{proposition}
Let $K$ be a $\tau$-convex domain. Then, the Radon measure $V_\varepsilon(K)$ on $\P(\C^*)$ is invariant under the dual projective action of $\Gamma$. 
\end{proposition}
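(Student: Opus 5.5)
We prove the set identity
\begin{equation*}
\mathcal{N}_\varepsilon(K)(\gamma * b) = \gamma_\tau \cdot \mathcal{N}_\varepsilon(K)(b)
\end{equation*}
for every Borel set $b \subseteq \P(\C^*)$ and every $\gamma \in \Gamma$. The invariance then follows because $\gamma_\tau \in \SA(\A^{d+1})$ preserves $\dvol$, so
\begin{equation*}
V_\varepsilon(K)(\gamma * b) = \vol\bp{\gamma_\tau \cdot \mathcal{N}_\varepsilon(K)(b)} = \vol\bp{\mathcal{N}_\varepsilon(K)(b)} = V_\varepsilon(K)(b) \, .
\end{equation*}
The projective dual action is by homeomorphisms of $\P(\C^*)$, so $\gamma * b$ is Borel and the left-hand side is defined.

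For the inclusion $\supseteq$, take a point $Q = P + t\, N_{\S}[\varphi]$ of $\mathcal{N}_\varepsilon(K)(b)$ with $[\varphi] \in b$, $P \in \partial_{sp}K$, $[\varphi] \in \G_K(P)$ and $t \in (0,\varepsilon)$. Since $\gamma_\tau$ is affine with linear part $\gamma$,
\begin{equation*}
\gamma_\tau \cdot Q = \gamma_\tau \cdot P + t\, \gamma \cdot N_{\S}[\varphi] \, .
\end{equation*}
By Proposition~\ref{prop equivariance of the Gauss map}, $\gamma \cdot N_{\S}[\varphi] = N_{\S}(\gamma * [\varphi])$ and $\gamma * [\varphi] \in \G_K(\gamma_\tau \cdot P)$. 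We also need $\gamma_\tau \cdot P \in \partial_{sp} K$. This holds because $K$ is $\Gamma_\tau$-invariant, so $\gamma_\tau$ preserves $\partial K$ and maps supporting hyperplanes to supporting hyperplanes. A hyperplane with direction $[\varphi]$ is sent to one with direction $\gamma * [\varphi]$, which lies in $\P(\C^*)$ because $\gamma$ preserves $\C^*$. Hence $\gamma_\tau \cdot Q \in \mathcal{N}_\varepsilon(K)(\gamma * b)$. Applying the same argument to $\gamma^{-1}$ and the set $\gamma * b$ gives the reverse inclusion.

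There is no serious obstacle. The only step that needs care is the equivariance of the set-valued Gauss map and the fact that $\Gamma_\tau$ preserves $\partial_{sp}K$, which is the content of Proposition~\ref{prop equivariance of the Gauss map} via identity~\eqref{eq action and dual action}. If one wants a check in the parametrisation $(\mathbf{P})$, the same identity can be verified from the $\tau$-equivariance~\eqref{eq action on Omega* support function} of the support function. Finally, because each $V_\varepsilon(K)$ is invariant, the Lagrange-interpolation expression~\eqref{eq def Si} shows that every $S_i(K)$, in particular $\Area(K)$, is $\Gamma$-invariant and descends to $\P(\C^*)/\Gamma$.
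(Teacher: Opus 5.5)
Your proof is correct and follows the paper's argument. The paper likewise establishes $\mathcal{N}_\varepsilon(K)(\gamma*b) = \gamma_\tau\cdot\mathcal{N}_\varepsilon(K)(b)$ from the equivariance of the Gauss map and of the $\C$-normal field (Proposition~\ref{prop equivariance of the Gauss map}), and then concludes because $\gamma_\tau$ preserves volume; your explicit checks that $\gamma_\tau$ preserves $\partial_{sp}K$ and that the reverse inclusion follows from $\gamma^{-1}$ only spell out steps the paper leaves implicit.
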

\begin{proof}
Let $b$ be Borel subset of $\P(\C^*)$ and $\gamma_\tau\in \Gamma_\tau$. Then, using the equivariance of the Gauss maps,
\begin{align*}
\mathcal{N}_\varepsilon(K)(\gamma*b) & = \left\lbrace P + t \, N_{\S} [\varphi]\st P \in \partial_{sp} K, \ [\varphi]\in \G_K(P)\cap (\gamma*b), \ t \in (0,\varepsilon) \right\rbrace \\
 & = \left\lbrace \gamma_\tau \cdot P + t \, \gamma N_{\S} [\varphi]\st P \in \partial_{sp} K, \ [\varphi]\in \G_K(P)\cap b, \ t \in (0,\varepsilon) \right\rbrace \\
 & = \left\lbrace \gamma_\tau \cdot( P + t N_{\S} [\varphi])\st P \in \partial_{sp} K, \ [\varphi]\in \G_K(P)\cap b, \ t \in (0,\varepsilon) \right\rbrace \\
 & = \gamma_\tau \cdot \mathcal{N}_\varepsilon(K)(b)\, . 
\end{align*}
So that, as $\gamma_\tau$ preserves the volume,
\begin{equation*}
V_\varepsilon(K)(\gamma*b) = \vol\bp{\mathcal{N}_\varepsilon(K)(\gamma*b)} = \vol \bp{\mathcal{N}_\varepsilon(K)(b)} = V_\varepsilon(K)(b) \, . \qedhere
\end{equation*}
\end{proof}

Hence, given a $\tau$-convex domain $K$ and $\varepsilon>0$, the Radon measure $V_\varepsilon(K)$ descends to a Radon measure on 
the quotient $\P(\C^*)/\Gamma$ which is naturally endowed with a \emph{convex compact projective manifold structure}. 
We shall denote by $V_\varepsilon(K/\Gamma_\tau)$ that Radon measure induced on $\P(\C^*)/\Gamma$. The locally finite nature of the Radon measure $V_\varepsilon(K/\Gamma_\tau)$ and the compactness of $\P(\C^*)/\Gamma$ imply that $V_\varepsilon(K/\Gamma_\tau)$ is a finite measure. 

\begin{remark} 
\label{rem induced volume} 
Let $K$ be an any $\tau$-convex domain $K$. As $K$ is included in $D_\tau$, by Proposition~\ref{prop action Dtau free prop disc}, the quotient $K/\Gamma_\tau$ is a manifold. As elements of $\SA(\A^{d+1})$ preserve the volume form $\dvol$ on $\A^{d+1}$, it is naturally endowed with a volume form induced by $\dvol$, the ambient one on $\A^{d+1}$, that we still denote by $\dvol$ (and by $\dif \mathcal{L}$ in the parametrisation $(\mathbf{P})$). Then, if $K$ is a $\tau$-convex domain, the measure $V_\varepsilon(K/\Gamma_\tau)$ on $\P(\C^*)/\Gamma$ satisfies that for all Borel subset $b \subseteq \P(\C^*)/\Gamma$
\begin{equation*}
 V_\varepsilon(K/\Gamma_\tau)(b) = \vol\bp{\mathcal{N}_\varepsilon(K/\Gamma_\tau)(b)} \, ,
\end{equation*}
where $\mathcal{N}_\varepsilon(K/\Gamma_\tau)(b)$ is defined in the following way: let $\tilde b \subseteq \P(\C^*)$ be the preimage of $b$ under the action of $\Gamma$,
\begin{equation*}
 \mathcal{N}_\varepsilon(K/\Gamma_\tau)(b) \coloneqq \mathcal{N}_\varepsilon(K)(\tilde b)/\Gamma_\tau \subset K/\Gamma_\tau \, . 
\end{equation*}
That fact justifies our choice of notation. It can be seen by introducing a fundamental domain of $\P(\C^*)$ for the action of $\Gamma$. 
\end{remark}

Theorem~\ref{theo Steiner} and Corollary~\ref{cor S0} descend to the quotient and give the following fact. 

\begin{theorem}[Local Steiner Formula for $\tau$-convex domains]
\label{theo Steiner tau-convex}
Let $K$ be a $\tau$-convex domain. Then, there exists a unique family $S_0(K/\Gamma_\tau)$, $\dots$, $S_d(K/\Gamma_\tau)$ of finite Radon measures on $\P(\C^*)/\Gamma$ such that for all $\varepsilon>0$,
\begin{equation*}
V_\varepsilon(K/\Gamma_\tau) = \frac{1}{d+1}\sum_{i=0}^{d}\varepsilon^{d+1-i} \binom{d+1}{i} S_i(K/\Gamma_\tau) \, . 
\end{equation*}
Moreover, $S_0(K/\Gamma_\tau)$ is the volume form induced by $\vol_{\S}$. 
\end{theorem}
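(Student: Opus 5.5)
The plan is to descend Theorem~\ref{theo Steiner} and Corollary~\ref{cor S0} to the quotient by checking that every area measure $S_i(K)$ of a $\tau$-convex domain is $\Gamma$-invariant.

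First, I prove this invariance. By the previous proposition, each $V_\varepsilon(K)$ is invariant under the dual projective action of $\Gamma$. Fix distinct $\varepsilon_0,\dots,\varepsilon_d>0$. As in the proof of Theorem~\ref{theo Steiner}, Lagrange interpolation gives universal coefficients $a_{ik}$ with
\begin{equation*}
S_i(K)=\sum_{k=0}^d a_{ik}V_{\varepsilon_k}(K) \, .
\end{equation*}
So each $S_i(K)$ is a finite linear combination of $\Gamma$-invariant Radon measures, and is therefore a $\Gamma$-invariant non-negative Radon measure on $\P(\C^*)$.

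Next, I descend to $\P(\C^*)/\Gamma$. Since the dual action is free and properly discontinuous, $\P(\C^*)\to\P(\C^*)/\Gamma$ is a covering. A $\Gamma$-invariant Radon measure $\mu$ induces a unique measure $\bar\mu$ on the quotient, characterised by $\bar\mu(b)=\mu(\tilde b\cap F)$. Here $F$ is a Borel fundamental domain, for instance one built from a locally finite cover by evenly covered open sets, and $\tilde b$ is the preimage of $b$. The construction does not depend on $F$, by invariance of $\mu$. It is linear in $\mu$ and compatible with the definition of $V_\varepsilon(K/\Gamma_\tau)$ from Remark~\ref{rem induced volume}. The quotient is compact, so $F$ can be chosen relatively compact, and $\bar\mu$ is then finite because $\mu$ is Radon.

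Applying this linear descent to the Steiner identity of Theorem~\ref{theo Steiner} gives the formula with $S_i(K/\Gamma_\tau)$ defined as the descent of $S_i(K)$. Uniqueness follows from the uniqueness of the coefficients of a polynomial in $\varepsilon$, evaluated on each Borel set $b$. Finally, Corollary~\ref{cor S0} gives $S_0(K)=\vol_{\S}$, which is $\Gamma$-invariant because $\Gamma$ preserves $\S$ and its canonical volume form. Its descent is therefore the volume form induced by $\vol_{\S}$.

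The only step needing some care is the measure-theoretic descent: the existence of a Borel fundamental domain that is relatively compact. I will handle it with a finite cover of the compact quotient by evenly covered relatively compact open sets, taking successive differences of chosen lifts.
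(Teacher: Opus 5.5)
Your proposal is correct and follows the paper's route. The paper only asserts that Theorem~\ref{theo Steiner} and Corollary~\ref{cor S0} descend to the quotient, relying on the $\Gamma$-invariance of $V_\varepsilon(K)$ proved just before; you supply the omitted details, namely the invariance of each $S_i(K)$ via the interpolation formula $S_i(K)=\sum_k a_{ik}V_{\varepsilon_k}(K)$, and the descent through a relatively compact Borel fundamental domain. One small care point: check that identity, and hence the invariance, first on relatively compact Borel sets, where every term is finite, and then extend it by exhausting $\P(\C^*)$ by compact sets.
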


\subsection{Hausdorff distance and cosmological time}

\begin{proposition}
Let $K$ and $K'$ be two $\tau$-convex domains of $\A^{d+1}$. Then there exist $\varepsilon >0$ such that
\begin{equation*}
K+ \varepsilon \S \subseteq K' \quad \text{and} \quad K'+ \varepsilon \S \subseteq K \, .
\end{equation*}
\end{proposition}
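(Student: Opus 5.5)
Work in the parametrisation $(\mathbf{P})$ and translate the inclusions into inequalities between support functions. Let $s$ and $s'$ be the $\tau$-support functions of $K$ and $K'$. By Remark~\ref{rem K+S} and Proposition~\ref{prop support function and sum}, $K+\varepsilon\S$ is the $\C$-convex domain with support function $s+\varepsilon\omega_\C$. Since a $\C$-convex domain is the epigraph of the Legendre--Fenchel transform of its support function (Proposition~\ref{prop K is epigraph}), and conjugation reverses inequalities, the inclusion $K+\varepsilon\S\subseteq K'$ is equivalent to $s+\varepsilon\omega_\C\le s'$ on $\Omega^*$. By symmetry it therefore suffices to find $\varepsilon>0$ with
\begin{equation*}
s-s'\le -\varepsilon\,\omega_\C \quad\text{and}\quad s'-s\le -\varepsilon\,\omega_\C \quad \text{on } \Omega^*,
\end{equation*}
that is, $\vert s-s'\vert\le \varepsilon(-\omega_\C)$.

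The key observation is that $f\coloneqq s-s'$ is $\Gamma$-equivariant, i.e. $\tau$-equivariant with $\tau=0$. Indeed, subtracting the two instances of identity~\eqref{eq action on Omega* support function} cancels the translation terms $\tau(\gamma)\cdot(y,-1)$. By Proposition~\ref{prop equiv and inv for function on Omega*}, the function $\bar f= f/\omega_\C$ is then $\Gamma$-invariant on $\Omega^*$ for the projective dual action.

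Both $s$ and $s'$ are convex and finite on the open set $\Omega^*$, so they are continuous, and hence $\bar f$ is continuous. Because $\Gamma$ divides $\C^*$ (Benoist's lemma, stated above), there is a compact fundamental domain $F\subset\Omega^*$ for the action of $\Gamma$. Then $\bar f$ is bounded on $F$, and by invariance $\sup_{\Omega^*}\vert\bar f\vert=\sup_F\vert\bar f\vert\eqqcolon M<\infty$. This gives $\vert s-s'\vert\le M(-\omega_\C)$. Taking any $\varepsilon\in(0,+\infty)$ with $\varepsilon\ge M$ (for instance $\varepsilon=M+1$) yields both inequalities, hence both inclusions.

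The main point needing care is the equivalence between the inclusion of domains and the inequality of support functions. One direction is immediate: $K\subseteq K'$ implies $\tilde s_K\le\tilde s_{K'}$ directly from the definition as a supremum. For the converse, use that $K''=\epi((s'')^*)$, and that $s_1\le s_2$ implies $s_1^*\ge s_2^*$, which gives $\epi(s_1^*)\subseteq\epi(s_2^*)$. Everything else follows directly from earlier results.
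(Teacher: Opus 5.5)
Your proposal is correct and takes essentially the same route as the paper. You translate both inclusions into the support-function inequality $\vert s-s'\vert\le\varepsilon(-\omega_\C)$, observe that $(s-s')/\omega_\C$ is $\Gamma$-invariant, and bound it using cocompactness of the action on $\Omega^*$. Your choice $\varepsilon=M+1$ also guarantees $\varepsilon>0$ when $K=K'$, a degenerate case that the paper's choice $\varepsilon=\Vert (s-s')/\omega_\C\Vert_\infty$ overlooks.
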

\begin{proof}
Let us use the parametrisation $(\mathbf{P})$, and let $s$ and $s'$ be the support function of respectively $K$ and $K'$. Then, using the epigraph characterisation of $\C$-convex domain (Proposition~\ref{prop char C-convex}), the two wanted inclusions are equivalent to 
\begin{equation*}
(s+\varepsilon\omega_\C)^* \geq (s')^* \quad \text{and} \quad (s'+\varepsilon\omega_\C)^* \geq s^* \, .
\end{equation*}
As the Legendre--Fenchel transform is order reversing, that is equivalent to
\begin{equation*}
s+\varepsilon\omega_\C^* \leq s' \quad \text{and} \quad s'+\varepsilon\omega_\C \geq s \, ,
\end{equation*}
which is just 
\begin{equation*}
\varepsilon\omega_\C^* \leq s'-s \leq -\varepsilon\omega_\C \, .
\end{equation*}
As $s$ and $s'$ are two $\tau$-support functions, then, because of Proposition~\ref{prop equiv supp function}, the function $(s-s')/\omega_\C$ on $\Omega^*$ is invariant under the dual projective action of $\Gamma$. Hence, it descends to a continuous function on the compact manifold $\Omega^*/\Gamma$. Setting $\varepsilon = \Vert (s-s')/\omega_\C\Vert_\infty$, we thus get the wanted result.
\end{proof}

The last proposition (and its proof) motives to define the following distance on $\tau$-convex domains. 

\begin{definition}[Hausdorff distance on $\tau$-convex domains]
The \emph{Hausdorff distance} between two $\tau$-convex domains $K$ and $K'$, with respective support functions $s$ and $s'$, is defined as 
\begin{equation*}
d_H(K,K') = \inf \left\lbrace \varepsilon >0 \st K+ \varepsilon \S \subseteq K' \ \text{and} \ K'+ \varepsilon \S \subseteq K\right\rbrace. 
\end{equation*}
In the parametrisation $(\mathbf{P})$, using the support function $s$ and $s'$ of respectively $K$ and $K'$, we have
\begin{equation*}
d_H(K,K') = \left\Vert \frac{s-s'}{\omega_\C}\right\Vert_\infty \, . 
\end{equation*}
\end{definition}

\begin{proposition}
\label{prop weak cv and Hausdorff}
Let $(K_n)_{n\in\N}$ be a sequence of $\tau$-convex domains converging to a $\tau$-convex domain $K$ for the Hausdorff distance $d_H$. Then, the sequence of Radon measures $(\Area(K_n/\Gamma_\tau))_{n\in\N}$ on $\P(\C)^*/\Gamma$ converges weakly to $\Area(K/\Gamma_\tau)$. That is, for any $f \in C^0_b(\P(\C)^*/\Gamma)$ a bounded continuous function on $\P(\C)^*/\Gamma$, 
\begin{equation*}
\int_{\P(\C)^*/\Gamma} f \, \dif \Area(K_n/\Gamma_\tau) \xrightarrow[n\to +\infty]{} \int_{\P(\C)^*/\Gamma} f \, \dif \Area(K/\Gamma_\tau) \, . 
\end{equation*}
\end{proposition}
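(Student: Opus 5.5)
Hausdorff convergence of $\tau$-convex domains is, in the parametrisation $(\mathbf{P})$, uniform convergence of $(s_n-s)/\omega_\C$ on $\Omega^*$. Since $\omega_\C$ is bounded on $\overline{\Omega^*}$, this gives uniform convergence of $s_n$ to $s$ on $\Omega^*$, so in particular $(K_n)_n$ epi-converges to $K$. By Proposition~\ref{prop vague convergence of Si} with $i=d$, the $\Gamma$-invariant Radon measures $\Area(K_n)$ then converge vaguely to $\Area(K)$ on $\Omega^*$. The plan is to pass this vague convergence upstairs to weak convergence on the compact quotient $\P(\C^*)/\Gamma$. On a compact space, vague and weak convergence coincide since $C^0_b=C^0_c$, so the only issue is the passage to the quotient.

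To descend, fix a continuous partition-of-unity type cutoff. Choose a compactly supported continuous function $\chi\ge 0$ on $\Omega^*$ with
\begin{equation*}
\sum_{\gamma\in\Gamma}\chi(\gamma * y)=1 \quad \text{for all } y\in\Omega^* .
\end{equation*}
To build it, take a compactly supported $\psi\geq 0$ that is positive on a compact set $F$ whose $\Gamma$-translates cover $\Omega^*$; such an $F$ exists by cocompactness. Then normalise by $\sum_\gamma \psi(\gamma * \cdot)$, which is a locally finite sum by proper discontinuity, and is continuous and positive.

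For $f\in C^0(\P(\C^*)/\Gamma)$ with lift $\tilde f$, the definition of the descended measure gives, for any $\Gamma$-invariant Radon measure $\mu$ with quotient $\bar\mu$,
\begin{equation*}
\int_{\P(\C^*)/\Gamma} f\,\dif\bar\mu=\int_{\Omega^*}\chi\tilde f\,\dif\mu .
\end{equation*}
This follows either from a fundamental domain decomposition or directly as the standard unfolding identity. Since $\chi\tilde f\in C^0_c(\Omega^*)$, the vague convergence $\Area(K_n)\to\Area(K)$ yields convergence of the integrals, which is the claimed weak convergence.

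The main point to check is the unfolding identity: that it is independent of $\chi$ and agrees with the notion of descended measure $\Area(K/\Gamma_\tau)$ used in Theorem~\ref{theo Steiner tau-convex}. This uses $\Gamma$-invariance of $\Area(K)$, which follows from the invariance of $V_\varepsilon(K)$ together with the polynomial uniqueness in the Steiner formula. Given that invariance, $\int \chi\tilde f\,\dif\mu$ does not depend on $\chi$ by the usual exchange of sums. Independence from the affine chart is not an issue, since everything is done in a fixed $(\mathbf{P})$.

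Alternatively, one could use Theorem~\ref{theo Steiner tau-convex} together with Lemma~\ref{lem vague cv of Veps} and interpolation as in the proof of Theorem~\ref{theo Steiner}, but the cutoff argument above is more direct.
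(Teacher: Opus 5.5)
Your proof is correct and follows the same route as the paper. Hausdorff convergence gives (locally) uniform convergence of the support functions, Proposition~\ref{prop vague convergence of Si} then gives vague convergence of $\Area(K_n)$ to $\Area(K)$ on $\Omega^*$, and cocompactness of the $\Gamma$-action passes this to weak convergence on the compact quotient. Your cutoff $\chi$ with $\sum_{\gamma}\chi(\gamma * y)=1$ and the unfolding identity spell out the descent step that the paper compresses into the phrase ``by cocompactness''.
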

\begin{proof}
Let us work in the parametrisation $(\mathbf{P})$ and introduce the sequence $(s_n)_{n\in \N}$ of support functions of the domains $(K_n)_{n\in\N}$. The Hausdorff convergence of $(K_n)_{n\in\N}$ directly implies the local uniform convergence of $(s_n)_{n\in \N}$ towards $s$, the support function of $K$. Thus, by Proposition~\ref{prop vague convergence of Si}, $(S_d(K_n))_{n\in\N} = (\Area(K_n))_{n\in\N}$ converges vaguely towards $S_d(K)=\Area(K)$. By cocompactness of the action of $\Gamma$ on $\Omega^*$, that implies that the quotient measures $(\Area(K_n/\Gamma_\tau))_{n\in\N}$ on $\Omega^*/\Gamma$ converges weakly to $\Area(K/\Gamma_\tau)$. 
\end{proof}

In \cite[Section 6]{Ablondi_25}, we have shown that the maximal domain $D_\tau$ is naturally endowed with a $C^1$ $\Gamma_\tau$-invariant \emph{cosmological time} $\mathcal{T}_\tau:D_\tau\to\R^*_+$. Its level sets
\begin{equation*}
 S^t_\tau \coloneqq \left\lbrace P \in D_\tau \st \mathcal{T}_\tau(X)=t \right\rbrace
\end{equation*}
are the $C^1$ and $\C$-spacelike embedded $\Gamma_\tau$-invariant hypersurfaces
\begin{equation*}
 S^t_\tau = \partial(D_\tau + t \, \S)
\end{equation*}
foliating $D_\tau$. 

The Hausdorff distance and cosmological time are related in the following way. Given a $\tau$-convex domain $K$, let us introduce 
\begin{equation*}
 \mathcal{T}_\tau^{\max} (K) \coloneqq \sup_{X\in \partial K} \mathcal{T}_\tau(X) \quad \text{and} \quad \mathcal{T}_\tau^{\min} (K) \coloneqq \inf_{X\in \partial K} \mathcal{T}_\tau(X) \, . 
\end{equation*}
Using the parametrisation $(\mathbf{P})$, the definition of cosmological time and the cocompactness of the action of $\Gamma$ on $\Omega^*$, one sees that those supremum and infimum are attained and that, denoting by $s$ the support function of $K$,
\begin{equation*}
 \mathcal{T}_\tau^{\max} (K) = d_H(K,D_\tau) = \max_{y \in \Omega^*} \frac{s(y)-s_\tau(y)}{\omega_\C(y)} \quad \text{and} \quad \mathcal{T}_\tau^{\min} (K) = \min_{y \in \Omega^*} \frac{s(y)-s_\tau(y)}{\omega_\C(y)} \, . 
\end{equation*}

\begin{lemma}[{\cite[Lemma~3.17]{Bonsante_Fillastre_17}}]
\label{lem exctraction of tau-convex domains}
Let $t>0$ and $(K_n)_{n\in\N}$ be a sequence of $\tau$-convex domains all intersecting $S_\tau^t$, the $t$-level hypersurface of the cosmological time $\mathcal{T}_\tau$ on $D_\tau$. Then, there exists a subsequence $(K_{\varphi(n)})_{n\in\N}$ converging for the Hausdorff distance $d_H$. 
\end{lemma}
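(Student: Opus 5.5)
We will work in the parametrisation $(\mathbf{P})$ and translate everything to support functions. Let $s_n$ be the $\tau$-support function of $K_n$ and $s_\tau$ that of $D_\tau$. Set $u_n \coloneqq (s_n - s_\tau)/\omega_\C$. By Proposition~\ref{prop equiv supp function}, $s_n - s_\tau$ is $\Gamma$-equivariant with $\tau=0$, so by Proposition~\ref{prop equiv and inv for function on Omega*} each $u_n$ is $\Gamma$-invariant and descends to the compact quotient $\Omega^*/\Gamma$. The Hausdorff distance is $d_H(K,K')=\Vert (s-s')/\omega_\C\Vert_\infty$, so convergence for $d_H$ amounts to uniform convergence of the invariant functions $u_n$ on $\Omega^*/\Gamma$. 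The plan is to apply Arzel\`a--Ascoli to $(u_n)$ on a compact fundamental region, and then check that the limit is the normalised support function of a $\tau$-convex domain.

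\textbf{Step 1: uniform bounds.} The hypothesis that $K_n$ meets $S^t_\tau$ means there is a point of $\partial K_n$ (or of $K_n$) where $\mathcal{T}_\tau = t$. Since $K_n\subseteq D_\tau$ and $K_n = K_n+\C$, this gives $\mathcal{T}^{\min}_\tau(K_n)\le t\le \mathcal{T}^{\max}_\tau(K_n)$, that is $\min u_n \le t \le \max u_n$. Also $u_n\ge 0$, because $s_n\le s_\tau$ (as $K_n\subseteq D_\tau$) and $\omega_\C<0$. To bound $u_n$ uniformly, we need an upper bound on $\max u_n$ from $\min u_n\le t$. For this we use convexity of $s_n$ together with compactness of a fundamental domain $F\subset\Omega^*$. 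The support function $s_n$ is bounded above by its value $s_\tau + u_n\,\omega_\C$ at a point $y_n$ where $u_n(y_n)\le t$, transported by equivariance. On a fixed compact convex set $F'\supset F$ containing $\Gamma$-translates of every point of $F$, convexity of $s_n$ and the pointwise bound at translates of $y_n$ should give $s_n\ge s_\tau - C$ on $F$. This is equivalent to $u_n\le C'$. It is the argument of \cite[Lemma~3.17]{Bonsante_Fillastre_17}, which does not use hyperbolic geometry.

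\textbf{Step 2: equicontinuity.} On the compact set $F$, the functions $s_n$ are convex and uniformly bounded on a slightly larger compact neighbourhood $F''\subset\Omega^*$. Convex functions that are uniformly bounded are uniformly Lipschitz on a smaller compact set. Dividing by the smooth negative function $\omega_\C$ and subtracting the fixed continuous function $s_\tau/\omega_\C$ keeps $(u_n)$ equicontinuous on $F$. Arzel\`a--Ascoli then gives a subsequence with $u_{\varphi(n)}\to u$ uniformly on $F$, hence uniformly on $\Omega^*$ by invariance.

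\textbf{Step 3: the limit.} Set $s\coloneqq s_\tau+u\,\omega_\C$. It is a pointwise limit of convex functions, hence convex on $\Omega^*$. It is $\tau$-equivariant, since equation~\eqref{eq action on Omega* support function} passes to pointwise limits. By Proposition~\ref{prop K is epigraph}, $s$ is the support function of a $\C$-convex domain $K$, and this domain is $\tau$-convex by Proposition~\ref{prop equiv supp function}. Finally $d_H(K_{\varphi(n)},K)=\Vert u_{\varphi(n)}-u\Vert_\infty\to0$.

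The main obstacle is Step 1: turning the single-point information $\mathcal{T}_\tau = t$ into a global upper bound on $u_n$. The natural first approach is convexity plus the cocompactness of $\Gamma$. A cleaner alternative is to note that $K_n$ contains a $\Gamma_\tau$-orbit of a point $P_n\in S^t_\tau$. We can move $P_n$ by $\Gamma_\tau$ into a compact fundamental domain of $D_\tau$, which exists by Proposition~\ref{prop action Dtau free prop disc}. Then $K_n\supseteq \Gamma_\tau\cdot(P_n+\C)$, and the convex hull of this orbit gives a lower bound on $s_n$ that is uniform in $n$, by compactness.
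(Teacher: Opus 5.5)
Your route differs from the paper's. Steps~2 and~3 are sound, but Step~1, which you rightly call the crux, is not proved, and the mechanism you describe for it would not work.

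\textbf{The gap in Step~1.} The information $u_n(y_n)\le t$ is a \emph{lower} bound, $s_n(y_n)\ge s_\tau(y_n)+t\,\omega_\C(y_n)$, not an upper bound as you write. Convexity alone never propagates lower bounds from finitely many points, translated or not. What does work is convexity combined with the global upper bound $s_n\le s_\tau\le M\coloneqq\max_{\overline{\Omega^*}}s_\tau$. This $M$ is finite because $s_\tau$ extends continuously to $\overline{\Omega^*}$; you note $s_n\le s_\tau$ but never use it.

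\textbf{How to fix it.} Take $y_n\in F$ with $u_n(y_n)=\min u_n\le t$. Only this inequality is needed; your claim $t\le\mathcal{T}^{\max}_\tau(K_n)$ fails if ``intersecting'' refers to the open set $K_n$, e.g.\ $K_n=D_\tau+\tfrac t2\S$. Set $\rho\coloneqq\operatorname{dist}(F,\partial\Omega^*)>0$. For $y\in\Omega^*$, let $z$ be the point at distance $\rho/2$ beyond $y_n$ on the ray from $y$ through $y_n$. Then $z\in\Omega^*$ and $y_n=\lambda y+(1-\lambda)z$ with $\lambda\ge\lambda_0\coloneqq\rho/(2\operatorname{diam}\Omega^*+\rho)$. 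Hence
\begin{equation*}
s_n(y)\;\ge\;\lambda^{-1}\bigl(s_n(y_n)-(1-\lambda)M\bigr)\;\ge\;-\lambda_0^{-1}\Bigl(\max_F\vert s_\tau\vert+t\max_F\vert\omega_\C\vert+\vert M\vert\Bigr)\, ,
\end{equation*}
uniformly in $n$ and $y$. So the $s_n$ are uniformly bounded on $\Omega^*$, hence $0\le u_n\le C'$ on $F$, and your Step~2 goes through.

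Your ``cleaner alternative'' also works and is shorter: any $P_n\in\overline{K_n}$ gives $s_n(y)\ge P_n\cdot(y,-1)$ on $\Omega^*$. However, the compactness must come from $S^t_\tau/\Gamma_\tau$ being compact. It cannot come from a compact fundamental domain of $D_\tau$, which does not exist because $D_\tau/\Gamma_\tau\simeq\P(\C)/\Gamma\times\R$ is not compact (Proposition~\ref{prop action Dtau free prop disc}).

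\textbf{Comparison with the paper.} Attributing your Step~1 to \cite[Lemma~3.17]{Bonsante_Fillastre_17} is also inaccurate. That argument, as the paper uses it, works on the primal side. There $\partial K_n$ is the graph of $s_n^*$, which is $k$-Lipschitz with $k=\sup_{y\in\Omega^*}\Vert y\Vert$ because its subgradients lie in $\overline{\Omega^*}$. Since the $\partial K_n$ meet $S^t_\tau$, whose quotient is compact, the graphs can be pinned through a fixed compact set. Arzel\`a--Ascoli then applies on $\R^d$ with no estimate to prove. One returns to support functions through Theorem~\ref{theo cv iff cv*} and the cocompactness of $\Gamma$ on $\Omega^*$.

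So the paper gets equicontinuity for free, but must transfer convergence through Legendre duality and identify the limit domain. Your dual-side version works directly with the $\Gamma$-invariant functions $u_n$ that define $d_H$. That makes the $d_H$-convergence and the $\tau$-convexity of the limit immediate, at the cost of the convexity estimate above. Your pinning by $P_n$ is exactly the dual of the paper's pinning of the graphs.
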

\begin{proof}
The proof of \cite{Bonsante_Fillastre_17}, relying on the Ascoli--Arzel\`a Theorem, directly translates to our setting once one notices that in our parametrisation $(\mathbf{P})$, the hypersurfaces $\partial K_n$ are now all graphs of $k$-Lipschitz functions where $k = \sup_{y \in \Omega^*} \Vert y \Vert$ \cite[Corollary~13.3.3]{Rockafellar_97}. 
\end{proof}

\begin{corollary}
\label{cor min cosmo time}
Let $(K_n)_{n\in\N}$ be a sequence of $\tau$-convex domains of $\A^{d+1}$. If $d_H(K_n,D_\tau)\xrightarrow[n\to+\infty]{}+\infty$, then $\mathcal{T}_\tau^{\min} (K_n) \xrightarrow[n\to+\infty]{}+\infty$. 
\end{corollary}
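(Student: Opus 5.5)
We argue by contradiction, using the compactness Lemma~\ref{lem exctraction of tau-convex domains}. Suppose $d_H(K_n,D_\tau)\to+\infty$ but $\mathcal{T}_\tau^{\min}(K_n)$ does not tend to $+\infty$. Up to extraction, there is then a constant $T>0$ with $\mathcal{T}_\tau^{\min}(K_n)\le T$ for all $n$. Since $\mathcal{T}_\tau^{\min}(K_n)=\min_{y}\frac{s_n(y)-s_\tau(y)}{\omega_\C(y)}$ is attained, each $\partial K_n$ contains a point $X_n$ with $\mathcal{T}_\tau(X_n)\le T$. Fix some $t_0>T$.

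\emph{First step: a common level set.} We want every $K_n$ to meet the single level set $S^{t_0}_\tau$. The point $X_n$ lies on $\partial K_n$, so $\mathcal{T}_\tau(X_n)<t_0$. Since $K_n+\C=K_n$, the open ray $X_n+\R_{>0}\vec v$, for $\vec v\in\C$, lies in $K_n\subseteq D_\tau$. Along such a future ray the cosmological time is continuous and tends to $+\infty$; this follows because in $(\mathbf{P})$ the level sets $\partial(D_\tau+t\S)$ are graphs of $(s_\tau+t\omega_\C)^*$, which increase to $+\infty$ pointwise as $t\to\infty$. Hence the ray crosses $S^{t_0}_\tau$, so $K_n\cap S^{t_0}_\tau\neq\emptyset$ for all $n$.

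\emph{Second step: extraction.} By Lemma~\ref{lem exctraction of tau-convex domains}, a subsequence $K_{\varphi(n)}$ converges for $d_H$ to some $\tau$-convex domain $K$. The triangle inequality for $d_H$ is clear from the sup-norm formula $d_H=\Vert (s-s')/\omega_\C\Vert_\infty$. It gives $d_H(K_{\varphi(n)},D_\tau)\le d_H(K_{\varphi(n)},K)+d_H(K,D_\tau)$, and the right-hand side is bounded since $d_H(K,D_\tau)<\infty$ by the preceding proposition. This contradicts $d_H(K_n,D_\tau)\to+\infty$.

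\emph{Main obstacle.} The delicate point is the first step: justifying that $\mathcal{T}_\tau$ goes to $+\infty$ along rays in the direction of $\C$ starting from a point of $D_\tau$. If this is awkward via the definition of cosmological time, an alternative avoids rays entirely. Take $y_n$ realising $\mathcal{T}_\tau^{\min}(K_n)$, and note that the domains $K'_n$ with support functions $s_\tau+t_0\,\omega_\C+\max(0,\cdot)$ can be compared directly at the level of support functions. Even more simply, one can directly apply the Ascoli--Arzel\`a argument behind Lemma~\ref{lem exctraction of tau-convex domains}, with the hypothesis ``meets $S^t_\tau$'' replaced by ``meets $\{\mathcal{T}_\tau\le T\}$''. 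That argument only needs the boundaries to be uniformly Lipschitz graphs passing through a compact region modulo $\Gamma_\tau$, which holds since $\{\mathcal{T}_\tau\le T\}\cap\{\mathcal{T}_\tau\geq \text{small}\}$ projects to a compact set in $D_\tau/\Gamma_\tau$ by Proposition~\ref{prop action Dtau free prop disc}.
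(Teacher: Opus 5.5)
Your argument is correct and is the one the paper intends: the corollary is stated without proof as an immediate consequence of Lemma~\ref{lem exctraction of tau-convex domains} (following Bonsante--Fillastre). The argument is by contradiction: bound $\mathcal{T}_\tau^{\min}(K_n)$, make all $K_n$ meet one level set $S^{t_0}_\tau$, extract a $d_H$-convergent subsequence, and conclude with the triangle inequality for $d_H$. Your first step is exactly the detail the paper leaves implicit, and it is cleanest along the vertical ray $\vec{e}_{d+1}\in\C$, where your pointwise monotonicity of $t\mapsto(s_\tau+t\omega_\C)^*(x)$ applies literally; the alternatives in your last paragraph are not needed.
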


\begin{lemma}[{\cite[Lemma~3.20]{Bonsante_Fillastre_17}}]
\label{lem ration minmax cosmo time}
There exists a constant $\delta=\delta(\tau)$ such that for any $\tau$-convex domain $K$, if $\mathcal{T}_\tau^{\min} (K) \geq 1$, then $\mathcal{T}_\tau^{\max} (K)/\mathcal{T}_\tau^{\min} (K) < \delta$
\end{lemma}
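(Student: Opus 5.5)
We argue by contradiction, using the compactness provided by Lemma~\ref{lem exctraction of tau-convex domains} together with the scaling behaviour of $\tau$-convex domains. Assume no such $\delta$ exists. Then there is a sequence $(K_n)_{n\in\N}$ of $\tau$-convex domains with $\mathcal{T}_\tau^{\min}(K_n)\geq 1$ and $\mathcal{T}_\tau^{\max}(K_n)/\mathcal{T}_\tau^{\min}(K_n)\to+\infty$. In the parametrisation $(\mathbf{P})$, set $u_n=(s_n-s_\tau)/\omega_\C$, where $s_n$ is the support function of $K_n$. This is a $\Gamma$-invariant function on $\Omega^*$ (Proposition~\ref{prop equiv and inv for function on Omega*}, since $s_n-s_\tau$ is $\Gamma$-equivariant), and
\begin{equation*}
m_n\coloneqq\min u_n=\mathcal{T}_\tau^{\min}(K_n)\geq 1, \qquad M_n\coloneqq\max u_n=\mathcal{T}_\tau^{\max}(K_n), \qquad M_n/m_n\to+\infty .
\end{equation*}

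We rescale by $m_n$. The function $\hat s_n\coloneqq s_\tau+(s_n-s_\tau)/m_n$ is a convex combination of $s_\tau$ and $s_n$ (because $1/m_n\in(0,1]$), so it is convex. It is also $\tau$-equivariant, since the equivariance relation~\eqref{eq action on Omega* support function} is affine in $s$ and is preserved by affine combinations with weights summing to $1$. Hence $\hat s_n$ is the support function of the $\tau$-convex domain $\hat K_n=(1-1/m_n)D_\tau+(1/m_n)K_n$ (Remark~\ref{rem convex combination}). Its normalised function $\hat u_n=u_n/m_n$ satisfies $\min\hat u_n=1$ and $\max\hat u_n\to+\infty$. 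Since $\min\hat u_n$ equals $\mathcal{T}_\tau^{\min}(\hat K_n)=1$ and this infimum is attained, each $\hat K_n$ meets $S^1_\tau$.

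Lemma~\ref{lem exctraction of tau-convex domains} then yields a subsequence converging for $d_H$ to some $\tau$-convex domain $\hat K$. In particular $\hat u_{\varphi(n)}$ converges uniformly to $(\hat s-s_\tau)/\omega_\C$, a bounded function. This contradicts $\max\hat u_n\to+\infty$.

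The main difficulty is justifying the rescaling. One must check that $\hat s_n$ is genuinely a $\tau$-support function, in particular that its Legendre transform has gradient image between $\Omega^*$ and $\overline{\Omega^*}$; this holds because it is a convex combination of two $\tau$-support functions. One must also check that the Hausdorff limit supplied by Lemma~\ref{lem exctraction of tau-convex domains} is a genuine $\tau$-convex domain, rather than a degenerate object with support function $-\infty$ or $+\infty$. If that lemma only gave local uniform convergence, I would instead use equivariance: local uniform convergence on a compact fundamental domain of $\Gamma$ acting on $\Omega^*$ already gives uniform convergence of the invariant functions $\hat u_n$, which is enough to reach the same contradiction.
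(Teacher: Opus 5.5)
Your proof is correct, and it takes a somewhat different route from the one the paper points to. The paper gives no argument of its own; it only defers to the Ascoli--Arzel\`a proof of \cite[Lemma~3.20]{Bonsante_Fillastre_17}.

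You put all the compactness into Lemma~\ref{lem exctraction of tau-convex domains}. You handle the scale by the convex combination $\hat K_n=(1-1/m_n)D_\tau+(1/m_n)K_n$, which keeps the cocycle fixed and simply divides the $\Gamma$-invariant function $u_n=(s_n-s_\tau)/\omega_\C$ by $m_n$. The more obvious normalisation, a homothety $K_n\mapsto K_n/m_n$, gives the same normalised function, but the rescaled domain is $(\tau/m_n)$-convex. One would then need compactness uniformly over the cocycles $\lambda\tau$, $\lambda\in(0,1]$, that is, to rerun Ascoli--Arzel\`a with varying $\tau$.

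The hypothesis $\mathcal{T}^{\min}_\tau(K)\geq 1$ enters your argument exactly where it must: it puts the weights in $[0,1]$, so that $\hat s_n$ is convex. In fact your argument shows the lemma is a formal consequence of Corollary~\ref{cor min cosmo time} applied to $(\hat K_n)$. Indeed, $d_H(\hat K_n,D_\tau)=\max\hat u_n=M_n/m_n\to+\infty$, while $\mathcal{T}^{\min}_\tau(\hat K_n)=1$ for all $n$.

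One small technical point concerns open domains. With the paper's convention, $\min\hat u_n=1$ means $\hat s_n\leq s_\tau+\omega_\C$, i.e.\ $\hat K_n\subseteq D_\tau+\S$. So the open set $\hat K_n$ does not literally meet $S^1_\tau=\partial(D_\tau+\S)$; only its boundary does. Apply Lemma~\ref{lem exctraction of tau-convex domains} with $t=2$ instead. Since $\hat K_n+\C=\hat K_n$ and some boundary point has cosmological time $1$, the domain $\hat K_n$ contains points of $S^2_\tau$.
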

\begin{proof}
The proof of \cite{Bonsante_Fillastre_17} again relies on the Ascoli--Arzel\`a Theorem and thus directly translates to our setting. 
\end{proof}

\section{The covolume}
\label{sec covolume}

In this section, we introduce and study what will be the central tool in the proof of Theorem~\ref{theo intro Minkowski solution}: the \emph{covolume functional} on $\tau$-convex domains. It will play the same role for $\tau$-convex domains as volume for bounded convex domain of the affine space. Note that, by the \emph{Brunn--Minkowski Inequality} \cite[Theorem~7.1.1]{Schneider_93}, the volume is a $\log$-concave functional on bounded convex domain of the affine space, while the covolume will be a strictly convex functional on $\tau$-convex domains (see Theorem~\ref{theo convexity covolume}).

Throughout this whole section, we shall again assume that $\C$ is divisible by a subgroup $\Gamma < \SL(\V^{d+1})$ and let $\Gamma_\tau < \SA(\A^{d+1})$ be an affine deformation of $\Gamma$.

Before introducing the covolume functional, let us recall Remark~\ref{rem induced volume}: given any $\tau$-convex domain $K$, the quotient $K/\Gamma_\tau$ is a manifold naturally endowed with a volume form induced by the form $\dvol$ on the affine space $\A^{d+1}$. 

\begin{definition}[Covolume]
The \emph{covolume} of a $\tau$-convex domain $K$, denoted by $\covol(K)$, is the volume of the complementary of $K/\Gamma_\tau$ inside the manifold $D_\tau/\Gamma_\tau$. 
\end{definition}

The proof of the continuity of the covolume by Fillastre \cite[Lemma~5.1]{Fillastre_13} in the case of affine deformation of uniform lattices of $\SO_0(d,1)$, adapted from the Euclidean case of the volume \cite[Theorem~1.8.20]{Schneider_93} also applies in the case of an affine deformation of a subgroup of $\SL(\V^{d+1})$ dividing a cone. 

\begin{proposition}
\label{prop covol is continuous}
The covolume is continuous on the set of $\tau$-convex domains endowed with the Hausdorff distance $d_H$. 
\end{proposition}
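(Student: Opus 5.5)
The plan is to follow Fillastre's argument for the quadratic case: squeeze the difference between two nearby domains inside a thin layer of the cosmological-time foliation, and control the volume of that layer through the local Steiner Formula for $\tau$-convex domains (Theorem~\ref{theo Steiner tau-convex}).

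Fix a $\tau$-convex domain $K$ and let $K'$ be another $\tau$-convex domain with $d_H(K,K')<\varepsilon$. By definition of $d_H$ we get
\begin{equation*}
K'+\varepsilon\S\subseteq K \quad \text{and} \quad K+\varepsilon\S\subseteq K' .
\end{equation*}
Since every $\tau$-convex domain is contained in $D_\tau$ (Proposition~\ref{prop Dtau}), the covolumes are finite and differences are computed inside $D_\tau/\Gamma_\tau$. The two inclusions give
\begin{equation*}
\left\vert \covol(K)-\covol(K')\right\vert \leq \max\Bigl(\vol\bigl((K\setminus (K+\varepsilon\S))/\Gamma_\tau\bigr),\ \vol\bigl((K'\setminus (K'+\varepsilon\S))/\Gamma_\tau\bigr)\Bigr).
\end{equation*}
For instance, $K'\supseteq K+\varepsilon\S$ yields $\covol(K')\leq \covol(K+\varepsilon\S)=\covol(K)+\vol\bigl((K\setminus(K+\varepsilon\S))/\Gamma_\tau\bigr)$.

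Next, identify each layer volume with a total mass. By the foliation given by normal projection onto $\partial_{sp}K$ (\cite[Section 6]{Ablondi_25}), the set $K\setminus(K+\varepsilon\S)$ coincides, up to a Lebesgue-null set (boundaries of convex sets, \cite{Lang_86}), with $\mathcal{N}_\varepsilon(K)(\P(\C^*))$. Remark~\ref{rem induced volume} then shows that its volume in the quotient is the total mass $V_\varepsilon(K/\Gamma_\tau)(\P(\C^*)/\Gamma)$. Theorem~\ref{theo Steiner tau-convex} expresses this mass as $\frac{1}{d+1}\sum_i\varepsilon^{d+1-i}\binom{d+1}{i}S_i(K/\Gamma_\tau)(\P(\C^*)/\Gamma)$, a polynomial in $\varepsilon$ with no constant term and finite coefficients. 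Hence the layer volume for $K$ tends to $0$ as $\varepsilon\to0$.

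The main obstacle is the corresponding term for $K'$: its bound must be uniform over all $K'$ in a $d_H$-ball around $K$. I would use monotonicity of mixed-type quantities instead of compactness. The inclusion $K'\supseteq K+\varepsilon\S$ gives
\begin{equation*}
K'\setminus(K'+\varepsilon\S)\subseteq (K-\varepsilon\S\text{-type enlargement})\setminus(K+2\varepsilon\S) .
\end{equation*}
Concretely, $K'\subseteq K_-$, where $K_-$ is the $\tau$-convex domain with support function $s-\varepsilon\omega_\C$; it is convex because $s-\varepsilon\omega_\C\geq s'$ is sandwiched appropriately, and otherwise I would take its convex envelope. Also $K'+\varepsilon\S\supseteq K+2\varepsilon\S$.

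This yields $\vol\bigl((K'\setminus(K'+\varepsilon\S))/\Gamma_\tau\bigr)\leq \vol\bigl((K_-\setminus(K+2\varepsilon\S))/\Gamma_\tau\bigr)$. The right-hand side is a volume between two leaves of the foliation in $D_\tau$ at times within $O(\varepsilon)$ of each other. If convexity of $K_-$ fails, I would use $D_\tau$'s cosmological time layers instead, bounding the volume by the Steiner polynomial of $D_\tau+t\S$ between times $\mathcal{T}^{\min}_\tau(K)-\varepsilon$ and $\mathcal{T}^{\max}_\tau(K)+2\varepsilon$. A cleaner route is to bound the $K'$ term by $\vol\bigl((K\setminus(K+2\varepsilon\S))/\Gamma_\tau\bigr)$ plus the $K$-layer of width $\varepsilon$ taken in the other direction. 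Either way the bound is a polynomial in $\varepsilon$ vanishing at $0$ and independent of $K'$, which proves continuity.
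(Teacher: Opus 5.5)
Your first steps are sound: the two inclusions coming from $d_H(K,K')<\varepsilon$, the bound $\vert\covol(K)-\covol(K')\vert\le\max\bigl(L_\varepsilon(K),L_\varepsilon(K')\bigr)$ with $L_\varepsilon(K)\coloneqq\vol\bigl((K\setminus(K+\varepsilon\S))/\Gamma_\tau\bigr)$, and $L_\varepsilon(K)\to0$ via Theorem~\ref{theo Steiner tau-convex}. The gap is at the step you yourself call the main obstacle, the uniform control of $L_\varepsilon(K')$.

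Taking $K_-=K_{s-\varepsilon\omega_\C}$, the domain with support function $(s-\varepsilon\omega_\C)^{**}$ from Proposition~\ref{prop Kg and g**}, is the right choice. Your first justification of convexity is not valid: $s-\varepsilon\omega_\C$ is in general not convex, because $-\omega_\C$ is concave. However, $s'$ convex and $s'\le s-\varepsilon\omega_\C$ give $s'\le(s-\varepsilon\omega_\C)^{**}$, so $K'\subseteq K_-$ does hold. Your bound then reads $L_\varepsilon(K')\le L_{2\varepsilon}(K)+\bigl(\covol(K)-\covol(K_-)\bigr)$, and nothing you invoke controls the last term.

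The set $K_-\setminus K$ lies outside $K$, and $(s-\varepsilon\omega_\C)^{**}$ is not affine in $\varepsilon$, so the Steiner formula for $K$ says nothing about its volume. You can fit $K_-\setminus(K+2\varepsilon\S)$ into the layer $K_-\setminus(K_-+3\varepsilon\S)$, since $K_-+\varepsilon\S\subseteq K$. But the Steiner coefficients of that layer are the total masses of $S_i(K_-/\Gamma_\tau)$ for a domain $K_-$ that itself moves with $\varepsilon$. So the claim that the bound is ``a polynomial in $\varepsilon$ vanishing at $0$'' is false as stated, and a uniform bound on those masses is exactly the missing ingredient. The cosmological-time fallback fails outright: the volume between $\partial(D_\tau+t\S)$ at $t=\mathcal{T}^{\min}_\tau(K)-\varepsilon$ and at $t=\mathcal{T}^{\max}_\tau(K)+2\varepsilon$ tends to a positive number unless $\partial K$ is a level set of $\mathcal{T}_\tau$.

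The missing step can be supplied by monotone convergence rather than by Steiner. As $\varepsilon\downarrow0$, the domains $K_{s-\varepsilon\omega_\C}$ decrease and each contains $K$. Their intersection lies in $\overline K$, since a point in all of them satisfies $X\cdot(y,-1)\le s(y)$ for every $y\in\Omega^*$. Moreover $\partial K$ is Lebesgue-null, and $\covol(K)-\covol(K_{s-\varepsilon_0\omega_\C})\le\covol(K)<\infty$. Continuity from above of the measure, computed in a Dirichlet--Lee fundamental domain, then gives $\covol(K_{s-\varepsilon\omega_\C})\to\covol(K)$. Your $\max$ bound then becomes unnecessary, since $K+\varepsilon\S\subseteq K'\subseteq K_{s-\varepsilon\omega_\C}$ directly yields
\begin{equation*}
\covol(K_{s-\varepsilon\omega_\C})\le\covol(K')\le\covol(K+\varepsilon\S)\, ,
\end{equation*}
and both ends tend to $\covol(K)$.

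Alternatively, you can keep your $\max$ bound and write $L_\varepsilon(K')$ as the Steiner polynomial of $K'$ itself. Its coefficients $S_i(K'/\Gamma_\tau)(\P(\C^*)/\Gamma)$ are locally bounded for $d_H$, by Proposition~\ref{prop vague convergence of Si} together with the cocompactness argument used in Proposition~\ref{prop weak cv and Hausdorff}. This gives $L_\varepsilon(K')=O(\varepsilon)$ uniformly for $K'$ near $K$, which is exactly the uniform estimate your argument needs.
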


\subsection{Strict convexity of the covolume}

In the case of affine deformations of uniform lattices of $\SO_0(d,1)$, the convexity of the covolume on $\tau$-convex domains is due to Bonsante and Fillastre \cite[Theorem~1.2]{Bonsante_Fillastre_17}. In the general case we can reproduce their proof using \emph{Dirichlet--Lee domains}, which are convex fundamental domains of $D_\tau$ for the action of $\Gamma_\tau$ (see Appendix~\ref{app DL domain}). Thanks to the fact that those fundamental domains are slightly different from the one they have used, we get a slightly stronger result: we have the strict convexity of the covolume functional on the set of all $\tau$-convex domains, and not only on those with boundary included in $D_\tau$ \cite[Proposition~3.33]{Bonsante_Fillastre_17}. 

\begin{theorem} 
\label{theo convexity covolume}
The covolume is strictly convex on $\tau$-convex domains, i.e. if $K_0$ and $K_1$ are two distinct $\tau$-convex domains then for all $0 < t < 1$,
\begin{equation*}
 \covol\bp{(1-t)K_0 + t K_1} < (1-t) \, \covol(K_0)+ t \, \covol(K_1) \, . 
\end{equation*}
\end{theorem}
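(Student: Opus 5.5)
We adapt the Bonsante--Fillastre argument, replacing their fundamental domains by Dirichlet--Lee domains (Appendix~\ref{app DL domain}). Fix a point $P_0\in D_\tau$ in general position and let $\DL\subset D_\tau$ be the associated Dirichlet--Lee domain. It is a convex fundamental domain for the action of $\Gamma_\tau$ on $D_\tau$, and it is a cone-like region over a compact convex fundamental domain of the $\Gamma$-action on $\P(\C)$. For any $\tau$-convex domain $K$, the covolume is then computed in the fundamental domain:
\begin{equation*}
\covol(K)=\vol\bp{\DL\setminus K}=\vol(\DL\cap D_\tau)-\vol(\DL\cap K)\, ,
\end{equation*}
where both terms are made finite by first truncating at a large cosmological time. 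Since $\DL$ is convex and contains the far parts of $K_0$ and $K_1$, the truncations are harmless.

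The key observation is the following. Put $K_t=(1-t)K_0+tK_1$ (Remark~\ref{rem convex combination}), and let $C_t=\overline{\DL\setminus K_t}$. Then $(1-t)C_0+tC_1\subseteq C_t$ fails in general. The correct statement should come from the cone structure. In the parametrisation $(\mathbf{P})$, over each ray direction $[x]\in\P(\C)$ we view $D_\tau\setminus K$ as bounded by the graph of the support-type function $s_K-s_\tau$. As in \cite{Bonsante_Fillastre_17}, we rewrite the covolume as an integral over a compact fundamental domain $F\subset\Omega^*$ of the dual action. This gives
\begin{equation*}
\covol(K)=\frac{1}{d+1}\int_{F}\bp{(s_K-s_\tau)\,\text{-type mixed determinant terms}}\, ,
\end{equation*}
or, more robustly, it expresses $\vol(\DL\setminus K)$ as the volume of a region between the boundary of the cone $\DL$ and the convex hypersurface $\partial K$. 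The region $\DL\cap K$ is a convex set whose "height" function over a fixed transversal is the concave function $-f_K$, with $f_K=s_K^*$. Under the convex combination, $f_{K_t}$ is the infimal convolution of $f_{K_0}$ and $f_{K_1}$, and it satisfies $f_{K_t}\le (1-t)f_{K_0}(x_0)+tf_{K_1}(x_1)$ at $x=(1-t)x_0+tx_1$. Applying the Brunn--Minkowski / Prékopa--Leindler inequality fiberwise along lines parallel to a fixed direction $\vec{e}_{d+1}\in\C$ then gives convexity of $K\mapsto\vol(\DL\setminus K)$. I would prefer to follow Bonsante--Fillastre: reduce to the slice-wise statement that the "depth" of $\DL\setminus K$ along each vertical line is linear under Minkowski combination, using $\partial\DL$ as an affine graph above $\partial D_\tau$. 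Convexity then follows from the convexity of $f\mapsto\int_F (h-f)$ composed with the infimal convolution.

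For strictness, suppose equality holds for some $t\in(0,1)$. Then the infimal convolution inequality is an equality almost everywhere on the projection of $\DL$. This forces $\partial K_0$ and $\partial K_1$ to be translates of each other over an open set, and by convexity globally. A translate of a $\tau$-convex domain by $\vec v\neq0$ is $\tau$-convex only if $\gamma\vec v=\vec v$ for all $\gamma\in\Gamma$. Since $\Gamma$ divides $\C$, it is Zariski dense or at least fixes no nonzero vector, because a fixed vector would give a $\Gamma$-invariant point or hyperplane. Hence $K_0=K_1$.

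The main obstacle is making the fundamental-domain computation genuinely independent of $K$. The fundamental domain must be chosen so that $\DL\setminus K$ represents the quotient for every $\tau$-convex $K$ simultaneously, including those whose boundary meets $\partial D_\tau$. This is exactly the advantage the paper attributes to Dirichlet--Lee domains, and we take it from Appendix~\ref{app DL domain}. The equality case for the fiberwise inequality is the second delicate point.
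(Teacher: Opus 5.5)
Your proposal does not establish either the convexity inequality or its strictness. Below are the two gaps and what fills them.

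\textbf{Convexity.} The mechanisms you list either fall short or are false.
\begin{itemize}
\item Brunn--Minkowski or Pr\'ekopa--Leindler applied to $(d+1)$-dimensional pieces only gives concavity of $\vol^{1/(d+1)}$ (or of $\log\vol$). That is strictly weaker than the linear inequality you need.
\item Your preferred reduction rests on a false claim. Since $K_t=(1-t)K_0+tK_1$, the function $f_{K_t}$ is the infimal convolution, so one only has $f_{K_t}\leq(1-t)f_{K_0}+tf_{K_1}$. The depth is convex in $t$, not linear. If it were linear, integrating would make $t\mapsto\covol(K_t)$ affine, contradicting the strictness you want.
\item A fibrewise argument can work, but only along a direction in which the fundamental domain is stable. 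This is the idea your proposal is missing. In the paper, $\DL_{[\varphi]}$ is attached to a direction $[\varphi]\in\P(\C^*)$, not to a base point. Lemma~\ref{lem for covolume cvx proof} gives $P+tN_{\S}[\varphi]\in\DL_{[\varphi]}$ for all $P\in\DL_{[\varphi]}$ and $t>0$.
\item For an arbitrary $\vec e_{d+1}\in\C$ this stability fails. The fibres of $\DL_{[\varphi]}$ are then bounded segments $[b(x),c(x)]$. The length of $\DL_{[\varphi]}\setminus K$ on such a fibre is $(\min(f_K,c)-b)^+$, which is not convex in $K$.
\item Taking instead $\vec e_{d+1}=N_{\S}[\varphi]$, every vertical fibre of $\DL_{[\varphi]}$ is an upward half-line starting at $b(x)=\inf\{\lambda \,:\, (x,\lambda)\in\DL_{[\varphi]}\}$. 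Hence $\covol(K)=\int(f_K-b)^+\,\dif x$. The pointwise inequality above, together with convexity of $z\mapsto z^+$, gives convexity.
\end{itemize}
The paper encodes the same idea differently. It truncates by a $\C$-spacelike hyperplane $H$ directed by the same $[\varphi]$, so that the sets $C_i=K_i\cap(H-\C)\cap\DL_{[\varphi]}$ are convex caps over the common base $H\cap\DL_{[\varphi]}$. It then uses concavity of cap volume under Minkowski combination.

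\textbf{Strictness.} Your step ``translates over an open set, and by convexity globally'' is false: two convex hypersurfaces agreeing up to translation on an open set need not agree elsewhere. Propagating by $\Gamma_\tau$ does not rescue it either. On $\gamma_\tau\DL_{[\varphi]}$ the translation vector becomes $c\,\gamma\vec n$, which varies with $\gamma$. So you never reach a global translate to which your fixed-vector argument applies.

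The equality case has to be analysed inside the fundamental domain, using the fibre picture above.
\begin{itemize}
\item Equality forces $f_{K_t}=(1-t)f_{K_0}+tf_{K_1}$ on the open set $U=\{(1-t)f_{K_0}+tf_{K_1}>b\}$. Since $f_{K_t}$ is the infimal convolution, $f_{K_0}$ and $f_{K_1}$ then have a common subgradient at every point of $U$. Hence $f_{K_1}-f_{K_0}$ is constant on each component of $U$.
\item Equality in $z\mapsto z^+$ forbids $f_{K_0}-b$ and $f_{K_1}-b$ from having strictly opposite signs. This forces that constant to be $0$ on any component of $U$ with a boundary point.
\item The same sign condition gives $f_{K_i}\leq b$ off $U$ for both $i$.
\item Therefore $K_0\cap\DL_{[\varphi]}=K_1\cap\DL_{[\varphi]}$ up to a null set, and $K_0=K_1$ by $\Gamma_\tau$-invariance.
\end{itemize}
Your fixed-vector argument is needed only in the degenerate case $U=\R^d$, where $K_1=K_0+c\vec n$.
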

\begin{proof}
For all $0 < t < 1$, set $K_t \coloneqq (1-t)K_0 + t K_1$ (see Remark~\ref{rem convex combination}). Let $[\varphi] \in \P(\C^*)$, $\vec{n} \coloneqq N_{\S}[\varphi] \in \S$, and $\DL_{[\varphi]}$ be the Dirichlet--Lee domain of $D_\tau$ given by $[\varphi]$ (see Definition~\ref{def DL domain}). Let $H$ be a $\C$-spacelike affine hyperplane directed by $[\varphi]$ and such that for all $t \in [0,1]$, $\partial K_t \cap \DL_{[\varphi]}$ is included in the half-space $H - \C$, (see Figure~\ref{fig covol}).

For $i\in\{0,1\}$, let us denote $C_i \coloneqq K_i \cap (H -\C) \cap \DL_{[\varphi]}$. Because of our choice of $H$, we have $\overline{C_i} \cap H = H \cap \DL_{[\varphi]} $. Because of Lemma~\ref{lem for covolume cvx proof}, if $P \in \DL_{[\varphi]}$ and there is an $\varepsilon>0$ such that $ X - \varepsilon \vec{n} \in \DL_{[\varphi]}$, then $P \in \mathrm{int}(\DL_{[\varphi]})$. Thus, for all $P \in H\cup \partial \DL_{[\varphi]}$, there is no $\varepsilon>0$ such that $X - \varepsilon \vec{n} \in \DL_{[\varphi]}$. Hence, as $\DL_{[\varphi]}$ is convex, $C_0$ and $C_1$ are convex caps in the sense of \cite[subsection 50]{Bonnesen_Fenchel_87}. 

For all $0 < t < 1$, set $C_t \coloneqq K_t \cap (H-\C) \cap \DL_{[\varphi]}$. As $K_t = (1-t)K_0 + t K_1$, we have $C_t = (1-t)C_0 + t C_1$. By strict concavity of the volume on convex caps \cite[subsection 50]{Bonnesen_Fenchel_87}, for all $0 < t < 1$,
\begin{equation*}
 \mathcal{L}(C_t) > (1-t)\,\vol(C_0)+t\,\vol(C_1) \, . 
\end{equation*}
As for all $t\in[0,1]$, $\covol(K_t) = \vol(\DL_{[\varphi]} \cap (H-\C)) - \vol(C_t)$, we deduce the strict convexity of the covolume functional on $\tau$-convex domains. 
\end{proof}

\begin{figure}[ht]
 \centering
 \includegraphics{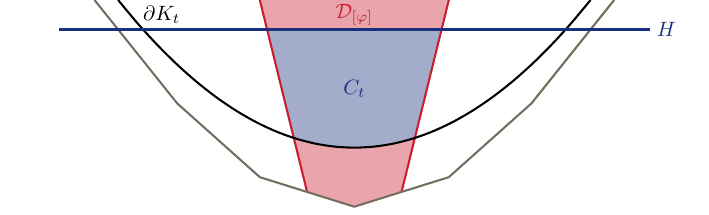}
 \caption{Situation in the proof of Theorem~\ref{theo convexity covolume}. }
 \label{fig covol}
\end{figure}

\begin{remark}
One could consider the covolume on all $\tau$-convex domains with the cocycle $\tau$ varying, but that functional is not convex \cite[Remark 3.35]{Bonsante_Fillastre_17}.
\end{remark}

From the convexity of the covolume, we deduce the following fact. 

\begin{lemma}[{\cite[Lemma~3.36]{Bonsante_Fillastre_17}}]
\label{lem ineq covol}
In the parametrisation $(\mathbf{P})$, let $K_0$ and $K_1$ be two $\tau$-convex domains with respective support functions $s_0$ and $s_1$, and such that $K_1 \subseteq K_0$ (or equivalently such that $s_1 \leq s_0$). Then, 
 \begin{equation*}
 \int_{\Omega^*/\Gamma} \frac{s_1-s_0}{\omega_\C} \dif \Area(K_0/\Gamma_\tau) \leq \covol(K_1)-\covol(K_0) \leq \int_{\Omega^*/\Gamma} \frac{s_1-s_0}{\omega_\C} \dif \Area(K_1/\Gamma_\tau) \, . 
 \end{equation*}
\end{lemma}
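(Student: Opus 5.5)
The plan is to combine the convexity of the covolume (Theorem~\ref{theo convexity covolume}) with a first-order computation of the covolume along the segment $K_t \coloneqq (1-t)K_0 + tK_1$, in the spirit of Bonsante--Fillastre. In $(\mathbf{P})$, $K_t$ has support function $s_t = (1-t)s_0 + ts_1$ (Proposition~\ref{prop support function and sum}). Since $s_1\le s_0$, the domains $K_t$ decrease in $t$, so $c(t)\coloneqq\covol(K_t)$ is increasing. It is also convex on $[0,1]$ by Theorem~\ref{theo convexity covolume}, and continuous by Proposition~\ref{prop covol is continuous}, because $d_H(K_t,K_s)=|t-s|\,\Vert (s_1-s_0)/\omega_\C\Vert_\infty$. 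For a convex function we have
\[
c'_+(0)\le c(1)-c(0)\le c'_-(1),
\]
so it suffices to show that the one-sided derivatives at the endpoints equal the two integrals in the statement.

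The key step is the following one-sided derivative formula. For $\tau$-convex domains $K\supseteq K'$ with support functions $s\ge s'$,
\[
\lim_{h\to 0^+}\frac{\covol\bp{(1-h)K+hK'}-\covol(K)}{h}=\int_{\Omega^*/\Gamma}\frac{s'-s}{\omega_\C}\,\dif\Area(K/\Gamma_\tau).
\]
A symmetric version holds at $t=1$ by exchanging roles, giving the derivative from the left in terms of $\Area(K_1/\Gamma_\tau)$.

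To establish the formula, I would first treat the case where $K$ is $C^2_+$. There the region $K\setminus K_h$ is swept by the hypersurfaces $\partial K_u$ for $u\in(0,h)$, and I would parametrise it by $(y,u)\mapsto \G_{K_u}^{-1}(y)$, exactly as in Lemma~\ref{lem Jacobian computation}. Here $\partial_u\G^{-1}_{K_u}(y)$ is the vector field associated with the function $s'-s$ in place of $\omega_\C$. The same determinant computation then gives the Jacobian $(s-s')\det\Hess s_u$, and the integrand has the correct sign because $s'-s\le 0$ and $\omega_\C<0$.

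Integrating over a fundamental domain of $\Omega^*$ for $\Gamma$ and letting $h\to0$ yields
\[
\int \frac{s'-s}{\omega_\C}\,(-\omega_\C)\det\Hess s\,\dif\mathcal{L},
\]
which is the claimed integral by Proposition~\ref{prop A MA C-convex}. For a general $K$, I would avoid smoothness altogether by using the elementary inclusions of $K\setminus K_h$ between sets controlled via Steiner-type neighbourhoods. Alternatively, I would approximate $K$ by $C^2_+$ $\tau$-convex domains, using Lemma~\ref{lem approx} made equivariant (cf.\ Appendix~\ref{app C2+-approximation}).

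The main obstacle is passing to the limit in the general, nonsmooth case, where the exchange of limit and derivative is delicate. To handle it, I would use convexity again. For $C^2_+$ approximants $K^n\to K$ and $K'^n\to K'$ in $d_H$ with $K'^n\subseteq K^n$, the inequality
\[
\int\frac{s'_n-s_n}{\omega_\C}\,\dif\Area(K^n/\Gamma_\tau)\le \covol(K'^n)-\covol(K^n)\le\int\frac{s'_n-s_n}{\omega_\C}\,\dif\Area(K'^n/\Gamma_\tau)
\]
holds by the smooth computation combined with convexity. The integrands $(s_n'-s_n)/\omega_\C$ are $\Gamma$-invariant, continuous on the compact quotient, and converge uniformly. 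Therefore the integrals pass to the limit by the weak convergence of Proposition~\ref{prop weak cv and Hausdorff}, and the covolumes converge by Proposition~\ref{prop covol is continuous}. Taking limits gives the lemma.
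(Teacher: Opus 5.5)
Your proposal is correct and is essentially the paper's proof: convexity of $t\mapsto\covol(K_t)$ along the segment, the Jacobian computation of Lemma~\ref{lem Jacobian computation} with $s_1-s_0$ in place of $\omega_\C$ to get the derivatives in the $C^2_+$ case, then approximation via Appendix~\ref{app C2+-approximation} together with Propositions~\ref{prop covol is continuous} and~\ref{prop weak cv and Hausdorff}. The paper writes $\covol(K_t)-\covol(K_0)$ as an integral over the whole segment, whereas you only use the endpoint one-sided derivatives, which amounts to the same thing.

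The only step you leave implicit, as does the paper, is the existence of nested approximants $K'^n\subseteq K^n$. It follows from the construction in Appendix~\ref{app C2+-approximation}, which squeezes each approximant between $K+\eta\,\S$ and $K$. Take $K+\eta_n\S\subseteq K^n\subseteq K$, and take $K'^n\subseteq K'+2\eta_n\S$ approximating $K'+2\eta_n\S$ from inside. Then $K'^n\subseteq K+2\eta_n\S\subseteq K^n$, and you even get the strict inequality $s'_n<s_n$ used in the smooth computation.
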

\begin{proof}
We shall proceed by approximation. First assume that $K_0$ and $K_1$ are two $C^2_+$ $\tau$-convex domains with support functions satisfying $s_1<s_0$. Consider the function 
 \begin{equation*}
 \function{\Psi}{[0,1]}{\R}{t}{\covol(K_t)-\covol(K_0)},
 \end{equation*}
where for all $0 \leq t \leq 1$,
 \begin{equation*}
 K_t \coloneqq (1-t) K_0 + t K_1. 
 \end{equation*}
Using the foliation given by the graphs of the functions $(s_t^*)_{t\in[0,1]}$, a computation similar to the one in the proof of Lemma~\ref{lem Jacobian computation} gives that for all $t \in [0,1]$
 \begin{equation}
 \label{eq expression f}
 \Psi(t)=\covol(K_t)-\covol(K_0) = \int_0^t \int_{\Omega^*/\Gamma} \frac{s_1-s_0}{\omega_\C} \dif \Area(K_{l}/\Gamma_\tau) \, \dif l \, . 
 \end{equation}
By convexity of the covolume (Theorem~\ref{theo convexity covolume}), $\Psi$ is convex. Thus, we have
 \begin{equation*}
 \Psi'(0) \leq \Psi(1)-\Psi(0)\leq \Psi(1) \, ,
 \end{equation*}
which becomes, using the expression of $\Psi$ given by \eqref{eq expression f},
 \begin{equation*}
 \int_{\Omega^*/\Gamma} \frac{s_1-s_0}{\omega_\C} \dif \Area(K_0/\Gamma_\tau) \leq \covol(K_1)-\covol(K_0) \leq \int_{\Omega^*/\Gamma} \frac{s_1-s_0}{\omega_\C} \dif \Area(K_1/\Gamma_\tau) \, . 
 \end{equation*}
For the general case, using the approximation lemma given in Appendix~\ref{app C2+-approximation}, Proposition~\ref{prop covol is continuous}, and Proposition~\ref{prop weak cv and Hausdorff}, one shows that this last inequality extends to every pair of $\tau$-convex domains $K_0$ and $K_1$ such that $K_1 \subseteq K_0$ (or equivalently such that $s_1 \leq s_0$). 
\end{proof}

\subsection{Area measures as derivatives of the covolume}

In this subsection we shall only work in the parametrisation $(\mathbf{P})$ described in Section~\ref{sec (P)}. Our goal is to compute the ‘‘derivative'' of the covolume. Hence, we first need to consider small deformations of $\C$-convex domains, or equivalently of support functions on $\Omega^*$ in ‘‘every direction''.

At first it can seem difficult. Indeed, let $s$ be a $\tau$-support function, let $f$ be any continuous $\Gamma$-equivariant function on $\Omega^*$, and let $t>0$. Then, the function $s+tf$ is $\tau$-equivariant but has no reason to be convex, even for $t$ small enough (for example when $f\geq 0$ and $s = s_\tau$ is the support function of the maximal $\tau$-convex domain $D_\tau$). 

Nonetheless, there is a way to get rid of that problem. Remember that any continuous $\tau$-equivariant function $g: \Omega^* \to \R$ (not necessarily convex) continuously extends to a function on $\overline{\Omega^*}$ with boundary values given by $g_\tau$ (Proposition~\ref{prop existence of boundary function gtau}). Given such a function $g$, let us introduce
 \begin{equation*}
 K_g \coloneqq \left\lbrace X \in \R^{d+1} \st X\cdot(y,-1)<g(y), \ \forall y \in \overline{\Omega^*}\right\rbrace \, . 
 \end{equation*}
The domains defined in that way satisfy the following key property.

\begin{proposition}
\label{prop Kg and g**}
Let $g: \Omega^* \to \R$ be a continuous $\tau$-equivariant function. Then, $K_g$ is a $\tau$-convex domain with support function $g^{**} = (g^*)^*$, the \emph{Legendre--Fenchel biconjugate} of $g$ which also is its \emph{convex hull}, that is the pointwise largest convex function bounded above by $g$. 
\end{proposition}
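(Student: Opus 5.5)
Throughout, work in $(\mathbf{P})$ and extend $g$ to $\overline{\Omega^*}$ by $g_\tau$ on $\partial\Omega^*$, as allowed by Proposition~\ref{prop existence of boundary function gtau}; this extension is continuous, hence bounded on the compact set $\overline{\Omega^*}$. The plan is to identify $K_g$ with the domain attached to the convex function $h \coloneqq (\bar g)^{**}$, where $\bar g$ is this extension (set to $+\infty$ outside $\overline{\Omega^*}$), and then show that $h$ restricted to $\Omega^*$ is the support function of $K_g$, is $\tau$-equivariant, and equals $g^{**}$.

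\textbf{Step 1: $K_g$ is an epigraph.} The condition $X\cdot(y,-1)<g(y)$ with $X=(x,\lambda)$ reads $\lambda > x\cdot y - g(y)$ for all $y\in\overline{\Omega^*}$. By compactness of $\overline{\Omega^*}$ and continuity of $g$, the supremum over $y$ is attained, so the strict inequality for every $y$ is equivalent to $\lambda > \sup_{y}(x\cdot y - \bar g(y)) = \bar g^*(x)$. Hence $K_g=\epi(\bar g^*)$, where $\bar g^*$ is a finite convex function on $\R^d$ (finite because $\bar g$ is bounded below on a bounded set). By Proposition~\ref{prop K is epigraph}, this domain is $\C$-convex with support function $\bar g^{**}|_{\Omega^*}$, provided that function is finite and convex on $\Omega^*$. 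Finiteness holds because $\bar g^{**}\le \bar g<\infty$ there and $\bar g^{**}\ge \min \bar g$.

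\textbf{Step 2: identifying the biconjugate.} $\bar g^{**}$ is the closed convex hull of $\bar g$, the largest lsc convex function below $\bar g$. On the open set $\Omega^*$, every convex function is continuous, so the largest convex function below $g|_{\Omega^*}$ coincides there with the restriction of this hull. Two points need checking. First, $\bar g^{**}\le g$ on $\Omega^*$ is immediate. Second, the hull of $g$ on $\Omega^*$ alone agrees with that of $\bar g$; boundary values could only lower it. I expect this to be the main obstacle. The idea is that any convex $c\le g$ on $\Omega^*$ extends by lower limits to $\overline{\Omega^*}$ with $c\le g_\tau$ at the boundary by continuity of $\bar g$, so $c\le\bar g$ on $\overline{\Omega^*}$ and hence $c\le\bar g^{**}$. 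Conversely, $\bar g^{**}|_{\Omega^*}$ is itself convex and below $g$. So $g^{**}=\bar g^{**}$ on $\Omega^*$, interpreting $g^{**}$ as the convex hull as in the statement.

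\textbf{Step 3: $\tau$-invariance.} Show directly that $\gamma_\tau K_g = K_g$. For $X\in K_g$, compute $(\gamma_\tau X)\cdot Y$ for $Y=(y,-1)$ using \eqref{eq action and dual action}: $(\gamma X+\tau(\gamma))\cdot Y = X\cdot(\gamma^{\top}Y)+\tau(\gamma)\cdot Y$. Write $\gamma^\top Y = \gamma^{-1}\star Y$ as a positive multiple $\mu\,(\gamma^{-1}*y,-1)$, with $\mu = \omega_\C(y)/\omega_\C(\gamma^{-1}*y)$ coming from $\Gamma$-invariance of $\S^*$. Then the equivariance \eqref{eq action on Omega* support function}, which holds for $g$ on $\Omega^*$ and by continuity on $\overline{\Omega^*}$ with positive scaling, gives $(\gamma_\tau X)\cdot Y < g(y)$. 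The boundary points need the extended relation, which follows from the uniqueness in Proposition~\ref{prop existence of boundary function gtau}. Alternatively, observe that the convex hull of a $\tau$-equivariant function is $\tau$-equivariant, because the transformation in \eqref{eq action on Omega* support function} maps convex functions below $g$ to convex functions below $g$ (it is the support-function action of $\gamma_\tau$). Then conclude with Proposition~\ref{prop equiv supp function}.
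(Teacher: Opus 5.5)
Your proposal is correct and follows essentially the same route as the paper. You identify $K_g$ with $\epi(g^*)$ by rewriting the defining inequalities, read off its support function as the biconjugate $g^{**}$, and deduce $\Gamma_\tau$-invariance from the $\tau$-equivariance of $g$, which the paper simply asserts. You are in fact more careful than the paper on two points it glosses over: the supremum over $\overline{\Omega^*}$ is attained, which is what turns the strict pointwise inequalities into $\lambda > g^*(x)$, and the closed convex hull agrees with the convex hull on the open set $\Omega^*$. One minor correction: the equivariance relation at boundary points follows from continuity, not from the uniqueness statement you cite. The scaling factor $\mu$, defined by $\gamma^{\top}(y,-1)=\mu\,(\gamma^{-1}*y,-1)$, extends continuously to $\partial\Omega^*$ even though the formula $\omega_\C(y)/\omega_\C(\gamma^{-1}*y)$ becomes $0/0$ there.
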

\begin{proof}
The fact that the biconjugate of a function is its convex hull is a standard convex analysis result. 

Note that as $g$ continuously extends to $\overline{\Omega^*}$, it is bounded and thus its convex hull $g^{**}$ is finite on the whole $\overline{\Omega^*}$. Moreover, the $\tau$-equivariance of $g$ directly implies that $K_g$ is $\Gamma_\tau$-invariant and that $g^{**}$ is $\tau$-equivariant. Using the definition of $K_g$, we have 
\begin{align*}
 K_g & = \left\lbrace X \in \R^{d+1} \st X\cdot(y,-1)<g(y), \ \forall y \in \overline{\Omega^*}\right\rbrace = \left\lbrace (x,\lambda) \in \R^{d+1} \st x\cdot y - g(y) < \lambda, \ \forall y \in \overline{\Omega^*}\right\rbrace \\
 & = \left\lbrace (x,\lambda) \in \R^{d+1} \st \lambda > \sup_{y \in \Omega^*}\bp{x\cdot y -g(y)}\right\rbrace = \left\lbrace (x,\lambda) \in \R^{d+1} \st \lambda > g^*(x)\right\rbrace \\
 & = \epi (g^*) \, . 
\end{align*}
Its support function $s$ is thus defined by: for all $y \in \Omega^*$,
\begin{equation*}
s(y) = \sup_{(x, \lambda) \in K_g} (x,\lambda) \cdot (y,-1) = \sup_{x \in \R^d } x\cdot y -g^*(x)
 = g^{**}(y) \, . \qedhere
\end{equation*}
\end{proof} 

Proposition~\ref{prop Kg and g**} solves our problem: let $s$ be a $\tau$-support function, $f$ is any continuous $\Gamma$-equivariant function on $\Omega^*$, and $t>0$, then $(s+tf)^{**}$ will be convex, i.e. will be a $\tau$-support function. The following lemma tells us that the path $(K_{s+tf})_{t \in \R}$ is a Hausdorff continuous path of $\tau$-convex domains.

\begin{lemma}
\label{lemma Hausdorff cv of Ks+tf}
Let $K$ be a $\tau$-convex domain with support function $s$, and $f$ be a continuous $\Gamma$-equivariant function on $\Omega^*$. Then, the family of $\tau$-convex domains $(K_{s+tf})_{t\in\R}$ is continuous for the Hausdorff distance. 
\end{lemma}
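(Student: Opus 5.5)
The plan is to reduce Hausdorff continuity to a sup-norm estimate on support functions, using the formula $d_H(K,K') = \Vert (s-s')/\omega_\C\Vert_\infty$. By Proposition~\ref{prop Kg and g**}, $K_{s+tf}$ is the $\tau$-convex domain with support function $(s+tf)^{**}$. So for $t,t'\in\R$ it suffices to show
\begin{equation*}
\left\Vert \frac{(s+tf)^{**}-(s+t'f)^{**}}{\omega_\C}\right\Vert_\infty \leq |t-t'| \, \left\Vert \frac{f}{\omega_\C}\right\Vert_\infty \, .
\end{equation*}
The right-hand side is finite: by Proposition~\ref{prop equiv and inv for function on Omega*}, $f/\omega_\C$ is $\Gamma$-invariant and continuous, hence bounded by cocompactness of the action on $\Omega^*$. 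Set $M\coloneqq \Vert f/\omega_\C\Vert_\infty$. Since $\omega_\C<0$, this gives $|f|\leq M(-\omega_\C)$, that is,
\begin{equation*}
M\omega_\C\leq f\leq -M\omega_\C \, .
\end{equation*}
Then, writing $\delta = t'-t$,
\begin{equation*}
(s+tf)+|\delta| M\omega_\C \;\leq\; s+t'f \;\leq\; (s+tf)-|\delta| M\omega_\C \, .
\end{equation*}

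The next step is to pass to convex hulls. The convex hull $g\mapsto g^{**}$ is monotone. It also behaves well with respect to adding a convex function $c$ (here $c=|\delta|M\omega_\C$, which is convex): $g^{**}+c$ is convex and bounded above by $g+c$, so $g^{**}+c\leq (g+c)^{**}$. This yields the lower bound
\begin{equation*}
(s+tf)^{**}+|\delta|M\omega_\C\leq (s+t'f)^{**} \, .
\end{equation*}
For the upper bound, adding the concave function $-|\delta|M\omega_\C$ is less direct. Instead, swap the roles of $t$ and $t'$ in the lower bound to get $(s+t'f)^{**}+|\delta|M\omega_\C\leq (s+tf)^{**}$. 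Combining the two, and dividing by $\omega_\C<0$, gives the desired estimate
\begin{equation*}
\left|\frac{(s+tf)^{**}-(s+t'f)^{**}}{\omega_\C}\right|\leq |\delta| M \, .
\end{equation*}
Hence $d_H(K_{s+tf},K_{s+t'f})\leq M|t-t'|$, so the path is Lipschitz and in particular continuous.

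One point needs care, and it is the main subtlety: $\omega_\C$ must be treated as a function on $\overline{\Omega^*}$ vanishing on $\partial\Omega^*$, and the inequality $g^{**}+c\leq(g+c)^{**}$ must be applied on $\overline{\Omega^*}$, where $K_g$ is defined. Convexity of $\omega_\C$ extends to the closure, and the boundary values are consistent: $f$ extends by $0$ (its boundary function $g_0=0$ for $\tau=0$), and $s+tf$ extends by $g_\tau$. The biconjugate taken over $\overline{\Omega^*}$ restricts to the support function of $K_g$ on $\Omega^*$, as in the proof of Proposition~\ref{prop Kg and g**}. I would also briefly check that $(s+tf)^{**}-(s+t'f)^{**}$ is $\Gamma$-equivariant with $\tau=0$. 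This holds because both functions are $\tau$-equivariant, and it ensures the supremum defining $d_H$ is the one of a continuous invariant function, as in the earlier Hausdorff-distance proposition.
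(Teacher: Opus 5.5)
Your proof is correct, and it takes a different route from the paper's.

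The paper proves continuity only at $t=0$. It reduces the general case by ``setting $K_0'=K_{s+t_0f}$'' and the case $t\to0^-$ by $f\mapsto -f$. For $t>0$ it uses the convexity of $s$ to get the sandwich $s+tf^{**}\leq(s+tf)^{**}\leq s+tf$, hence $d_H(K,K_{s+tf})\leq t\max\bigl(\Vert f/\omega_\C\Vert_\infty,\Vert f^{**}/\omega_\C\Vert_\infty\bigr)$.

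You instead compare two arbitrary parameters directly. You use only $\vert f\vert\leq M(-\omega_\C)$, monotonicity of the hull, and $g^{**}+c\leq(g+c)^{**}$ for convex $c$. This buys you two things.

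First, you never need the base function to be convex. The paper's sandwich does need it, and its re-basing step is not literal. The family based at $K_{s+t_0f}$ is $K_{(s+t_0f)^{**}+\delta f}$, which in general differs from $K_{s+(t_0+\delta)f}$ when $s+t_0f$ is not convex. So your argument is what actually yields continuity at every $t$, as the statement claims.

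Second, you get the explicit Lipschitz constant $\Vert f/\omega_\C\Vert_\infty$ without involving $f^{**}$. The same lines show that $g\mapsto g^{**}$ is $1$-Lipschitz for $d_H$ on all continuous $\tau$-equivariant functions. This is a stronger form of the continuity of $\mathrm{hull}$ asserted in Remark~\ref{rem Gateaux derivative}.

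The paper's one-sided estimate at a convex base point is what the proof of Theorem~\ref{theo area measure as gateau derivative} actually uses, since there only $t\to0$ with base $s$ is needed.
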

\begin{proof}
Up to setting $K_0' = K_{s+ tf}$, we just have to prove the continuity at $t = 0$. Moreover, up to replacing $f$ by $-f$ (which is also $\Gamma$-equivariant), we just have to consider the case when $t$ tends to $0^+$. Let $t>0$. As $s$ and $f^{**}$ are convex, so too is $s+tf^{**}$. Moreover, $f^{**}\leq f$, so $s+tf^{**}$ is convex and satisfies $s+tf^{**} \leq s+tf$. Hence
\begin{equation*}
 s+tf^{**} \leq (s+tf)^{**} \leq s+tf \, . 
\end{equation*}
Thus, as $\omega_\C \leq 0$, we get 
\begin{equation*}
 t\frac{f}{\omega_\C} \leq \frac{(s+tf)^{**}-s}{\omega_\C} \leq t\frac{f^{**}}{\omega_\C} \, . 
\end{equation*}
That concludes the proof, as then
\begin{equation*}
 d_H(K,K_{s+tf}) \leq t \max \vp{\left\Vert\frac{f}{\omega_\C} \right\Vert_\infty , \left\Vert\frac{f^{**}}{\omega_\C} \right\Vert_\infty} \, . \qedhere
\end{equation*}
\end{proof}

We shall thus prove that, just like in the Euclidean case \cite[Proposition~2]{Carlier_03}, area measures are derivatives of the covolume in the following sense. 

\begin{theorem}[Theorem~\ref{theo intro derivative covol}]
\label{theo area measure as gateau derivative}
Let $K=K_s$ be a $\tau$-convex domain with support function $s$, and $f$ be a continuous $\Gamma$-equivariant function on $\Omega^*$ (remember that, by Proposition~\ref{prop equiv and inv for function on Omega*}, $\bar f = f/\omega_\C$ is then $\Gamma$-invariant). Then, we have 
\begin{equation*}
\frac{\covol (K_{s+tf})-\covol(K)}{t} \xrightarrow[t \to 0]{} \int_{\Omega^*/\Gamma} \frac{f}{\omega_\C} \, \dif \Area(K/\Gamma_\tau) \, . 
 \end{equation*}
\end{theorem}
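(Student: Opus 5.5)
We follow Carlier's Euclidean argument, as adapted by Bonsante--Fillastre. The tool is the two-sided inequality of Lemma~\ref{lem ineq covol}, applied to $K$ and $K_t \coloneqq K_{s+tf}$ with support function $s_t \coloneqq (s+tf)^{**}$ (Proposition~\ref{prop Kg and g**}). That lemma requires nested domains, so we will not compare $K$ and $K_t$ directly. Instead we compare both with the intersection-type domain $K_{\min(s,s_t)}$, or, more simply, split $f=f^+ - f^-$. A cleaner route is to treat $t>0$ and $t<0$ and, for each, reduce to the nested case as follows. First assume $f\le 0$. Then $s+tf\le s$ for $t>0$, so $s_t\le s$ and $K_t\subseteq K$, and Lemma~\ref{lem ineq covol} gives
\begin{equation*}
\int_{\Omega^*/\Gamma} \frac{s_t-s}{\omega_\C}\,\dif\Area(K/\Gamma_\tau) \le \covol(K_t)-\covol(K) \le \int_{\Omega^*/\Gamma} \frac{s_t-s}{\omega_\C}\,\dif\Area(K_t/\Gamma_\tau) \, .
\end{equation*}

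We then divide by $t$ and control $(s_t-s)/t$. From the proof of Lemma~\ref{lemma Hausdorff cv of Ks+tf} we have $tf^{**}\le s_t-s\le tf$, so $(s_t-s)/(t\,\omega_\C)$ is uniformly bounded on the compact quotient. This bound alone does not identify the limit, and the key claim is the following: the difference $(s_t-s)/t$ converges to $f$ at every point of the support of $\Area(K)=(-\omega_\C)\MA(s)$, or at least in $L^1(\Area(K/\Gamma_\tau))$. To prove it we use Carlier's argument, Proposition~2 of \cite{Carlier_03}. At a point $y$ where $\partial s(y)$ contains an exposed point $p$, the affine function $a(z)=s(y)+p\cdot(z-y)$ supports $s$ and touches it only at $y$. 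By compactness near $y$ and boundedness of $f$, the function $a+tf(y)-o(t)$ lies below $s+tf$, which forces $s_t(y)\ge s(y)+tf(y)-o(t)$. Carlier's argument uses that $\MA(s)$-almost every point has this property, since the set of $p$ whose preimage $\partial s^{-1}(p)$ is not a singleton has Lebesgue measure zero (Aleksandrov's lemma). Combined with $s_t-s\le tf$, this gives pointwise convergence $\Area$-a.e. Dominated convergence on a compact fundamental domain, using the uniform bound above, then gives convergence of the left-hand integral to $\int f/\omega_\C\,\dif\Area(K/\Gamma_\tau)$.

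For the right-hand integral, $K_t\to K$ in Hausdorff distance by Lemma~\ref{lemma Hausdorff cv of Ks+tf}, so $\Area(K_t/\Gamma_\tau)\to\Area(K/\Gamma_\tau)$ weakly by Proposition~\ref{prop weak cv and Hausdorff}. The integrand $(s_t-s)/(t\omega_\C)$ is not fixed, so we bound it above and below by $f/\omega_\C$ and $f^{**}/\omega_\C$. The upper bound $(s_t-s)/(t\omega_\C)\ge f/\omega_\C$ together with weak convergence controls one side of the limit. For the other side we simply reuse the left inequality, with the roles reversed via convexity of $t\mapsto\covol(K_t)$. The cleaner way to phrase this: the one-sided derivative exists by convexity (Theorem~\ref{theo convexity covolume}), and the lower bound gives $\liminf\ge\int f/\omega_\C\,\dif\Area$. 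For the $\limsup$ we use $\int f/\omega_\C\,\dif\Area(K_t)\to\int f/\omega_\C\,\dif\Area(K)$, which holds because $f/\omega_\C$ is continuous and bounded on the quotient. The case of general $f$ and of $t\to0^-$ follows by linearity: the map $f\mapsto K_{s+tf}$ is not linear, but we sandwich $K_{s+tf}$ between $K_{s+tf^{+}}$-type comparisons, or we apply the same nested argument to the pair $K_t\subseteq K_{\max}$ where appropriate.

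The main obstacle is the pointwise identification $\lim (s_t-s)/t=f$ almost everywhere for $\Area(K)$, that is, the Carlier/Aleksandrov step. The rest is bookkeeping with Lemma~\ref{lem ineq covol}, Proposition~\ref{prop weak cv and Hausdorff} and cocompactness. If the direct approach to the general-sign case fails, we first prove the result for $C^2_+$ domains, where the Gateaux derivative is an explicit first variation as in \eqref{eq expression f}, and then pass to the limit using the approximation lemma of Appendix~\ref{app C2+-approximation} together with continuity of the covolume (Proposition~\ref{prop covol is continuous}).
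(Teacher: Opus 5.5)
Your reduction to a fixed sign of $f$ and the use of Lemma~\ref{lem ineq covol} on the nested pair match the paper's starting point. The gap is that you put the effort on the wrong side of the sandwich, and the side that carries the real content is not proved.

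Take $f\le 0$, $t>0$ and $s_t\coloneqq(s+tf)^{**}$. The bound against the fixed measure $\Area(K/\Gamma_\tau)$ needs no Carlier/Aleksandrov argument. From $s_t-s\le tf$ and $t\,\omega_\C<0$ you get $\frac{s_t-s}{t\,\omega_\C}\ge\frac{f}{\omega_\C}$ pointwise, hence directly $\frac{\covol(K_{s+tf})-\covol(K)}{t}\ge\int_{\Omega^*/\Gamma}\frac{f}{\omega_\C}\,\dif\Area(K/\Gamma_\tau)$. (Your pointwise step is also misstated: what is needed is $(\partial s)^{-1}(p)=\{y\}$, not that $p$ is exposed in $\partial s(y)$. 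The step is superfluous anyway.)

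The hard inequality is the upper bound $\covol(K_{s+tf})-\covol(K)\le\int_{\Omega^*/\Gamma}\frac{s_t-s}{\omega_\C}\,\dif\Area(K_{s+tf}/\Gamma_\tau)$, where the measure moves with $t$. There your only information is $\frac{f}{\omega_\C}\le\frac{s_t-s}{t\,\omega_\C}\le\frac{f^{**}}{\omega_\C}$. The left inequality points the wrong way. The right one only yields $\limsup\le\int\frac{f^{**}}{\omega_\C}\,\dif\Area(K/\Gamma_\tau)$, which is weaker in general. Your sentence about the $\limsup$ silently replaces $\frac{s_t-s}{t}$ by $f$ inside the integral against $\Area(K_{s+tf}/\Gamma_\tau)$, and nothing in the proposal justifies this.

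Convexity of $\phi(t)\coloneqq\covol(K_{s+tf})$ does hold, since $t\mapsto s_t$ is pointwise concave. But it only gives one-sided derivatives with $\phi'_-(0)\le\phi'_+(0)$. The trivial bounds at $t>0$ and $t<0$ give $\phi'_-(0)\le I\le\phi'_+(0)$, where $I=\int\frac{f}{\omega_\C}\,\dif\Area(K/\Gamma_\tau)$. This is compatible with differentiability but does not force it.

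The missing idea is that $\Area(K_g)$, for $g=s+tf$, is carried by the contact set $\{g^{**}=g\}$ (Lemma~\ref{prop MA and cvx hull} and Corollary~\ref{cor area measure convex hull}). Suppose $g^{**}(y)<g(y)$ and $x\in\partial g^{**}(y)$. Then $y\in\partial g^*(x)$. A maximiser $y'\in\overline{\Omega^*}$ of $z\mapsto x\cdot z-g(z)$ also lies in $\partial g^*(x)$ and in the contact set, so $y'\neq y$. Thus $\partial g^{**}$ sends the non-contact set into the Lebesgue-null set where $g^*$ is not differentiable. Hence $\MA(g^{**})$, and so $\Area(K_g)=(-\omega_\C)\MA(g^{**})$, does not charge it.

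This gives exactly $\int\frac{s_t-s}{\omega_\C}\,\dif\Area(K_{s+tf}/\Gamma_\tau)=t\int\frac{f}{\omega_\C}\,\dif\Area(K_{s+tf}/\Gamma_\tau)$. Lemma~\ref{lemma Hausdorff cv of Ks+tf} and Proposition~\ref{prop weak cv and Hausdorff} then close the upper bound. The case $f\ge 0$ is symmetric, with $K$ and $K_{s+tf}$ exchanged in Lemma~\ref{lem ineq covol}. This is the same Aleksandrov-type null-set fact you quote, but it must be applied to $\MA(s_t)$, not to $\MA(s)$.

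Two further points:
\begin{itemize}
\item Your treatment of general-sign $f$ (``sandwich \dots\ where appropriate'') is not an argument. The paper splits $f=f^++f^-$ and telescopes through $K_{s+tf^-}$.
\item Your $C^2_+$ fallback does not help. For convex $\phi_n\to\phi$ with $\phi_n'(0)\to I$, you again only get $\phi'_-(0)\le I\le\phi'_+(0)$. Moreover $K_{s+tf}$ with $f$ merely continuous is not $C^2_+$, so \eqref{eq expression f} does not apply to it directly.
\end{itemize}
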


First, let us prove a useful convex analysis lemma concerning Monge--Amp\`ere measures of convex hulls. 

\begin{lemma}
\label{prop MA and cvx hull}
 Let $U \subseteq \R^{d+1}$ be a bounded convex domain, $g: \overline{U} \to \R$ be a continuous function and $g^{**}$ be its convex hull. Then, the Borel set
 \begin{equation*}
 b = \left\lbrace x \in U \st g^{**}(x) \neq g(x)\right\rbrace
 \end{equation*}
is negligible for the Monge--Amp\`ere measure of $g^{**}$, i.e. $\MA(g^{**})(b) = 0$. 
\end{lemma}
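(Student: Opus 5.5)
The strategy is to show that the subgradients of $g^{**}$ at points of $b$ form a Lebesgue-negligible set of slopes. At a point $x \in b$, the convex hull is not touched by $g$, so every supporting affine function of $g^{**}$ at $x$ must touch $g^{**}$ along a nontrivial segment through $x$. That forces such slopes to be non-exposed in a measure-theoretic sense. Concretely, the set $\partial g^{**}(b)$ will be shown to be contained in the set of slopes $p$ at which the Legendre transform $(g^{**})^* = g^*$ is not differentiable, and Rademacher's theorem (or Alexandrov's theorem) then gives that this set is Lebesgue-null.

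\textbf{Step 1: contact sets.} Fix $x \in b$ and $p \in \partial g^{**}(x)$. The affine function $a(z) = g^{**}(x) + p\cdot(z-x)$ satisfies $a \le g^{**} \le g$ on $\overline{U}$. Consider the contact set $T_p = \{ z \in \overline{U} \st g(z) = a(z)\}$. I claim $T_p$ is nonempty and $x \in \mathrm{conv}(T_p)$. This is the standard Carathéodory description of the convex hull of a continuous function on a compact convex set: $g^{**}(x) = \min \sum \lambda_i g(z_i)$ over convex combinations $\sum \lambda_i z_i = x$, and this minimum is attained by compactness and continuity. For a minimiser, $\sum \lambda_i g(z_i) = g^{**}(x) = a(x) = \sum \lambda_i a(z_i)$ together with $g \ge a$ forces $g(z_i) = a(z_i)$ for all $i$. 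Since $g(x) \neq g^{**}(x)$, we have $x \notin T_p$, so $T_p$ contains at least two distinct points $z_1 \neq z_2$.

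\textbf{Step 2: non-differentiability of the conjugate.} Work with $h = g^*$, where $g$ is extended by $+\infty$ outside $\overline{U}$. It is a finite convex (Lipschitz) function on the whole space, and $h^* = g^{**}$ on $\overline{U}$. For each $z \in T_p$, the Fenchel equality $g(z) + h(p) = z \cdot p$ holds, because the supremum defining $h(p)$ is attained exactly on the contact set. Hence $z \in \partial h(p)$ for every $z \in T_p$. Since $T_p$ has two distinct points, $\partial h(p)$ is not a singleton, so $h$ is not differentiable at $p$. This gives $\partial g^{**}(b) \subseteq \{ p \st h \ \text{not differentiable at} \ p\}$.

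\textbf{Step 3: conclusion.} A finite convex function is differentiable Lebesgue-almost everywhere (Rockafellar, Theorem~25.5), so $\MA(g^{**})(b) = \mathcal{L}(\partial g^{**}(b)) = 0$. Measurability of $\partial g^{**}(b)$ is not an issue: it is contained in a null Borel set, and $b$ itself is Borel because both $g$ and $g^{**}$ are continuous on $U$.

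The main obstacle is Step 1, the attainment of the Carathéodory minimum, since the lemma is stated on the open set $U$ with $g$ continuous on $\overline{U}$. Compactness of $\overline{U}^{\,d+2}$ and continuity of $g$ make the infimum a minimum; the remaining point to check is that $g^{**}$ computed on $\overline{U}$ agrees with this Carathéodory formula, which holds by lower semicontinuity on the compact set.
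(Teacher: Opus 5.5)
Your proof is correct and follows the same strategy as the paper. Both arguments show that $\partial g^{**}(b)$ lies in the set where the finite convex function $g^*$ is not differentiable, and then conclude by almost-everywhere differentiability.

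The only difference is how you obtain two distinct subgradients of $g^*$ at $p$. The paper uses $x$ itself, since Fenchel duality for $g^{**}=(g^*)^*$ turns $p\in\partial g^{**}(x)$ into $x\in\partial g^*(p)$. It pairs this with a single maximiser $x'\in\overline{U}$ of $z\mapsto z\cdot p-g(z)$, which lies in your contact set $T_p$ and therefore differs from $x$. So the Carath\'eodory attainment you flagged as the main obstacle is not actually needed, although your version does yield the extra information $x\in\mathrm{conv}(T_p)$.
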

\begin{proof}
As $g^{**}$, the convex hull of $g$, satisfies $g^{**} \leq g$, $b$ is the set
\begin{equation*}
 b = \left\lbrace x \in U \st g^{**}(x)<g(x)\right\rbrace \, . 
 \end{equation*}
We claim that $\partial g^{**}(b)$ is included in the set of non-differentiable points of $g^*$, which will conclude the proof as, by Rademacher's Theorem, the convex function $g^*$ is differentiable almost everywhere, so that 
 \begin{equation*}
 \MA(g^{**})(b)=\mathcal{L}\bp{\partial g^{**}(b)}= 0\, . 
 \end{equation*}
 
Now, let us prove our claim. Let $x \in b$ and $y \in \partial g^{**}(x)$. By the Fenchel Inequality Theorem \cite[Theorem~23.5]{Rockafellar_97}, $x$ is a subgradient of $g^*$ at $y$. As
 \begin{equation*}
 g^*(y) = \sup_{x \in U} \bp{x \cdot y -g(x)} \, , 
 \end{equation*}
 there exist $x' \in \overline{U}$ such that 
 \begin{equation*}
 g^*(y) = x' \cdot y - g(x') \, . 
 \end{equation*}
 Thus, as $g^{**} \leq g$, we have
 \begin{equation}
 \label{ineq cvx hull}
 g^*(y) = x' \cdot y - g(x') \leq x' \cdot y - g^{**}(x) \, . 
 \end{equation}
 As $g^{**} = (g^*)^*$ is convex, by the Fenchel Inequality Theorem \cite[Theorem~23.5]{Rockafellar_97}, the equality must be achieved in \eqref{ineq cvx hull}, so that $g^{**}(x)=g(x)$, and $x'$ is a subgradient of $g^*$ at $y$. Thus, $x' \notin b$, and $g^*$ has two distinct subgradients $x \neq x'$ at $y$, i.e. $g^*$ is not differentiable at $y$. 
\end{proof}

Proposition~\ref{prop A MA C-convex} and Lemma~\ref{prop MA and cvx hull} directly imply the following. 

\begin{corollary}
\label{cor area measure convex hull}
Let $g: \Omega^* \to \R$ be a continuous $\tau$-equivariant function. Then, the following Borel set (which is well-defined by $\tau$-equivariance of $g$ and $g^{**}$)
 \begin{equation*}
 b = \left\lbrace [y] \in \Omega^*/\Gamma \st g^{**}(y) \neq g(y)\right\rbrace 
 \end{equation*}
is negligible for the area measure of $K_g/\Gamma_\tau$, i.e. $\Area(K_g/\Gamma_\tau)(b) = 0$. 
\end{corollary}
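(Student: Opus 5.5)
The plan is to reduce the corollary to Lemma~\ref{prop MA and cvx hull} via Proposition~\ref{prop A MA C-convex}, working on the universal cover $\Omega^*$ and then passing to the quotient.

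First, by Proposition~\ref{prop Kg and g**}, $K_g$ is a $\tau$-convex domain with support function $g^{**}$. Proposition~\ref{prop A MA C-convex} then gives, on $\Omega^*$,
\begin{equation*}
\Area(K_g) = (-\omega_\C)\, \MA(g^{**}) \, .
\end{equation*}
Let $\tilde b = \{ y \in \Omega^* \st g^{**}(y) \neq g(y)\}$. This set is Borel, since it is open by continuity of $g$ and of the convex function $g^{**}$. It is $\Gamma$-invariant for the dual projective action: both $g$ and $g^{**}$ are $\tau$-equivariant, so by \eqref{eq action on Omega* support function} the difference $g-g^{**}$ is $\Gamma$-equivariant with $\tau=0$, and by Proposition~\ref{prop equiv and inv for function on Omega*} the quotient $(g-g^{**})/\omega_\C$ is $\Gamma$-invariant. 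Since $\omega_\C<0$, the zero set of $g-g^{**}$ is $\Gamma$-invariant, and $\tilde b$ is the preimage of $b$.

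The main step is to apply Lemma~\ref{prop MA and cvx hull}. That lemma is stated for a continuous function on the closure of a bounded convex domain, and here $U=\Omega^*$ (the statement's $\R^{d+1}$ should read $\R^d$). By Proposition~\ref{prop existence of boundary function gtau}, $g$ extends continuously to $\overline{\Omega^*}$, so the lemma gives $\MA(g^{**})(\tilde b)=0$. One point needs checking: the convex hull in Proposition~\ref{prop Kg and g**} is taken with the supremum over $\Omega^*$ versus $\overline{\Omega^*}$. By continuity of the extension, these give the same $g^*$, and hence the same $g^{**}$ on $\Omega^*$. The lemma's proof also uses a maximiser $x'\in\overline{U}$, which exists by compactness of $\overline{\Omega^*}$ and continuity of $g$. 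It follows that $\Area(K_g)(\tilde b) = \int_{\tilde b} (-\omega_\C)\,\dif\MA(g^{**}) = 0$.

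Finally, pass to the quotient. The measure $\Area(K_g/\Gamma_\tau)$ is the measure induced on $\Omega^*/\Gamma$ by the $\Gamma$-invariant measure $\Area(K_g)$. Take a Borel fundamental domain $F$ for the action of $\Gamma$ on $\Omega^*$. Then
\begin{equation*}
\Area(K_g/\Gamma_\tau)(b) = \Area(K_g)(\tilde b \cap F) \leq \Area(K_g)(\tilde b) = 0 \, .
\end{equation*}
No step is expected to be a real obstacle. The only delicate point is the boundary and closure issue in applying the lemma, which is handled by the continuous extension to $\overline{\Omega^*}$.
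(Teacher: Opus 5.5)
Your proposal is correct and follows the paper's route: the paper derives the corollary directly from $\Area(K_g)=(-\omega_\C)\MA(g^{**})$ (Proposition~\ref{prop Kg and g**} combined with Proposition~\ref{prop A MA C-convex}) and Lemma~\ref{prop MA and cvx hull}. Your extra checks make explicit what the paper leaves implicit: extending $g$ continuously to $\overline{\Omega^*}$ so the lemma applies, verifying that the exceptional set is $\Gamma$-invariant, and passing to the quotient through a fundamental domain.
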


Now we have all the tools needed to prove Theorem~\ref{theo area measure as gateau derivative}.

\begin{proof}[Proof of Theorem~\ref{theo area measure as gateau derivative}]
Up to replacing $f$ by $-f$ (which is also $\Gamma$-equivariant), we just have to consider the case when $t$ tends to $0^+$. 

\noindent \textbullet\ First, let us start with the case where $f \geq 0$. Let $t>0$. As $s$ is convex and satisfies $s \leq s+tf$, we have 
\begin{equation*}
 s \leq (s+tf)^{**} \leq s+tf \, . 
\end{equation*}
Thus, as $\omega_\C \leq 0$, we deduce that 
\begin{equation}
\label{eq + ineq s+tf**}
0 \leq \frac{s - (s+tf)^{**}}{\omega_\C} \leq -t\frac{f}{\omega_\C} \, . 
\end{equation}
Now, let us apply apply Lemma~\ref{lem ineq covol} with $K_0=K_{s+tf}$ and $K_1= K$. We get
\begin{equation}
\label{eq + triple ineq}
 \int_{\Omega^*/\Gamma} \frac{s - (s+tf)^{**}}{\omega_\C} \,\dif \Area(K_{s+tf}/\Gamma_\tau) \leq \covol(K) - \covol(K_{s+tf}) \leq \int_{\Omega^*/\Gamma} \frac{s - (s+tf)^{**}}{\omega_\C} \,\dif \Area(K/\Gamma_\tau) \, . 
 \end{equation}
By Corollary~\ref{cor area measure convex hull}, the first term in \eqref{eq + triple ineq} is 
\begin{equation*}
\int_{\Omega^*/\Gamma} \frac{s - (s+tf)^{**}}{\omega_\C} \,\dif \Area(K_{s+tf}/\Gamma_\tau) = \int_{\Omega^*/\Gamma} \frac{s - (s+tf)}{\omega_\C} \,\dif \Area(K_{s+tf}/\Gamma_\tau) = -t \int_{\Omega^*/\Gamma} \frac{f}{\omega_\C} \,\dif \Area(K_{s+tf}/\Gamma_\tau) \, . 
\end{equation*}
Inequalities \eqref{eq + ineq s+tf**} imply that the last term in \eqref{eq + triple ineq} satisfies
\begin{equation*}
\int_{\Omega^*/\Gamma} \frac{s - (s+tf)^{**}}{\omega_\C} \,\dif \Area(K/\Gamma_\tau) \leq -t \int_{\Omega^*/\Gamma} \frac{f}{\omega_\C} \,\dif \Area(K/\Gamma_\tau) \, . 
\end{equation*}
Thus, we have 
\begin{equation*}
\int_{\Omega^*/\Gamma} \frac{f}{\omega_\C} \, \dif \Area(K/\Gamma_\tau) \leq \frac{\covol (K_{s+tf})-\covol(K)}{t} \leq \int_{\Omega^*/\Gamma} \frac{f}{\omega_\C} \, \dif \Area(K_{s+tf}/\Gamma_\tau) \, . 
\end{equation*}
By Lemma~\ref{lemma Hausdorff cv of Ks+tf}, $(K_{s+tf})_t$ Hausdorff converges to $K$ as $t$ tends to $0$. Hence, letting $t$ tend to $0$ in the previous inequalities, and using the weak convergence of area measures given by Proposition~\ref{prop weak cv and Hausdorff}, we get the wanted result. 

\noindent \textbullet\ Now, let us consider the case where $f \leq 0$. Let $t>0$. We now have 
\begin{equation*}
 (s+tf)^{**} \leq s+tf \leq s \, . 
\end{equation*}
Thus, as $\omega_\C \leq 0$, we deduce that 
\begin{equation}
\label{eq - ineq s+tf**}
 0 \leq t \frac{f}{\omega_\C} \leq \frac{(s+tf)^{**}-s}{\omega_\C} \, . 
\end{equation}
Now, let us apply apply Lemma~\ref{lem ineq covol} with $K_0=K$ and $K_1= K_{s+tf}$. We get
\begin{equation}
\label{eq - first triple ineq}
 \int_{\Omega^*/\Gamma} \frac{(s+tf)^{**} - s}{\omega_\C} \,\dif \Area(K/\Gamma_\tau) \leq \covol(K_{s+tf}) - \covol(K) \leq \int_{\Omega^*/\Gamma} \frac{(s+tf)^{**} - s}{\omega_\C} \,\dif \Area(K_{s+tf}/\Gamma_\tau) \, . 
 \end{equation}
By Corollary~\ref{cor area measure convex hull}, the last term in \eqref{eq - first triple ineq} is 
\begin{equation*}
\int_{\Omega^*/\Gamma} \frac{(s+tf)^{**} -s}{\omega_\C} \,\dif \Area(K_{s+tf}/\Gamma_\tau) =t \int_{\Omega^*/\Gamma} \frac{f}{\omega_\C} \,\dif \Area(K_{s+tf}/\Gamma_\tau) \, . 
\end{equation*}
Inequalities \eqref{eq - ineq s+tf**} imply that the first term in \eqref{eq - first triple ineq} satisfies
\begin{equation*}
\int_{\Omega^*/\Gamma} \frac{(s+tf)^{**}-s}{\omega_\C} \,\dif \Area(K/\Gamma_\tau) \geq t \int_{\Omega^*/\Gamma} \frac{f}{\omega_\C} \,\dif \Area(K/\Gamma_\tau) \, . 
\end{equation*}
Thus, we have 
\begin{equation*}
\int_{\Omega^*/\Gamma} \frac{f}{\omega_\C} \, \dif \Area(K/\Gamma_\tau) \leq \frac{\covol (K_{s+tf})-\covol(K)}{t} \leq \int_{\Omega^*/\Gamma} \frac{f}{\omega_\C} \, \dif \Area(K_{s+tf}/\Gamma_\tau) \, . 
\end{equation*}
By Lemma~\ref{lemma Hausdorff cv of Ks+tf}, $(K_{s+tf})_t$ Hausdorff converges to $K$ when $t$ tends to $0$. Again, letting $t$ tend to $0$ in the previous inequalities, and using the weak convergence of area measures given by Proposition~\ref{prop weak cv and Hausdorff}, we get the wanted result. 

\noindent \textbullet\ Finally, in the general case, decompose $f$ as 
\begin{equation*}
 f= f^++f^- \,
\end{equation*}
where $f^+ = (f + \vert f\vert)/2 \geq 0$ and $f^- = (f - \vert f\vert)/2 \leq 0$. For all $t>0$, we have
\begin{equation*}
 \frac{\covol (K_{s+tf})-\covol(K)}{t} = \frac{\covol (K_{s+tf^{+} +t f^{-}})-\covol(K_{s+tf^{-}})}{t} + \frac{\covol (K_{s+tf^{-}})-\covol(K)}{t} \, . 
\end{equation*}
Using what was done in the first point (the case $f\geq0$), we get that for all $t>0$,
\begin{equation*}
\int_{\Omega^*/\Gamma} \frac{f^+}{\omega_\C} \, \dif \Area(K_{s+tf^-}/\Gamma_\tau) \leq \frac{\covol (K_{s+tf^+ +tf^-})-\covol(K_{s+tf^-})}{t} \leq \int_{\Omega^*/\Gamma} \frac{f^+}{\omega_\C} \, \dif \Area(K_{s + tf^+ + tf^-}/\Gamma_\tau) \, . 
\end{equation*}
Letting $t$ tend to $0$ in the previous inequalities, and using Lemma~\ref{lemma Hausdorff cv of Ks+tf} and Proposition~\ref{prop weak cv and Hausdorff}, we get
\begin{equation*}
\frac{\covol (K_{s+tf^{+} +t f^{-}})-\covol (K_{s+tf^-})}{t}\xrightarrow[t \to 0^+]{} \int_{\Omega^*/\Gamma} \frac{f^+}{\omega_\C} \, \dif \Area(K/\Gamma_\tau) \, . 
\end{equation*}
By the second point (the case $f\leq0$), we have
\begin{equation*}
\frac{\covol (K_{s+tf^{-}})-\covol(K)}{t} \xrightarrow[t \to 0^+]{} \int_{\Omega^*/\Gamma} \frac{f^-}{\omega_\C}\, \dif \Area(K/\Gamma_\tau) \, . 
\end{equation*}
That concludes the proof as we hence have 
\begin{equation*}
\frac{\covol (K_{s+tf})-\covol(K)}{t}\xrightarrow[t \to 0^+]{} \int_{\Omega^*/\Gamma} \frac{f^+ + f^-}{\omega_\C} \, \dif \Area(K/\Gamma_\tau) \, . \qedhere
\end{equation*}
\end{proof}

\begin{remark}[On the ‘‘Gateaux derivative'' in Theorem~\ref{theo intro derivative covol}]
\label{rem Gateaux derivative}
Let $C^0_\tau(\Omega^*)$ denote the space of $\tau$-equivariant continuous function on $\Omega^*$. It has a structure of infinite-dimensional real affine space with associated vector space $C^0(\Omega^*/\Gamma)$, given by the transitive action
\begin{equation*}
 \namelessfunction{C^0(\Omega^*/\Gamma)\times C^0_\tau(\Omega^*)}{C^0_\tau(\Omega^*)}{(\bar{f},g)}{g +\omega_\C \bar{f}} \, . 
\end{equation*}
The metric on $C^0_\tau(\Omega^*)$ induced by the $L^\infty$-norm on the underlying vector space $C^0(\Omega^*/\Gamma)$ is the Hausdorff distance $d_H$. 
The set $\mathcal{K}_\tau(\Omega^*)$ of $\tau$-equivariant convex functions is a closed convex subset of $C^0_\tau(\Omega^*)$. Note that $\mathcal{K}_\tau(\Omega^*)$ can equivalently be seen as the set of all $\tau$-convex domains (Propositions~\ref{prop K is epigraph} and \ref{prop equiv supp function}). 
Lemma~\ref{lemma Hausdorff cv of Ks+tf} states that the map
\begin{equation*}
 \function{\mathrm{hull}}{C^0_\tau(\Omega^*)}{\mathcal{K}_\tau(\Omega^*)}{g}{g^{**}}
\end{equation*}
is continuous on $\mathcal{K}_\tau(\Omega^*)$. 
Theorem~\ref{theo area measure as gateau derivative} states that the extension of the covolume to $C^0_\tau(\Omega^*)$ given by 
\begin{equation*}
 \Psi \coloneqq \covol \circ \mathrm{hull} : C^0_\tau(\Omega^*) \longrightarrow \R 
\end{equation*}
admits a continuous linear \emph{Gateaux derivative} at every point of $\mathcal{K}_\tau(\Omega^*)$, expressed as 
\begin{equation*}
 \function{\dif^{\mathrm{Gat}} \Psi}{\mathcal{K}_\tau(\Omega^*) \times C^0(\Omega^*/\Gamma)}{\R}{(s,\bar f)}{\int_{\Omega^*/\Gamma} \bar f \,\dif \Area(K_s/\Gamma_\tau)} \, . 
\end{equation*}
\end{remark}

\section{\texorpdfstring{The Minkowski problem for $\tau$-convex domains}{The Minkowski problem for tau-convex domains}}
\label{sec inv Mink pb}

Throughout this whole section, we shall again assume that $\C$ is divisible by a subgroup $\Gamma < \SL(\V^{d+1})$ and let $\Gamma_\tau < \SA(\A^{d+1})$ be an affine deformation of $\Gamma$.

\subsection{Existence and uniqueness of a solution}

We shall prove that the Minkowski problem, asking for a $\tau$-convex domain with prescribed area measure, always has a unique solution. 

\begin{theorem}[{Theorem~\ref{theo intro Minkowski solution}}]
\label{theo Minkowski solution}
Let $\mu$ be a Radon measure on $\P(\C^*)/\Gamma$. Then, there exists a unique $\tau$-convex domain with area measure $\Area(K/\Gamma_\tau)=\mu$. 
\end{theorem}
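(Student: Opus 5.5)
Existence will come from a variational argument in the spirit of Carlier and Bonsante--Fillastre, and uniqueness from a comparison principle for the Monge--Amp\`ere equation. Work in the parametrisation $(\mathbf{P})$ and identify $\tau$-convex domains with their $\tau$-support functions $s$. Consider the functional
\begin{equation*}
 \mathcal{F}(s) \coloneqq \covol(K_s) - \int_{\Omega^*/\Gamma} \frac{s-s_\tau}{\omega_\C}\, \dif\mu \, ,
\end{equation*}
defined on $\tau$-support functions. The integrand $(s-s_\tau)/\omega_\C$ is $\Gamma$-invariant by Proposition~\ref{prop equiv supp function}, so it descends to a continuous function on the compact quotient. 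It is nonpositive, since $s\le s_\tau$ and $\omega_\C<0$, and its supremum norm is $d_H(K_s,D_\tau)$.

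The plan is to minimise $\mathcal{F}$. It is continuous for $d_H$, by Proposition~\ref{prop covol is continuous} and because the linear term is a $d_H$-Lipschitz functional. For coercivity, the linear term is at most $\mu(\P(\C^*)/\Gamma)\, \mathcal{T}^{\max}_\tau(K)$. The covolume, on the other hand, should grow superlinearly: $\covol(K)$ dominates the volume of the region between $D_\tau$ and $D_\tau+\mathcal{T}^{\min}_\tau(K)\S$ in the quotient. Using Theorem~\ref{theo Steiner tau-convex} for $D_\tau$, this volume is of order $\mathcal{T}^{\min}_\tau(K)^{d+1}\vol_{\S}(\P(\C^*)/\Gamma)/(d+1)$. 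By Lemma~\ref{lem ration minmax cosmo time}, $\mathcal{T}^{\max}\le\delta\,\mathcal{T}^{\min}$ once $\mathcal{T}^{\min}\ge1$. Since $d+1\ge 2$, $\mathcal{F}(K)\to+\infty$ as $d_H(K,D_\tau)\to\infty$ (Corollary~\ref{cor min cosmo time}).

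Hence a minimising sequence stays in a $d_H$-bounded set. Each of its members meets some level $S^t_\tau$ with $t$ bounded, so Lemma~\ref{lem exctraction of tau-convex domains} gives a convergent subsequence, after slightly adapting the level (for instance by comparing with $K\cup$ levels, or passing to $\min$/$\max$ times). By continuity of $\mathcal{F}$ the limit $K_s$ is a minimiser. I expect this compactness and coercivity step to be the main obstacle, in particular the lower bound of the covolume in terms of $\mathcal{T}^{\min}$ via the Steiner formula.

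At the minimiser, fix any continuous $\Gamma$-invariant $\bar f$ and set $f=\omega_\C\bar f$. For every $t$, $(s+tf)^{**}$ is admissible, and $(s+tf)^{**}\le s+tf$. Since $\mu\ge0$ and $\omega_\C<0$, this gives
\begin{equation*}
 -\int \frac{(s+tf)^{**}-s_\tau}{\omega_\C}\dif\mu \le -\int \frac{s-s_\tau}{\omega_\C}\dif\mu - t\int \bar f \dif\mu ,
\end{equation*}
so $t\mapsto \covol(K_{s+tf})-\int\frac{s+tf-s_\tau}{\omega_\C}\dif\mu$ is bounded below by $\mathcal{F}(K_{s+tf})\ge\mathcal{F}(K_s)$ and is minimised at $t=0$. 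By Theorem~\ref{theo area measure as gateau derivative} this function is differentiable at $0$ with derivative $\int\bar f\,\dif\Area(K_s/\Gamma_\tau)-\int\bar f\,\dif\mu$, which must therefore vanish. As $\bar f$ is arbitrary, $\Area(K_s/\Gamma_\tau)=\mu$ by the Riesz representation theorem.

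For uniqueness, let $K,K'$ be two solutions. By Proposition~\ref{prop A MA C-convex} their support functions $s,s'$ satisfy $\MA(s)=\MA(s')=\mu/(-\omega_\C)$ on $\Omega^*$. They also share the boundary values $g_\tau$ by Proposition~\ref{prop existence of boundary function gtau}. The Alexandrov comparison principle for Monge--Amp\`ere measures on the bounded convex domain $\Omega^*$ then gives $s=s'$. Alternatively, strict convexity of the covolume (Theorem~\ref{theo convexity covolume}) makes $\mathcal{F}$ strictly convex along $(1-t)K+tK'$. By Lemma~\ref{lem ineq covol}, any solution is a critical point and hence a minimiser of $\mathcal{F}$, so two distinct solutions would contradict strict convexity.
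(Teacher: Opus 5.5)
Your proposal is correct and follows the paper's proof essentially step for step: your $\mathcal{F}$ is the paper's $L_\mu$, coercivity comes from Corollary~\ref{cor min cosmo time}, Lemma~\ref{lem ration minmax cosmo time} and the Steiner polynomial $\covol(D_\tau+a\,\S)$, the first-variation step uses $(s+tf)^{**}\le s+tf$ together with Theorem~\ref{theo area measure as gateau derivative} and the Riesz representation theorem, and uniqueness is the Monge--Amp\`ere comparison principle. The only differences are cosmetic: the paper extracts a convergent minimising subsequence via \cite[Theorem~10.9]{Rockafellar_97} rather than Lemma~\ref{lem exctraction of tau-convex domains}, and there are two harmless slips in your write-up.
\begin{itemize}
\item Since $s\le s_\tau$ and $\omega_\C<0$, the integrand $(s-s_\tau)/\omega_\C$ is nonnegative, not nonpositive. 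You only use its sup norm, so nothing breaks.
\item Your optional strict-convexity route to uniqueness should invoke Theorem~\ref{theo area measure as gateau derivative} along the segment $(1-t)K+tK'$. Lemma~\ref{lem ineq covol} only treats nested domains, so it cannot show that a solution minimises $\mathcal{F}$ over all $\tau$-convex domains.
\end{itemize}
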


The proof of the existence part of Theorem~\ref{theo Minkowski solution} is variational. To any Radon measure $\mu$ on $\P(\C^*)/\Gamma$, we will associate a functional $L_\mu$, involving the covolume, on the space of $\tau$-convex domains. We shall show that $L_\mu$ attains a minimum at a $\tau$-convex domain $K_\mu$. Then, using that the derivative of $L_\mu$ must vanish at $K_\mu$ and that we have computed the derivative of the covolume in Section~\ref{sec covolume}, we will get that $K_\mu$ satisfies a particular condition: it is a solution of the Minkowski problem $\Area(K/\Gamma_\tau)=\mu$.

The uniqueness part of Theorem~\ref{theo Minkowski solution} is then a direct consequence of the comparison principle \cite[Theorem~2.10]{Figalli_17} for the underlying Monge--Amp\`ere equation. 

\begin{proof}[Proof of Theorem~\ref{theo Minkowski solution}]
Let us work in the parametrisation $(\mathbf{P})$ described in Section~\ref{sec (P)}. Given the finite measure $\mu$ on $\Omega^*/\Gamma$, let us introduce the following continuous functional $L_\mu$ on the space of $\tau$-convex domains: for every $\tau$-convex domain $K$ with support function $s$, 
 \begin{equation*}
 L_\mu(K) \coloneqq \covol(K) - \int_{\Omega^*/\Gamma} \frac{s-s_\tau}{\omega_\C} \dif \mu \, . 
 \end{equation*}

\noindent \textbullet\ First, let us prove that $L_\mu$ is coercive for the Hausdorff distance, i.e. if $d_H(K_n,D_\tau)\xrightarrow[n\to+\infty]{}+\infty$, then $L_\mu(K_n)\xrightarrow[n\to+\infty]{}+\infty$. 

Let $(K_n)_{n\in\N}$ be a sequence of $\tau$-convex domains with respective support functions $(s_n)_{n\in\N}$ such that $d_H(K_n,D_\tau)\xrightarrow[n\to+\infty]{}+\infty$. By Corollary~\ref{cor min cosmo time}, we have that $a_n\coloneqq \mathcal{T}_\tau^{\min}(K_n) \xrightarrow[n\to+\infty]{}+\infty$. On one side, by Lemma~\ref{lem ration minmax cosmo time}, there exist a constant $\delta$ such that for $n$ big enough we have 
\begin{equation*}
 \frac{s_n-s_\tau}{\omega_\C} \leq \delta a_n \, ,
\end{equation*}
so that $\int_{\Omega^*/\Gamma} (s_n-s_\tau)/ \omega_\C \, \dif \mu$ grows at most linearly in $a_n$. 

On the other side, as $a_n = \mathcal{T}_\tau^{\min}(K_n)$, we have $ K_n \subseteq D_\tau + a_n \, \S$, so
\begin{equation*}
 \covol(K_n) \geq \covol(D_\tau + a_n \, \S) \, . 
\end{equation*}
By Theorem~\ref{theo Steiner tau-convex}, $\covol(D_\tau + a_n \, \S) = V_{a_n}(D_\tau/\Gamma_\tau)(\Omega^*/\Gamma)$ is a polynomial of degree $d+1$ in $a_n$. Thus, we have that 
\begin{equation*}
 L_\mu(K_n) = \covol(K_n) - \int_{\Omega^*/\Gamma} \frac{s_n-s_\tau}{\omega_\C} \dif \mu \, \xrightarrow[n\to+\infty]{}+\infty. 
 \end{equation*}

\noindent \textbullet\ By {\cite[Theorem~10.9]{Rockafellar_97}}, we can extract a converging subsequence from a minimising sequence for $L_\mu$. Thus, $L_\mu$ attains its minimum at some $\tau$-convex domain $K_\mu$ with support function $s_\mu$. 

\noindent \textbullet\
Let $\bar{f} : \Omega^* \to \R$ be a continuous $\Gamma$-invariant function. Then, $f\coloneqq \bar{f}\,\omega_\C$ is a continuous $\Gamma$-equivariant function. By Theorem~\ref{theo area measure as gateau derivative}, we have
\begin{equation*} 
\frac{\covol (K_{s_\mu+tf})-\covol(K_\mu)}{t} \xrightarrow[t \to 0]{} \int_{\Omega^*/\Gamma} \frac{f}{\omega_\C} \, \dif \Area(K_\mu/\Gamma_\tau) \, . 
 \end{equation*}
Thus, as $K_\mu$ is a minimum of $L_\mu$, we have 
\begin{equation} 
\label{eq ineq limsup at min}
0 \leq \limsup_{t \to 0^+}\frac{L_\mu (K_{s_\mu+tf})-L_\mu(K_\mu)}{t} \leq \int_{\Omega^*/\Gamma} \frac{f}{\omega_\C} \, \dif \Area(K_\mu/\Gamma_\tau) - \liminf_{t \to 0^+} \frac{1}{t}\int_{\Omega^*/\Gamma} \frac{(s_\mu +tf )^{**}-s_\mu}{\omega_\C} \dif \mu \, . 
\end{equation}
As $(s_\mu +tf )^{**} \leq s_\mu+tf$ and $\omega_\C \leq 0$, for all $t \in \R$ we have that, 
\begin{equation*}
\int_{\Omega^*/\Gamma} \frac{(s_\mu +tf )^{**}-s_\mu}{\omega_\C} \dif \mu \geq t\int_{\Omega^*/\Gamma} \frac{f}{\omega_\C} \, \dif \mu \, . 
 \end{equation*}
and thus
\begin{equation}
\label{eq ineq liminf (s+tf)**-s}
\liminf_{t \to 0^+} \frac{1}{t} \int_{\Omega^*/\Gamma} \frac{(s_\mu +tf )^{**}-s_\mu}{\omega_\C} \dif \mu \geq \int_{\Omega^*/\Gamma} \frac{f}{\omega_\C} \, \dif \mu \, . 
 \end{equation}
As $\bar{f} = f/\omega_\C$, inequalities~\eqref{eq ineq limsup at min} and \eqref{eq ineq liminf (s+tf)**-s} imply
\begin{equation*} 
 \int_{\Omega^*/\Gamma} \bar{f} \, \dif \Area(K_\mu/\Gamma_\tau) \geq \int_{\Omega^*/\Gamma} \bar{f} \, \dif \mu \, . 
\end{equation*}
Applying the same reasoning to $-\bar{f}$, we get the reverse inequality. Hence, for every continuous function $\bar{f} : \Omega^*/\Gamma \to \R$, we have
\begin{equation*} 
 \int_{\Omega^*/\Gamma} \bar{f} \, \dif \Area(K_\mu/\Gamma_\tau) = \int_{\Omega^*/\Gamma} \bar{f} \, \dif \mu \, . 
\end{equation*}
By the uniqueness part of the Riesz Representation Theorem, that implies that $\Area(K_\mu/\Gamma_\tau) = \mu$.

\noindent \textbullet\ For the uniqueness of the solution, just notice that, by Proposition~\ref{prop A MA C-convex}, being a solution of the invariant Minkowski problem is the same as having support function satisfying the following Monge--Amp\`ere equation:
\begin{equation*}
\begin{cases}
\MA(s) = (-\omega_\C)^{-1}\mu & \text{in} \ \Omega^* \, ,\\
s_{\vert \partial U} = g_\tau \, .
\end{cases}
\end{equation*}
Then, the uniqueness of the solution is a consequence of the comparison principle for Monge--Amp\`ere equations \cite[Theorem~2.10 and Corollary~2.11]{Figalli_17}. 
\end{proof}

\subsection{Regularity of the solution}

In order to get regularity results concerning the solution of Theorem~\ref{theo Minkowski solution}, we shall use various facts from the Monge--Amp\`ere equation theory. 

Here is a theorem concerning the regularity of strictly convex solutions of Monge--Amp\`ere equations. 

\begin{theorem}[{\cite[Theorem~3.1]{Trudinger_Wang_08}}]
\label{Strictly convex solution is C2}
Let $U \subset \R^d$ be a bounded convex domain, and $\varphi \in C^{1,1}(U)$ be a positive function. Then, any strictly convex solution of $\MA(s) = \varphi \mathcal{L}$ is $ C^{3,\alpha}$ for any $\alpha \in (0,1)$. 

Furthermore, if $\varphi$ is $C^{k,\alpha}$ for some $k\geq2$ and $\alpha \in (0,1)$, then any strictly convex solution of $\MA(s) = \phi \, \mathcal{L}$ is $ C^{k+2,\alpha}$. 
\end{theorem}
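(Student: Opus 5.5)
Since this is a purely local interior regularity statement, I would prove it by combining Caffarelli's interior theory for the Monge--Amp\`ere equation with a linear Schauder bootstrap. The argument would be run near an arbitrary point $y_0 \in U$. I would not redo Caffarelli's estimates; I would quote them and only check that their hypotheses hold here.

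\textbf{Step 1: localisation.} Fix $y_0 \in U$ and let $\ell$ be a supporting affine function of $s$ at $y_0$. For $h>0$ consider the section $S(y_0,h)=\{ y \in U \st s(y) < \ell(y)+h\}$. Because $s$ is strictly convex, $S(y_0,h)$ shrinks to $\{y_0\}$ as $h \to 0$, so it is compactly contained in $U$ for $h$ small. On $\overline{S(y_0,h)}$ the function $\varphi$ is bounded between two positive constants, and $s = \ell + h$ on $\partial S(y_0,h)$. Hence $s$ solves a Dirichlet problem for $\MA(s) = \varphi\,\mathcal{L}$ on a bounded convex domain with affine boundary data. This is exactly the setting of Caffarelli's interior estimates.

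\textbf{Step 2: Caffarelli's interior $C^{2,\alpha}$ estimate.} Since $\varphi \in C^{1,1}$, it is in particular $C^{\alpha}$ for every $\alpha\in(0,1)$. Caffarelli's interior $C^{2,\alpha}$ estimate on sections then gives $s \in C^{2,\alpha}$ near $y_0$, with $\Hess s$ positive definite there. As a consequence, the matrix $(a^{ij}) = (\Hess s)^{-1}$ is $C^{\alpha}$ and uniformly elliptic on a neighbourhood of $y_0$.

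\textbf{Step 3: bootstrap.} Now $\log\det\Hess s = \log\varphi$ holds classically. Differentiating in a direction $e_k$ gives
\begin{equation*}
a^{ij}\,\partial_{ij}(\partial_k s) = \partial_k \log \varphi \, .
\end{equation*}
This is a linear uniformly elliptic equation for $\partial_k s$ with $C^{\alpha}$ coefficients. Strictly speaking one first works with difference quotients, since a priori $s$ is only $C^{2,\alpha}$. The right-hand side is Lipschitz because $\varphi\in C^{1,1}$ and $\varphi>0$, so it is in particular $C^{\alpha}$. Interior Schauder estimates then give $\partial_k s \in C^{2,\alpha}$, that is, $s\in C^{3,\alpha}$.

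If moreover $\varphi\in C^{k,\alpha}$ with $k\ge 2$, I would iterate. Suppose $s \in C^{m,\alpha}$; then the coefficients $a^{ij}$ are $C^{m-2,\alpha}$. Higher-order Schauder estimates applied to the differentiated equation raise the regularity of $s$ by one order, as long as $\partial\log\varphi\in C^{k-1,\alpha}$ permits. This stops exactly at $s\in C^{k+2,\alpha}$.

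\textbf{Main obstacle.} The crucial step is Step 2, getting from a merely convex (Aleksandrov) solution to $C^{2,\alpha}$ with nondegenerate Hessian. This is where strict convexity is indispensable: without it, sections need not be compactly contained in $U$, and Pogorelov-type examples show that regularity genuinely fails. Rather than reprove Caffarelli's $C^{1,\beta}$ and $C^{2,\alpha}$ theory, I would verify carefully that the compactness of sections from Step 1 provides exactly what it needs. Once that is in place, Step 3 is routine linear elliptic theory.
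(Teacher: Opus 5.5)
Your argument is correct, but it follows a different route from the result the paper relies on. The paper gives no proof of its own; it quotes Trudinger--Wang. The classical proof presented there (Pogorelov, Cheng--Yau) shares your Step~1, localising on a section $S(y_0,h)$ compactly contained in $U$ thanks to strict convexity. It also shares your Step~3, the Schauder bootstrap.

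The difference is in how the second derivatives are controlled. That proof applies Pogorelov's interior estimate to smooth solutions of approximating Dirichlet problems on the section, with smoothed data, and then passes to the limit. Pogorelov's maximum-principle computation differentiates $\log\det\Hess s=\log\varphi$ twice and needs a lower bound on the pure second derivatives of $\log\varphi$. That is exactly where the hypothesis $\varphi\in C^{1,1}$ comes from. Once $|\Hess s|$ is bounded, the equation is uniformly elliptic and concave, so Evans--Krylov (or Calabi's third-derivative estimate) gives $C^{2,\alpha}$, and Schauder theory finishes.

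You instead replace ``Pogorelov estimate $+$ approximation $+$ Evans--Krylov'' by Caffarelli's perturbative interior $C^{2,\alpha}$ theory, applied directly to the Aleksandrov solution on the section.

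What your route buys:
\begin{itemize}
\item no approximation step is needed;
\item a sharper statement, since $\varphi\in C^{\alpha}$ already gives $C^{2,\alpha}$, and $\varphi\in C^{1,\alpha}$ would suffice for $C^{3,\alpha}$. The $C^{1,1}$ hypothesis is used only mildly, in your difference-quotient step.
\end{itemize}
The price is that you invoke a much deeper black box. The Pogorelov route is comparatively elementary: a maximum principle computation plus solvability of the smooth Dirichlet problem. However, it genuinely needs $C^{1,1}$ data, which explains the form of the statement.

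Your bootstrap details are fine. The difference quotients give coefficients $a^{ij}_h$ that are uniformly $C^\alpha$ and elliptic, and right-hand sides $\delta_h\log\varphi$ that are uniformly Lipschitz. In the iteration, coefficients in $C^{m-2,\alpha}$ and data in $C^{k-1,\alpha}$ correctly stop the gain at $C^{k+2,\alpha}$.
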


\subsubsection{Dimension 2}

In dimension $d=2$, there is a particular strict convexity result for solutions of a Monge--Amp\`ere equation due to Alexandrov and Heinz. 

\begin{theorem}[Alexandrov--Heinz {\cite[Remark 3.2]{Trudinger_Wang_08}}]
\label{Solution is strictly convex}
Let $U \subset \R^2$ be a bounded convex domain and $s:U \to \R$ a convex function such that there exist $m>0$ such that on $U$:
\begin{equation*}
\MA(s) \geq m\mathcal{L} \, . 
\end{equation*} 
Then, $s$ is strictly convex. 
\end{theorem}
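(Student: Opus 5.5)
This result is classical and is only cited in the paper, but here is how I would prove it, arguing by contradiction with a measure estimate on sections. Suppose $s$ is not strictly convex. Then there is a supporting affine function $a$ of $s$ at some point whose contact set $Z=\{s=a\}$ is a convex set containing a non-trivial segment. Replacing $s$ by $s-a$ (which leaves $\MA(s)$ unchanged), I may assume $s\geq 0$ and $Z=\{s=0\}$. Since $Z$ is a compact convex set with non-empty relative interior, I choose affine coordinates in which $Z$ contains a segment $\{|x_1|\leq L,\ x_2=0\}$. I also take an endpoint of that segment which is an extreme point $x_0$ of $Z$ lying in $U$. The interior case is the delicate one; if every such extreme point lies on $\partial U$, I would simply shrink $U$ so that it does not.

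The basic tool is the estimate $|\partial s(S)|\cdot|S|\lesssim h^2$ for a section $S=\{s<h\}\cap(\text{box})$ on which $s$ ranges in $[0,h]$. It comes from two observations. First, a subgradient $p$ at a point of $S$ has each component bounded by $h$ divided by the width of $S$ in the corresponding direction. Second, the hypothesis gives $|\partial s(S)|=\MA(s)(S)\geq m|S|$.

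First I apply this near the middle of the segment, to the box $R_h=\{|x_1|\leq L/2,\ s<h\}$ with $x_2$-width $w(h)$. Convexity together with $s\leq h$ on a longer box gives $|p_1|\lesssim h/L$ and $|p_2|\lesssim h/w(h)$. Hence $m L\,w(h)\lesssim h^2/(L\,w(h))$, that is, $w(h)\lesssim h/(\sqrt m\,L)$. In other words, $s$ grows at least linearly, at rate $\gtrsim\sqrt m\,L$, transversally to the segment.

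By itself this is not contradictory: $M|x_2|$ has zero Monge--Ampère mass off the line $\{x_2=0\}$, so the mass bound and the linear growth are compatible. The contradiction has to come from the end of the segment, and I expect this to be the main obstacle. Near $x_0$ I would consider the sections $S_h=\{s<h\}\cap B_r(x_0)$ and exploit a two-dimensional feature. On $Z$ near the extreme point $x_0$, all subgradients at interior points of the segment lie on a single line, and $\partial s(x_0)$ is a cone-like convex set whose area is controlled by the transversal slopes. I would try to show that $\partial s(S_h)$ is contained in an $O(h/r)$-neighbourhood of $\partial s(x_0)\cap B_{M}$, which is a one-dimensional object because $x_0$ is extremal in a two-dimensional contact set. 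Combined with the lower bound $|S_h|\gtrsim r\,h/M$ from Lipschitz continuity, this gives
\[
m\,\frac{r h}{M}\leq \MA(s)(S_h)\lesssim M\,\frac{h}{r},
\]
which fails for $r$ large relative to $M/\sqrt m$.

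If the scales do not match, I would rescale using the affine invariance $x_1\mapsto\lambda x_1$, $x_2\mapsto\lambda^{-1}x_2$. This map preserves $\MA$ and can stretch the segment relative to the Lipschitz constant in the transversal direction. Iterating it should produce the contradiction.

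An alternative route, which I would try if the subgradient-localisation step resists, is a comparison-principle barrier. On a rectangle straddling $x_0$, I would compare $s$ with $v=a x_1^2+b x_2^2+c$ with $4ab=m$, so that $\MA(v)=m\mathcal{L}\leq\MA(s)$. The comparison principle \cite[Theorem~2.10]{Figalli_17} then yields $s\leq v$, and I would choose $a,b,c$, after the above rescaling, so that $v(x_0)<0$. This contradicts $s\geq 0$.
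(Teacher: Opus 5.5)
The paper gives no proof of its own; it only cites Trudinger--Wang. Your sketch has a genuine gap, and the main problem is structural.

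\textbf{What fails.}

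\emph{The reduction to an interior extreme point is unavailable.} If $Z$ is a chord of $U$, with both ends on $\partial U$, then shrinking $U$ only moves the ends to the new boundary, so an extreme point $x_0\in U$ never appears. The chord case is exactly what the theorem must exclude: it is what happens in Pogorelov's examples for $d\geq 3$. So an argument localised at an endpoint, in the spirit of Caffarelli's theorem, cannot suffice.

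\emph{The endpoint step is unsupported.} Extremality of $x_0$ in $Z$ does not make $\partial s(x_0)$ one-dimensional. For instance, for $\operatorname{dist}(\cdot,[0,L]\times\{0\})$ the subdifferential at the origin is a half-disc. Moreover, a mere lower bound on $\MA(s)$ allows $\MA(s)(\{x_0\})=|\partial s(x_0)|>0$.

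\emph{The rescaling cannot fix the scales.} The map $(x_1,x_2)\mapsto(\lambda x_1,\lambda^{-1}x_2)$ multiplies both the segment length $L$ and the transversal slope $K$ by $\lambda$, while $m$ is unchanged. Hence $\sqrt m\,L/K$ is invariant. Your barrier on a rectangle straddling a point of $Z$, like your middle-section estimate, therefore only gives $L\lesssim K/\sqrt m$ and never a contradiction.

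\textbf{The missing idea.} Exploit the freedom in the supporting affine function. On the open segment, $\partial s(x_1,0)=\{0\}\times[\alpha^-,\alpha^+]$ does not depend on $x_1$. After subtracting $\alpha^+x_2$ you still have $s\geq 0$ and $s=0$ on the segment, and now $\partial^+_{x_2}s(x_1,0)=0$. Since $t\mapsto s(x_1,t)/t$ is nondecreasing, Dini's theorem gives
$\eta(\delta)\coloneqq\sup\{s(x_1,t)/t \,:\, |x_1|\leq L',\ 0<t\leq\delta\}\to 0$ for any $L'<L$.

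Take the one-sided rectangle $R=(-L',L')\times(0,\delta)$. Then $s\leq\eta(\delta)\delta$ on $\partial R$, with $s=0$ on the bottom edge. Compare $s$ with
$w=\frac{\sqrt m\,\delta}{4L'}x_1^2+\frac{\sqrt m\,L'}{\delta}\bigl(x_2-\frac{\delta}{2}\bigr)^2+\eta(\delta)\delta-\frac{\sqrt m\,L'\delta}{4}$.
This $w$ satisfies $\MA(w)=m\mathcal{L}\leq\MA(s)$ and $w\geq\eta(\delta)\delta$ on $\partial R$. The comparison principle gives $s(0,\delta/2)\leq w(0,\delta/2)<0$ for $\delta$ small, a contradiction.

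This argument works at interior points of the segment, so chords are covered, and it needs no upper bound on $\MA(s)$. It is genuinely two-dimensional: the segment is a full edge of $R$, and in dimension $d$ the analogous barrier around a segment only gains $\delta^{2-2/d}\ll\delta$, consistent with Pogorelov's examples.
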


Theorems~\ref{Strictly convex solution is C2} and \ref{Solution is strictly convex} imply the following theorem. 

\begin{theorem}
\label{theo C2+ regularity}
Let $U \subset \R^2$ be a bounded convex domain, and $\varphi \in C^{1,1}(U)$ a positive function. Then, any solution of $\MA(s) = \varphi \mathcal{L}$ is $ C^{3,\alpha}$ for any $\alpha \in (0,1)$.

Furthermore, if $\varphi$ is $C^{k,\alpha}$ for some $k\geq2$ and $\alpha \in (0,1)$, then any solution of $\MA(s) = \varphi \mathcal{L}$ is $ C^{k+2,\alpha}$. 
\end{theorem}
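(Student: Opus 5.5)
The plan is to combine the two quoted results directly. Let $s$ be a convex solution of $\MA(s) = \varphi\,\mathcal{L}$ on the bounded convex domain $U \subset \R^2$, where $\varphi \in C^{1,1}(U)$ is positive. The idea is to first get strict convexity of $s$ from Theorem~\ref{Solution is strictly convex}, and then get regularity from Theorem~\ref{Strictly convex solution is C2}. The only thing to check is that the lower bound $\MA(s) \geq m\mathcal{L}$ required by Alexandrov--Heinz really holds.

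\textbf{Step 1: the lower bound.} A positive function need not have a positive infimum on an open set, since $\varphi$ could tend to $0$ at $\partial U$. So I would localise. Strict convexity and regularity are local properties, so it suffices to prove them on every convex subdomain $U' \Subset U$, for instance small open balls. On such a $U'$, the closure $\overline{U'}$ is a compact subset of $U$. Since $\varphi$ is continuous (being $C^{1,1}$) and positive, it attains a minimum $m_{U'} > 0$ on $\overline{U'}$. Hence $\MA(s_{\vert U'}) = \varphi\,\mathcal{L} \geq m_{U'}\mathcal{L}$ on $U'$. Here I use that the Monge--Amp\`ere measure is local: $\MA(s_{\vert U'})$ is the restriction of $\MA(s)$ to $U'$, because subdifferentials at interior points depend only on the germ of $s$.

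\textbf{Step 2: strict convexity.} Applying Theorem~\ref{Solution is strictly convex} gives that $s$ is strictly convex on each $U'$. I then pass to global strict convexity on $U$. A convex function that is not strictly convex is affine on some nondegenerate segment $[a,b]\subset U$. A small convex neighbourhood $U'$ of a subsegment would then contain a segment on which $s$ is affine, which contradicts the previous conclusion.

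\textbf{Step 3: regularity.} Now $s$ is a strictly convex solution, so Theorem~\ref{Strictly convex solution is C2} gives $s \in C^{3,\alpha}$ for every $\alpha\in(0,1)$. If moreover $\varphi \in C^{k,\alpha}$ with $k \geq 2$, the same theorem gives $s \in C^{k+2,\alpha}$. If that theorem is read as an interior, local statement, I would apply it on each $U'$ and patch the conclusions, which is harmless since regularity is local.

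\textbf{Main obstacle.} The only delicate point is the mismatch between \emph{positive} $\varphi$ and the \emph{uniform} lower bound $m>0$ in Alexandrov--Heinz. The localisation in Step 1, together with the locality of both strict convexity and the Monge--Amp\`ere measure, is designed to handle it. The rest is immediate.
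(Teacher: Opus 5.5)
Your proof is correct and follows the paper's route: the paper justifies this theorem in one line, by combining the Alexandrov--Heinz theorem (strict convexity in dimension $2$) with the Trudinger--Wang regularity theorem for strictly convex solutions. Your localisation to relatively compact convex subdomains $U'$ of $U$ supplies the uniform lower bound $\varphi \geq m_{U'} > 0$ that a merely positive $\varphi$ does not guarantee on all of $U$; the paper leaves this point implicit, and you handle it correctly.
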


Using the parametrisation $(\mathbf{P})$, a $\tau$-convex domain has $\C$-curvature $\phi \in C^0(\Omega^*/\Gamma)$ if and only if its support function is a solution of the following Monge--Amp\`ere equation
\begin{equation*}
\begin{cases}
\MA(s) = (-\omega_\C) \phi^{-1}\vol_{\S} = \phi^{-1} (-\omega_C)^{-d-2} \mathcal{L} & \text{in} \ \Omega^* \, ,\\
s_{\vert \partial \Omega^*} = g_\tau \, . 
\end{cases}
\end{equation*}

Thus, Theorems~\ref{theo Minkowski solution} and \ref{theo C2+ regularity} give the following statement, implying the $d=2$ part of Theorem~\ref{theo intro Minkowski pb in MGHCC reg}. 

\begin{theorem}
In dimension $d=2$, let $\phi$ be a positive $\Gamma$-invariant $C^{k+1}$ function on $\P(\C^*)/\Gamma$ with $k\geq 2$. Then, there is a unique $\tau$-convex domain $K$ with area measure $A(K) = \phi^{-1} \vol_{\S}$. Moreover, its $\C$-spacelike boundary is a strictly convex $C^{k+2}$ hypersurface with $\C$-curvature $\phi$. 
\end{theorem}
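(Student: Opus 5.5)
Existence and uniqueness come directly from Theorem~\ref{theo Minkowski solution}, applied to the finite Radon measure $\mu \coloneqq \phi^{-1}\vol_{\S}$ on $\P(\C^*)/\Gamma$. It is finite because $\phi$ is positive and continuous on the compact quotient, hence bounded below, and $\vol_{\S}$ descends to a finite measure there. This gives a unique $\tau$-convex domain $K$ with $\Area(K/\Gamma_\tau)=\mu$, and lifting to $\Omega^*$ gives $\Area(K)=\phi^{-1}\vol_{\S}$. What remains is the regularity of $\partial_{sp}K$ and the identification of its $\C$-curvature.

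I will work in the parametrisation $(\mathbf{P})$ and let $s$ be the support function of $K$. Proposition~\ref{prop A MA C-convex} together with $\dvol_{\S}=(-\omega_\C)^{-d-1}\dif\mathcal{L}$ gives
\begin{equation*}
\MA(s)=\varphi\,\mathcal{L}, \qquad \varphi\coloneqq \phi^{-1}(-\omega_\C)^{-d-2}\quad\text{on }\Omega^*.
\end{equation*}
Since regularity is local, I apply Theorem~\ref{theo C2+ regularity} on relatively compact convex subdomains $U\Subset\Omega^*$, for instance small balls. On each such $U$, $\omega_\C$ is smooth and negative, so it is bounded away from $0$, and $\phi$ is $C^{k+1}$ and positive. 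Hence $\varphi$ is positive and $C^{k+1}$ on $\overline U$, in particular $C^{k,\alpha}$ and $C^{1,1}$ for any $\alpha\in(0,1)$. Theorem~\ref{theo C2+ regularity}, with $d=2$ and strict convexity supplied by Alexandrov--Heinz since $\MA(s)\geq m\mathcal{L}$ on $U$, then gives $s\in C^{k+2,\alpha}(U)$. Therefore $s$ is $C^{k+2}$ on $\Omega^*$ with $\det\Hess s=\varphi>0$, so $\Hess s$ is positive definite.

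Next I transfer this to the boundary. By \eqref{eq expression of the inverse of the Gauss map}, $\partial_{sp}K$ is parametrised by $y\mapsto(\grad s(y),\grad s(y)\cdot y-s(y))$. This map is $C^{k+1}$, and since $\grad s$ is a local diffeomorphism ($\Hess s>0$) and is injective by strict convexity, it is an embedding. So $\partial_{sp}K$ is the graph of $s^*$ over the open set $\grad s(\Omega^*)$. By Legendre duality, $s^*$ is $C^{k+2}$ there with $\Hess s^*=(\Hess s)^{-1}\circ\grad s^*>0$, which is the main point for getting the full $C^{k+2}$ regularity rather than just $C^{k+1}$. Hence $\partial_{sp}K$ is a strictly (indeed locally uniformly) convex $C^{k+2}$ hypersurface. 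Finally, Proposition~\ref{prop aff invariants relative aff sphere} gives $\phi_K=(-\omega_\C)^{-d-2}(\det\Hess s)^{-1}=\phi$, so the $\C$-curvature is $\phi$.

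The only delicate step is the local application of the Monge--Amp\`ere regularity theorem. Theorem~\ref{theo C2+ regularity} is stated for bounded convex domains with $\varphi$ regular on $U$, and $\varphi$ may blow up near $\partial\Omega^*$, so I will restrict to interior balls to keep the hypotheses valid. I also need to check that the paper's definitions permit treating $\partial_{sp}K$ alone, since the full $\partial K$ need not be $C^2$ where it meets directions in $\partial\Omega^*$. The paper's remark on $C^2_+$ domains allows regularity to be required only on the spacelike part, which is all the statement claims.
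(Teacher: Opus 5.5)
Your proposal is correct and follows the paper's route: existence and uniqueness come from Theorem~\ref{theo Minkowski solution}, and regularity comes from the Monge--Amp\`ere equation $\MA(s)=\phi^{-1}(-\omega_\C)^{-d-2}\mathcal{L}$ (via Proposition~\ref{prop A MA C-convex}) combined with the two-dimensional result Theorem~\ref{theo C2+ regularity} (Alexandrov--Heinz plus Trudinger--Wang). Your localisation to interior subdomains, where the right-hand side stays bounded, and your Legendre-duality step showing that $\partial_{sp}K$ is the $C^{k+2}$ graph of $s^*$ with $\C$-curvature $\phi$ correctly supply details the paper leaves implicit.
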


\subsubsection{Higher dimension}

In higher dimension, the following result is due to Caffarelli. 

\begin{theorem}[{Caffarelli \cite[Theorem~4.10]{Figalli_17}}]
\label{theo Caffarelli}
Let $U \subset \R^d$ be a bounded convex domain and $s:U \to \R$ a convex function such that there exist $0<m \leq M$ such that on $U$:
\begin{equation*}
 m\mathcal{L} \leq \MA(s) \leq M\mathcal{L} \, . 
\end{equation*}
Fix $x_0 \in U$ and $y_0\in \partial s(x_0)$, consider the affine function $a(x)= (x-x_0) \cdot y_0 + s(x_0)$, and define the convex set $\Sigma \coloneqq \left\lbrace x \in U \st s(x)=a(x) \right\rbrace$. Then, one of the following two properties holds:
\begin{enumerate}
 \item $\Sigma = \{x\}$ (i.e. $s$ is strictly convex at $x$),
 \item $\Sigma$ has no extremal point in $U$. 
\end{enumerate}
\end{theorem}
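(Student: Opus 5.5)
The statement is Caffarelli's strict convexity theorem, which the paper simply cites from \cite[Theorem~4.10]{Figalli_17}. If I had to reprove it, I would follow the classical route through \emph{sections} of $s$ and the two basic Monge--Amp\`ere estimates: the Alexandrov maximum principle and its converse lower bound.

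\emph{Setup.} Replace $s$ by $s-a$; this changes neither $\MA(s)$ nor the hypotheses. Then $s\geq 0$ on $U$ and $\Sigma=\{s=0\}$ is a closed convex subset of $U$. Suppose $\Sigma\neq\{x_0\}$ and that $\Sigma$ has an extremal point in $U$. By Straszewicz's theorem (exposed points are dense among extremal points) we may take an exposed point $x_1\in U$. Choose an affine function $\ell$ with $\ell(x_1)>0$ and $\ell<0$ on $\Sigma$ away from a small neighbourhood of $x_1$. The sets
\begin{equation*}
S_h \coloneqq \left\lbrace x \in U \st s(x) < h\,\ell(x)\right\rbrace , \qquad h>0 \text{ small},
\end{equation*}
then form a family of convex sections. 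They are compactly contained in $U$ because $s>0$ off $\Sigma$. As $h\to0$ they shrink to the small cap $\Sigma\cap\{\ell\geq0\}$, which is a nondegenerate segment or face of positive length.

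\emph{Normalisation.} For each $h$, apply John's lemma to obtain an affine map $T_h$ with $B_1\subseteq T_h(S_h)\subseteq B_d$. Define the renormalised function
\begin{equation*}
\tilde s_h = \frac{(s-h\ell)\circ T_h^{-1}}{\lambda_h}, \qquad \lambda_h \coloneqq \left\vert \min_{S_h}(s-h\ell)\right\vert ,
\end{equation*}
which vanishes on $\partial T_h(S_h)$ and has minimum $-1$. Because $m\mathcal{L}\leq\MA(s)\leq M\mathcal{L}$, the measure $\MA(\tilde s_h)$ is pinched between $c\,\mathcal{L}$ and $C\,\mathcal{L}$, where $c,C$ depend only on $m$, $M$, $d$. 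This uses the standard comparison $\lambda_h^d\asymp |S_h|^2$: the lower bound comes from Alexandrov's estimate, and the upper bound from comparison with a paraboloid.

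\emph{Contradiction.} Track the point $x_1$. Since $s(x_1)=0$ and $\ell(x_1)$ is, up to choosing $\ell$, nearly maximal on $S_h$, the value $(s-h\ell)(x_1)$ is comparable to the minimum $-\lambda_h$. So $\tilde s_h(T_h x_1)\leq -c'$ for a fixed $c'>0$. On the other hand, $x_1$ is exposed, so the sections $S_h$ are long in the direction of $\Sigma$ and degenerate towards $x_1$: $x_1$ sits near the tip of $S_h$. Consequently the normalised distance from $T_h x_1$ to $\partial T_h(S_h)$ tends to $0$ as $h\to0$.

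The Alexandrov maximum principle, applied to $\tilde s_h$ on the normalised section, gives
\begin{equation*}
|\tilde s_h(z)|^d \leq C\,\mathrm{dist}(z,\partial T_h S_h)\,\mathrm{diam}^{d-1}\,\MA(\tilde s_h)(T_hS_h).
\end{equation*}
With the bounds above, this forces $\tilde s_h(T_h x_1)\to0$, contradicting $\tilde s_h(T_h x_1)\leq -c'$. Hence the extremal point cannot exist in $U$, which is alternative (2) whenever (1) fails.

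\emph{Main obstacle.} The delicate step is the geometric one. I need to show quantitatively that the exposed point $x_1$ becomes close to the boundary of the \emph{normalised} section while still keeping a definite depth. This requires a careful choice of the tilting $\ell$ and a comparison of the width of $S_h$ near $x_1$ with its John ellipsoid. I would try first to control this by a one-dimensional convexity argument along the segment of $\Sigma$ through $x_1$, using that $s-h\ell$ is linear there.
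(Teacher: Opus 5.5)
Your argument is correct. It is essentially the standard proof of this result, which the paper does not prove itself but only cites from \cite[Theorem~4.10]{Figalli_17}: pass to an exposed point via Straszewicz, take tilted sections (localised to a small ball around $x_1$ so they are compactly contained in $U$), normalise with John's lemma using $\lambda_h^d\asymp|S_h|^2$, and contradict the Alexandrov maximum principle.

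The step you flag as the main obstacle is in fact immediate. Write $\ell(x_1)=\delta$ and $\max_{\overline{S_h}}\ell=\delta+\eta_h$, so that $\eta_h\to0$. The hyperplanes $\{\ell=0\}$ and $\{\ell=\delta+\eta_h\}$ both support $S_h$; the first does so because the non-singleton $\Sigma$ crosses $\{\ell=0\}$. Then $s\geq0$ gives $\tilde s_h(T_hx_1)\leq-\delta/(\delta+\eta_h)$. Since affine maps preserve the ratio in which $x_1$ divides this slab, you also get $\mathrm{dist}(T_hx_1,\partial T_hS_h)\leq 2d\,\eta_h/(\delta+\eta_h)$, which tends to $0$.
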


Here is the equivalent of the Theorem of Alexandrov and Heinz in any dimension.

\begin{theorem}[Multidimensional Alexandrov--Heinz Theorem {\cite[Theorem~2.34]{Bonsante_Fillastre_17}}]
Let $U \subset \R^d$ be a bounded convex domain and $s:U \to \R$ a convex function such that there exist $m>0$ such that on $U$:
\begin{equation*}
\MA(s) \geq m\mathcal{L} \, . 
\end{equation*} 
Then, for all $i \leq d/2$, $s$ cannot be locally affine on a $(d-j)$-plane around some point of $U$. 
\end{theorem}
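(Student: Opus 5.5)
The plan is to argue by contradiction, in the spirit of Caffarelli's and Pogorelov's arguments, by comparing two estimates for the Monge--Amp\`ere measure of a small sub-level set of $s$. I read the index in the statement as $j$, so that the claim is that $s$ is not affine on any piece of affine subspace of dimension $k \coloneqq d-j \geq d/2$ through a point of $U$.

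\emph{Normalisation.} Suppose $s$ is affine on a $k$-dimensional disc $D$ centred at $x_0 \in U$, with $k \geq d/2$. Pick $y_0 \in \partial s(x_0)$ and set $a(x) = (x-x_0)\cdot y_0 + s(x_0)$, as in Theorem~\ref{theo Caffarelli}. The restriction $(s-a)_{\vert D}$ is affine, non-negative, and vanishes at the interior point $x_0$, so it vanishes identically on $D$. Subtracting $a$ changes neither the convexity of $s$ nor $\MA(s)$. We may therefore assume $s \geq 0$ on $U$ and $s = 0$ on $D$.

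Choose coordinates $x = (x',x'') \in \R^k \times \R^j$ with $D = \{\vert x'\vert < r,\ x''=0\}$. A convex function is locally Lipschitz, so near $D$ we have $0 \leq s(x',x'') \leq L\vert x''\vert$ for $\vert x'\vert \leq r$.

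\emph{The two estimates.} For small $h>0$, consider the convex set $S_h \coloneqq \{x \st \vert x'\vert < r/2,\ s(x) < h\}$.
\begin{itemize}
\item Volume from below: by the Lipschitz bound, $S_h$ contains $\{\vert x'\vert<r/2,\ \vert x''\vert < h/L\}$. Hence $\vert S_h\vert \geq c\, h^{j}$, and the hypothesis $\MA(s) \geq m\mathcal{L}$ gives $\MA(s)(S_h) \geq c\, m\, h^{j}$.
\item Gradient image from above: let $x \in S_h$ and $p = (p',p'') \in \partial s(x)$. Testing the subgradient inequality at $z = x \pm (r/4)e$, with $e$ a unit vector in the $x'$-directions, uses $s(z) \leq C h$. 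This gives $\vert p'\vert \leq C h/r$. The Lipschitz bound on a compact neighbourhood gives $\vert p''\vert \leq C$. Therefore $\MA(s)(S_h) = \mathcal{L}(\partial s(S_h)) \leq C h^{k}$.
\end{itemize}
Comparing the two yields $c\, h^{j} \leq C h^{k}$ for all small $h$. This is impossible as $h \to 0$ whenever $k > j$, i.e. whenever $j < d/2$.

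\emph{Main obstacle: the borderline $k = j = d/2$.} The crude bound $\vert p''\vert \leq C$ is too weak there. My first attempt would be to replace the cylinders by genuine sections $S_h = \{s < h\}$, normalised by John's ellipsoid, and use the classical estimate $\MA(s)(S_h)\,\vert S_h\vert \leq C h^{d}$ for sections. Together with $\MA(s)(S_h) \geq m \vert S_h\vert$, this gives $\vert S_h\vert \lesssim h^{d/2}$. The section also contains a cylinder of $x'$-size $r$ and $x''$-size $w(h)$, where $w(h)$ is the width of $\{s<h\}$ transversally to $D$. One then needs $w(h)^{j} \gg h^{d/2}$, that is $w(h) \gg h$. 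To get this I would run a dichotomy on the transverse growth of $s$:
\begin{itemize}
\item if $s = o(\vert x''\vert)$ in some transverse direction, then $w(h) \gg h$ and the previous inequality gives the contradiction;
\item otherwise $s$ has a genuine linear cone-like growth $s \gtrsim \vert x''\vert$ transversally to $D$. Near $D$ its subgradients then concentrate on a set whose $p''$-part has small measure, which sharpens the upper bound on $\mathcal{L}(\partial s(S_h))$ and again contradicts $\MA(s)(S_h) \geq m\vert S_h\vert$.
\end{itemize}
Alternatively, I would follow \cite[Theorem~2.34]{Bonsante_Fillastre_17} for this borderline case.
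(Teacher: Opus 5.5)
The paper gives no proof of this statement; it quotes \cite[Theorem~2.34]{Bonsante_Fillastre_17}. So your argument has to stand on its own. Your reading of the index as $j$, i.e.\ $k=d-j\geq d/2$, is the right one.

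\textbf{A repairable error in the case $k>j$.} On $S_h=\{\vert x'\vert<r/2,\ s<h\}$, the bound $s(x\pm\frac{r}{4}e)\leq Ch$ does not follow from what you use. The point $z=x\pm\frac{r}{4}e$ is not in the convex hull of $D\cup\{x\}$, so convexity only gives $s(z)\leq L\vert x''\vert$. And $s(x)<h$ does not force $\vert x''\vert\lesssim h$. For instance, take $s=\max(0,\ell)$ with $\ell(x)=x'_1-r+Kx''_1$ and $K$ large. For every $h$, $S_h$ contains the point $x'=0$, $x''=(r/K,0,\dots,0)$, where $\partial s$ contains the gradient of $\ell$, whose $x'$-part has norm $1$. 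The fix is to run both estimates on the thin cylinder $Q_h=\{\vert x'\vert<r/2,\ \vert x''\vert<h/L\}$ itself. There $s(z)\leq L\vert x''\vert<h$, hence $\vert p'\vert\leq 4h/r$, and you get $c\,m\,h^j\leq C h^k$.

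\textbf{The genuine gap: the borderline $k=j=d/2$.} The statement contains this case for even $d$ (for $d=2$ it is the whole content of the Alexandrov--Heinz theorem), and you do not prove it. The section estimate needs sections compactly contained in $U$, which fails because the contact set may reach $\partial U$. The decisive claim of your second branch, that the $p''$-parts of the subgradients fill a set of small measure, is only asserted. That claim is in fact true in general, with no dichotomy, and it is the missing idea:

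\begin{enumerate}
\item Let $x_1$ be in the relative interior of $D$ and $p\in\partial s(x_1)$. Testing along $\pm e$ gives $p'=0$, so $s(y)\geq p''\cdot y''$ for all $y$, and $p\in\partial s(x_2)$ for every other such $x_2$. Hence, along $D$, $\partial s$ is a fixed set $\{0\}\times A$, with $A\subset\R^j$ compact convex.
\item Let $Q_\delta\coloneqq\{\vert x'\vert<r/2,\ \vert x''\vert<\delta\}$, let $x\in Q_\delta$ with $x''\neq 0$, and let $p\in\partial s(x)$. Monotonicity of $\partial s$ against the point $(x',0)$ gives $p''\cdot x''\geq\max_{q\in A}q\cdot x''$, so $p''\notin\mathrm{int}A$.
\item Use the upper semicontinuity of $(x,v)\mapsto s'(x;v)$ \cite[Theorem~24.5]{Rockafellar_97}, the identity $s'\bigl((x',0);(0,v'')\bigr)=\max_{q\in A}q\cdot v''$, and compactness. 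They give $p''\in A+\overline{B}_{\eta(\delta)}$ with $\eta(\delta)\to 0$.
\item As before, $\vert p'\vert\leq 4L\delta/r$, and the points with $x''=0$ only contribute the null set $\{0\}\times A$.
\end{enumerate}
Putting these together,
\begin{equation*}
c\,m\,\delta^j\leq\MA(s)(Q_\delta)=\mathcal{L}\bigl(\partial s(Q_\delta)\bigr)\leq C\,\delta^k\,\mathcal{L}^j\bigl((A+\overline{B}_{\eta(\delta)})\setminus\mathrm{int}A\bigr) \, ,
\end{equation*}
where $\mathcal{L}^j$ is the Lebesgue measure on $\R^j$. The last factor tends to $\mathcal{L}^j(\partial A)=0$. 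This is a contradiction for every $k\geq j$, the borderline included, so this single estimate replaces both your counting argument and the section/dichotomy sketch.
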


Theorems~\ref{Strictly convex solution is C2} and \ref{theo Caffarelli} imply the following theorem (see the proof of \cite[Corollary~2.37]{Bonsante_Fillastre_17}). 

\begin{theorem}
\label{theo regularity higher d}
Let $U \subset \R^d$ be a bounded convex domain, $g:\partial U \to \R$ a continuous function convex on faces of $\partial U$ and $\varphi \in C^{k,\alpha}(U)$, with $k\geq 2$, be a positive function. If $s$ is a convex solution of 
\begin{equation*}
\begin{cases}
\MA(s)= \varphi \mathcal{L} & \text{in} \ \Omega^* \, ,\\
s_{\vert \partial U} = g \, . 
\end {cases}
\end{equation*}
and if one of the following conditions is satisfied 
\begin{enumerate}
\item $s<\mathrm{Env(g)} \coloneqq \sup \left\lbrace a\st a \ \text{is affine and} \ a\leq g \ \text{on} \ \partial U\right\rbrace$,
\item for all $i \leq d/2$, the convex function $\mathrm{Env(g)}$ is not locally affine on any $(d-i)$-plane around any point of $U$,
\end{enumerate}
then $s$ is $C^{k+2,\alpha}$. 
\end{theorem}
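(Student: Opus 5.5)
The goal is to show that $s$ is strictly convex on $U$. The interior regularity of Theorem~\ref{Strictly convex solution is C2} then gives $s \in C^{k+2,\alpha}$. The tool is Caffarelli's dichotomy, Theorem~\ref{theo Caffarelli}. Since $\varphi$ is positive and continuous it is not bounded away from $0$ and $\infty$ on all of $U$, so I will localise. Suppose $s$ is not strictly convex at some $x_0 \in U$, with $y_0 \in \partial s(x_0)$. Let $a$ be the corresponding supporting affine function and $\Sigma = \{s = a\}$, a convex set containing more than one point. Apply Theorem~\ref{theo Caffarelli} on an exhausting sequence of convex subdomains $U' \Subset U$, on each of which $m\mathcal{L} \leq \MA(s) \leq M\mathcal{L}$. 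The conclusion is that $\Sigma \cap U'$ has no extremal point in $U'$ for every such $U'$. Hence $\Sigma$, which is closed in $U$ and convex, has all its extremal points on $\partial U$, i.e. $\overline{\Sigma}$ is the convex hull of $\overline{\Sigma} \cap \partial U$.

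Next I exploit the boundary datum. The set $\overline{\Sigma}$ is a compact convex set spanned by points of $\partial U$. On those points $s = g$, using the continuous extension of $s$ by $g$, which is part of the hypothesis that $s$ solves the Dirichlet problem. Moreover $s = a$ on $\overline{\Sigma}$ by continuity. So $a \leq s \leq \mathrm{Env}(g)$ on $U$, since $s$ is convex with boundary values $g$ and $\mathrm{Env}(g)$ is the largest convex function below $g$ on $\partial U$. Also $a \leq g$ on $\partial U$, as $a \leq s$ and $s$ extends to $g$. Therefore $a \leq \mathrm{Env}(g)$. On the other hand, $\mathrm{Env}(g)$ is the convex envelope of the boundary values, so on the convex hull of points $p_j \in \partial U$ where $g(p_j) = a(p_j)$ one has $\mathrm{Env}(g) \leq \sum \lambda_j g(p_j) = a$. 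Here I use that $g$ is convex on faces, so that $\mathrm{Env}(g) = g$ on $\partial U$ and $\mathrm{Env}(g)$ is the convex function with those boundary values. Hence
\begin{equation*}
s = a = \mathrm{Env}(g) \quad \text{on } \Sigma .
\end{equation*}

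Now I conclude in each case. Under condition (1), $s < \mathrm{Env}(g)$ everywhere in $U$, which contradicts $s = \mathrm{Env}(g)$ at $x_0 \in \Sigma$. Under condition (2), I invoke the multidimensional Alexandrov--Heinz theorem, using $\MA(s) \geq m\mathcal{L}$ locally. It says $s$ cannot be affine on a $(d-i)$-plane piece with $i \leq d/2$, so $\dim \Sigma < d/2$. Then $\dim \Sigma \geq 1$, and $\mathrm{Env}(g)$ coincides with the affine function $a$ on $\Sigma$, hence is affine along a segment through $x_0$ with endpoints on $\partial U$. The remaining point, which I expect to be the main obstacle, is to upgrade this to $\mathrm{Env}(g)$ being locally affine on a $(d-i)$-plane with $i \leq d/2$. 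For this I would follow the proof of \cite[Corollary~2.37]{Bonsante_Fillastre_17}. A convex envelope of boundary data is affine on each simplex of its contact decomposition, and $\MA(\mathrm{Env}(g)) = 0$. Pogorelov-type arguments then show that its affine pieces through interior points have dimension at least $d/2$, since otherwise the Monge--Amp\`ere measure would charge them. Meanwhile the dimension bound on $\Sigma$ clashes with $s \leq \mathrm{Env}(g)$ touching at $x_0$: near $x_0$ the function $s$ lies below a function that is affine on a large-dimensional face through $x_0$ and agrees with it there. I expect this comparison to force $\Sigma$ to contain that face, contradicting Alexandrov--Heinz. Either way $s$ is strictly convex, and Theorem~\ref{Strictly convex solution is C2}, applied on subdomains where $\varphi \in C^{k,\alpha}$, yields $s \in C^{k+2,\alpha}$.
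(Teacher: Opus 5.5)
Your reduction is the route the paper indicates. Its proof only says that Theorems~\ref{Strictly convex solution is C2} and~\ref{theo Caffarelli} give the result, following the proof of \cite[Corollary~2.37]{Bonsante_Fillastre_17}. The following parts of your proposal are correct:
the localised use of Caffarelli's dichotomy;
the fact that the extremal points of $\overline{\Sigma}$ lie on $\partial U$;
the identity $s=a=\mathrm{Env}(g)$ on $\Sigma$ (here $\mathrm{Env}(g)\leq g$ on $\partial U$ already suffices, so convexity on faces is not needed);
the contradiction under condition (1);
the final appeal to interior regularity.

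Case (2), however, is not proved. Once Alexandrov--Heinz gives $\dim\Sigma<d/2$, all you know is that $\Sigma$ lies in the face $\{\mathrm{Env}(g)=a\}$. Hypothesis (2) says precisely that every face of $\mathrm{Env}(g)$ has dimension $<d/2$, so nothing is contradicted yet. Everything therefore rests on your last step, and that step is built on a false claim.

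A convex envelope need not have affine pieces of dimension $\geq d/2$ through interior points. On the unit ball $U\subset\R^3$ with $g=(x_1^2+x_2^2)_{\vert\partial U}$ one has $\mathrm{Env}(g)=x_1^2+x_2^2$. Its only affine pieces are the chords parallel to $e_3$, so this $g$ satisfies (2). Vanishing Monge--Amp\`ere measure is perfectly compatible with one-dimensional faces.

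In fact your claim contradicts hypothesis (2) outright, and your argument never uses (2). Suppose every interior point lay in the relative interior of a face of dimension $\geq d/2$. Then your comparison would indeed work there: $\mathrm{Env}(g)-s$ restricted to that face is a nonnegative concave function vanishing at a relative interior point, hence zero. So $s$ would be affine on the face, against Alexandrov--Heinz, and you would have shown that \emph{every} solution is strictly convex. That is false for $d\geq 3$ by Pogorelov's example $u(x)=(1+x_1^2)\,\vert x'\vert^{2-2/d}$ with $x'=(x_2,\dots,x_d)$. It has smooth positive $\det\Hess u$ near $0$ and is affine on a segment.

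If instead the face merely contains $x_0$ in its relative boundary, touching from below gives nothing beyond $\partial s(x_0)\subseteq\partial\,\mathrm{Env}(g)(x_0)$. So the missing piece is a mechanism that turns the low-dimensional coincidence set of $s$ and $\mathrm{Env}(g)$ into a contradiction with (2). This is exactly the part the paper delegates to \cite[Corollary~2.37]{Bonsante_Fillastre_17}, and your sketch does not supply it.
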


Theorem~\ref{theo regularity higher d} motivates to introduce the following notion. 

\begin{definition}[Simple maximal domain]
\label{def simple}
The maximal $\tau$-convex domain $D_\tau$ is called \emph{simple} if for all $i \leq d/2$, its support function $s_\tau$ is not locally affine on any $(d-i)$-plane around any point of $\Omega^*$. 
\end{definition}

The following \emph{total mass condition} on a Radon measure $\mu$ on $\Omega^*/\Gamma$, ensures that the boundary $\partial K$ of the solution of the Minkowski problem $\Area(K) = \mu$ does not meet $\partial D_\tau$. 

\begin{proposition} [{\cite[Corollary~3.25]{Bonsante_Fillastre_17}}]
\label{prop m(tau)}
There exists a non-negative number $m(\tau)$ such that for every $\tau$-convex domain $K$, if its total area $\mathrm{Area}(K/\Gamma_\tau) \coloneqq \Area(K/\Gamma_\tau)(\P(\C^*)/\Gamma)$ satisfies $\mathrm{Area}(K/\Gamma_\tau)>m(\tau)$ then $K$ is a Cauchy $\C$-convex domain, i.e. it is such that $\partial_{sp} K = \partial K \subset D_\tau$. 
\end{proposition}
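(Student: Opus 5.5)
Following \cite[Corollary~3.25]{Bonsante_Fillastre_17}, I would argue by contraposition: if $\partial K$ meets $\partial D_\tau$, then the total area of $K/\Gamma_\tau$ is bounded by a constant depending only on $\tau$. The first observation is that $\mathcal{T}^{\min}_\tau(K)$ then vanishes. In the parametrisation $(\mathbf{P})$, with $s$ the support function of $K$, the function $(s-s_\tau)/\omega_\C$ is $\Gamma$-invariant and non-negative, since $K\subseteq D_\tau$. If $\partial K\not\subset D_\tau$, then this function has infimum $0$ on the compact quotient $\Omega^*/\Gamma$. Otherwise $K\subseteq D_\tau+\varepsilon\S$ for some $\varepsilon>0$, so that $\partial K\subset D_\tau$ and, since $D_\tau+\varepsilon\S$ is $\C$-spacelike, every point of $\partial K$ is $\C$-spacelike. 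So it suffices to show that $\mathcal{T}^{\min}_\tau(K)=0$ forces $\mathrm{Area}(K/\Gamma_\tau)\le m(\tau)$.

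The key tool is Lemma~\ref{lem ineq covol} applied to the pair $K_0=K\supseteq K_1=K\cap(D_\tau+ t\S)$, or more simply to $K_0=D_\tau$ and $K_1=K$. Taking $K_0=D_\tau$ (whose area measure vanishes) and $K_1=K$, the right inequality gives
\[
\covol(K)\le \int_{\Omega^*/\Gamma}\frac{s-s_\tau}{\omega_\C}\,\dif\Area(K/\Gamma_\tau)\le \mathcal{T}^{\max}_\tau(K)\,\mathrm{Area}(K/\Gamma_\tau),
\]
which points the wrong way for an upper bound on the area. Instead I would compare $K$ with $K_1=K+\S$, which lies inside $K$. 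By the Steiner formula (Theorem~\ref{theo Steiner tau-convex}), $\covol(K+\S)-\covol(K)=V_1(K/\Gamma_\tau)(\Omega^*/\Gamma)\ge \frac{1}{d+1}\binom{d+1}{d}\mathrm{Area}(K/\Gamma_\tau)$, since all $S_i$ are non-negative. So the total area is bounded by $\covol(K+\S)-\covol(K)\le\covol(K+\S)$. It then remains to bound $\covol(K+\S)$ uniformly over all $K$ touching $\partial D_\tau$.

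For that bound, I would use Lemma~\ref{lem ration minmax cosmo time} together with a compactness argument. Since $\mathcal{T}^{\min}_\tau(K+\S)\ge 1$, the ratio estimate gives $\mathcal{T}^{\max}_\tau(K+\S)<\delta\,\mathcal{T}^{\min}_\tau(K+\S)$. The minimum of $(s+\omega_\C-s_\tau)/\omega_\C$ is at most $1$ plus the minimum of $(s-s_\tau)/\omega_\C$, which is $0$; hence $\mathcal{T}^{\min}_\tau(K+\S)\le 1$. It follows that $\mathcal{T}^{\max}_\tau(K+\S)\le\delta$, so $K+\S\supseteq D_\tau+\delta\S$. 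Therefore $\covol(K+\S)\le\covol(D_\tau+\delta\S)$, which is a finite polynomial in $\delta$ by Theorem~\ref{theo Steiner tau-convex}. Setting $m(\tau)=(d+1)\,d^{-1}\covol(D_\tau+\delta\S)$ (or $0$ if it happens to be smaller) concludes the proof.

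The main obstacle is the first step: showing rigorously that $\partial_{sp}K=\partial K\subset D_\tau$ as soon as $\mathcal{T}^{\min}_\tau(K)>0$. This needs the facts that $D_\tau+\varepsilon\S$ is a $\C$-spacelike $\C$-convex domain contained in $D_\tau$ (from \cite[Section~6]{Ablondi_25}), and that a convex set containing a $\C$-spacelike domain has no null supporting hyperplanes. The latter holds because a supporting hyperplane with direction in $\partial\P(\C^*)$ would have to support $D_\tau+\varepsilon\S$ as well, which is impossible.
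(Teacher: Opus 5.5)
Your proof is correct, and it takes a genuinely different route from the paper.

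The paper's argument is by contradiction and compactness. Suppose there were $\tau$-convex domains whose boundaries touch $\partial D_\tau$ and whose total areas exceed every $n$. Lemma~\ref{lem exctraction of tau-convex domains} extracts a Hausdorff-convergent subsequence, and Proposition~\ref{prop weak cv and Hausdorff} then forces the total areas to converge, which is impossible. The equality $\partial_{sp}K=\partial K$ is handled separately: a null supporting hyperplane of $K$ also supports $D_\tau$, because the support functions share the boundary values $g_\tau$, and this forces the contact point onto $\partial D_\tau$.

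You instead give a direct quantitative bound in three steps.
\begin{enumerate}
\item You reduce to $\mathcal{T}^{\min}_\tau(K)=0$. Your argument with $D_\tau+\varepsilon\S$ is the paper's null-hyperplane argument with $D_\tau+\varepsilon\S$ in place of $D_\tau$, and it treats both parts of the Cauchy condition at once.
\item The Steiner formula gives $\mathrm{Area}(K/\Gamma_\tau)\le V_1(K/\Gamma_\tau)(\P(\C^*)/\Gamma)=\covol(K+\S)-\covol(K)$.
\item You apply Lemma~\ref{lem ration minmax cosmo time} to $K+\S$, whose minimal cosmological time is exactly $1$. This yields $K+\S\supseteq D_\tau+\delta\S$.
\end{enumerate}

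What your route buys is an explicit constant, $m(\tau)=\covol(D_\tau+\delta(\tau)\S)$, which is the total Steiner polynomial of $D_\tau$ evaluated at $\delta$. It also avoids the extraction lemma and the weak continuity of area measures. The paper's route is shorter given those tools but gives no control on $m(\tau)$. Both ultimately rest on Ascoli--Arzel\`a: yours through Lemma~\ref{lem ration minmax cosmo time}, the paper's through Lemma~\ref{lem exctraction of tau-convex domains}.

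A few minor corrections:
\begin{itemize}
\item Since $\frac{1}{d+1}\binom{d+1}{d}=1$, the factor $(d+1)/d$ in your $m(\tau)$ is unnecessary, though harmless.
\item ``A convex set containing a $\C$-spacelike domain'' should read ``contained in''. What you actually use is $K\subseteq D_\tau+\varepsilon\S$, together with the fact that both have the same null support values $g_\tau$ (Proposition~\ref{prop existence of boundary function gtau}).
\item The identity $\covol(K+\S)-\covol(K)=V_1(K/\Gamma_\tau)(\P(\C^*)/\Gamma)$ rests on the normal projection of \cite[Section~6]{Ablondi_25}. The paper uses the same identity for $D_\tau$ in its coercivity argument, so you should cite it the same way.
\item The exploratory paragraph with $K_0=D_\tau$ is a dead end and can be dropped.
\end{itemize}
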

\begin{proof}
First let us prove that there exists a non-negative number $m(\tau)$ such that every $\tau$-convex domain $K$ such that $\mathrm{Area}(K/\Gamma_\tau) > m(\tau)$ satisfies $\partial K \subset D_\tau$. By contradiction, assume there is a sequence $(K_n)_{n\in\N}$ of $\tau$-convex domains such that for all $n\in \N$, $\partial K_n$ meets $\partial D_\tau$ and $\mathrm{Area}(K_n/\Gamma_\tau) >n$. Then, by Lemma~\ref{lem exctraction of tau-convex domains}, we can extract a subsequence $(K_{\varphi(n)})_{n\in\N}$ converging to a $\tau$-convex domain $K_\infty$ for the Hausdorff distance. Proposition~\ref{prop weak cv and Hausdorff} implies that 
 \begin{equation*}
 \varphi(n) = \mathrm{Area}(K_{\varphi(n)}/\Gamma_\tau) \xrightarrow[n \to +\infty]{} \mathrm{Area}(K_\infty/\Gamma_\tau) \, ,
 \end{equation*}
 which is a contradiction. 

Then, for the $\partial_{sp} K = \partial K $ part, let $K$ be a $\tau$-convex domain having a point $P \in \partial K \setminus \partial_{sp}K $. A supporting hyperplane at $P$ of $K$ is also a supporting hyperplane of $D_\tau$ (that can be seen through support functions in $(\mathbf{P})$: $s_K$ and $s_\tau$ coincide on $\partial \Omega^*$) and thus, as $K\subset D_\tau$, we have $P \in \partial D_\tau$, so that $\partial K \not\subset D_\tau$. 
\end{proof}

Proposition~\ref{prop m(tau)} directly implies Theorem~\ref{theo intro Minkowski pb in MGHCC measure}, while Theorems~\ref{theo Minkowski solution} and \ref{theo regularity higher d}, and Proposition~\ref{prop m(tau)} give the following statement, implying the $d \geq 3$ part of Theorem~\ref{theo intro Minkowski pb in MGHCC reg}. 

\begin{theorem}
Let $\phi$ be a positive $\Gamma$-invariant $C^{k+1}$ function on $\P(\C^*)$ with $k\geq 2$, and such that $\phi<c(\tau)$, where 
\begin{equation*}
 c(\tau) \coloneqq \frac{\vol_{\S}(\Omega^*/\Gamma)}{m(\tau)} \, ,
\end{equation*}
with $m(\tau)$ the constant given by Proposition~\ref{prop m(tau)}. Then, there exist a unique $\tau$-convex domain $K$ with area measure $A(K) = \phi^{-1} \vol_{\S} \,(\P(\C^*)/\Gamma)$. Moreover, its boundary is a strictly convex $\C$-spacelike hypersurface of class $C^{k+2}$ with $\C$-curvature $\phi$. 

When $D_\tau$ is simple, the constant $c(\tau)$ can be taken to be $0$. 
\end{theorem}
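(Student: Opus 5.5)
The plan is to combine Theorem~\ref{theo Minkowski solution} for existence and uniqueness, Proposition~\ref{prop m(tau)} to keep the boundary inside $D_\tau$, and Theorem~\ref{theo regularity higher d} for regularity. Set $\mu \coloneqq \phi^{-1}\vol_{\S}$, viewed as a measure on $\P(\C^*)/\Gamma$. It is a finite Radon measure, since $\phi$ is positive and continuous on a compact quotient and $\vol_{\S}$ descends to a finite measure there. Theorem~\ref{theo Minkowski solution} then gives a unique $\tau$-convex domain $K$ with $\Area(K/\Gamma_\tau)=\mu$.

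The hypothesis $\phi<c(\tau)$ gives $\phi^{-1}>m(\tau)/\vol_{\S}(\Omega^*/\Gamma)$ pointwise, so the total mass satisfies $\mu(\Omega^*/\Gamma)>m(\tau)$. This is a strict inequality, because $\phi$ attains its maximum on the compact quotient. If $m(\tau)=0$, read $c(\tau)=+\infty$ and the condition is vacuous. By Proposition~\ref{prop m(tau)}, $K$ is then Cauchy: $\partial K=\partial_{sp}K\subset D_\tau$, so $K\subsetneq D_\tau$ at every boundary point. In $(\mathbf{P})$ this means the support function $s$ of $K$ satisfies $s<s_\tau=\mathrm{Env}(g_\tau)$ on $\Omega^*$. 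Indeed, a point where $s(y)=s_\tau(y)$ would give a common supporting hyperplane with direction $y\in\Omega^*$, touching $\partial D_\tau$ at a point of $\partial K$.

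By Proposition~\ref{prop A MA C-convex}, $s$ solves $\MA(s)=\phi^{-1}(-\omega_\C)^{-d-2}\mathcal{L}$ with boundary value $g_\tau$. The right-hand side density is positive and $C^{k+1}$ on $\Omega^*$, since $\omega_\C$ is smooth and negative there. The one real subtlety is that $\Omega^*$ is not compact, so $\omega_\C\to 0$ at $\partial\Omega^*$ and the density blows up. I would handle this locally, which is the main point to check. Regularity is a local statement and everything is $\Gamma$-equivariant, so it suffices to work on a relatively compact convex subdomain $U\Subset\Omega^*$ containing a fundamental domain. On $U$ the density is bounded between positive constants and lies in $C^{k,\alpha}$.

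Condition (1) of Theorem~\ref{theo regularity higher d} is the strict inequality $s<\mathrm{Env}(g_\tau)$ obtained above. I would apply it through its proof via Theorem~\ref{theo Caffarelli}. If $s$ were not strictly convex at some point of $U$, the contact set $\Sigma$ would have no extremal point in $\Omega^*$. It would then extend to $\partial\Omega^*$, where $s=g_\tau$, forcing $s$ to coincide with an affine minorant of $g_\tau$ along $\Sigma$, hence $s=\mathrm{Env}(g_\tau)$ there, a contradiction. So $s$ is strictly convex, and Theorem~\ref{Strictly convex solution is C2} gives $s\in C^{k+2,\alpha}$ locally, hence everywhere on $\Omega^*$.

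When $D_\tau$ is simple, condition (2) of Theorem~\ref{theo regularity higher d} holds for any $\phi$ without the mass condition. In that case we get regularity directly and may take $c(\tau)=0$ as the statement says, reading the constraint as vacuous. Finally, with $s$ of class $C^{k+2}$ and $\Hess s>0$ (strict convexity plus the positive Monge--Amp\`ere density), $\G_K^{-1}$ from \eqref{eq expression of the inverse of the Gauss map} is a $C^{k+1}$ embedding. Its image $\partial K=\partial_{sp}K$ is the graph of the $C^{k+2}$ Legendre transform $s^*$ and is strictly convex and $\C$-spacelike. Corollary~\ref{cor link A(K) vol} then identifies its $\C$-curvature as $\phi$.
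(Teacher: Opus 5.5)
Your proposal is correct and is essentially the paper's argument. The paper simply combines Theorem~\ref{theo Minkowski solution}, Proposition~\ref{prop m(tau)} (which gives $\partial K=\partial_{sp}K\subset D_\tau$, hence $s<s_\tau=\mathrm{Env}(g_\tau)$) and Theorem~\ref{theo regularity higher d}; your localisation to $U\Subset\Omega^*$ and the Caffarelli contact-set argument only unpack what the paper leaves to the cited results.

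One point is missing in the simple case, where ``$c(\tau)=0$'' in the statement should be read as $+\infty$. There Proposition~\ref{prop m(tau)} is unavailable, so the equality $\partial K=\partial_{sp}K$ in your last paragraph needs its own justification, and the paper is equally silent on it. It does follow. $C^2$ regularity with positive density makes $s$ strictly convex, and this forces $s<s_\tau$, because the contact set of $\mathrm{Env}(g_\tau)$ with a supporting affine function at an interior point contains a segment. Cocompactness then gives $\min_{\Omega^*}(s-s_\tau)/\omega_\C>0$, i.e.\ $\partial K\subset D_\tau$.
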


\appendix

\section{Dirichlet--Lee fundamental domains}
\label{app DL domain}

In this first appendix, we show that the action of an affine deformation of a group dividing a cone on its maximal domain admits a convex fundamental domains. This fact is used in the proof of the strict convexity of the covolume (Theorem~\ref{theo convexity covolume}). 

Let $\Gamma_\tau < \SA (\A^{d+1})$ be an affine deformation of a subgroup $\Gamma < \SL (\V^{d+1})$ dividing a proper open convex cone $\C$. The construction of a convex fundamental domain for the action of $\Gamma_\tau$ on $D_\tau$, the maximal $\tau$-convex domain is due to Lee \cite{Lee_08} (Lee actually only considered the case where $\tau=0$, but the construction immediately extends to our frame of work). Another study of these domains (again in the case $\tau=0$) using \emph{Vinberg hypersurfaces} instead of affine spheres, can be found in \cite{Marquis_12}. 

\begin{definition}[Dirichlet--Lee domain]
\label{def DL domain}
Let $[\varphi]\in \P(\C^*)$. The \emph{Dirichlet--Lee domain of $D_\tau$ given by $[\varphi]$} is the convex set $\DL_{[\varphi]}$, closed in $D_\tau$, defined as
\begin{equation*}
 \DL_{[\varphi]} = \left\lbrace P \in D_\tau \st \forall \gamma_\tau \in \Gamma_\tau, \ \varphi\vp{\overrightarrow{P(\gamma_\tau P)}}\leq 0\right\rbrace \, ,
\end{equation*}
where $\overrightarrow{P(\gamma_\tau P)} \in \V^{d+1}$ is the vector from $P \in \A^{d+1}$ to $\gamma_\tau \cdot P\in \A^{d+1}$.
\end{definition}

In the parametrisation $(\mathbf{P})$, if $y\in \Omega^*$ and $Y\coloneqq (y,-1) \in \C^*$. The Dirichlet--Lee domain given by $y$ is 
\begin{equation*}
 \DL_{y} = \left\lbrace X \in D_\tau \st \forall (\gamma,\tau) \in \Gamma_\tau, \ X\cdot Y \geq (\gamma X + \tau ) \cdot Y\right\rbrace \, . 
\end{equation*}

\begin{lemma}
\label{lem scalar product to - infty}
Let $[\varphi] \in \P(\C^*)$ and $A$ be a compact subset of $D_\tau$. If $(\gamma^n_\tau)_{n\in \N}$ is a diverging sequence of elements of $\Gamma_\tau$, then for all $P \in \A^{d+1}$,
\begin{equation*}
 \sup_{Q \in A} \varphi\vp{ \overrightarrow{P(\gamma^n_\tau Q)}} \xrightarrow[n \to +\infty]{} -\infty \, . 
\end{equation*}
\end{lemma}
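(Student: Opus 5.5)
We would work in the parametrisation $(\mathbf{P})$ and write $Y=(y,-1)\in\C^*$ for a representative of $[\varphi]$ (up to a positive scalar, which is harmless). Then we need, for fixed $P=X_0$,
\begin{equation*}
\sup_{Q\in A}\bigl(\gamma_n Q+\tau(\gamma_n)-X_0\bigr)\cdot Y \xrightarrow[n\to+\infty]{} -\infty .
\end{equation*}
The term $-X_0\cdot Y$ is a constant, so it suffices to treat $\sup_{Q\in A}(\gamma_n Q+\tau(\gamma_n))\cdot Y$. The key tool is the maximal domain $D_\tau$ through its support function $s_\tau$, and the $\tau$-equivariance of the total support function (Proposition~\ref{prop equiv supp function}).

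\textbf{First step: reduce to $\tilde s$ of an auxiliary domain.} Fix a $\tau$-convex domain $K\subset D_\tau$ with $A\subset K$. For instance, take $K$ such that $\partial K$ lies in the past of $A$: the domains $D_\tau+t\S$ are $\tau$-convex, and by compactness of $A$ together with the continuity of the cosmological time, we may choose $K=D_\tau+t\S$ with $t<\min_A\mathcal{T}_\tau$. For $Q\in A$ we then have $\gamma_{n,\tau}Q\in K$ by invariance. Moreover, using \eqref{eq action and dual action},
\begin{equation*}
(\gamma_nQ+\tau(\gamma_n))\cdot Y=Q\cdot(\gamma_n^{-1}\star Y)+\tau(\gamma_n)\cdot Y .
\end{equation*}
Since $Q$ lies in $K$ and in fact in the interior region $K+\delta\S$ for some $\delta>0$ (again by compactness of $A$), we get $Q\cdot Z\le\tilde s_{K}(Z)+\delta\,\tilde\omega_\C(Z)$ for all $Z\in\C^*$, where $\tilde\omega_\C$ is the $1$-homogeneous extension of $\omega_\C$, which is negative. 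Applying this with $Z=\gamma_n^{-1}\star Y$ and using the equivariance $\tilde s_K(Y)=\tilde s_K(\gamma_n^{-1}\star Y)+\tau(\gamma_n)\cdot Y$ gives
\begin{equation*}
\sup_{Q\in A}(\gamma_nQ+\tau(\gamma_n))\cdot Y\le \tilde s_K(Y)+\delta\,\tilde\omega_\C(\gamma_n^{-1}\star Y).
\end{equation*}

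\textbf{Second step: show $\tilde\omega_\C(\gamma_n^{-1}\star Y)\to-\infty$.} This is the main point, and it is purely linear. Since $\S^*$ is $\Gamma$-invariant and $\tilde\omega_\C$ is $\Gamma$-equivariant up to the dual action, $-\tilde\omega_\C(Z)$ measures how far $Z$ lies "above" $\S^*$ in the cone: writing $Z=\lambda Z'$ with $Z'\in\S^*$, we have $-\tilde\omega_\C(Z)=\lambda$ up to the normalisation fixed by the definition of $\S^*$, in which $-\tilde\omega_\C\equiv1$ on $\S^*$. So we need $\lambda_n\to\infty$ for $\gamma_n^{-1}\star Y=\lambda_n Z'_n$. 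Since $\Gamma$ preserves $\S^*$, we have $\gamma_n^{-1}\star Y\in\S^*$ whenever $Y\in\S^*$, so $\lambda_n$ stays equal to $1$. This shows the homogeneous bound alone is insufficient.

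\textbf{Correction, expected to be the main obstacle.} I would therefore refine the estimate, using instead that $\gamma_n^{-1}\star Y$ leaves every compact subset of $\S^*$ by proper discontinuity of the dual action. The fix is to compare $Q\cdot Z$ with $\tilde s_{K}(Z)$ on the compact set $A$: the function
\begin{equation*}
Z\mapsto \max_{Q\in A}Q\cdot Z-\tilde s_{D_\tau}(Z)
\end{equation*}
is $1$-homogeneous, and after dividing by $-\tilde\omega_\C$ it is a function on $\S^*$. I claim it tends to $-\infty$ as $Z\to\partial$. The reason is that $\tilde s_{D_\tau}$ extends continuously to $\partial\Omega^*$ with values $g_\tau$ by Proposition~\ref{prop existence of boundary function gtau}, whereas $\max_{Q\in A}Q\cdot(z,-1)$ stays strictly below $g_\tau$ at the boundary because $A$ is compact inside the open set $D_\tau$. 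Meanwhile $-\tilde\omega_\C(z,-1)=-\omega_\C(z)\to0$ at $\partial\Omega^*$, so the ratio tends to $-\infty$. Combining this with the equivariance identity above, with $D_\tau$ in place of $K$, yields the lemma. The delicate point is the strict gap at $\partial\Omega^*$, which requires checking that $A$ stays at positive distance from every supporting hyperplane of $D_\tau$ with direction in $\partial\Omega^*$.
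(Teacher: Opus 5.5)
Your final argument (the ``Correction'' paragraph) is correct, but it takes a genuinely different, dual route from the paper's. Steps 1--2 are a dead end, as you diagnose yourself: $\tilde\omega_\C$ is invariant under the dual action of $\Gamma$, and its level set $\{\tilde\omega_\C=-1\}$ is $\S^*$. So a uniform margin $A\subset K+\delta\S$ only gives a constant bound, and you can delete those steps.

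Set $P=X_0$, $Y=(y,-1)$, $z_n\coloneqq\gamma_n^{-1}*y$ and $G(z)\coloneqq\bigl(\max_{Q\in A}Q\cdot(z,-1)-s_\tau(z)\bigr)/(-\omega_\C(z))$. Your corrected argument then amounts to the exact identity
\begin{equation*}
\sup_{Q\in A}\varphi\bigl(\overrightarrow{P(\gamma^n_\tau Q)}\bigr)=s_\tau(y)-X_0\cdot Y+\bigl(-\omega_\C(y)\bigr)\,G(z_n)\,.
\end{equation*}
It follows from the $\tau$-equivariance of $\tilde s_{D_\tau}$ and the $\Gamma$-invariance of $\tilde\omega_\C$ alone. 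Then $z_n$ leaves every compact subset of $\Omega^*$ by proper discontinuity of the dual action, and $G\to-\infty$ at $\partial\Omega^*$.

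The point you flag as delicate is routine. Choose $r>0$ with $A+B_r\subset D_\tau$. Letting $z_k\to z\in\partial\Omega^*$ in $X\cdot(z_k,-1)\le s_\tau(z_k)$ shows $D_\tau\subset\{X\cdot(z,-1)\le g_\tau(z)\}$. Hence $\max_{Q\in A}Q\cdot(z,-1)\le g_\tau(z)-r$ on $\partial\Omega^*$. By continuity of $s_\tau$ up to $\partial\Omega^*$ (Proposition~\ref{prop existence of boundary function gtau}), the numerator of $G$ is at most $-r/2$ near $\partial\Omega^*$, while $-\omega_\C\to0^+$.

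The paper argues on the primal side instead. It writes $Q=\Pi(Q)+t\,N(Q)$ with $\Pi(Q)\in\partial D_\tau$ and $N(Q)\in\S$, using the $\C$-normal projection onto $\partial D_\tau$. It bounds the $\partial D_\tau$-part by $\Gamma_\tau$-invariance of $D_\tau$. It then uses that $\gamma_n N(A)$ escapes to infinity in $\S\subset\C$, so its pairing with $Y\in\C^*$ tends to $-\infty$.

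That route needs:
\begin{itemize}
\item the cosmological-time decomposition;
\item continuity of $N$ and proper discontinuity on $\S$;
\item a positive lower bound for $t$ on $A$, which is needed implicitly since the factor $t$ is dropped in the paper's display.
\end{itemize}
Your argument bypasses all of this. It needs only the boundary values $g_\tau$, the fact that $\omega_\C$ vanishes on $\partial\Omega^*$, and proper discontinuity on $\P(\C^*)$, and it gives an explicit rate through $G$. The paper's version, in turn, stays closer to the geometric picture of orbits of $\S$ escaping to infinity in the cone.
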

\begin{proof}
Let us work in the parametrisation $(\mathbf{P})$. We now let $Y=(y,-1) \in \C^*$ and $(\gamma_n, \tau_n)_n$ be a diverging sequence of elements of $\Gamma_\tau < \SL(\R^{d+1}) \ltimes \R^{d+1}$. We want to show that
\begin{equation*}
 \sup_{X \in A} (\gamma_n X + \tau_n)\cdot Y \xrightarrow[n \to +\infty]{} -\infty \, . 
\end{equation*}

Using \cite[Proposition~6.7]{Ablondi_25}, let us decompose any element $X \in A $ as $X = \Pi(X) + t N(X)$, where $ \Pi(X) \in \partial D_\tau$, $t>0$, and $N(X) \in \S$. By \cite[Proposition~6.9]{Ablondi_25}, $N(A)$ is compact as the image of the compact set $A$ by a continuous map. Then, for all $X \in A$, we have,
\begin{equation}
\label{eq1 proof inf on compact}
(\gamma_n X + \tau_n)\cdot Y = (\gamma_n \Pi(X) + \tau_n)\cdot Y + \gamma_n N(X)\cdot Y \, . 
\end{equation}
As $P \in \partial D_\tau$, by \cite[Proposition~5.15]{Ablondi_25}, we have
\begin{equation}
\label{eq2 proof inf on compact}
(\gamma_n \Pi(X) + \tau_n)\cdot Y = (\gamma_n \Pi(X) + \tau_n)\cdot (y,-1) \leq g_\tau(y) \, . 
\end{equation}
As the affine sphere $\S$ is $\Gamma$-invariant, for all $n \in \N$, we have $\gamma_n N(A) \subset \S$. As the action of $\Gamma$ on $\S$ is properly discontinuous and the sequence $(\gamma_n,\tau_n)_{n\in \N}$ diverges, the compact sets $(\gamma_n N(A))_{n\in \N}$ escape from every compact subset of $\S$. Then, as $\S$ is asymptotic to $\partial \C$ and $Y \in \C^*$, we have
\begin{equation}
\label{eq3 proof inf on compact}
\sup_{X \in A} \gamma_n N(X) \cdot Y \xrightarrow[n \to +\infty]{} -\infty \, . 
\end{equation}
That concludes the proof, as equations~\eqref{eq1 proof inf on compact}, \eqref{eq2 proof inf on compact} and \eqref{eq3 proof inf on compact} give
\begin{equation*}
 \sup_{X \in A} (\gamma_n X + \tau_n)\cdot Y \leq g_\tau(y)+ \sup_{X \in A} \gamma_n N(X) \cdot Y\xrightarrow[n \to +\infty]{} -\infty \, . \qedhere
\end{equation*}
\end{proof}

\begin{corollary}
\label{cor1 fundamental domain}
Let $y\in \Omega^*$. The Dirichlet--Lee domain $\DL_{[\varphi]}$ of $D_\tau$ satisfies
\begin{equation*}
 D_\tau =\bigcup_{\gamma_\tau \in \Gamma_\tau} (\gamma_\tau \cdot \DL_{[\varphi]}) \, . 
\end{equation*}
\end{corollary}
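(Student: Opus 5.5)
The plan is the standard Dirichlet-domain argument: for each point $P \in D_\tau$ we pick, in its $\Gamma_\tau$-orbit, a point maximising the linear form $\varphi$. Such a point lies in $\DL_{[\varphi]}$, which gives the covering. The reverse inclusion $\bigcup_{\gamma_\tau}\gamma_\tau\cdot\DL_{[\varphi]}\subseteq D_\tau$ is immediate, since $\DL_{[\varphi]}\subseteq D_\tau$ and $D_\tau$ is $\Gamma_\tau$-invariant (it is a $\tau$-convex domain). So only $D_\tau \subseteq \bigcup_{\gamma_\tau}\gamma_\tau\cdot\DL_{[\varphi]}$ needs proof.

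Fix $P\in D_\tau$ and consider the function
\begin{equation*}
F:\Gamma_\tau\to\R, \qquad F(\gamma_\tau)\coloneqq\varphi\vp{\overrightarrow{P(\gamma_\tau P)}} .
\end{equation*}
\textbf{Step 1: $F$ attains its maximum.} Apply Lemma~\ref{lem scalar product to - infty} with the compact set $A=\{P\}\subset D_\tau$ and base point $P$. It says $F(\gamma^n_\tau)\to-\infty$ along every diverging sequence of $\Gamma_\tau$.

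Since $\Gamma_\tau$ is discrete, a sequence either diverges or has a subsequence that eventually takes a single value (relatively compact subsets of a discrete group are finite). In particular, for $c\coloneqq F(\mathrm{id})=0$, the superlevel set $\{\gamma_\tau \st F(\gamma_\tau)\geq 0\}$ is finite. It is also non-empty, as it contains the identity. Hence $F$ attains its maximum at some $\eta_\tau\in\Gamma_\tau$.

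I expect the main subtlety here to be justifying that ``non-diverging'' means ``takes finitely many values''. This uses discreteness of $\Gamma_\tau$ in $\SA(\A^{d+1})$, which follows from the discreteness of $\Gamma$.

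\textbf{Step 2: $Q\coloneqq\eta_\tau\cdot P$ lies in $\DL_{[\varphi]}$.} First, $Q\in D_\tau$ by invariance. For any $\gamma_\tau\in\Gamma_\tau$, use Chasles' relation:
\begin{equation*}
\varphi\vp{\overrightarrow{Q(\gamma_\tau Q)}}=\varphi\vp{\overrightarrow{P(\gamma_\tau\eta_\tau P)}}-\varphi\vp{\overrightarrow{P(\eta_\tau P)}}=F(\gamma_\tau\eta_\tau)-F(\eta_\tau)\leq 0 .
\end{equation*}
The inequality holds by maximality of $F(\eta_\tau)$. This is exactly the defining condition of $\DL_{[\varphi]}$ (Definition~\ref{def DL domain}).

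\textbf{Step 3: conclusion.} From Step 2, $P=\eta_\tau^{-1}\cdot Q\in\eta_\tau^{-1}\cdot\DL_{[\varphi]}$. Since $P\in D_\tau$ was arbitrary, this gives $D_\tau \subseteq \bigcup_{\gamma_\tau}\gamma_\tau\cdot\DL_{[\varphi]}$, which together with the trivial reverse inclusion proves the equality.

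The statement's use of $y$ instead of $[\varphi]$ is only the parametrisation $(\mathbf{P})$, with $\varphi=(y,-1)$.
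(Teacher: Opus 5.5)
Your proof is correct and follows the paper's argument. Like the paper, you apply Lemma~\ref{lem scalar product to - infty} with $A=\{P\}$ to see that $\gamma_\tau\mapsto\varphi\vp{\overrightarrow{P(\gamma_\tau P)}}$ attains its supremum at some $\eta_\tau$, and then conclude $P\in\eta_\tau^{-1}\cdot\DL_{[\varphi]}$. You also spell out the steps the paper leaves implicit: finiteness of the superlevel set via discreteness of $\Gamma_\tau$, the Chasles computation showing the maximising orbit point lies in $\DL_{[\varphi]}$, and the trivial reverse inclusion.
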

\begin{proof}
Let $P \in D_\tau$. Lemma~\ref{lem scalar product to - infty} applied with $A=\{P\}$ and the proper discontinuity of the action of $\Gamma$ on $\P(\C^*)$, imply that \begin{equation*}
\sup_{\gamma_\tau \in \Gamma_\tau}\varphi\vp{\overrightarrow{P(\gamma_\tau P)}}
\end{equation*}
is achieved for some $\gamma_\tau^P \in \Gamma_\tau$. Hence, $P \in \gamma_\tau \cdot \DL_{[\varphi]}$, with $\gamma_\tau = (\gamma_\tau^P)^{-1} \in \Gamma_\tau$. 
\end{proof}

\begin{lemma}
\label{lem int of Dirichlet--Lee domain}
Let $[\varphi]\in \P(\C^*)$. The Dirichlet--Lee domain $\DL_{[\varphi]}$ of $D_\tau$ has interior
\begin{equation*}
 \mathrm{int}(\DL_{[\varphi]}) = \left\lbrace P \in D_\tau \st \forall \gamma_\tau \in \Gamma_\tau\setminus \{\Id\}, \ \varphi\vp{\overrightarrow{P(\gamma_\tau P)}} < 0\right\rbrace ,
\end{equation*}
where $\Id$ is the identity in $\SA (\A^{d+1})$. 
\end{lemma}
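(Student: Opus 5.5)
We prove the two inclusions separately, working in the parametrisation $(\mathbf{P})$ with $Y=(y,-1)\in\C^*$ representing $[\varphi]$. Set
\begin{equation*}
 F_\gamma(X) \coloneqq (\gamma X+\tau(\gamma))\cdot Y - X\cdot Y \, ,
\end{equation*}
so that $\DL_{[\varphi]}=\{X\in D_\tau \st F_\gamma(X)\leq 0 \ \forall \gamma\}$. Each $F_\gamma$ is an affine function of $X$, and $F_{\Id}\equiv 0$.

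\textbf{Inclusion $\supseteq$.} Let $X\in D_\tau$ with $F_\gamma(X)<0$ for all $\gamma\neq\Id$. Choose a compact ball $A\subset D_\tau$ around $X$. By Lemma~\ref{lem scalar product to - infty}, applied with base point $X$ and with $A$, we have
\begin{equation*}
 \sup_{X'\in A}(\gamma_n X'+\tau_n)\cdot Y\to-\infty
\end{equation*}
along any diverging sequence, while $X'\cdot Y$ stays bounded on $A$. Hence only finitely many $\gamma$ satisfy $\sup_A F_\gamma\geq -1$; call this finite set $\Gamma_A$. For $\gamma\notin\Gamma_A$ we have $F_\gamma<0$ on all of $A$. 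For the finitely many $\gamma\in\Gamma_A\setminus\{\Id\}$, we have $F_\gamma(X)<0$, and by continuity $F_\gamma<0$ on a smaller neighbourhood of $X$. The intersection of these finitely many neighbourhoods with $\mathrm{int}(A)$ is a neighbourhood of $X$ contained in $\DL_{[\varphi]}$, so $X\in\mathrm{int}(\DL_{[\varphi]})$.

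\textbf{Inclusion $\subseteq$.} Let $X\in\mathrm{int}(\DL_{[\varphi]})$ and suppose, for contradiction, that $F_\gamma(X)=0$ for some $\gamma\neq\Id$. Since $F_\gamma\leq 0$ on a neighbourhood of $X$ and $F_\gamma$ is affine, $X$ is an interior local maximum of an affine function, which forces $F_\gamma$ to be constant, i.e. $F_\gamma\equiv 0$. This means that for all $X$,
\begin{equation*}
 (\gamma-\Id)X\cdot Y + \tau(\gamma)\cdot Y = 0 \, .
\end{equation*}
Comparing linear parts gives $(\gamma-\Id)^\top Y=0$, that is $\gamma^\top Y=Y$, so $\gamma^{-1}\star Y=Y$. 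Then the projective dual action of $\gamma\neq \Id$ fixes the point $y\in\Omega^*$. This contradicts the freeness of the action of $\Gamma$ on $\P(\C^*)$: by \cite[Lemma~2.8]{Benoist_04}, $\Gamma$ divides $\C^*$, and $\Gamma$ is torsion-free. If needed, one must also handle $\gamma$ acting as a nontrivial scalar. In that case $\gamma$ preserving $\C$ forces a positive scalar, and $\det\gamma=1$ then forces $\gamma=\Id$.

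\textbf{Main obstacle.} The delicate step is the local finiteness in the first inclusion. It requires that the estimate of Lemma~\ref{lem scalar product to - infty} be uniform over a compact neighbourhood, as that lemma provides, and that $\Gamma_\tau$ be discrete, so that "diverging" covers all but finitely many elements.
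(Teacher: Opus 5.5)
Your proof is correct and, for the substantive inclusion $\supseteq$, it is the paper's argument. You take a compact neighbourhood $A$, use Lemma~\ref{lem scalar product to - infty} together with discreteness of $\Gamma_\tau$ to reduce to finitely many group elements, and handle those by continuity. For the inclusion $\subseteq$, which the paper dismisses as trivial, your argument supplies a justification the paper omits: an affine function with an interior maximum is constant, so $\gamma^\top Y=Y$ and $\gamma\neq\Id$ fixes a point of $\P(\C^*)$, contradicting freeness of the dual action.
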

\begin{proof}
The direct inclusion $\subseteq$ is trivial. Let us then prove the reverse inclusion.

Let us work in the parametrisation $(\mathbf{P})$. We now let $Y=(y,-1) \in \C^*$ and we want to show that 
\begin{equation*}
\left\lbrace X \in D_\tau \st \forall (\gamma,\tau) \in \Gamma_\tau\setminus \{(I,0)\}, \ X\cdot Y > (\gamma X + \tau ) \cdot Y\right\rbrace \subseteq \, \mathrm{int}(\DL_{y}) \,.
\end{equation*}
Let $X \in D_\tau$ such that for all $(\gamma,\tau) \in \Gamma_\tau\setminus \{(I,0)\}$, we have $X\cdot Y > (\gamma X + \tau ) \cdot Y$. Let $A$ be a compact neighbourhood of $X$ in $D_\tau$, such that 
\begin{equation}
\label{eq1 lem int}
\inf_{X' \in A} X' \cdot Y \geq X \cdot Y -1 \, . 
\end{equation}
Lemma~\ref{lem scalar product to - infty} and the proper discontinuity of the action of $\Gamma_\tau$ on $D_\tau$ imply that there is only a finite family of elements $(\gamma_i,\tau_i)_{1 \leq i \leq n}$ of $\Gamma_\tau \setminus\{(I,0)\}$ such that for all $(\gamma,\tau) \in \Gamma_\tau \setminus\{(I,0),(\gamma_1,\tau_1), \dots, (\gamma_n,\tau_n)\}$,
\begin{equation}
\label{eq2 lem int}
X \cdot Y - 1 > \sup_{X' \in A} (\gamma X' + \tau) \cdot Y \, . 
\end{equation}
Thus, using equations~\eqref{eq1 lem int} and \eqref{eq2 lem int}, for all $X' \in A$ and $(\gamma,\tau) \in \Gamma_\tau \setminus\{(I,0),(\gamma_1,\tau_1), \dots, (\gamma_n,\tau_n)\}$,
\begin{equation}
\label{eq3 lem int}
 X' \cdot Y > (\gamma X' + \tau ) \cdot Y \, . 
\end{equation}
Now, for all $1 \leq i \leq n$, we have, by assumption, that $X\cdot Y > (\gamma_i X + \tau_i ) \cdot Y$, so there is a neighbourhood $U_i \subseteq A$ of $X$ such that for all $X' \in U_i$,
\begin{equation}
\label{eq4 lem int}
X' \cdot Y >(\gamma_i X' + \tau_\gamma) \cdot Y \, . 
\end{equation}
Equations~\eqref{eq3 lem int} and \eqref{eq4 lem int} imply the neighbourhood $U \coloneqq \bigcap_{1 \leq i \leq n} U_i \subseteq A$ of $X$ in $D_\tau$ satisfies that for all $X' \in U$ and $(\gamma,\tau) \in \Gamma_\tau$, 
\begin{equation*}
 X' \cdot Y \geq (\gamma X' + \tau ) \cdot Y \, . 
\end{equation*}
That means that $X$ admits a neighbourhood included in $\DL_{y}$, i.e. $X \in \mathrm{int}(\DL_{y})$. 
\end{proof}

\begin{corollary}
\label{cor2 fundamental domain}
Let $[\varphi]\in \P(\C^*)$. The Dirichlet--Lee domain $\DL_{[\varphi]}$ of $D_\tau$ satisfies that for all $(\gamma_\tau) \in \Gamma_\tau \setminus\{\Id\}$, 
\begin{equation*}
\mathrm{int}(\DL_{[\varphi]}) \cap \bp{ \gamma_\tau \cdot \mathrm{int}(\DL_{[\varphi]})} = \emptyset \, . 
\end{equation*}
\end{corollary}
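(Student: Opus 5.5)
The plan is a proof by contradiction, using the characterisation of the interior given by Lemma~\ref{lem int of Dirichlet--Lee domain}. Suppose some $P \in \mathrm{int}(\DL_{[\varphi]}) \cap \bp{\gamma_\tau \cdot \mathrm{int}(\DL_{[\varphi]})}$ with $\gamma_\tau \neq \Id$. Then $P = \gamma_\tau \cdot Q$ for some $Q \in \mathrm{int}(\DL_{[\varphi]})$. We will play the strict inequalities at $P$ and at $Q$ against each other. The idea is that $\varphi$ is a linear form on $\V^{d+1}$, so the vectors $\overrightarrow{P(\gamma_\tau^{-1}P)}$ and $\overrightarrow{Q(\gamma_\tau Q)}$ are exactly opposite.

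First, apply Lemma~\ref{lem int of Dirichlet--Lee domain} to $Q$ with the non-trivial element $\gamma_\tau$. This gives $\varphi\bp{\overrightarrow{Q(\gamma_\tau Q)}} < 0$, that is, $\varphi\bp{\overrightarrow{QP}} < 0$.

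Next, apply the same lemma to $P$ with the non-trivial element $\gamma_\tau^{-1} \in \Gamma_\tau\setminus\{\Id\}$. This gives $\varphi\bp{\overrightarrow{P(\gamma_\tau^{-1}P)}} < 0$. Since $\gamma_\tau^{-1}\cdot P = Q$, this reads $\varphi\bp{\overrightarrow{PQ}} < 0$.

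Since $\overrightarrow{PQ} = -\overrightarrow{QP}$ and $\varphi$ is linear, the two inequalities give $0 > \varphi\bp{\overrightarrow{PQ}} = -\varphi\bp{\overrightarrow{QP}} > 0$, a contradiction. Hence the intersection is empty.

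The only point needing care, which I expect to be the main (mild) obstacle, is checking that Lemma~\ref{lem int of Dirichlet--Lee domain} really applies to both points. Both $P$ and $Q$ lie in $D_\tau$, because $\DL_{[\varphi]} \subset D_\tau$ and $D_\tau$ is $\Gamma_\tau$-invariant. Both $\gamma_\tau$ and $\gamma_\tau^{-1}$ are distinct from $\Id$. Beyond this, the argument is a direct application of the characterisation of the interior and needs no further input.
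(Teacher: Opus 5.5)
Your argument is correct and is exactly the one the paper leaves implicit, since it states this corollary as a direct consequence of Lemma~\ref{lem int of Dirichlet--Lee domain}: applying the strict inequality to $Q$ with $\gamma_\tau$ and to $P=\gamma_\tau\cdot Q$ with $\gamma_\tau^{-1}$ gives $\varphi(\overrightarrow{QP})<0$ and $\varphi(\overrightarrow{PQ})<0$, which is absurd. You only use the inclusion $\subseteq$ of that lemma (the one the paper calls trivial), and $P,Q\in D_\tau$ holds simply because both lie in $\mathrm{int}(\DL_{[\varphi]})$, so the $\Gamma_\tau$-invariance of $D_\tau$ is not even needed.
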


Corollaries~\ref{cor1 fundamental domain} and~\ref{cor2 fundamental domain} give the following Theorem. 

\begin{theorem}
\label{theo Dirichlet--Lee domain}
Let $[\varphi]\in \P(\C^*)$. The Dirichlet--Lee domain $\DL_{[\varphi]}$ is a convex fundamental domain of $D_\tau$ for the action of $\Gamma_\tau$. 
\end{theorem}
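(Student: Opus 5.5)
The statement to prove is Theorem~\ref{theo Dirichlet--Lee domain}: $\DL_{[\varphi]}$ is a convex fundamental domain for the action of $\Gamma_\tau$ on $D_\tau$. The plan is to assemble three facts: convexity, covering, and disjointness of the translates of the interior. All three reduce to earlier results.

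\emph{Convexity.} For each fixed $\gamma_\tau\in\Gamma_\tau$ with linear part $\gamma$, the map
\begin{equation*}
P\mapsto \varphi\bigl(\overrightarrow{P(\gamma_\tau P)}\bigr)
\end{equation*}
is affine in $P$. Indeed, in $(\mathbf{P})$ it equals $((\gamma-I)X+\tau(\gamma))\cdot Y$. Hence each defining condition cuts out a closed half-space. $\DL_{[\varphi]}$ is therefore the intersection of the convex open set $D_\tau$ with a family of closed half-spaces, so it is convex and closed in $D_\tau$.

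\emph{Covering.} Corollary~\ref{cor1 fundamental domain} gives $D_\tau=\bigcup_{\gamma_\tau}\gamma_\tau\cdot\DL_{[\varphi]}$. This rests on Lemma~\ref{lem scalar product to - infty}, which makes the supremum of $\varphi(\overrightarrow{P(\gamma_\tau P)})$ over $\Gamma_\tau$ attained. At a maximiser $\gamma^P_\tau$, the point $\gamma^P_\tau P$ satisfies the defining inequalities. The cocycle relation turns this into $\varphi(\overrightarrow{Q(\eta_\tau Q)})\le 0$ for all $\eta_\tau$, where $Q=\gamma^P_\tau P$.

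\emph{Disjointness of interiors.} Corollary~\ref{cor2 fundamental domain} follows from Lemma~\ref{lem int of Dirichlet--Lee domain}. Suppose $P\in\mathrm{int}(\DL)\cap\gamma_\tau\,\mathrm{int}(\DL)$ with $\gamma_\tau\neq\Id$, and write $P=\gamma_\tau Q$ with $Q\in\mathrm{int}(\DL)$. The strict inequalities of Lemma~\ref{lem int of Dirichlet--Lee domain} give $\varphi(\overrightarrow{Q P})<0$ and $\varphi(\overrightarrow{P Q})<0$. These contradict each other, since $\overrightarrow{PQ}=-\overrightarrow{QP}$.

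If the definition of fundamental domain also requires $\DL_{[\varphi]}$ to be the closure of its interior, or local finiteness of the tiling, I would add two remarks. First, $\mathrm{int}(\DL_{[\varphi]})$ is nonempty: by Lemma~\ref{lem int of Dirichlet--Lee domain}, a point where $\varphi(\overrightarrow{P(\gamma_\tau P)})$ is uniquely maximised at $\Id$ lies in the interior, and such points exist by freeness and the covering. A closed convex set with nonempty interior is the closure of its interior. Second, local finiteness follows from the finiteness argument inside the proof of Lemma~\ref{lem int of Dirichlet--Lee domain}, again via Lemma~\ref{lem scalar product to - infty} on compact sets. I expect this regularity point to be the only delicate step; the rest is immediate from the corollaries.
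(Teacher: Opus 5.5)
Your proposal is correct and follows the paper's argument: the paper proves the theorem by combining Corollary~\ref{cor1 fundamental domain} (covering) with Corollary~\ref{cor2 fundamental domain} (disjoint interiors), and convexity is built into the definition of $\DL_{[\varphi]}$ as an intersection of $D_\tau$ with half-spaces. One caveat about your optional regularity remark: freeness of the action of $\Gamma_\tau$ on $D_\tau$ does not by itself rule out ties $\varphi\bigl(\overrightarrow{P(\gamma_\tau P)}\bigr)=0$, so nonempty interior is better obtained from Lemma~\ref{lem for covolume cvx proof}, which sends every point of $\DL_{[\varphi]}$ into its interior after translating along $N_{\S}[\varphi]$.
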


Finally, let us end this appendix by noticing a nice property satisfied by Dirichlet--Lee domains, which is useful in the proof of the strict convexity of the covolume. 

\begin{lemma}
\label{lem for covolume cvx proof}
Let $[\varphi]\in \P(\C^*)$ and $\DL_{[\varphi]}$ be the Dirichlet--Lee domain of $D_\tau$ given by it. For all $P \in \DL_{[\varphi]}$ and $t>0$, we have that $ P + tN_{\S}[\varphi] \in \mathrm{int}(\DL_{[\varphi]})$. 
\end{lemma}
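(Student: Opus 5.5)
We prove that for $P \in \DL_{[\varphi]}$ and $t>0$, the point $P+t\vec{n}$, with $\vec{n} \coloneqq N_{\S}[\varphi]$, satisfies the strict inequalities characterising $\mathrm{int}(\DL_{[\varphi]})$ in Lemma~\ref{lem int of Dirichlet--Lee domain}. First, $P + t\vec{n} \in D_\tau$, since $D_\tau$ is $\C$-convex and $\S \subset \C$ (indeed $D_\tau + t\S \subseteq D_\tau + \C = D_\tau$). Fix $\gamma_\tau \in \Gamma_\tau\setminus\{\Id\}$ with linear part $\gamma$. Since the linear part acts on vectors, we have
\begin{equation*}
\overrightarrow{(P+t\vec n)\,\gamma_\tau(P+t\vec n)} = \overrightarrow{P(\gamma_\tau P)} + t\,(\gamma \vec{n} - \vec{n}) \, .
\end{equation*}
Hence
\begin{equation*}
\varphi\vp{\overrightarrow{(P+t\vec n)\,\gamma_\tau(P+t\vec n)}} = \varphi\vp{\overrightarrow{P(\gamma_\tau P)}} + t\,\varphi(\gamma\vec n - \vec n) \, .
\end{equation*}
The first term is $\leq 0$ because $P \in \DL_{[\varphi]}$. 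It therefore suffices to show $\varphi(\gamma \vec{n}) < \varphi(\vec{n})$ whenever $\gamma \neq \Id$.

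For this key inequality, use the defining property of $\vec n$ (Definition~\ref{def C-normal vf}): $\mathrm{T}_{\vec n}\S$ is directed by $\ker\varphi$. Since $\S$ is a locally uniformly (hence strictly) convex hypersurface, bounding the convex set $\S + \C$ with $\varphi<0$ on $\C$, the affine hyperplane $\vec n + \ker \varphi$ supports $\S+\C$ and touches $\S$ only at $\vec n$. Concretely, $\varphi$ restricted to $\S$ attains its strict maximum exactly at $\vec n$; in the parametrisation $(\mathbf{P})$ this is the statement that $y$ is the unique maximiser of $X\mapsto X\cdot Y$ over $\graph \omega_\C^*$, i.e. the strict convexity of $\omega_\C^*$ (equivalently, that its gradient $\grad\omega_\C^*$ is injective, being inverse to $\grad\omega_\C$). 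As $\gamma$ preserves $\S$, we get $\gamma\vec n \in \S$, so $\varphi(\gamma \vec n) \le \varphi(\vec n)$, with equality only if $\gamma \vec n = \vec n$.

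The main point to check is then that $\gamma\vec n \neq \vec n$ for $\gamma\neq \Id$. This follows from freeness of the action of $\Gamma$ on $\P(\C)$: a fixed vector $\vec n \in \S\subset\C$ gives a fixed point $[\vec n]\in\P(\C)$, which is impossible for $\gamma\neq\Id$ (and the element $\gamma_\tau\neq\Id$ of $\Gamma_\tau$ has $\gamma\neq\Id$, since $\Phi_\tau$ is an isomorphism). Combining everything, $\varphi\vp{\overrightarrow{(P+t\vec n)\,\gamma_\tau(P+t\vec n)}} < 0$ for every $\gamma_\tau\neq\Id$. By Lemma~\ref{lem int of Dirichlet--Lee domain}, this gives $P+t\vec n \in \mathrm{int}(\DL_{[\varphi]})$.
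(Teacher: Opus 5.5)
Your proof is correct and is essentially the paper's argument. You split the displacement vector of $P+t\vec n$ under $\gamma_\tau$ as $\overrightarrow{P(\gamma_\tau P)} + t(\gamma\vec n - \vec n)$, bound the first term using $P \in \DL_{[\varphi]}$ and the second strictly using the strict convexity of $\S$ and $\gamma\vec n \neq \vec n$, and conclude by Lemma~\ref{lem int of Dirichlet--Lee domain}. You also supply two details the paper leaves implicit: that $P+t\vec n \in D_\tau$, and that $\gamma\vec n\neq\vec n$ follows from freeness of the action on $\P(\C)$. The only slip is that the unique maximiser of $\varphi$ on $\S$ is $N_{\S}(y)$, not $y$.
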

\begin{proof}
Let $P \in \DL_{[\varphi]}$, $t >0$ and $\gamma_\tau \in \Gamma_\tau \setminus\{\Id\}$. Let us denote $\vec{n} \coloneqq N_{\S}[\varphi] \in \S$. As $P \in \DL_{[\varphi]}$, we have 
\begin{equation}
\label{ineq vector 1}
 \varphi\vp{\overrightarrow{P(\gamma_\tau P)}}\leq 0 \, . 
\end{equation}
Moreover, $\gamma \cdot \vec{n} \in \S$ and $[\varphi]$ directs the supporting hyperplane to $\S$ at $\vec{n} = N_{\S}[varphi]$, so, as $\gamma N \neq N$ and the affine sphere is a locally uniformly convex hypersurface, for $\gamma \in \Gamma \setminus \{\Id\}$ we have
\begin{equation}
\label{ineq vector 2}
 \varphi(\gamma \cdot \vec{n}) < \varphi(\vec{n}). 
\end{equation}
Let $t >0$ and set $P' = P + t\vec{n} =P + t N_{\S}[\varphi]$. For all $\gamma_\tau \in \Gamma_\tau\setminus\{\Id,\}$, we have
\begin{align*}
\overrightarrow{P'(\gamma_\tau P')}& = \overrightarrow{P'P}+ \overrightarrow{P(\gamma_\tau P)}+\overrightarrow{(\gamma_\tau P)(\gamma_\tau P')} \\
& = \overrightarrow{P(\gamma_\tau P)} + t(\gamma \cdot \vec{n}) -t\vec{n} \, .
\end{align*}
As $t>0$, with \eqref{ineq vector 1} and \eqref{ineq vector 2} that implies that 
\begin{equation*}
 \varphi\vp{\overrightarrow{P(\gamma_\tau P)}} < 0 \, . 
\end{equation*}
By Lemma~\ref{lem int of Dirichlet--Lee domain}, that means that for all $t>0$, $P'= P + t N_{\S}[\varphi] \in \mathrm{int}(\DL_{[\varphi]})$. 
\end{proof}

\section{\texorpdfstring{Smooth approximation of $\tau$-convex domains}{Smooth approximation of tau-convex domains}}
\label{app C2+-approximation}

In this second appendix, we provide the proof of a $C^2_+$-approximation lemma for $\C$-convex domains, which we use in the proof of Lemma~\ref{lem ineq covol}, in order to get Theorem~\ref{theo area measure as gateau derivative} and Theorem~\ref{theo Minkowski solution}. 

The following approximation result is due to Choi \cite[Theorem~4.4.4]{Choi_25}, who managed to adapt the proof of a classical convex geometry approximation result for bounded convex domains in the Euclidean affine space, presented in \cite[Proposition~2.1]{Ghomi_04}. For the sake of completeness, we give a quick proof of it, using the same language and notation as in the rest of this article. 

\begin{lemma}
\label{lem smooth approx}
Let $K$ be a $\tau$-convex. For all $\varepsilon>0$, there exists a $C^2_+$ $\tau$-convex domain $K'$ such that $d_H(K,K') \leq \varepsilon$. 
\end{lemma}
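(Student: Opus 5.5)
The idea is to adapt Ghomi's approximation scheme to the equivariant setting, working entirely with support functions in the parametrisation $(\mathbf{P})$. Let $s$ be the support function of $K$. By Proposition~\ref{prop equiv supp function}, $s$ is $\tau$-equivariant. By the formula for $d_H$, it suffices to produce a $\tau$-equivariant convex function $s'$ on $\Omega^*$ that is $C^2$ with positive definite Hessian and satisfies $\Vert (s-s')/\omega_\C\Vert_\infty \leq \varepsilon$. I would do this in two stages. First I regularise $s$ to a smooth $\tau$-equivariant convex function. Then I make it uniformly convex by adding a small multiple of $\omega_\C$.

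For the regularisation, mollifying $s$ in $\Omega^*$ is not compatible with equivariance. So I would smooth the total support function $\tilde s$ on the cone $\C^*$ instead, averaging over the group rather than over translations. Pick a smooth compactly supported probability density $\rho$ on $\SL(\V^{d+1})$, concentrated in a small neighbourhood of $\Id$. Define
\begin{equation*}
\tilde s_\rho(Y) \coloneqq \int \Bp{\tilde s(g \star Y) - \tau'(g)\cdot Y} \rho(g)\, \dif g \, .
\end{equation*}
Here $\tau'$ would be a correction making the construction equivariant. The cleaner route, which I would actually use, is to convolve on the left with respect to the $\Gamma$-action, using a partition of unity on the compact quotient $\Omega^*/\Gamma$.

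Concretely, write $s = s_0 + \omega_\C \bar u$. Here $s_0$ is a fixed smooth $\tau$-equivariant function, obtained by gluing local affine pieces $\tau(\gamma)\cdot(y,-1)$ through a $\Gamma$-invariant partition of unity. Then $\bar u$ is a continuous $\Gamma$-invariant function on $\Omega^*$, that is, a continuous function on the closed manifold $\Omega^*/\Gamma$ (Proposition~\ref{prop equiv and inv for function on Omega*}). Smoothing $\bar u$ on the compact manifold gives $\bar u_\delta \to \bar u$ uniformly. The difficulty is that convexity is lost under this smoothing.

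To restore convexity, I would follow Ghomi and Choi and work with the domain rather than the function. Take $K_1 \coloneqq K + \eta\,\S$, which is $\tau$-convex with support function $s + \eta\omega_\C$. By Remark~\ref{rem K+S} and the uniform convexity of $\S$, $K_1$ is "uniformly convex in the $\C$-sense" with a uniform lower bound on curvature modulo $\Gamma$. Now regularise $K_1$ by an equivariant smooth convex hull construction: replace $\partial K_1$ by level sets of a smooth $\Gamma_\tau$-invariant function obtained by averaging the defining function of $K_1$ over small $\Gamma_\tau$-compatible translations. Cocompactness of the $\Gamma_\tau$-action on $D_\tau$ (Proposition~\ref{prop action Dtau free prop disc}) makes all estimates uniform. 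The strict convexity inherited from $\eta\,\S$ should survive a sufficiently small $C^2$ perturbation, giving a Hessian bound of the form $\Hess s' \geq \tfrac{\eta}{2}\Hess\omega_\C > 0$.

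The main obstacle is exactly this step: producing a smoothing operator that simultaneously preserves $\Gamma_\tau$-invariance and convexity. Averaging over translations preserves convexity but breaks invariance, and averaging over $\Gamma$-adapted charts preserves invariance but breaks convexity. My first attempt would be to use the $\Gamma$-invariant partition of unity to mollify locally, convex function by convex function, and then take a smooth maximum (Ghomi's technique) of finitely many pieces per fundamental domain. The $\eta\omega_\C$ term provides the positive-definiteness margin that absorbs the errors created by gluing. Finally, I would choose $\eta$ and the mollification scale so that $\Vert (s - s')/\omega_\C\Vert_\infty \leq \eta + o(1) \leq \varepsilon$.
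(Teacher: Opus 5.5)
The proposal has a genuine gap at its central step. Your reduction is fine: it suffices to produce a $\tau$-equivariant $C^2$ function $s'$ with $\Hess s'>0$ and $\Vert (s-s')/\omega_\C\Vert_\infty\le\varepsilon$. But you never exhibit an operator that is both convexity-preserving and $\Gamma_\tau$-equivariant, and you say so yourself.

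The mechanisms you suggest for that step do not work as stated.

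\begin{itemize}
\item Adding $\eta\,\S$ does not give a lower bound on curvature. It bounds the principal radii of $\C$-curvature from below, i.e.\ $\Hess(s+\eta\omega_\C)\ge\eta\Hess\omega_\C$ in the Alexandrov sense. Flat pieces of $\partial K$ survive in $\partial(K+\eta\,\S)$, so $s+\eta\omega_\C$ is in general not even $C^1$, and no smoothing of it is ``a small $C^2$ perturbation''.
\item Suppose you glue local mollifications, made in different lifts of charts, with a $\Gamma$-invariant partition of unity $(\psi_i)$. These lifts are related by projective maps of $\Omega^*$, so the pulled-back kernels have genuinely different shapes. The Hessian then acquires terms $\nabla\psi_i\otimes\nabla(s_i-s_j)$. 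Near a crease of $s$ these are of the size of the jump of $\nabla s$, independently of the mollification scale, whereas your margin $\eta$ must tend to $0$.
\item A smooth maximum of local convex pieces only works if each piece is made inactive near the boundary of its domain, which you do not arrange.
\item Averaging over translations breaks invariance, as you note.
\end{itemize}

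Making the margin absorb these errors would require a genuine Greene--Wu type estimate, which is not supplied.

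The missing idea is to average over the family of supporting half-spaces rather than over translations or charts; $\Gamma_\tau$ acts on that family by permutation. This is the construction of Ghomi as adapted by Choi.

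\begin{itemize}
\item First replace $K$ by $K+t\,\S$, so that $\partial K\subset D_\tau$ and $\partial K/\Gamma_\tau$ is compact.
\item Fix a smooth $g$ vanishing on $(-\infty,0]$ and locally uniformly convex on $(0,+\infty)$. The paper sets $\chi(X)=\int_{\S^*} g\bigl(X\cdot Y-(\tilde s+\varepsilon\tilde\omega_\C)(Y)\bigr)\,\dvol_{\S}(Y)$.
\item Each integrand is a convex function composed with an affine map, so convexity of $\chi$ is exact and no margin is needed.
\item The $\tau$-equivariance of $\tilde s+\varepsilon\tilde\omega_\C$ turns $X\mapsto\gamma_\tau X$ into $Y\mapsto\gamma^{-1}\star Y$. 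So $\Gamma$-invariance of $\dvol_{\S}$ gives $\Gamma_\tau$-invariance of $\chi$.
\item The integral is finite on $\overline K\subset D_\tau$: because of the boundary values $g_\tau$, only a compact set of $Y$ contributes.
\item On $\overline K\setminus\overline{K+\varepsilon\,\S}$ one has $\Hess\chi>0$, because an open set of $Y$ is active there.
\end{itemize}

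Let $r=\min_{\partial K}\chi$, which is positive and attained by invariance and compactness of $\partial K/\Gamma_\tau$. Then $K'=\{\chi<r/2\}$ is a $C^2_+$ $\tau$-convex domain with $K+\varepsilon\,\S\subseteq K'\subseteq K$, hence $d_H(K,K')\le\varepsilon$.
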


\begin{proof}

Let us work in the parametrisation $(\mathbf{P})$ and let $s$ be the support function of $K$. For any support function $f$, let us denote by $K(f)$ the $\C$-convex domain with support function $f$. 

First, note that $K$ can already be approximated by the $\tau$-convex domains $K(s+t\omega_\C)=K+t\S$, with $t>0$. Thus, we can assume that we are working with $K$ a $\C$-convex domain with a strictly convex support function $s$, and such that $\partial_{sp} K =\partial K \subset D_\tau$ \cite[Lemma~6.4]{Ablondi_25} and $\partial K /\Gamma_\tau$ is compact \cite[Lemma~7.6]{Ablondi_25}. 

Let $\varepsilon>0$. Let us now show that there exists a $\tau$-convex domain $K'$ with smooth and locally uniformly convex boundary, and such that $K(s+\varepsilon \omega_\C) \subseteq K' \subseteq K$. 

Let $g:\R \to \R$ be smooth function, locally uniformly convex on $(0,+\infty)$ and vanishing on $(-\infty,0]$. For instance, consider
\begin{equation*}
 \function{g}{\R}{\R}{t}{\begin{cases}
 t^2 \exp\vp{-\frac{1}{t^2}} & \text{if} \ x>0 \, , \\
 0 & \text{otherwise. }
 \end{cases}}
\end{equation*}
Then, for $Y = -(y,-1)/\omega_\C(y) \in \Sigma_\C^*$, set $\phi_Y$ to be the following smooth convex function:
\begin{equation*}
\function{\phi_Y}{K}{\R_+}{X}{g\bp{X\cdot Y-(\tilde s+\varepsilon\tilde\omega_\C)(Y)}} \, ,
\end{equation*}
where $\tilde s$ and $\tilde \omega_\C$ are the $\C^*$-support function of respectively $K$ and $\S$. Note that the function $\phi_Y$ is smooth and convex as the composition of $g$ and an affine map, and that it vanishes on $K(s+\varepsilon \omega_\C)$ but is positive in the $\C$-past half-space $\left\lbrace X \in \R^{d+1} \st X\cdot(y,-1)>(s+\varepsilon\omega_\C)(y)\right\rbrace$. 

Let $X \in \overline{K} \subset D_\tau$. By \cite[Proposition~4.22]{Ablondi_25}, for all $y \in \partial\Omega^*$, we have $ X\cdot(y,-1)< s_\tau(y)$ . Hence, as $s+\varepsilon \omega_\C$ continuously extend to $s_\tau$ on $\partial \Omega^*$, the subset $\left\lbrace y \in \Omega^* \st X\cdot(y,-1)\geq(s+\varepsilon\omega_\C)(y) \right\rbrace$ is closed and bounded, thus compact.

Thus, we can set $\chi$ to be the function defined by
\begin{equation*}
\function{\chi}{\overline{K}}{\R_+}{X}{\int_{\S^*}\phi_Y(X) \, \dvol_{\S}(Y)} \, ,
\end{equation*}
where $\dvol_{\S}$ is the canonical volume from on $\S^* \simeq\S$ described in subsection~\ref{subsec Cheng-Yau affine sphere}.

Note that $\chi^{-1}(0) = \overline{K(s+\varepsilon \omega_\C)}$ and that $\chi$ is $\Gamma_\tau$-invariant: for all $(\gamma,\tau) \in \Gamma_\tau$ and $X \in K$, using the $\Gamma_\tau$-equivariance satisfied by the total support function $\tilde s+\varepsilon\tilde\omega_\C : \C^* \to \R$, we have
\begin{align*}
 \phi_Y(\gamma X+\tau) & = g\bp{\gamma X\cdot Y + \tau \cdot Y-(\tilde s+\varepsilon\tilde\omega_\C)(Y)} \\
 & = g\bp{ X\cdot(\gamma^{-1} \star Y) + \tau \cdot Y-(\tilde s+\varepsilon\tilde\omega_\C)(\gamma^{-1} \star Y) - \tau \cdot Y} \\
 & = g\bp{ X\cdot(\gamma^{-1} \star Y)-(\tilde s+\varepsilon\tilde\omega_\C)(\gamma^{-1} \star Y)} \\
 & = \phi_{\gamma^{-1} \star Y}(X) \, ,
\end{align*}
where $\star$ denotes the dual action of $\Gamma$. 

By $\Gamma$-invariance of the volume form $\dvol_{\S}$ on the affine sphere $\S^*$, for all $(\gamma,\tau) \in \Gamma_\tau$ and $X \in K$, we have
\begin{align*}
 \chi(\gamma X+\tau) & = \int_{\S^*}\phi_{\gamma^{-1} \star Y}(X) \, \dvol_{\S}(Y) = \int_{\S^*}\phi_{Y}(X) \, \dvol_{\S}(\gamma \star Y) \\
 & =\int_{\S^*}\phi_{Y}(X) \, \dvol_{\S}(Y) \\
 & = \chi(X) \, . 
\end{align*}
As we have assumed that $\partial K / \Gamma_\tau$ is compact, and because of the $\Gamma_\tau$-invariance of $\chi$, there is an $r>0$ such that $\chi(\partial K) \subseteq [r,+\infty)$. 

The smoothness and convexity of the functions $\phi_Y$ clearly imply the smoothness and convexity of $\chi$. We claim that it is actually locally uniformly convex on $\overline{K} \setminus \overline{K(s+\varepsilon \omega_\C)}$, concluding the proof as then, by $\Gamma_\tau$-invariance of $\chi$, the level set $K' \coloneqq (\chi < r/2)$ is a $\tau$-convex domain with smooth and locally uniformly convex boundary, satisfying $K(s+\varepsilon \omega_\C) \subseteq K' \subseteq K$. 

Let us now prove our claim. Let $X \in K \setminus \overline{K(s+\varepsilon \omega_\C)}$ and $V \in \R^{d+1}\setminus\{0\}$. We shall show that $\Hess \chi(X)(V,V)>0$. As $X \notin \overline{K(s+\varepsilon \omega_\C)}$, there exists a $y_X \in \Omega^*$ directing a supporting hyperplane to $K(s+\varepsilon \omega_\C)$ and separating it from $X$, i.e. such that $X\cdot(y_X,-1) > (s+\varepsilon\omega_\C)(y_X)$. Thus, there exist a neighbourhood $U_X \subset \Omega^*$ of $y_X$ and a $\delta_X >0$ such that for all $y\in U_X$ and $t\in (-\delta_X,\delta_X)$,
\begin{equation*}
 (X+tV)\cdot(y,-1) > (s+\varepsilon\omega_\C)(y). 
\end{equation*}
Then, by local uniform convexity of $g$ on $(0,+\infty)$, for all $y\in U_X$,
\begin{equation*}
 \Hess \phi_Y (X) (V,V) = \left. \frac{\dif^2}{\dif t^2}\right\vert_{t=0} \phi_Y(X + tV)>0 \, . 
\end{equation*}
Finally, as $t\mapsto \phi_Y(X+tV)$ is convex we have that for all $y\in \Omega^*$,
\begin{equation*}
 \left. \frac{\dif^2}{\dif t^2}\right\vert_{t=0} \phi_Y(X+tV)\geq 0 \, ,
\end{equation*}
and thus
\begin{align*}
 \Hess \chi (X) (V,V) & = \left. \frac{\dif^2}{\dif t^2}\right\vert_{t=0} \chi(X + tV) = \int_{\Omega^*}\vp{\left. \frac{\dif^2}{\dif t^2}\right\vert_{t=0}\phi_Y(X+tV)}\dvol_{\S}(y) \\
 & \geq \int_{U_X} \vp{ \left. \frac{\dif^2}{\dif t^2}\right\vert_{t=0}\phi_Y(X+tv)} \dvol_{\S}(y) > 0 \, . \qedhere
\end{align*}
\end{proof}

\end{spacing}

\bibliography{AffMin}
\bibliographystyle{alpha}

\end{document}